\documentclass{amsart}
\usepackage[margin = 1in]{geometry}
\usepackage{amsmath}
\usepackage{amssymb}
\usepackage{amsthm}
\usepackage{xcolor}
\usepackage{tikz}
\usepackage{tikz-cd}
\usepackage{mathrsfs}
\usepackage{stmaryrd}
\usepackage{colonequals}
\usepackage{hyperref}
\newcommand{\doots}{,\dots,}
\newcommand{\NN}{\mathbb N}
\newcommand{\PP}{\mathbb P}

\newcommand{\QQ}{\mathbb Q}
\newcommand{\OO}{\mathcal O}
\newcommand{\XXX}{\mathscr X}

\newcommand{\oF}{\overline{F}}

\newcommand{\CC}{\mathbb C}
\newcommand{\RR}{\mathbb R}
\newcommand{\PPP}{\mathcal P}

\newcommand{\ZZ}{\mathbb Z}

\newcommand{\Fv}{F_v}

\newcommand{\Gm}{\mathbb G_m}

\newcommand{\YYY}{\mathscr Y}

\newcommand{\AAA}{\mathbb A}

\newcommand{\XX}{\mathcal X}

\newcommand{\thickslash}{\mathbin{\!\!\pmb{\fatslash}}}

\DeclareMathOperator{\Spec}{Spec}
\DeclareMathOperator{\Proj}{Proj}
\DeclareMathOperator{\sProj}{\mathcal{P}roj}

\DeclareMathOperator{\End}{End}
\DeclareMathOperator{\Gal}{Gal}
\DeclareMathOperator{\age}{age}

\DeclareMathOperator{\un}{un}
\DeclareMathOperator{\orb}{orb}
\DeclareMathOperator{\lcm}{lcm}
\DeclareMathOperator{\Hom}{Hom}
\DeclareMathOperator{\GL}{GL}

\DeclareMathOperator{\ord}{ord}

\DeclareMathOperator{\Pic}{Pic}
\DeclareMathOperator{\Eff}{\overline{Eff}}
\DeclareMathOperator{\NS}{NS}

\DeclareMathOperator{\rig}{rig}

\DeclareMathOperator{\Aut}{Aut}

\DeclareMathOperator{\Id}{Id}
\DeclareMathOperator{\SL}{SL}
\DeclareMathOperator{\naive}{naive}
\DeclareFontFamily{U}{wncy}{}
\DeclareFontShape{U}{wncy}{m}{n}{<->wncyr10}{}
\DeclareSymbolFont{mcy}{U}{wncy}{m}{n}
\DeclareMathSymbol{\Sh}{\mathord}{mcy}{"58}
\newtheorem{conj}{Conjecture}[section]
\newtheorem{thm}[conj]{Theorem}
\newtheorem{lem}[conj]{Lemma}
\newtheorem{cor}[conj]{Corollary}

\theoremstyle{definition}
\newtheorem{rem}[conj]{Remark}

\newtheorem{mydef}[conj]{Definition}

\newtheorem{ex}[conj]{Example}
\begin{document}
	
	\title{The Stacky Batyrev--Manin conjecture and modular curves}
	
	\author{Ratko Darda}
	\address{Faculty of Engineering and Natural Sciences, Sabanc\i{} University, 34956 Istanbul, Turkey}
	\email{ratko.darda@sabanciuniv.edu} 
	
	\author{Changho Han}
	\address{Department of Mathematics, Korea University, Seoul 02841, Republic of Korea}
	\email{\nolinkurl{changho_han@korea.ac.kr}} 
	
	\subjclass[2020]{Primary 11G50, 14G05; Secondary 14A20, 11G18}
	\keywords{Batyrev-Manin conjecture, modular curves, Deligne-Mumford stacks, orbifold pseudoeffective cone}

\begin{abstract}
	Let $\XXX_0(N)$ be the Deligne--Rapoport modular stack of elliptic curves endowed with a cyclic rational $N$-isogeny over a number field~$F$. 
	Let $N\in\{1,2,3,4,5,6,7,8,9,10,12,13,16,18,25\},$ which are precisely the values for which the coarse moduli space of $\XXX_0(N)$ is isomorphic to $\PP^1$. 
	We show that the stacky Batyrev--Manin conjecture \cite{dardayasudabm} holds for the naive height on $\XXX_0(N)$ when $F=\QQ$. In the process, we give a concrete description of $\XXX_0(N)$ as a square root stack over a stacky curve. 
\end{abstract}

\maketitle

\section{Introduction}\label{sec:intro}
\subsection{Batyrev--Manin conjecture and stacks}
\subsubsection{} One of the most prominent conjectures about the distribution of rational points on varieties is the Batyrev--Manin conjecture. It predicts the asymptotic behavior of the number of rational points of bounded height on varieties which have ``many'' rational points. 
Under some assumptions, it conjectures \cite{Peyre, BatyrevTschinkel} that the number of points of height bounded by $B$ on {\it geometrically rationally connected varieties} grows as $CB^a\log(B)^{b-1}$ for certain explicit constants $C>0$, $a>0$, and $b\geq 1$.
The constants $a$ and $b$, which are the main focus of this paper, are expressed in terms of relative positions of the line bundle defining height and the canonical line bundle to the effective cone of the variety.

Recently, the Batyrev--Manin conjecture has been studied for {\it Deligne--Mumford stacks} (DM stacks for short) \cite{ellenberg_satriano_zureick-brown_2023, dardayasudabm, dardayasudabm2}. 
The motivation comes from the fact that some counting results and predictions, such as Malle's conjecture \cite{Malle} or counting elliptic curves, with or without a level structure, of bounded height \cite{boggesssankar, Molnar2023, phillips2024pointsboundedheightimages, 7isogeny}, often feature a formula of the same shape $CB^a\log(B)^{b-1}$. 
They turn out to be concerned with rational points of DM stacks. 
The main conjecture of \cite{dardayasudabm} gives a prediction for the number of rational points of bounded height for a reasonable class of DM stacks.

\subsubsection{} 
Defining a useful height on a Deligne--Mumford stack requires more data than a single line bundle. 
Namely, a line bundle on a DM stack does not produce a useful height as some power of it is a pullback of a line bundle from the stack's {\it coarse moduli space}. 
In this case, the height is (up to taking a power) the pullback of a height from the coarse moduli space and often fails the Northcott property. 
In \cite{dardayasudabm}, the problem is resolved by the use of {\it sectors}. 

Let $\XX$ be a smooth irreducible, separated DM stack over a number field $F$ with projective geometrically rationally connected coarse moduli space. 
Roughly, a sector of $\XX$ is a deformation equivalence class of a pair of a geometric point $x$ of $\XX$ and 
an injection $\mu_k \hookrightarrow \Aut(x)$ (with $k\geq 1$), where $\Aut(x)$ is the automorphism group scheme of $x$. 
If $k=1$, the resulting sector is called the {\it untwisted} sector, while the other sectors are called the {\it twisted} sectors. Now, the role of the N\'eron--Severi space of a variety, which parametrizes heights up to $O(1)$ quotients, is played by its orbifold analogue $$\NS_{\orb}(\XX)_{\QQ}:=\NS(\XX)_{\QQ}\oplus\bigoplus_{y \in \pi_0^*(\mathcal J_0(\XX))}\QQ[y],$$
where $\NS(\XX)$  is the N\'eron--Severi space of $\XX$ and $\pi_0^*(\mathcal J_0(\XX))$ is the (finite) set of  twisted sectors of $\XX$.  The $a$- and $b$-invariants are expressed in terms of relative positions of the element defining the height, call it $(L,x)$, and the orbifold analogue $K_{\XX,\orb}$ of the canonical line bundle \cite[Definition 9.1]{dardayasudabm} to the orbifold analogue  $\Eff_{\orb}(\XX)$ of the pseudo-effective cone \cite[Definition 8.1]{dardayasudabm}:
$$a(L,x):=\inf\{t\in\RR|\hspace{0,1cm}t\cdot (L,x)+K_{\XX,\orb}\in\Eff_{\orb}(\XX)\}$$ and $$b(L,x):=\text{codimension of the minimal face of }\Eff_{\orb}(\XX)\text{ containing }a(L,x)\cdot (L,x)+K_{\XX,\orb}.$$
Let us now state the main conjecture of \cite{dardayasudabm}. Given a stack $\mathcal Y$ and a ring $R$, denote by $\mathcal Y\langle R\rangle$ the set of $R$-isomorphism classes of $R$-points of $\mathcal Y$. 
\begin{conj}[{\cite[Conjecture 9.6]{dardayasudabm}}]\label{mainconjecture}
	Assume that $\XX\langle F\rangle$ is Zariski-dense in $\XX$.  Let $(L,x)\in\Eff_{\orb}(\XX)$ and let $H=H_{(L,x)}:\XX\langle F\rangle\to\RR_{>0}$ be the corresponding height\footnote{
		We do not impose the {\it adequacy} condition of \cite[Definition 5.2]{dardayasudabm}, which is required for \cite[Conjecture 5.6]{dardayasudabm} but redundant for \cite[Conjecture 9.10]{dardayasudabm2}; this is consistent with the sequel \cite{dardayasudabm2}, and several of our cases are non-adequate.
		}. There exist a thin set $T\subset \XX\langle F\rangle $ and a constant $C>0$ such that 
	$$\#\{\XX\langle F\rangle -T|\hspace{0,1cm}H(x)\leq B\}\sim_{B\to\infty}C B^{a(L,x)}\log(B)^{b(L,x)-1}.$$
\end{conj}
\subsection{Results} 
The main theorem of this paper proves Conjecture~\ref{mainconjecture}  for the Deligne--Rapoport modular curves $\XXX_0(N)$ over~$\QQ$ for the values of $N$ in 
\begin{equation} \label{eq:N_genus_0}
	\mathfrak N_0 \colonequals \{1,2,3,4,5,6,7,8,9,10,12,13,16,18,25\}
\end{equation} 
with respect to the {\it naive} height. 
Recall that for $N\in\ZZ_{>0}$, $\XXX_0(N)$ is  the Deligne--Mumford compactification of the moduli stack of elliptic curves endowed with a cyclic rational $N$-isogeny over $F$ (see Definition \ref{defx0n} for the precise definition).
The set $\mathfrak N_0$ is precisely the set of those values for which the coarse moduli space $X_0(N)$ of $\XXX_0(N)$ is geometrically rationally connected, i.e. isomorphic to $\PP^1$ (cf. \cite{Mazur78}). When $N=1$ the naive height\footnote{some sources use a different normalization, e.\ g.\ in \cite{AHPP25}, it is given by $\max(4|e^3|, 27|f^2|),$ but it has no effect on $a$- and $b$-invariants.} $H:\XXX_0(1)\langle \QQ\rangle\to\RR_{>0}$ is given as follows: the height of the curve $E:Y^2=X^3+eX+f,$ where $e,f\in\ZZ$ and for all $p$ prime, $p^4\nmid e$ or $p^6\nmid f$, is  $H_1(E):=\max(|e^3|, |f^2|)$. For $N>1$, the naive height on $\XXX_0(N)\langle \QQ\rangle$ is the pullback of the naive height from $\XXX_0(1)$ along the  morphism $J_N:\XXX_0(N)\to\XXX_0(1)$, which forgets the isogeny; so $H_N(E)=H_1(J_N(E))$. We drop $N$ from the index when it is clear from the context.   
The rational points of stacks $\XXX_0(N)$ of bounded  naive height have been counted in \cite{Brumer, Grant, countthreeisog, zbMATH07357690, phillips2024pointsboundedheightimages, Molnar2023, 7isogeny, boggesssankar, AHPP25}.  By Lemma~\ref{Zariski-dense}, one has that $\XXX_0(N)\langle \QQ\rangle$ is Zariski-dense in $\XXX_0(N)$, hence is covered by Conjecture~\ref{mainconjecture}.
\begin{thm}\label{thm:mainthm} 
	Let $N \in \mathfrak{N}_0$. 
	When $F=\QQ$,  Conjecture~\ref{mainconjecture} holds for $\XXX_0(N)$ with respect to the height~$H$.
\end{thm}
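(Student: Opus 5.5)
The proof has two halves. The arithmetic half takes the known asymptotics for counting rational points of $\XXX_0(N)$ of bounded naive height from the literature cited above (Brumer, Grant, Pizzo--Pomerance--Voight, Pomerance--Schaefer, Molnar--Voight, Boggess--Sankar, Phillips, Phillips et al.). Each gives an asymptotic of the form $\#\{E\in\XXX_0(N)\langle\QQ\rangle : H_N(E)\le B\}\sim C B^{a_N}\log(B)^{b_N-1}$ after removing an explicit thin set. The removed sets are typically CM points, or the finitely many points where $J_N$ is ramified and an extra degeneracy occurs, or images of lower-level maps; these are all thin. The geometric half computes, for each $N\in\mathfrak N_0$, the invariants $a(L,x)$ and $b(L,x)$ from the definitions via $\Eff_{\orb}(\XXX_0(N))$ and $K_{\XXX_0(N),\orb}$. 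The theorem then follows by checking that the geometric and arithmetic exponents coincide case by case.

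For the geometric half I would use the concrete description of $\XXX_0(N)$ promised in the abstract. It is a square root stack (adding a $\mu_2$-gerbe, since $-1$ acts trivially everywhere) over a stacky curve $\mathcal C_N$, whose coarse space is $\PP^1$ and which has stacky points of orders $2$ and $3$ over the elliptic points, with multiplicities read off from the classical counts $\nu_2(N)$ and $\nu_3(N)$. From this description I would carry out three steps.

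First, I would list the sectors. The twisted sectors come from $\mu_2\subset$ the generic $\pm1$ and from the $\mu_k$ ($k=2,3,4,6$) in the automorphism groups at elliptic points, together with their products with $-1$.

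Second, I would compute $\NS_{\orb}(\XXX_0(N))_{\QQ}$ and the cone $\Eff_{\orb}$. For a stacky curve of this type I expect $\Eff_{\orb}$ to be generated by the classes of points together with the sector classes $[y]$, corrected by ages as in \cite[Definition 8.1]{dardayasudabm}. Concretely, I would use the rule that a residual-gerbe point $P$ with sector $y$ contributes an effective class $(\tfrac1{|\Aut|}[P], \ldots)$ plus $[y]$-coordinates.

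Third, I would determine the class $(L,x)$ defining the naive height. I would write $H$ as the pullback along $J_N$ of $\max(|e|^3,|f|^2)$. On $\XXX_0(1)$ this corresponds to $L=\OO(1)$ on $\mathcal P(4,6)$ with the raising data $x$ recording how local conditions at the sectors change the height (the $p^4\nmid e$ or $p^6\nmid f$ minimality). Under $J_N$ one gets $L=J_N^*\OO(1)$, with $x$ given by the pulled-back sector contributions, which depend on the ramification of $J_N$ at elliptic points and at the cusps' preimages.

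With these in hand, $a(L,x)$ is a small linear programming computation on a cone of low dimension. The invariant $b(L,x)$ is the codimension of the face containing $a(L,x)\cdot(L,x)+K_{\XXX_0(N),\orb}$, and for the non-adequate cases I expect $b>1$ precisely where the literature finds log factors; $N=3$ and some others are expected to have a $\log$.

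The main obstacle is the correct determination of $x$, the sector part of the height class, together with the precise shape of $\Eff_{\orb}$. An error in an age or in a ramification index changes $a$. I would attack this by computing the local heights at a prime $p$ for each sector explicitly: take a $\QQ_p$-point specializing to a twisted sector, compute $\max(|e|_p^{-3},|f|_p^{-2})$ in terms of the local parameter, and read off the coefficient of $[y]$. I would then verify the results against the known exponents $a_N$, e.g.\ $a=5/6$ for $N=1$ and $a=1/2$ for $N=2$. A secondary obstacle is confirming that each exceptional set removed in the literature is genuinely thin and that no accumulating thin set is needed beyond it. For this I would note that the removed sets are finite or lie in images of $\QQ$-points of covers of degree $\ge2$.
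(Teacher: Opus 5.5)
\textbf{Verdict.} Your architecture matches the paper's, but the step that settles the non-adequate cases is only stated as an expectation, so there is a genuine gap.

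Theorem~\ref{thm:mainthm} is obtained in the paper exactly as you propose: the literature counts are combined with a computation of $a(M_{\naive})$ and $b(M_{\naive})$. That computation uses the square-root-stack description, the sectors and their ages, a local computation of the height class, and $K_{\XXX_0(N),\orb}$.

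\textbf{Where the gap is.} The step you leave as an expectation is the shape of $\Eff_{\orb}(\XXX_0(N))$.
\begin{itemize}
\item This cone is defined by nonnegativity against covering families of stacky curves. A list of classes you believe to be effective (point classes with age corrections, plus the vectors $[y]$) is therefore at best an inner approximation.
\item Even after checking that each such class pairs nonnegatively with every stacky curve, you only get $O:=a\,(L,x)+K_{\XXX_0(N),\orb}\in\Eff_{\orb}$, together with upper bounds on $a$ and $b$.
\item The matching lower bounds require exhibiting covering families of stacky curves, and nothing in your plan produces any.
\end{itemize}
In the adequate cases $N\in\{1,2,3,4,5,6,8,9\}$ the paper sidesteps the cone by using the adequate-case formulas from \cite[Conjecture 5.6]{dardayasudabm}. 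In the non-adequate cases $N\in\{7,10,12,13,16,18,25\}$, determining the cone near $O$ is the whole content. Comparing with the exponents in the literature cannot replace this, since that agreement is what is being proved.

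\textbf{The missing idea} is the paper's analysis of stacky curves on $\XXX_0(N)$, in two parts.
\begin{itemize}
\item Families that are constant onto a point lie in a $B\mu_2$-fibre, and every stacky point of such a curve has sector $1/2$. These give the supporting inequality $x_{1/2}\ge 0$ (Lemma~\ref{partofcone}), which is what forces $a\ge 1/6$ in all non-adequate cases.
\item Every non-constant representable stacky curve is a relative normalization $\iota(f,g)^{\nu}$, with $f$ a rational function and $g\in H^1(\oF(C),\mu_2)$ (Lemma~\ref{relnormfg}). Its sector at a point over $\mathcal K_6$ (resp.\ $\mathcal K_4$) is determined by the ramification index $e$ of $f$ at that point and by whether $g$ ramifies there (Lemmas~\ref{localmu6}, \ref{localmu4}, \ref{associated_sector}).
\end{itemize}
With this, membership in $\Eff_{\orb}$ reduces to elementary inequalities such as $\deg f\ge 5n_{*,5,0}+2n_{*,2,1}$. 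It also identifies exactly which curves pair to zero with $O$. For $N=7$, the value $b=2$ rests on exhibiting an explicit pair $(f,g)$ with $\gamma(f,g)(O)=0$ whose functional is not a multiple of $x_{1/2}$. With only an inner approximation you could conclude at most $b\le 2$.

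\textbf{Smaller corrections.}
\begin{itemize}
\item The height $\max(|e|^3,|f|^2)$ comes from $\Lambda^{\otimes 12}=\OO(12)$ on $\mathcal P(4,6)$, not from $\OO(1)$. Its raising function is $12-12t$, and $(\OO(1),12-12t)$ is not proportional to the correct class.
\item On $\XXX_0(N)$ the raising function is simply $y\mapsto 12-12J_N(y)$ for the induced map on sectors (Corollary~\ref{naivforx0n}). No cusp data enter.
\item When listing sectors, you must show that the order-$2$ elements at the elliptic points fall into the generic sector $1/2$.
\item Over $\QQ$, Galois-conjugate elliptic points give a single sector. This changes $b$; for example $b=5-[F(i):F]$ for $N=5$.
\end{itemize}
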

The definition of the naive height is extended to a general number field~$F$ in Definition~\ref{naive}. The following theorem calculates values of the $a$- and $b$-invariants for the naive height in the general number field case.
 \begin{thm}\label{valuesaandb} Let $M_{\naive}\in\NS_{\orb}(\XXX_0(N))_{\QQ}$ be the element defining $H$. For $N\in\mathfrak N_0$ and $M_{\naive}$, one has the following values of $a$- and $b$-invariants:
 	\begin{center}
 		\begin{tabular}{|c|c|c|}
 			\hline
 			$N$	& $a(M_{\naive})$& $ b(M_{\naive})$ \\
 			\hline
 			1	&  5/6      &     1    \\
 			\hline 
 			2	&   1/2      &    1 \\
 			\hline 
 			3	&    1/3   &  2        \\
 			\hline
 			4	&         1/3  & 1 \\
 			\hline
 			5	&     1/6  &     5-[F(i):F]\\
 			\hline
 			6	&       1/6     &  2  \\
 			\hline
 			7	&       1/6   &  2    \\
 			\hline
 			8	&        1/6    &   2   \\
 			\hline
 			9	&        1/6    &   2   \\
 			\hline
 			10	&       1/6    &   1   \\
 			\hline
 			12	&       1/6    &   1   \\
 			\hline
 			13	&        1/6    &   1   \\
 			\hline
 			16	&        1/6   &   1  \\
 			\hline
 			18	&        1/6   &   1  \\
 			\hline
 			25	&        1/6  &   1  \\
 			\hline
 		\end{tabular}
 	\end{center}
 \end{thm}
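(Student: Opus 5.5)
The plan is to compute everything explicitly in $\NS_{\orb}(\XXX_0(N))_{\QQ}$, using a concrete model of $\XXX_0(N)$ as an iterated root stack over $\PP^1$. First, describe $\XXX_0(N)$ as a $\mu_2$-gerbe (generic automorphism $\pm1$) over a stacky curve $\mathcal C_N$ with coarse space $X_0(N)\cong\PP^1$. Then $\mathcal C_N$ is a root stack of $\PP^1$ at the elliptic points (orders $2$ and $3$, at cusps or at $j=1728,0$), together with the square-root structure at irregular cusps; the abstract calls this the square-root-stack description. From this:
\begin{itemize}
\item $\NS(\XXX_0(N))_{\QQ}=\QQ$;
\item the twisted sectors are indexed by pairs consisting of a stacky point and a nontrivial subgroup $\mu_k\hookrightarrow\Aut(x)$ up to deformation, including the sector from the generic $-1$;
\item field-of-definition issues, e.g.\ $\mu_4$-points defined over $F(i)$, determine which sectors are defined over $F$. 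This is where $[F(i):F]$ enters for $N=5$.
\end{itemize}

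Next, write down the three ingredients.
\begin{itemize}
\item $K_{\XXX_0(N),\orb}=(K,\ \sum_y (1-\age(y))[y])$ in the convention of \cite[Definition 9.1]{dardayasudabm}, with ages computed from the action of $\mu_k$ on the tangent space. The generic $\mu_2$ acts trivially, so its age is $0$.
\item $\Eff_{\orb}$: in this one-dimensional situation it should be generated by $\NS$-effective classes together with the sector classes $[y]$, suitably shifted by degree contributions. I would use \cite[Definition 8.1]{dardayasudabm} directly: a class is effective if it is a limit of classes of effective orbifold divisors, i.e.\ pairs (divisor, multiplicities at sectors) satisfying the compatibility that passing through a stacky point with given multiplicity forces a given sector coefficient.
\item $M_{\naive}=J_N^*M_{\naive,1}$: for $N=1$, read the naive height as the height attached to $\OO(1)$ on $\PP(4,6)$. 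This gives the degree $1/12$ line-bundle part plus raising data at the sectors over $j=0,1728$ and the generic $\mu_2$. Then pull back along $J_N$, which has degree $\psi(N)=N\prod_{p\mid N}(1+1/p)$ on coarse spaces, tracking how sectors of $\XXX_0(1)$ pull back to sectors of $\XXX_0(N)$ via ramification at the elliptic points and cusps.
\end{itemize}

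The $a$-invariant is then a linear optimization. It is the minimal $t$ such that $tM_{\naive}+K_{\orb}$ has a nonnegative line-bundle degree after absorbing negative sector coefficients through the effective-cone relations. Since $\deg K_{\PP^1}=-2$ and $\deg M_{\naive}$ grows like $\psi(N)/12$, the generic answer is $a=1/6$ for $N\ge5$, matching the known count $B^{1/6}$. For small $N$, sector contributions dominate: the $\mu_2$, $\mu_3$ and $\mu_4$ stabilizers raise $a$ to $5/6$, $1/2$, $1/3$. The $b$-invariant is the codimension of the minimal face containing $a M_{\naive}+K_{\orb}$, which amounts to counting the facet inequalities that become equalities. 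For $N=5$, two order-$2$ elliptic points are conjugate over $F$ unless $i\in F$, giving $5-[F(i):F]$.

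I expect the main obstacle to be determining $\Eff_{\orb}$ exactly and the precise pullback of the sector part of $M_{\naive}$ along $J_N$ at every cusp and elliptic point. These need careful local analysis in the root-stack model, and the answer changes case by case. I would do this uniformly by working on the explicit model and then run the finite linear program for each $N$ in $\mathfrak N_0$.
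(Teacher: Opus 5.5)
There is a genuine gap, and it is exactly the step you postpone: you give no workable way to determine $\Eff_{\orb}(\XXX_0(N))$, and the description you sketch is wrong. In the paper (following \cite{dardayasudabm}), $\Eff_{\orb}$ is defined dually, as the set of $(L,c)$ pairing nonnegatively with every covering family of stacky curves. It is \emph{not} spanned by effective line-bundle classes together with the sector coordinate vectors. For $N=7$, the point $O=\tfrac16 M_{\naive}+K_{\XXX_0(7),\orb}$ has coefficient $-1/3$ on the sectors $(\mathcal K_6,5/6)$, yet it lies in $\Eff_{\orb}$. Your cone would therefore give $a=1/3$ instead of $1/6$, and the same happens for $N=13$.

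The missing idea is the paper's classification of stacky curves. Every surjective representable stacky curve on $\XXX_0(N)$ is the relative normalization of a pair $(f,g)$, with $f\in\oF(C)$ nonconstant and $g\in H^1(\oF(C),\mu_2)$. Explicit computations in the local charts $[\Spec(\oF[[T]][W,W^{-1}])/\Gm]$ with weights $(2,4)$ and $(2,6)$ show that the sector at $x$ depends only on $e_{f,x}$ modulo $4$ or $6$ and on whether $g$ ramifies at $x$. This makes each pairing $\gamma(f,g)$ explicit. The fibre identity $\sum_{x\in f^{-1}(p)}e_{f,x}=\deg f$ then proves $\gamma(f,g)(O)\ge 0$: a point over a $\mu_6$-point carrying the sector $5/6$ has $e_{f,x}\ge 5$, or $e_{f,x}\ge2$ if $g$ ramifies there.

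Families contracted to a point, i.e.\ stacky curves into the $B\mu_2$-fibre of $\XXX_0(N)\to\XXX_0(N)^{\rig}$, supply the facet $x_{1/2}\ge 0$, and this is what forces $a\ge1/6$. For $b$ you must also exhibit families attaining $\gamma(f,g)(O)=0$. For $N=7$ this is a degree-$2$ map totally ramified over both $\mu_6$-points, with $g$ ramified there. It gives the second supporting functional $2L+x_{((\mathcal K_6)_+,5/6)}+x_{((\mathcal K_6)_-,5/6)}$ and hence $b=2$; counting tight coordinate inequalities cannot detect this.

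Several inputs to your optimization are also off:
\begin{itemize}
\item The sector part of $K_{\XXX_0(N),\orb}$ is $(\age(y)-1)_y$, not $(1-\age(y))_y$. With your sign the $1/2$-sector coefficient is $+1$ and never binds.
\item The line-bundle part of $M_{\naive}$ is $J_N^*\Lambda^{\otimes 12}$, of degree $\kappa(N)/2$ (so $1/2$, not $1/12$, for $N=1$). Its raising function is $12-12J_N(y)$.
\item Your heuristic for $a$ runs backwards. For $N\le 4$ the binding constraint is the degree bound $a=(1-\varepsilon_3(N)/3-\varepsilon_2(N)/4)/(\kappa(N)/2)$, and elliptic points make $K$ \emph{less} negative, which lowers $a$. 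For $N\ge 5$ the value $1/6$ comes from the $1/2$-sector (raising value $6$ against coefficient $-1$), independently of $N$, while the degree bound decays like $1/\kappa(N)$. The two bounds coincide exactly for $N\in\{5,6,8,9\}$.
\item $J_N$ is representable, so cusps have stabilizer $\mu_2$. The only stacky points of $\XXX_0(N)^{\rig}$ are the unramified points over $j=0,1728$.
\item Sectors are indexed by embeddings $\mu_k\hookrightarrow\Aut(x)$, not by subgroups. For example, $(\mathcal K_6,1/6)$ and $(\mathcal K_6,5/6)$ have different ages and different raising values.
\end{itemize}
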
 
 
 
 \begin{proof}[Proof of Theorem~\ref{thm:mainthm} assuming Theorem~\ref{valuesaandb}]
 The theorem follows from Theorem~\ref{valuesaandb} and the statements which are listed below:
 
 \begin{center}
 \begin{tabular}{|c|c|}
 	\hline
 	$N$	& $\text{Statement}$ \\
 	\hline
 	1	&  \text{\cite[Lemma 4.3]{Brumer}}         \\
 	\hline 
 	2	&  \text{\cite[Proposition 1]{Grant}}     \\
 	\hline 
 	3	&    \text{\cite[Proposition 4.2]{boggesssankar}}           \\
 	\hline
 	4	&         \text{\cite[Theorem 4.2]{zbMATH07357690}} \\
 	\hline
 	5	&    \text{\cite[Theorem 1.1]{AHPP25}}  \\
 	\hline
 	6	&       \text{\cite[Theorem 1.2.1]{Molnar2023}}     \\
 	\hline
 	7	&       \text{\cite[Theorem 1.2.2]{7isogeny}}       \\
 	\hline
 	8	&        \text{\cite[Theorem 1.2.1]{Molnar2023}}      \\
 	\hline
 	9	&        \text{\cite[Theorem 1.2.1]{Molnar2023}}      \\
 	\hline
 	10	&        \text{\cite[Theorem 1.2.4]{Molnar2023}}       \\
 	\hline
 	12	&       \text{\cite[Theorem 1.2.4]{Molnar2023}}       \\
 	\hline
 	13	&        \text{\cite[Theorem 1.2.4]{Molnar2023}}      \\
 	\hline
 	16	&       \text{\cite[Theorem 1.2.4]{Molnar2023}}   \\
 	\hline
 	18	&        \text{\cite[Theorem 1.2.4]{Molnar2023}}    \\
 	\hline
 	25	&        \text{\cite[Theorem 1.2.4]{Molnar2023}}   \\
 	\hline
 \end{tabular}
 \end{center}
 \end{proof}
 \begin{rem}
 	\normalfont Note that when $N=3$, Theorem~\ref{thm:mainthm} is false without removing a thin set. As it is clear from \cite[Proposition 4.2]{boggesssankar}, a sufficient thin set is given by $$\{(E,\phi)\in\XXX_0(3)\langle F\rangle|\hspace{0,1cm}j(E)=0\}.$$ 
 \end{rem}
 \subsection{Strategy of proof of Theorem~\ref{valuesaandb}}
 To prove Theorem~\ref{valuesaandb}, we first give a concrete description of $\XXX_0(N)$ as a {\it square root stack} over a stacky curve, i.e. over a DM stack of dimension $1$ which is generically a genuine curve. 
 Recall that given a line bundle $L$ on a stack~$\mathcal X$, the $n$-th root stack of $L$, denoted by $\sqrt[n]{L/\XX}$, is a stack defined in \cite{impose_tangency}, endowed with a morphism to $\mathcal X$ with the property that the pullback of $L$ is an $n$-th power in the Picard group of~$\sqrt[n]{L/\XX}$. 
 The {\it square root stacks} are $\mu_2$-gerbes, and are {\it trivial} $\mu_2$-gerbes if and only if $L$ is a square in the Picard group $\Pic(\mathcal X)$. Hence, the square root stacks are classified by $\Pic(\XX)/2\Pic(\XX)$. In \cite{Abramovich_Olsson_Vistoli}, one describes a notion of {\it rigidification} $\mathcal Y^{\rig}$ of a stack $\mathcal Y$, which essentially kills a subgroup scheme of the generic stabilizer group scheme of a stack. If $\mathcal X$ is a stacky curve and $L$ is a line bundle on $\mathcal X$, the generic stabilizer of $\sqrt[2]{L/\XX}$ is $\mu_2$ and the rigidification is just $\mathcal X$. 

 We have a canonical representable morphism $J_N:\XXX_0(N)\to \XXX_0(1)$ and an induced representable morphism $J_N^{\rig}:\XXX_0(N)^{\rig}\to \XXX_0(1)^{\rig}$. 
 Let $\Lambda$ be the Hodge line bundle on $\XXX_0(1)$. 
 There exists a unique line bundle $\Lambda^{\rig}$ on $\XXX_0(1)^{\rig}$ such that $\Lambda^{\otimes 2}$ is isomorphic to the pullback of $\Lambda^{\rig}$ under the rigidification morphism $\XXX_0(1)\to\XXX_0(1)^{\rig}$. We set $\Lambda_{N}:=J_N^*\Lambda$ and $\Lambda_{N}^{\rig}:=(J_N^{\rig})^*\Lambda^{\rig}$.
  We will express $\XXX_0(N)\cong\sqrt{\Lambda^{\rig}_N/\XXX_0(N)^{\rig}}$. Moreover, to give a concrete description of the stacky curve $\XXX_0(N)^{\rig}$, we use the following notion. Given an $r$-tuple of disjoint Cartier divisors $(C_1\doots C_r)$ on a curve $Z$, we denote by $\sqrt[n_1\doots n_r]{C_1\doots C_r/Z}\to Z$ the $(n_1\doots n_r)$-th root stack of the $r$-tuple,  which can be thought of as adding stackiness $\mu_{n_i}$ along $C_i$. 
   Let $K_6$ and $K_4$ be the divisors $X^2+3Y^2=0$ and $ X^2+Y^2=0$, respectively, on $\PP^1$.  Let $E_0,E_{1728}$ be the divisors on $\PP^1$ corresponding to $X=0$ and $X=1728Y$, respectively. 
 
\begin{thm}\label{isomclas} Assume that $N \in \mathfrak{N}_0$. Then there is a canonical isomorphism $\XXX_0(N)\cong \sqrt{\Lambda_N^{\rig}/\XXX_0(N)^{\rig}}$ corresponding to the line bundle $\Lambda_N$. The following table summarizes the rigidifications of $\XXX_0(N)$, under a suitable isomorphism of $\PP^1$ and the coarse space $X_0(N)$, as root stacks over $\PP^1$. 
\begin{center}
	\begin{tabular}{|c|c|c|}
		\hline
		$\XXX_0(N)$	& $\XXX_0(N)^{\rig}$& $ \text{trivial/non-trivial gerbe}$ \\
		\hline
		$\XXX_0(1)$	&  $\sqrt[3,2]{E_0, E_{1728}/\PP^1} $      &     \text{non-trivial}    \\
		\hline 
	$	\XXX_0(2)$	&   $\sqrt[2]{E_{1728}/\PP^1} $     &    non-trivial \\
		\hline 
	$	\XXX_0(3)$	&    $\sqrt[3]{E_{0}/\PP^1} $    &  trivial        \\
		\hline
	$\XXX_0(4)$	&         $\PP^1 $  & non-trivial \\
		\hline
		$\XXX_0(5)$&     $\sqrt[2]{K_4/\PP^1} $  &     \text{non-trivial}\\
		\hline
	$	\XXX_0(6)$	&       $\PP^1 $      &  \text{trivial}  \\
		\hline
	$	\XXX_0(7)$	&       $\sqrt[3]{K_6/\PP^1}$   &   \text{trivial}    \\
		\hline
	$	\XXX_0(8)$	&        $\PP^1 $   &    \text{trivial}  \\
		\hline
	$	\XXX_0(9)$	&        $\PP^1 $   &    \text{trivial}   \\
		\hline
	$	\XXX_0(10)$	&        $\sqrt[2]{K_4/\PP^1} $    &  \text{non-trivial}   \\
		\hline
	$	\XXX_0(12)$	&        $\PP^1 $    &    \text{trivial}   \\
		\hline
	$	\XXX_0(13)$	&        $\sqrt[3,2]{K_6, K_4/\PP^1} $    &   \text{non-trivial}   \\
		\hline
	$	\XXX_0(16)$	&        $\PP^1 $  &    \text{trivial}  \\
		\hline
	$	\XXX_0(18)$	&        $\PP^1 $   &    \text{trivial}  \\
		\hline
	$	\XXX_0(25)$	&       $\sqrt[2]{K_4/\PP^1} $   &   \text{non-trivial}  \\
		\hline
	\end{tabular}
\end{center}
\end{thm}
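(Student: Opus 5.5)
We will prove Theorem~\ref{isomclas} in three steps: construct the gerbe structure, identify the rigidified stacky curve as a root stack over $\PP^1$, and decide when the gerbe is trivial.

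\emph{Step 1: the root stack structure.} The generic stabilizer of $\XXX_0(N)$ is $\mu_2$, acting by $[-1]$ on the elliptic curve and compatibly on the cyclic $N$-isogeny. This element acts on the Hodge bundle $\Lambda_N$ (the pullback of $\omega_E$) by $-1$. Hence $\Lambda_N^{\otimes 2}$ has trivial generic inertia action and descends to the rigidification. By construction it descends to $\Lambda_N^{\rig}=(J_N^{\rig})^*\Lambda^{\rig}$. The pair $(\rho\colon\XXX_0(N)\to\XXX_0(N)^{\rig},\ \Lambda_N)$, together with the isomorphism $\Lambda_N^{\otimes2}\cong\rho^*\Lambda_N^{\rig}$, gives by the universal property of root stacks a morphism
\[
\XXX_0(N)\to\sqrt{\Lambda_N^{\rig}/\XXX_0(N)^{\rig}}.
\]
Both sides are $\mu_2$-gerbes over $\XXX_0(N)^{\rig}$, and this morphism commutes with the projections. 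It is a morphism of $\mu_2$-gerbes that is the identity on banding groups, because $-1$ acts by the tautological character on $\Lambda_N$. Any such morphism of gerbes is an isomorphism, which proves the first claim.

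\emph{Step 2: the rigidification as a root stack.} The rigidification $\XXX_0(N)^{\rig}$ is a smooth stacky curve with trivial generic stabilizer and coarse space $X_0(N)\cong\PP^1$. By the standard structure theorem for such curves (Geraschenko--Satriano), it is the root stack of $\PP^1$ along its stacky points, with the orders of their stabilizers. We therefore only need the elliptic points of $X_0(N)$ and their images in $\PP^1$.

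1. For $N=1$, the stabilizers mod $\pm1$ are $\mu_3$ at $j=0$ and $\mu_2$ at $j=1728$.
2. For $N>1$, the order-$3$ elliptic points number $\nu_3(N)=\prod_{p\mid N}(1+(\tfrac{-3}{p}))$ if $9\nmid N$ and $0$ otherwise. The order-$2$ points number $\nu_2(N)=\prod_{p\mid N}(1+(\tfrac{-1}{p}))$ if $4\nmid N$ and $0$ otherwise.
3. Cusps are not stacky after rigidification, since the N\'eron polygon automorphisms are again only $\pm1$ generically.

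Evaluating the formulas reproduces the table. For example, $N=2$ has one order-$2$ point; $N=5,10,25$ have two conjugate order-$2$ points; $N=7$ has two order-$3$ points; $N=13$ has two of each; and $N=4,6,8,9,12,16,18$ have none. To match the table, we take explicit Hauptmoduln $t$ with known $j$-maps, for instance $j=(t^2+10t+5)^3/t$ for $N=5$. The elliptic points of order $2$ (resp.\ $3$) are the points over $j=1728$ (resp.\ $j=0$) at which the ramification index of $j$ is $1$. After a linear change of the Hauptmodul over $\QQ$, these points are the zeros of $X^2+Y^2$ (they are defined over $\QQ(i)$) or of $X^2+3Y^2$ (defined over $\QQ(\sqrt{-3})$). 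In the cases with a single rational point we place it at $X=0$ or $X=1728Y$.

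\emph{Step 3: trivial versus non-trivial.} The square root stack is a trivial gerbe iff $\Lambda_N^{\rig}$ is a square in $\Pic(\XXX_0(N)^{\rig})$. For a root stack $\sqrt[n_1,\dots,n_r]{C_1,\dots,C_r/\PP^1}$, the Picard group is generated by $\OO(1)$ and the tautological roots $\OO(C_i/n_i)$, subject to $n_i\cdot(C_i/n_i)=\deg C_i\cdot\OO(1)$. We compute the class of $\Lambda_N^{\rig}$ there using two inputs:

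1. its degree, $\deg\Lambda^{\rig}_N=\tfrac{1}{6}[\SL_2(\ZZ):\Gamma_0(N)]$, obtained by pulling back $\Lambda^{\otimes12}\cong\OO(\infty)$, i.e.\ $\Lambda^{\rig,\otimes6}$ being the pullback of a degree-one bundle on the $j$-line;
2. its local characters at each stacky point, read off from the action of the stabilizer on $\omega_E^{\otimes2}$.

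Parity is then checked case by case. For instance, $\Lambda^{\rig}_6$ has degree $2$ on $\PP^1$, hence is a square, while $\Lambda^{\rig}_4$ has degree $1$, hence is not.

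The main obstacle is this last step when stacky points are present: the local characters must be determined correctly, including over non-split points such as $K_4$ and $K_6$. In those cases I would compare with the known $N=1$ computation (the gerbe is non-trivial because $\Pic\XXX_0(1)\cong\ZZ/12$ is generated by $\Lambda$) and use functoriality under $J_N$.
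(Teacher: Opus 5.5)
\textbf{Where you match the paper.} Your Step 1 is the paper's Lemma~\ref{lem:root_stack_of_rigidification}. Your Step 2 identifies the stacky points by $\varepsilon_2(N),\varepsilon_3(N)$, as in Lemma~\ref{stacky-points-xon}. Your parity argument for the cases with $\XXX_0(N)^{\rig}\cong\PP^1$ is exactly the paper's.

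\textbf{The gap: Step 3 in the stacky cases.} For $N\in\{1,2,3,5,7,10,13,25\}$ you never decide whether the gerbe is trivial. You explicitly leave the local characters as an unresolved obstacle, so the third column of the table is not proved there. The missing observation is short.
\begin{itemize}
\item \emph{The character at $\mu_2$-points.} $J_N$ is representable and $\Lambda=\OO(1)$ on $\PPP(4,6)$ is uniformizing, so $\Lambda_N$ has faithful character at every point. At a point with $\Aut=\mu_4$, the descended bundle $\Lambda_N^{\rig}$ therefore carries the \emph{nontrivial} character of the residual $\mu_4/\mu_2\cong\mu_2$ on $\XXX_0(N)^{\rig}$.
\item \emph{Consequence when $\varepsilon_2(N)>0$.} The square of any line bundle has trivial character on $\mu_2$, so $\Lambda_N^{\rig}$ is not a square. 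This settles $N=1,2,5,10,13,25$. The paper phrases the same point as follows: a trivial gerbe $\XXX_0(N)^{\rig}\times B\mu_2$ would have a point with stabilizer $\mu_2\times\mu_2$, which cannot embed in the cyclic stabilizers of $\PPP(4,6)$.
\item \emph{The cases $N=3,7$.} At $\mu_3$-points every character is divisible by $2$, so there is no local obstruction and you need the global Picard group.
 \begin{itemize}
 \item $\Pic(\sqrt[3]{E_0/\PP^1})\cong\ZZ$, with degree image $\tfrac13\ZZ$.
 \item For $N=7$, the pushout description (Lemma~\ref{picx03}) gives $\ZZ$ or $\ZZ\oplus\ZZ/3\ZZ$, and the torsion is $2$-divisible.
 \item Since $\deg\Lambda_3^{\rig}=2/3$ and $\deg\Lambda_7^{\rig}=4/3$ lie in $2\cdot\tfrac13\ZZ$, both are squares and both gerbes are trivial.
 \end{itemize}
\end{itemize}

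\textbf{Two smaller points.}
\begin{itemize}
\item \emph{Cusps.} Your justification via N\'eron polygons is not right for non-squarefree $N$. On the moduli stack of generalized elliptic curves with $\Gamma_0(N)$-structure, a $d$-gon cusp acquires extra automorphisms $\mu_{\gcd(d,N/d)}$. For example, for $N=4$ the $2$-gon with $G=\langle(i,1)\rangle\subset\Gm\times\ZZ/2\ZZ$ is preserved by $(x,j)\mapsto((-1)^jx,j)$. The correct reason is that $\XXX_0(N)$ is defined as a normalization, so $J_N$ is representable and every cusp has stabilizer $\mu_2$.
\item \emph{Placement of $K_4,K_6$ for $N=13$.} No single change of coordinate can move $K_4$ and $K_6$ to $X^2+Y^2=0$ and $X^2+3Y^2=0$ simultaneously. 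The cross-ratio of the four geometric points is a projective invariant. With the usual Hauptmodul, these points are the roots of $t^2+6t+13$ and $t^2+5t+13$, and their cross-ratio is $(11-6\sqrt3)^2/13$ up to inversion, not $(2-\sqrt3)^2$. Only the residue fields $F(i)$ and $F(\sqrt{-3})$ can be normalized. That is all the later sections use; the paper also asserts the stronger placement without proof.
\end{itemize}
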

\medskip

We now describe the sectors of $\XXX_0(N)$. 
We recall that a sector of a connected smooth DM $F$-stack $\XX$ is an equivalence class of a pair $(x,g)$ with $x\in \XX(\oF)$ and $g:\mu_k\hookrightarrow\Aut(x)$ and $k\geq 1$, where $(x_1,g_1:\mu_k\hookrightarrow \Aut(x_1))$ and $(x_2,g_2:\mu_k\hookrightarrow\Aut(x_2))$ are equivalent if there exists a connected $F$-scheme $S$, two geometric points $y_1,y_2:\Spec(\oF)\to S$, a morphism $y:S\to \XX$ and an injection $h:(\mu_k)_S\to\Aut(x)$ and isomorphisms of the restriction $(y,h)$ at $y_i$ with  $(x_i,g_i)$ for $i=1,2$. 
A sector is untwisted if it is the equivalence class of $(x,\Id_x)$;
otherwise, it is twisted. 
The choice of the line bundle $\Lambda_N$ induces an identification of automorphism group schemes of points of $\XXX_0(N)$ with $\mu_m$ for $m\in\{2,4,6\}$. 
Let $\mathcal K_{6}$ and $\mathcal K_{4}$ be the closed substacks of $\XXX_0(N)$ where the automorphism group scheme is of order $6$ and $4$, respectively. 
Before stating the next theorem, we recall the definition of the age function on the set $\pi_0\mathcal J_0\XX$ of sectors of $\XX$. 
Given $(x,\iota:\mu_k\hookrightarrow\Aut(x))$ with $x\in\XX(\oF)$, we have a canonical representation of $\mu_k$ on the fiber of the tangent space $T_{\XX}(x)$. We decompose it as a sum of the irreducible representations $\otimes_{i=1}^{\dim(\XX)}\tau^{a_i}$ with $a_i\in \ZZ\cap [0\doots k-1]$, where $\tau:\mu_k\hookrightarrow\Gm$ is the canonical inclusion. Then the age of $(x,\iota)$ is defined to be $\frac{1}{k}\sum_{i=0}^{\dim(\XX)}a_i$. The function $\age$ descends to the sectors of $\XX$. In Paragraph~\ref{subsubsec:wtproj_stack}, we obtain the following explicit descriptions of sectors of $\XXX_0(N)$ and their ages:  
\begin{thm}\label{thmonsectors}
For every $N \in \mathfrak{N}_0$, the DM stack $\XXX_0(N)$ has the following sectors:
\begin{itemize}
	\item the untwisted sector, that we denote by $0$ and which is of age $0$;
	\item the twisted sector denoted by $1/2$, which is the unique twisted sector of age $0$;  
	\item for $0< \zeta<1$ with $\zeta\in\frac16\ZZ-\frac12\ZZ$ and for $T_6\in\mathcal K_6$, the twisted sector $(T_6,\zeta)$ which is the equivalence class of the pair $(x,\mu_6\xrightarrow{6\zeta}\mu_6)$ where $x\in T_6(\oF)$ is a point. The age of $(T_6,\zeta)$ is $\{4\zeta\} $. Here $\{\cdot\}$ denotes the fractional part of a real number.
	\item for $0< \zeta<1$ with $\zeta\in\frac14\ZZ-\frac12\ZZ$ and for $T_4\in\mathcal K_4$, the twisted sector $(T_4,\zeta)$ which is the equivalence class of the pair $(x,\mu_4\xrightarrow{4\zeta}\mu_4)$ where $x\in T_4(\oF)$ is a point. The age of $(T_4,\zeta)$ is $\{2\zeta\} $. 
\end{itemize}
\end{thm}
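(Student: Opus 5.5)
We will deduce the statement from Theorem~\ref{isomclas} together with a local description of $\XXX_0(N)$ near points with extra automorphisms. The first step is to fix, via the line bundle $\Lambda_N$, the identification of automorphism groups: for a geometric point $x=(E,\phi)$, the group $\Aut(x)$ acts faithfully on the fiber $\Lambda_N(x)=H^0(E,\Omega^1_E)$. This gives an injective character $\Aut(x)\hookrightarrow\Gm$, so $\Aut(x)\cong\mu_m$ canonically. Representability of $J_N$ gives $m\in\{2,4,6\}$. Every point has $\mu_2=\{\pm1\}\subset\Aut(x)$; in the description $\XXX_0(N)\cong\sqrt{\Lambda_N^{\rig}/\XXX_0(N)^{\rig}}$ this is exactly the generic $\mu_2$ of the gerbe. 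The loci $\mathcal K_6$ and $\mathcal K_4$ are the finitely many points lying over the stacky points of $\XXX_0(N)^{\rig}$, which have order $3$ and $2$ respectively. In Theorem~\ref{isomclas}, the divisors $K_6$ and $E_0$ give order $6$, while $K_4$ and $E_{1728}$ give order $4$.

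Next we enumerate sectors. A pair $(x,\mu_k\hookrightarrow\mu_m)$ corresponds to a subgroup together with a generator choice, i.e.\ to $\zeta=j/k\in(0,1)$ in lowest terms. The embedding is $t\mapsto t^{6\zeta}$ for $m=6$ and $t\mapsto t^{4\zeta}$ for $m=4$.
\begin{itemize}
\item For $\zeta=1/2$, the subgroup $\mu_2$ lies in every automorphism group. Since $\XXX_0(N)$ is connected and the $\mu_2$ is a flat subgroup scheme of the inertia, all points with this embedding are deformation equivalent. This gives one sector, denoted $1/2$.
\item For $\zeta\notin\frac12\ZZ$, the image of $\mu_k$ is not contained in $\mu_2$. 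Such $x$ must therefore lie in $\mathcal K_6$ or $\mathcal K_4$. Over $\oF$ these are isolated points, so deformation equivalence over a connected $F$-scheme $S$ identifies exactly the Galois-conjugate points. Hence the sectors are indexed by the closed points $T_6\in\mathcal K_6$ (resp.\ $T_4$) together with $\zeta$, as stated.
\end{itemize}
One must check that Galois conjugation preserves $\zeta$. This holds because the identification with $\mu_m$ comes from the $F$-rational line bundle $\Lambda_N$, so it is Galois-equivariant.

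For ages we compute the character by which $\Aut(x)$ acts on the one-dimensional tangent space $T_x\XXX_0(N)$. Since $-1$ acts trivially on deformations of $(E,\phi)$, the generic $\mu_2$ acts trivially on the tangent space (equivalently, the stack is a gerbe over the rigidification locally). This gives age $0$ for the untwisted sector and for $1/2$. For $x\in\mathcal K_6$ or $\mathcal K_4$ we use Kodaira--Spencer: the tangent space of $\XXX_0(N)$ at $x$ identifies with that of $\XXX_0(1)$ at $J_N(x)$, because $J_N$ is étale away from cusps and the elliptic points are not cusps. That tangent space is $H^1(E,T_E)\cong(\Lambda^{-1})^{\otimes 2}(x)=\Lambda^{\otimes -2}(x)$. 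An automorphism acting on $\Lambda(x)$ by $u$ therefore acts on the tangent space by $u^{-2}$.
\begin{itemize}
\item For $m=6$, the embedding $t\mapsto t^{6\zeta}$ composed with the tangent character gives $t\mapsto t^{-12\zeta}$. Its exponent modulo $k$, divided by $k$, is $\{-2\zeta\}$.
\item For $m=4$ the same computation gives $\{-2\zeta\}$.
\end{itemize}
We must reconcile this with the stated $\{4\zeta\}$ for $m=6$. For $\zeta\in\frac16\ZZ$ one has $\{-2\zeta\}=\{4\zeta\}$, since $6\zeta\in\ZZ$. For $m=4$ and $\zeta\in\{1/4,3/4\}$ one has $\{-2\zeta\}=\{2\zeta\}=1/2$.

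The main obstacle is the convention check in this last step: whether the identification with $\mu_m$ uses $\Lambda$ or $\Lambda^{-1}$, and whether the Kodaira--Spencer identification is $\Lambda^{-2}$ or $\Lambda^{2}$. I would verify it directly on the model $y^2=x^3+1$, where $\omega=dx/y$ and $\zeta_6$ acts by $(x,y)\mapsto(\zeta_3x,-y)$. In Weierstrass coordinates $(e,f)$ of weights $(4,6)$, the tangent direction at $e=0$ is $e$, of weight $4$ relative to $\omega$. This confirms the factor $4\zeta$, and the same model with weight $2$ handles the $j=1728$ case.
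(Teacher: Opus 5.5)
Your proof is correct, but it is organized differently from the paper's.

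\textbf{The paper's route.} It first treats $N=1$ on $\PPP(4,6)$ and reads the ages off the linearization weights: $T=\OO(10)$, so the tangent character at $[0:1]$ is $t\mapsto t^4$ relative to $\OO(1)=\Lambda$. For general $N$ it then shows, case by case, that all pairs whose image has order $2$ lie in one sector:
\begin{itemize}
\item away from $j=0,1728$ it trivializes the gerbe as $U\times B\mu_2$, using $\Pic(U)=0$;
\item at points of $\mathcal K_6$ it uses that every class in $\Pic(\mathcal U)\cong\ZZ/3\ZZ$ is a square;
\item at points of $\mathcal K_4$ it uses a local $\PPP(2,4)$ model and an étale chart.
\end{itemize}
Distinctness of the remaining sectors comes from the locally constant invariant $\gamma$ of Lemma~\ref{secgroup}. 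Their ages are transported from $\XXX_0(1)$ by Corollary~\ref{etaleage}.

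\textbf{Your route.} You replace the whole case analysis for $1/2$ by one observation: $\langle[-1]\rangle\cong\mu_2$ is a flat central subgroup of the inertia over all of $\XXX_0(N)$, cusps included. This gives a surjection from the connected stack $\XXX_0(N)$ onto the part of $\mathcal J_0\XXX_0(N)$ parametrizing injections of $\mu_2$, so that part is a single sector. For the elliptic sectors, the parts with $k\in\{3,4,6\}$ lie over the finite set $\mathcal K_4\cup\mathcal K_6$, so their connected components are Galois orbits. The value $\zeta$ is Galois-invariant because $\Lambda_N$ is defined over $F$. For the ages, the Kodaira--Spencer isomorphism $T\cong\Lambda^{-2}$ off the cusps is a relation between two characters of $\Aut(x)$. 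Hence $\{-2\zeta\}=\{4\zeta\}$ (resp.\ $\{-2\zeta\}=\{2\zeta\}$) follows once $\zeta$ is read off from the action on $\Lambda_N(x)$, which is the paper's identification. Your Weierstrass check is exactly the paper's $N=1$ computation.

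Your appeal to Theorem~\ref{isomclas} is only cosmetic: all you use is $\Aut(x)\in\{\mu_2,\mu_4,\mu_6\}$ and finiteness of $\mathcal K_4\cup\mathcal K_6$. Your route is shorter and uniform. The paper's stays inside the explicit local models it reuses in Lemmas~\ref{localmu6} and~\ref{localmu4}.

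Your worry about conventions is justified. In the general-$N$ step the paper quotes the age of the sector $j/6$ of $\XXX_0(1)$ as $\{-2j/3\}$. That corresponds to the opposite identification and contradicts both its own $N=1$ computation ($\{2\ell/3\}$) and the statement. Your derivation lands on $\{4\zeta\}$, as stated.
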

We now describe explicitly how the element $K_{\XXX_0(N),\orb}$ of $\NS_{\orb}(\XX)_{\QQ}$ plays the role of the canonical line bundle. 
We identify $\NS(\XXX_0(N))_{\QQ}$ with $\QQ$ via $\deg:\NS(\XXX_0(N))_{\QQ}\xrightarrow{\sim}\QQ$. 
\begin{cor}  
For every $N \in \mathfrak{N}_0$, one has that $K_{\XXX_0(N),\orb} \in \NS_{\orb}(\XXX_0(N))_{\QQ}$ is given by $$((-1+\varepsilon_2(N)/4+\varepsilon_3(N)/3),(\age(y)-1)_{y \in \pi_0^*J_0\XXX_0(N)})$$
where for $u=2,3$, one sets
$$\varepsilon_u(N) :=\begin{cases}
	0,&\text{ if } u^2|N\\
	\prod_{p|N}\bigg(1+ \bigg(\dfrac{u^2-7u+9}{p}\bigg)\bigg),&\text{otherwise.}
\end{cases} $$
\end{cor}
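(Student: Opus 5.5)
The plan is to unwind the definition of $K_{\XX,\orb}$ from \cite[Definition 9.1]{dardayasudabm}. There it is the element $(K_{\XX},(\age(y)-1)_{y})$ of $\NS_{\orb}(\XX)_{\QQ}$, where $K_{\XX}$ is the canonical bundle of the stack. The twisted components are then immediate from Theorem~\ref{thmonsectors}, which lists all twisted sectors and their ages. So the whole content is to show that $\deg K_{\XXX_0(N)}=-1+\varepsilon_2(N)/4+\varepsilon_3(N)/3$, with the degree normalized as in the identification $\deg:\NS(\XXX_0(N))_{\QQ}\xrightarrow{\sim}\QQ$. I would fix this normalization as the one for which the degree of a line bundle on a stack with generic stabilizer of order $2$ is $\frac12$ times its naive degree on the generic point. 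This means degrees of pullbacks from the rigidification get halved.

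The first step is to reduce to the rigidification. By Theorem~\ref{isomclas}, $\XXX_0(N)\cong\sqrt{\Lambda^{\rig}_N/\XXX_0(N)^{\rig}}$. The structure map $\rho:\XXX_0(N)\to\XXX_0(N)^{\rig}$ is a $\mu_2$-gerbe, hence étale, so $K_{\XXX_0(N)}\cong\rho^*K_{\XXX_0(N)^{\rig}}$ and $\deg K_{\XXX_0(N)}=\frac12\deg K_{\XXX_0(N)^{\rig}}$.

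The second step computes the canonical bundle of the root stack. Theorem~\ref{isomclas} describes $\XXX_0(N)^{\rig}$ as a root stack $\sqrt[n_1\doots n_r]{C_1\doots C_r/\PP^1}$ with $C_i$ reduced divisors. The standard formula for the canonical bundle of a root stack along reduced divisors (Riemann--Hurwitz for the coarse map $\pi$) gives
$$K_{\XXX_0(N)^{\rig}}=\pi^*\Big(K_{\PP^1}+\sum_i\big(1-\tfrac1{n_i}\big)C_i\Big),$$
whose degree is $-2+\sum_i(1-1/n_i)\deg C_i$. The divisors with $n_i=2$ are exactly the geometric points of $\mathcal K_4$, i.e.\ the elliptic points of order $2$ (stabilizer $\mu_4$ in the stack, $\mu_2$ after rigidifying). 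The divisors with $n_i=3$ are the points of $\mathcal K_6$, i.e.\ the elliptic points of order $3$. Thus the degree equals $-2+\frac12\varepsilon_2+\frac23\varepsilon_3$, where $\varepsilon_u$ denotes the number of geometric elliptic points of order $u$ on $X_0(N)$. Halving gives the claimed $-1+\varepsilon_2/4+\varepsilon_3/3$.

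The third step identifies this count with the stated formula, and this is where I expect the main care to be needed. The classical count (Shimura) is that $\varepsilon_2(N)=0$ if $4\mid N$ and $\prod_{p\mid N}(1+(\frac{-1}{p}))$ otherwise. Similarly, $\varepsilon_3(N)=0$ if $9\mid N$ and $\prod_{p\mid N}(1+(\frac{-3}{p}))$ otherwise. These match the statement, since $u^2-7u+9$ equals $-1$ for $u=2$ and $-3$ for $u=3$. The point to check is that the points of $\mathcal K_4$ and $\mathcal K_6$ with the extra automorphisms coincide with the classical elliptic points. In particular, points lying over $j=1728$ or $j=0$ whose isogeny is not preserved by the extra automorphism have stabilizer only $\pm1$, so they must not be counted. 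As a consistency check I would verify the formula against Theorem~\ref{isomclas} case by case: $\deg K_4=2$ and $\deg K_6=2$, $E_0$ and $E_{1728}$ have degree $1$, and, e.g., $N=13$ gives $\varepsilon_2=\varepsilon_3=2$. Finally, inserting the ages from Theorem~\ref{thmonsectors} into the twisted coordinates completes the proof.
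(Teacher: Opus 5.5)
Your proposal is correct and follows essentially the same route as the paper's proof (Lemma~\ref{degcanonical}). Both arguments pass to the rigidification via the \'etale $\mu_2$-gerbe, so that $\deg K_{\XXX_0(N)}=\frac12\deg K_{\XXX_0(N)^{\rig}}$, then apply the Riemann--Hurwitz formula for the coarse map of the stacky curve $\XXX_0(N)^{\rig}$ (the paper cites \cite[Page 58]{VZB}), and finally count the points with stabilizer $\mu_4$ and $\mu_6$ as the classical elliptic points given by $\varepsilon_2,\varepsilon_3$ (Lemmas~\ref{cmsunr} and~\ref{stacky-points-xon}).

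The only cosmetic difference is that you go through the root-stack description of Theorem~\ref{isomclas}, while the paper applies the stacky-curve formula directly from the stabilizer counts. You only use the rigidification part of Theorem~\ref{isomclas}, not the gerbe-triviality part that for $N=7$ itself relies on this degree, so there is no circularity.
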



To prove Theorem~\ref{valuesaandb}, the computations of $a(M_{\naive})$ and $b(M_{\naive})$ are divided into two cases: {\it adequate} and {\it non-adequate} ones in the sense of \cite[Definition 5.2]{dardayasudabm}.
The distinction comes from the location of the point of the intersection of $tM_{\naive}+K_{\XXX_0(N),\orb}$ with $\Eff_{\orb}(\XXX_0(N))$. The adequate cases, which are simpler to handle, are treated in Theorem~\ref{aandb}. They correspond to  $N \in \{1,2,3,4,5,6,8,9\}$, where the formula for $a(M_{\naive})$ and $b(M_{\naive})$ is deduced from \cite[Conjecture 5.6]{dardayasudabm}.

The rest of the cases correspond to non-adequate cases, which we treat by giving a full or partial description of the cone $\Eff_{\orb}(\XXX_0(N)). $ 
The case $N\in\{12, 16,18\}$ is treated by Theorem~\ref{thm:N=12,16,18}, the case $N=7$ by Theorem~\ref{thm:N=7},  the case $N\in\{10,25\}$ by Theorem~\ref{thm:N=10,25}, and the case $N=13$ by Theorem~\ref{thm:N=13}.

\subsection{Organization}
In Section~\ref{sec:prelim}, we recall cyclotomic stacks, the stacky Proj construction, root stacks, and the sectors of DM stacks.
Section~\ref{sec:modular_curve} starts with the definition and geometric properties of modular curves $\XXX_0(N)$.
After that, the naive height of $\XXX_0(N)$ is studied in Section~\ref{subsec:naive_height}, which concludes with the description of the element defining the naive height in $\NS_{\orb}(\XXX_0(N))$ (Corollary~\ref{naivforx0n}).
In Section~\ref{subsec:cano_elt}, we consider the orbifold canonical divisor $K_{\XXX_0(N),\orb}$.
In Section~\ref{subsec:adequate}, we classify the adequate cases and then prove Theorem~\ref{valuesaandb} for these cases.
Section~\ref{subsec:local_situation} considers local computations needed to understand stacky curves over $\XXX_0(N)$, which is a preparation for the descriptions of the orbifold pseudoeffective cones $\Eff_{\orb}(\XXX_0(N))$ in Section~\ref{subsec:orb_peff_cone}.
In the remaining subsections of Section~\ref{sec:non-adequate}, Theorem~\ref{valuesaandb} is proved for the non-adequate cases.
\subsection{Notation}
Let $F$ be a fixed number field. 
By $M_F$ we denote the set of places of~$F$. 
By~$M_F^{0}$, respectively by~$M_F^{\infty}$, we denote the set of finite, respectively, infinite places of~$F$. 
For a finite place~$v$ we denote by~$q_v$ the cardinality of the residue field at~$v$. 
Given a scheme~$S$, we will use $(\mu_n)_S$ to denote the finite group scheme of $n$-th roots of unity. 
If~$S=\Spec(K)$, with~$K$ a field, we may also write $(\mu_n)_K$ for $(\mu_n)_S$. 
We may drop~$S$ or~$K$ from the index when they are clear from the context. 
When~$K$ is a field having all $n$-th roots of unity, we may identify the finite group scheme $(\mu_n)_{K}$ with the Galois module $\mu_n(K)$. 
By~$\zeta_n$ we denote a (fixed) generator of~$\mu_n$. We may denote by~$i$ the element~$\zeta_4$. Given a Deligne--Mumford stack~$X$, we will denote by $|X|$ the corresponding topological space. 
\subsection{Acknowledgments} 
The authors would like to thank Mohammad Sadek for many insightful discussions and for performing some MAGMA computations.
The authors also would like to thank Santiago Arango-Pi\~neros, Runxuan Gao, Oana Padurariu, Sun Woo Park, Tim Santens, and Takehiko Yasuda for helpful comments and conversations.
The first author has received a funding from the European Union’s Horizon 2023 research and innovation programme under the Maria Skłodowska-Curie grant agreement 101149785. The second author is supported by Korea University grants and the National Research Foundation of Korea(NRF) grant funded by the Korea government(MSIT) (RS-2025-24535254).

\section{Preliminaries}\label{sec:prelim}

\subsection{Cyclotomic stacks and the stacky Proj construction}\label{sec:stacky_proj}
Let us recall the stacky Proj construction; see \cite[Example 10.2.8]{Olsson16} or \cite[\S 2]{AHPP25} for details. 
Given a graded ring $R = \bigoplus_{d \geq 0} R_d$, the grading specifies a $\Gm$-action on $\Spec R$. 
This action fixes the point $V(R_+)$ corresponding to the irrelevant ideal $R_+ \colonequals \bigoplus_{d \ge 0} R_d$; when $R_0$ is a field or a ring of integers, we abbreviate $V(R_+)$ as the $\Spec R_0$-point $0$ in $\Spec R$. 
Then the {\it stacky Proj} of $R$ is the quotient stack
\[\sProj R \colonequals [(\Spec R - V(R_+))/\Gm].\] 
Its coarse moduli space is $\Proj R$, the usual Proj of $R$.
On $\sProj R$, the line bundle $\OO_{\sProj R}(1)$ comes from the associated classifying morphism $\sProj R \to B\Gm$ corresponding to the $\Gm$-torsor 
\[(\Spec R - V(R_+)) \to \sProj R.\]
If $R$ is a finitely generated graded $F$-algebra with $R_0=F$, then $\OO_{\sProj R}(1)$ is ample, i.e. there exists $m > 0$ such that $\OO_{\sProj R}(m)$ is isomorphic to the pullback of an ample line bundle on a quasiprojective $F$-variety $\Proj R$.

\begin{ex}\label{ex:wtproj_stack} 
	Suppose that a given graded ring $R$ is free and finitely generated in positive degrees, i.e. $R = \ZZ[x_0,\dotsc,x_n]$ where for every $i$, $x_i \in R_{w_i}$ for some $w_i >0$.
	Then $\mathcal{P}_\ZZ(w_0,\dotsc,w_n) \colonequals \sProj R$ the weighted projective stack over $\Spec \ZZ$.
	Using the definition of $\sProj$, $\mathcal{P}_\ZZ(w_0,\dotsc,w_n)=[(\AAA^{n+1}_\ZZ - \{0\})/\Gm]$, where $\Gm$-action on $\AAA^{n+1}_\ZZ - \{0\}$ is given by $t \cdot (x_0,\dotsc,x_n) = (t^{w_0}x_0,\dotsc,t^{w_n}x_n)$.
	Given a base scheme $S$, $\mathcal{P}_S(w_0,\dotsc,w_n)$ is the pullback of $\mathcal{P}_\ZZ(w_0,\dotsc,w_n)$ on $S$; when there is no ambiguity on $S$, we abbreviate $\mathcal{P}_S(w_0,\dotsc,w_n)$ as $\mathcal{P}(w_0,\dotsc,w_n)$.
\end{ex}


\medskip

Given a $\QQ$-stack $\mathcal X$, \cite[Lemma 2.7]{AHPP25} explains necessary and sufficient conditions for $\mathcal X$ to be a stacky Proj, which involves definitions from \cite[\S 2]{stable_with_twist}; so let us summarize those definitions over $\Spec F$.
A separated $F$-stack $\mathcal X$ of locally finitely presentation over $\Spec F$ is called a {\it cyclotomic stack} if for every geometric point $\overline{x}$ of $X$, the stabilizer group space $\Aut_{\mathcal X}(\overline{x})$ (or $\Aut(\overline{x})$ for short) is isomorphic to $(\mu_n)_{\kappa(\overline{x})}$ for some $n \ge 1$.
A line bundle $\mathcal{L}$ on a cyclotomic $F$-stack $\mathcal X$ is {\it uniformizing} if the associated classifying morphism $\mathcal X \to B\Gm$ is representable; equivalently, for every geometric point $\overline{x}$ of $\mathcal X$, $\Aut(\overline{x})$-action on $\mathcal{L}_{\overline x}$ is faithful (c.f. \cite[Proposition 2.3.10]{stable_with_twist}).
Finally, a uniformizing line bundle $\mathcal L$ on a proper cyclotomic $F$-stack $\mathcal X$ is {\it polarizing} if there exists $m \ge 1$ such that $\mathcal L^m \cong c^*M$, where $c : \mathcal X \to X$ is the coarse moduli map and $M$ is an ample line bundle on $X$. A mild generalization of \cite[Lemma 2.7]{AHPP25} is as follows:

\begin{lem}\label{lem:stacky_proj}
	Suppose that $\mathcal X$ is a proper geometrically connected cyclotomic $F$-stack with a polarizing line bundle $\mathcal L$. Then $\mathcal X \cong \sProj R_{\mathcal L}$, where $R_{\mathcal L} \colonequals \oplus_{n \ge 0} H^0(\mathcal L^n)$ is called the section ring of $\mathcal L$.
\end{lem}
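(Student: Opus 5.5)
The plan is to build a morphism $\mathcal X\to\sProj R_{\mathcal L}$ from the $\Gm$-torsor of $\mathcal L$ and then prove it is an isomorphism by comparing with the coarse space. Let $P\to\mathcal X$ be the total space of $\mathcal L^{\vee}$ with its zero section removed, i.e.\ the $\Gm$-torsor associated to $\mathcal L$. Since $\mathcal L$ is uniformizing, the classifying morphism $\mathcal X\to B\Gm$ is representable. Hence $P=\mathcal X\times_{B\Gm}\Spec F$ is an algebraic space. Because $\mathcal X$ is proper with finite (cyclotomic) stabilizers, $P$ is separated and of finite type.

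Next we identify global functions on $P$. Decomposing under the $\Gm$-action gives
\[
H^0(P,\OO_P)=\bigoplus_{n\in\ZZ}H^0(\mathcal X,\mathcal L^n).
\]
Since $\mathcal L^m\cong c^*M$ with $M$ ample on the proper connected coarse space $X$, the negative-degree pieces vanish. The degree-zero piece equals $H^0(X,\OO_X)$, which is a field, and after geometric connectedness it is $F$. So $H^0(P,\OO_P)=R_{\mathcal L}$. We get a $\Gm$-equivariant morphism $P\to\Spec R_{\mathcal L}$, and the key claim is that it is an open immersion with image $\Spec R_{\mathcal L}-V(R_{\mathcal L,+})$. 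Passing to $\Gm$-quotients then gives $\mathcal X\cong\sProj R_{\mathcal L}$.

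To prove the key claim, first use finite generation. The Veronese subring $R^{(m)}=\bigoplus_k H^0(X,M^k\otimes c_*\OO)=\bigoplus_k H^0(X,M^k)$ (using $c_*\OO_{\mathcal X}=\OO_X$) is finitely generated, and $R_{\mathcal L}$ is finite over it. Finiteness holds because each $c_*\mathcal L^i$ for $0\le i<m$ is coherent, so $\bigoplus_k H^0(X,M^k\otimes c_*\mathcal L^i)$ is a finite $R^{(m)}$-module by Serre. Next observe that $P/\mu_m$ is the $\Gm$-torsor of $\mathcal L^m=c^*M$. On this torsor the map factors through the affine cone over $X$ with respect to $M$ (after replacing $M$ by a power, we may take it very ample with projectively normal section ring). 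That map is an open immersion onto the punctured cone, i.e.\ the classical statement $X\cong\Proj R^{(m)}$.

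Finally we lift from the $\mu_m$-quotient to $P$ itself. Locally on $X$ this is the étale-local model $\mathcal X\simeq[U/\mu_n]$, with $\mathcal L$ given by a faithful character. This is the same reduction used in \cite[Lemma 2.7]{AHPP25}, whose argument we follow. We compare $P$ with $\Spec R_{\mathcal L}\times_{\Spec R^{(m)}}(\text{punctured cone})$, using that $P\to P/\mu_m$ is finite and that $\Spec R_{\mathcal L}\to\Spec R^{(m)}$ is finite. On the punctured loci both are normalized by the same local sections of $\mathcal L^i$. To check this, use affineness of $P$ over affine opens of $X$, which follows from representability and the local quotient description, and show that $R_{\mathcal L}$ localized at a homogeneous element $s$ of positive degree equals $H^0(P_s,\OO)$. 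This holds because $P_s$ is affine, and the identity is the usual $\Gamma_*$ computation. From this, $P=\bigcup_s P_s\to\bigcup_s D(s)$ is an isomorphism, which is the key claim.

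The main obstacle is this last step: showing that the affine pieces $P_s$ cover $P$ and match $D(s)$ exactly. That is where the polarizing and uniformizing hypotheses are used. Generalizing the base from $\QQ$ to $F$ is only a matter of notation.
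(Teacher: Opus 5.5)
Your proposal is correct and takes essentially the same route as the paper, whose proof is simply the citation of \cite[Lemma 2.7]{AHPP25} with $\QQ$ replaced by $F$. Your torsor argument fleshes out that standard proof: uniformizing makes $P$ an algebraic space affine over $X$, and $H^0(P_s,\OO_P)=(R_{\mathcal L})_s$ for homogeneous $s$ of positive degree.

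One slip is harmless. The claim $H^0(P,\OO_P)=R_{\mathcal L}$ fails when $\dim\mathcal X=0$: for $\mathcal X=B\mu_n$ one gets $F[u^{\pm1}]$ rather than $F[u]$. Your final localization step absorbs the negative degrees anyway, and for the same reason the detour through $P/\mu_m$ and projective normality is not needed.
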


\begin{proof}
	The proof of \cite[Lemma 2.7]{AHPP25} follows when $\QQ$ is replaced by $F$. 
\end{proof}

\begin{rem}\label{rem:closed_embed_wtproj_stack}
	Suppose that $\mathcal X$ satisfies the conditions of Lemma~\ref{lem:stacky_proj} and the graded $F$-algebra $R_\mathcal{L}$ admits a graded resolution by a free graded $F$-algebra $R$ generated in positive degrees.
	Then the graded resolution $R \twoheadrightarrow R_\mathcal{L}$ induces a closed embedding of $\mathcal X$, induced by $\mathcal L$ into a weighted projective stack $\mathcal{P}(w_0,\dotsc,w_n) \cong \sProj R$ for appropriate $w_0,\dotsc,w_n >0$; in other words, the pullback of $\OO_{\mathcal{P}(w_0,\dotsc,w_n)}(1)$ on $\mathcal X$ is $\mathcal L$.
	
	If the generic stabilizer of $\mathcal X$ is $\mu_d$, then this closed embedding must factor through a coordinate substack $\mathcal{P}(w_{j_0},\dotsc,w_{j_m})$, where $\{j_0,\dotsc,j_m\} \subset \{0,\dotsc,n\}$ such that $d$ divides $w_{j_i}$ for every $0 \le i \le m$.
	As a result, $R_{\mathcal L}$ is $d\NN$-graded.
\end{rem}
In \cite[Appendix A]{Abramovich_Olsson_Vistoli}, the notion of \textit{rigidification} of a stack is introduced, which has many applications in the theory of algebraic stacks.
Given a geometrically connected cyclotomic $F$-stack $\mathcal X$, the inertia stack $\mathcal I_{\mathcal X} \to \mathcal X$ (called $\mathcal I \mathcal X$ in loc. cit.) is a proper subgroup $\mathcal X$-scheme of $(\Gm)_{\mathcal X}$. 
So the generic stabilizer group scheme of $\mathcal X$ extends to a flat subgroup $\mathcal X$-scheme $(\mu_n)_{\mathcal X}$ for some $n \ge 1$.
Then for every $\ell \ge 1$ that divides $n$, \cite[Theorem A.1]{Abramovich_Olsson_Vistoli} implies that the \textit{$\mu_\ell$-rigidification} $\mathcal X \thickslash (\mu_\ell)_{\mathcal X}$ (or $\mathcal X \thickslash \mu_\ell$ for short) is the stackification of the prestack (as a fibered category), whose objects are objects of $\mathcal X$ and whose morphisms are $\mu_\ell$-equivalence classes of morphisms of $\mathcal X$.
In other words, when $\mathcal X \cong \sProj \oplus_{d \in n\NN}  R_d$, then $\mathcal X \thickslash \mu_\ell \cong \sProj \oplus_{d \in \frac{n}{\ell}\NN} R_{d \ell}$ when $\ell$ divides $n$.

\begin{rem}\label{rem:...}
	When $\mathcal X$ is a cyclotomic $F$-stack with the generic stabilizer $\mu_n$, then we define \textit{rigidification} $\mathcal X^{\rm rig}$ of $\mathcal X$ to be $\mathcal X \thickslash \mu_n$. 
	In this case, if $\mathcal L$ is a uniformizing line bundle on $\mathcal X$, then let $\mathcal L^{\rig}$ be a line bundle on $\mathcal X^{\rig}$ where the pullback of $\mathcal L^{\rig}$ by the rigidification $\mathcal X \to \mathcal X^{\rig}$ is isomorphic to $\mathcal L^n$.
	Then $\mathcal L^{\rig}$ is a uniformizing line bundle on $\XX^{\rig}$, which is also polarizing if $\mathcal L$ is a polarizing line bundle on $\XX$.
\end{rem}

\subsection{Root stacks}\label{subsec:root_stacks}

Here, we summarize the notion of root stacks to describe several classes of cyclotomic $F$-stacks as root stacks. The definitions are from \cite[Appendix B]{AGV08} (also independently developed by \cite{impose_tangency}), but because the definition is local on lisse-\'etale sites, we will state the construction on stacks.

Given an algebraic $F$-stack $\XX$ and a line bundle $\mathcal L$, the {\it $n^{\rm th}$ root stack} of $\mathcal L$ on $\XX$ is defined as the fiber product $\sqrt[n]{\mathcal L/\XX} \colonequals \XX \times_{B\Gm} B\Gm$ of $\XX \to B\Gm$ corresponding to $\mathcal L$ and the $n^{\rm th}$-power morphism $B\Gm \to B\Gm$. 
Then for every $F$-scheme $T$, any $T$-point of $\sqrt[n]{\mathcal L/\XX}$ is given by a triple $(x,M,\varphi)$ where $x \colon T \to \XX$ is an $F$-morphism, $M$ is a line bundle on $T$, and $\varphi \colon M^n \to x^*\mathcal L$ is an isomorphism.
Observe that $\sqrt[n]{\mathcal L/\XX}$ is a $\mu_n$-gerbe over $\XX$.

Given an effective Cartier divisor $\mathcal D$ on $\XX$, pick a global section $s_{\mathcal D}$ of $\OO_{\XX}(\mathcal D)$ that cuts out $\mathcal D$. 
Then the {\it $n^{\rm th}$ root stack} of $\mathcal D$ on $\XX$ is defined as the fiber product $\sqrt[n]{\mathcal D/\XX} \colonequals \XX \times_{[\AAA^1/\Gm]} [\AAA^1/\Gm]$ of $\XX \to [\AAA^1/\Gm]$ corresponding to $(\mathcal L,s_{\mathcal D})$ and the $n^{\rm th}$-power morphism $[\AAA^1/\Gm] \to [\AAA^1/\Gm]$; 
the isomorphism class of $\sqrt[n]{\mathcal L/\XX}$ is independent of the choice of $s_{\mathcal D}$.
Similarly, given a fixed choice $s_{\mathcal D}$, a $T$-object of $\sqrt[n]{\mathcal D/\XX}$ is instead a quadruple $(x,M,u,\varphi)$ where $u \in H^0(M)$ and $\varphi(u^n)=x^*s_{\mathcal D}$.
Finally, when $\mathcal D_1, \dotsc, \mathcal D_m$ are mutually disjoint effective Cartier divisors of $\XX$, then $\sqrt[n_1,\dotsc,n_m]{\mathcal D_1,\dotsc,\mathcal D_m/\XX}$ is the iterative root stack $\sqrt[n_1]{\mathcal D_1/\sqrt[n_2]{\mathcal D_2/\dotsb /\sqrt[n_m]{\mathcal D_m/\XX}}}$; note that the isomorphism class of this stack remains the same under permutation of $(n_1,\mathcal D_1),\dotsc, (n_m,\mathcal D_m)$.
Because our stacks of interests are DM stacks of dimension one, the mutually disjoint condition of divisors $\mathcal D_1, \dotsc, \mathcal D_m$ is not restrictive.

\medskip

Let's relate root stack construction to rigidification of cyclotomic $F$-stacks:

\begin{lem}\label{lem:root_stack_of_rigidification}
	Let $\XX$ be a cyclotomic $F$-stack with the generic stabilizer $\mu_n$, and let $\mathcal L$ be a uniformizing line bundle on $\XX$.
	Then $\mathcal L$ induces an isomorphism $\XX \xrightarrow{\sim} \sqrt[n]{\mathcal L^{\rig}/\XX^{\rig}}$.
\end{lem}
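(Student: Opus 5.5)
We construct a morphism $\Phi\colon \XX \to \sqrt[n]{\mathcal L^{\rig}/\XX^{\rig}}$ from the modular description of the root stack, and then check that it is an isomorphism.

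\emph{Construction of $\Phi$.} Let $\rho\colon\XX\to\XX^{\rig}$ be the rigidification map. By definition of $\mathcal L^{\rig}$ (Remark~\ref{rem:...}) there is an isomorphism $\varphi\colon \mathcal L^{n}\xrightarrow{\sim}\rho^*\mathcal L^{\rig}$. A $T$-point of $\sqrt[n]{\mathcal L^{\rig}/\XX^{\rig}}$ is a triple $(y,M,\psi)$ with $\psi\colon M^n\cong y^*\mathcal L^{\rig}$. So we send $x\in\XX(T)$ to
\[
\Phi(x) \colonequals \bigl(\rho(x),\ x^*\mathcal L,\ x^*\varphi\bigr).
\]
This is functorial, and it defines $\Phi$ as a morphism over $\XX^{\rig}$.

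\emph{$\Phi$ is an isomorphism.} Both stacks are $\mu_n$-gerbes over $\XX^{\rig}$:
\begin{itemize}
\item $\XX\to\XX^{\rig}$ is a $\mu_n$-gerbe by \cite[Theorem A.1]{Abramovich_Olsson_Vistoli}, since we rigidify the whole generic stabilizer $(\mu_n)_{\XX}$, which is a flat subgroup of the inertia;
\item the root stack is a $\mu_n$-gerbe, as noted in \S\ref{subsec:root_stacks}.
\end{itemize}
A morphism of gerbes over the same base is an isomorphism as soon as it induces an isomorphism on the banding groups. We therefore only need to check that $\Phi$ induces an isomorphism between the relative automorphism groups $(\mu_n)_{\XX}\subset\mathcal I_{\XX}$ and the $\mu_n$ acting on $M$ in the root stack. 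This check is local in the smooth/fppf topology on $\XX^{\rig}$.

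\emph{The banding check (main step).} For $\zeta\in\mu_n$ acting as a generic automorphism of $x$, the induced automorphism of the triple $\Phi(x)$ is multiplication by $\chi(\zeta)$ on $x^*\mathcal L$. Here $\chi\colon\mu_n\to\Gm$ is the character through which the generic stabilizer acts on $\mathcal L$.
\begin{itemize}
\item Because $\mathcal L$ is uniformizing, this action is faithful at every geometric point. A faithful character of $\mu_n$ is $\zeta\mapsto\zeta^k$ with $k$ prime to $n$, so $\chi$ is an automorphism of $\mu_n$.
\item The automorphism lands in $\mu_n$, not merely in $\Gm$, because it must preserve $\psi=x^*\varphi$. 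This is automatic, since $\mu_n$ acts trivially on $\rho^*\mathcal L^{\rig}$.
\end{itemize}
Hence the map on bands is an isomorphism. Equivalently, $\Phi$ is representable (by uniformizability of $\mathcal L$) and bijective on automorphism groups relative to $\XX^{\rig}$.

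\emph{Essential surjectivity.} Any two objects of the root stack over the same $y\in\XX^{\rig}(T)$ are fppf-locally isomorphic, since fppf-locally all $n$-th roots of $y^*\mathcal L^{\rig}$ agree. The object $y$ also lifts fppf-locally to $\XX$ because $\rho$ is a gerbe. So $\Phi$ is locally essentially surjective, and together with full faithfulness it is an isomorphism of stacks.

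\emph{Alternative route.} A more concrete check uses the stacky Proj description. Write $\XX\cong\sProj R$ with $R$ being $n\NN$-graded (Lemma~\ref{lem:stacky_proj} and Remark~\ref{rem:closed_embed_wtproj_stack}). Then $\XX^{\rig}\cong\sProj R^{(n)}$, with the grading rescaled by $1/n$, and the root stack of $\OO(1)$ is presented by the same $\Gm$-torsor with the $\Gm$-action composed with the $n$-th power map. This gives a quotient presentation matching $\XX=[(\Spec R-0)/\Gm]$.

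The main obstacle is ensuring that the induced character on the band is exactly an automorphism of $\mu_n$. This is where the uniformizing hypothesis is used, and where one must keep track of which $\mu_n$ is killed in $\mathcal L^{\rig}$.
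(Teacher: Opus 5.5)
Your proposal is correct and follows essentially the same route as the paper. The paper also obtains a morphism of $\mu_n$-gerbes over $\XX^{\rig}$ from the isomorphism $\mathcal L^{n}\cong\rho^*\mathcal L^{\rig}$, and shows it is an isomorphism by applying \cite[Lemma 12.2.4]{Olsson16} on a smooth cover and descending. Your explicit check that the induced map on bands is the faithful character $\chi$ (using that $\mathcal L$ is uniformizing) spells out what the paper leaves implicit when it asserts that $h$ is a morphism of $\mu_n$-gerbes.
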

\begin{proof}
	Observe that the rigidification morphism $\pi \colon \XX \to \XX^{\rig}$ is a $\mu_n$-gerbe by definition.
	Because $\pi^*\mathcal L^{\rig} \cong \mathcal L^n$, this isomorphism induces a morphism $h \colon \XX \to \sqrt[n]{\mathcal L^{\rig}/\XX^{\rig}}$ of $\mu_n$-gerbes over $\XX^{\rig}$.
	Then for every smooth $F$-covering $U \to \XX^{\rig}$ by an $F$-scheme $U$, the pullback $h_U \colon \XX \times_{\XX^{\rig}} U \to \sqrt[n]{\mathcal L^{\rig}/\XX^{\rig}} \times_{\XX^{\rig}} U$ is an isomorphism by \cite[Lemma 12.2.4]{Olsson16}. 
	Therefore, $h$ is also an isomorphism by descent.
\end{proof}

\begin{cor}\label{cor:rigid-by-pol-LB_base-change}
	Let $\XX$ and $\mathcal Y$ be geometrically irreducible cyclotomic $F$-stacks with generic stabilizers $\mu_n$.
	Suppose that $f \colon \XX \to \mathcal Y$ is a representable dominant morphism that induces isomorphism of generic stabilizers.
	Let $f^{\rig} \colon \XX^{\rig} \to \mathcal Y^{\rig}$ be the induced morphism of rigidifications.
	If $\mathcal L$ is a uniformizing line bundle on $\mathcal Y$, then the following diagram is Cartesian:
	\[\begin{tikzcd}
		\XX \arrow[r, "f"] \arrow[d, "\pi_{\XX}"']
		& \mathcal{Y} \arrow[d, "\pi_{\mathcal{Y}}"] \\
		\mathcal{X}^{\rig} \arrow[r, "f^{\rig}"']
		& \mathcal{Y}^{\rig}
	\end{tikzcd}\]
\end{cor}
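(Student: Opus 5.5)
The plan is to reduce to Lemma~\ref{lem:root_stack_of_rigidification} on both sides and check that the root-stack description is compatible with base change along $f^{\rig}$. Put $\mathcal L_{\XX}:=f^*\mathcal L$. First I check that $\mathcal L_{\XX}$ is uniformizing on $\XX$. Since $f$ is representable, for every geometric point $\overline x$ of $\XX$ the map $\Aut_{\XX}(\overline x)\to\Aut_{\mathcal Y}(f(\overline x))$ is injective. Moreover $\Aut_{\XX}(\overline x)$ acts on $(f^*\mathcal L)_{\overline x}=\mathcal L_{f(\overline x)}$ through this injection, and $\Aut_{\mathcal Y}(f(\overline x))$ acts faithfully on $\mathcal L_{f(\overline x)}$. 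Hence the action of $\Aut_{\XX}(\overline x)$ is faithful, so $\mathcal L_{\XX}$ is uniformizing.

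Next I compare the rigidified bundles. By hypothesis $f$ identifies generic stabilizers, and both are $\mu_n$, so the $\mu_n$-rigidifications are compatible and $f^{\rig}$ exists by the universal property of rigidification \cite[Theorem A.1]{Abramovich_Olsson_Vistoli}. The key claim is $(f^{\rig})^*\mathcal L^{\rig}\cong \mathcal L_{\XX}^{\rig}$. To see this, note that both bundles pull back along $\pi_{\XX}$ to $f^*\mathcal L^{n}=\mathcal L_{\XX}^n$, using $\pi_{\mathcal Y}\circ f=f^{\rig}\circ\pi_{\XX}$. The $\mu_n$-linearizations also agree, since $\mu_n\subset$ inertia acts trivially on $n$-th powers. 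Line bundles on $\XX^{\rig}$ are exactly line bundles on $\XX$ on which the generic $\mu_n$ acts trivially, with compatible descent data, so the two bundles descend to isomorphic bundles on $\XX^{\rig}$. Equivalently, one can define $\mathcal L^{\rig}_{\XX}$ as $(f^{\rig})^*\mathcal L^{\rig}$ and verify the defining property directly.

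Now apply Lemma~\ref{lem:root_stack_of_rigidification} twice, to obtain $\XX\cong\sqrt[n]{\mathcal L_{\XX}^{\rig}/\XX^{\rig}}$ and $\mathcal Y\cong\sqrt[n]{\mathcal L^{\rig}/\mathcal Y^{\rig}}$. By definition root stacks are fibre products $(-)\times_{B\Gm,\,(\cdot)^n}B\Gm$, and they commute with base change. This gives
\[\sqrt[n]{(f^{\rig})^*\mathcal L^{\rig}/\XX^{\rig}}\cong \XX^{\rig}\times_{\mathcal Y^{\rig}}\sqrt[n]{\mathcal L^{\rig}/\mathcal Y^{\rig}}\]
by pasting Cartesian squares, since the classifying map $\XX^{\rig}\to B\Gm$ of $(f^{\rig})^*\mathcal L^{\rig}$ factors through $\mathcal Y^{\rig}$. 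Combining this with the previous step gives $\XX\cong \XX^{\rig}\times_{\mathcal Y^{\rig}}\mathcal Y$.

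Finally, I must check that this isomorphism is the canonical map $(\pi_{\XX},f)$. Both the map in Lemma~\ref{lem:root_stack_of_rigidification} and the pasted square send an object $x$ to the triple $(\pi(x),\mathcal L_x,\mathcal L_x^n\cong\pi^*\mathcal L^{\rig})$. So it suffices to see that the isomorphism $f^*$ of $\mathcal L$-data respects these triples, which is immediate from naturality. Alternatively, $(\pi_{\XX},f)\colon\XX\to\XX^{\rig}\times_{\mathcal Y^{\rig}}\mathcal Y$ is a morphism of $\mu_n$-gerbes over $\XX^{\rig}$, hence an isomorphism by \cite[Lemma 12.2.4]{Olsson16} and descent, exactly as in the proof of the lemma. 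This alternative is probably cleaner. The main obstacle is the gerbe-compatibility: the induced map on the banding $\mu_n$ must be the identity. This is precisely where the hypothesis that $f$ induces an isomorphism of generic stabilizers is used.
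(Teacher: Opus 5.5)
Your proposal is correct and follows essentially the same route as the paper: apply Lemma~\ref{lem:root_stack_of_rigidification} to $\mathcal Y$ and to $\XX$ (with $f^*\mathcal L$, which is uniformizing because $f$ is representable), identify $(f^*\mathcal L)^{\rig}\cong (f^{\rig})^*\mathcal L^{\rig}$, and use that root stacks commute with base change. Your additional check fills a point the paper leaves implicit: Cartesianness requires that the canonical comparison map $\XX\to\XX^{\rig}\times_{\mathcal Y^{\rig}}\mathcal Y$ be an equivalence, and you verify this directly or via the morphism-of-$\mu_n$-gerbes argument.
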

\begin{proof}
	Observe that $f^{\rig}$ is dominant and representable.
	By Lemma~\ref{lem:root_stack_of_rigidification}, $\mathcal Y \cong \sqrt[n]{\mathcal L^{\rig}/\mathcal Y^{\rig}}$ and $\pi_{\mathcal Y}$ is a $\mu_n$-gerbe.
	As $f^*\mathcal L$ is a uniformizing line bundle on $\XX$, $\pi_\XX$ is a $\mu_n$-gerbe with $\XX \cong \sqrt[n]{(f^*\mathcal L)^{\rig}/\XX^{\rig}}$ by Lemma~\ref{lem:root_stack_of_rigidification} again.
	Note that $(f^*\mathcal L)^{\rig} \cong (f^{\rig})^*\mathcal L^{\rig}$, which implies that $\XX \cong \sqrt[n]{(f^{\rig})^*\mathcal L^{\rig}/\XX^{\rig}}$.
	The formation of root stack commutes with base change, so $\XX \cong \XX^{\rig} \times_{\mathcal Y^{\rig}} \sqrt[n]{\mathcal L^{\rig}/\mathcal Y^{\rig}} \cong \XX^{\rig} \times_{\mathcal Y^{\rig}} \mathcal Y$.
\end{proof}

\subsection{Sectors of some Deligne--Mumford curves}
\subsubsection{}We start by recalling definition of sectors and their ages from \cite{dardayasudabm}. Let~$X$ be a finite type smooth Deligne--Mumford stack over~$X$.
\begin{mydef}
We denote by $\mathcal J_0X$ its cyclotomic inertia stack, which parametrizes pairs $(x,\mu_k\hookrightarrow\Aut(x))$ for some $k\geq 1$. A connected component of~$\mathcal J_0X$ is called a sector of~$X$. The connected component corresponding to $k=1$ is called the untwisted sector, while the other components are called twisted sectors.
\end{mydef}
\begin{mydef} Let $d\geq 1$ be an integer. 	Let~$\rho$ be a $d$-dimensional representation of a cyclic group~$\mu_n$, where $n\geq 1$. There exists unique integers $a_1\doots a_n\geq 1$ such that $\rho\cong \tau^{a_1}\oplus\cdots \tau ^{a_d}$, with~$\tau$ given by the embedding $\mu_n\hookrightarrow F^{\times}, \zeta_n\mapsto \zeta_n.$ We define a number $$\age(\rho):=\frac{1}{n}\sum_{i=1}^d a_i.$$
\end{mydef}
\begin{mydef}
	Consider a vector bundle~$V$ on~$X$. Given an algebraically closed $\overline F$-field~$K$ an
	 $x\in (\mathcal J_0X)(K)$ corresponding to $(x',\mu_{n}\hookrightarrow\Aut(x'))$, we denote by~$\rho(x)$ the induced action of $\mu_n$ on~$V(x)$. We define $$|\mathcal J_0X|\to \QQ_{\geq 0},\hspace{1cm}x\mapsto \age(\rho(x)).$$ This by \cite[Lemma 2.22]{dardayasudabm}, induces a function$$\age_V:\pi_0\mathcal J_0X\to\QQ_{\geq 0}.$$
	If~$V=T_X$, we write $\age$ for $\age_{T_X}$.
\end{mydef}
\subsubsection{}The following language is convenient when working with functorial properties.  Let us set $$\widehat{\mu}=\varprojlim_n\mu_n$$
where the transition maps are given by $k|m$ then $\mu_m\to\mu_k$ is given by $x\mapsto x^{m/k}$. One has that~$\widehat{\mu}$ has a structure of a group scheme. By \cite[Proposition 2.11]{dardayasudabm}, one has that the stack $\mathcal J_0X$ is canonically equivalent with the stack parametrizing the pairs $(x, \widehat{\mu}\to \Aut(x))$ of a point~$x$ of~$X$ and a homomorphism $\widehat{\mu}\to \Aut(x)$. With this description it is immediate \cite[Corollary 2.14]{dardayasudabm}, that given a morphism $f:Y\to X$ of smooth finite-type Deligne--Mumford stacks, we obtain an induced pointed map $\pi_0\mathcal J_0Y\to\pi_0\mathcal J_0X$.
\begin{mydef}\label{defofagelimproj} 
	Let $k$ be an algebraically closed field over $\overline F$. 
	A finite dimensional $k$-representation of $\widehat{\mu}$ is an $k$-homomorphism $\rho:\widehat{\mu}\to \GL_r$, for certain $r\geq 1$ called the dimension of representation, which factorizes through the canonical homomorphism $\widehat{\mu}\to\mu_{\ell}$ for some $\ell\geq 1$. 
	Let $\rho_{\ell}:\mu_{\ell}\to \GL_r$  be the induced representation. We define $$\age(\rho):=\age(\rho_{\ell}).$$
\end{mydef}
\begin{lem}Let $\rho$ be an $r$-dimensional $k$-representation. One has that $\age(\rho)$ does not depend on the choice of~$\ell$.
\end{lem}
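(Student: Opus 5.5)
The plan is to show that if $\ell$ and $\ell'$ both work, each of them gives the same age as a common multiple. Suppose $\rho$ factors through both $\widehat{\mu}\to\mu_\ell$ and $\widehat{\mu}\to\mu_{\ell'}$. Put $m=\lcm(\ell,\ell')$. Since $\widehat{\mu}\to\mu_\ell$ factors as $\widehat{\mu}\to\mu_m\to\mu_\ell$, with the second map $x\mapsto x^{m/\ell}$, we have $\rho_m=\rho_\ell\circ(x\mapsto x^{m/\ell})$. Here we use that $\widehat{\mu}\to\mu_m$ is an epimorphism, so the induced representation of $\mu_m$ is uniquely determined. By symmetry, it then suffices to prove one claim: if $\ell\mid m$ and $\rho_m=\rho_\ell\circ p$, where $p\colon\mu_m\to\mu_\ell$ is the $(m/\ell)$-th power map, then $\age(\rho_m)=\age(\rho_\ell)$.

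To prove the claim, decompose $\rho_\ell\cong\tau_\ell^{a_1}\oplus\dots\oplus\tau_\ell^{a_r}$ with $a_i\in\{0,\dots,\ell-1\}$. This is possible because $\mu_\ell$ is diagonalizable over the algebraically closed field $k$, and its characters are the powers of $\tau_\ell$. Composing with $p$ turns each character $\tau_\ell^{a_i}$ into $\tau_m^{a_i m/\ell}$, because $\tau_\ell(x^{m/\ell})=\tau_m(x)^{m/\ell}$. The exponents satisfy $0\le a_i m/\ell\le m-1$, so they are already reduced exponents in $\{0,\dots,m-1\}$ and require no further reduction. Therefore
\[
\age(\rho_m)=\frac1m\sum_i a_i\frac{m}{\ell}=\frac1\ell\sum_i a_i=\age(\rho_\ell).
\]

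There is no serious obstacle. The one point needing care is making sure each exponent lies in the normalized range $[0,m-1]$ before the age is computed. Since $a_i\le \ell-1$, we get $a_im/\ell\le m-m/\ell\le m-1$, which settles it. The lemma then follows because $\age(\rho_\ell)=\age(\rho_m)=\age(\rho_{\ell'})$.
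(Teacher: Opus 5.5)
Your proof is correct and is essentially the paper's argument: both rest on the computation that pulling back $\tau_\ell^{a_i}$ along the $(m/\ell)$-th power map $\mu_m\to\mu_\ell$ gives $\tau_m^{a_i m/\ell}$, so the age is unchanged. The only difference is how you reduce to the divisible case. The paper fixes the minimal admissible $\ell$ and asserts without proof that it divides every other admissible $\ell_0$. You instead pass through $\lcm(\ell,\ell')$, which needs only the transition maps of $\widehat{\mu}$ and the surjectivity of $\widehat{\mu}\to\mu_m$, and you also check explicitly that the rescaled exponents stay in $[0,m-1]$.
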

\begin{proof}Choose~$\ell$ as in Definition~\ref{defofagelimproj} minimal possible.. Suppose that $\ell_0$ is such that $\rho$ factorizes through $\widehat{\mu}\to\mu_{\ell_0}$. Then $\ell|\ell_0$. We compare $\age(\rho_{\ell_0})$ and $\age(\rho_{\ell})$. Let $\tau_{\ell}:\mu_{\ell}\hookrightarrow\GL_1$ and $\tau_{\ell_0}:\mu_{\ell}\hookrightarrow\GL_1$ be given by the inclusions. Write $\rho_{\ell}$ as $$\rho_{\ell}\cong\bigoplus_{i=1}^r\tau_{\ell}^{a_i}.$$ Then $$\rho_{\ell_0}\cong \bigoplus\tau_{\ell_0}^{a_i\cdot (\ell_0/\ell)}.$$
	We have that $$\age(\rho_{\ell})=\frac{1}{\ell}\sum_{i=1}^r a_i=\frac{1}{\ell_0}\sum_{i=1}^r a_i\cdot (\ell_0/\ell).$$
	The claim follows.
\end{proof}
\begin{lem}\label{pullbackage}
	Suppose that $f:X\to Y$ is a morphism of smooth finite type $F$-Deligne--Mumford stacks. Let $\mathcal X\in\pi_0\mathcal J_0X $ and let $\mathcal Y=(\pi_0\mathcal J_0f)(\mathcal X).$ Let~$V$ be a vector bundle on~$Y$. Then $$\age_{f^*V}(\mathcal X)=\age_V (\mathcal Y).$$
\end{lem}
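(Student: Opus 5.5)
The plan is to reduce the statement to a pointwise comparison of representations, and then use that the age of a sector is computed at any one of its points. Since $\mathcal Y=(\pi_0\mathcal J_0 f)(\mathcal X)$, it is enough to pick one point of $\mathcal X$ and follow its image in $\mathcal J_0Y$. By \cite[Lemma 2.22]{dardayasudabm}, $\age_V$ is constant on connected components of $\mathcal J_0Y$, and $\age_{f^*V}$ is constant on connected components of $\mathcal J_0X$. So it suffices to prove, for every algebraically closed $\oF$-field $K$ and every $x\in(\mathcal J_0X)(K)$, that
\[\age(\rho_{f^*V}(x))=\age(\rho_V(\mathcal J_0f(x))).\]
Here $\mathcal J_0f(x)$ lies in the component $\mathcal Y$.

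First I would work in the $\widehat{\mu}$-description of \cite[Proposition 2.11]{dardayasudabm}. There a point $x$ corresponds to a pair $(x',\phi\colon\widehat{\mu}\to\Aut(x'))$, and $\mathcal J_0f(x)$ corresponds to $(f(x'),f_*\circ\phi)$, where $f_*\colon\Aut(x')\to\Aut(f(x'))$ is the homomorphism induced by $f$. Note that $f_*\circ\phi$ need not be injective on $\mu_k$ when $f$ is not representable. This is exactly why the $\widehat{\mu}$-language, together with Definition~\ref{defofagelimproj} and the independence-of-$\ell$ lemma just proved, is the right setting. Both $\phi$ and $f_*\circ\phi$ factor through some common $\mu_\ell$, so both ages can be computed with the same $\ell$.

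Next I would identify the two representations. The fiber $(f^*V)(x')$ is canonically $V(f(x'))$, and an automorphism $g\in\Aut(x')$ acts on $(f^*V)(x')$ through $f_*(g)$ acting on $V(f(x'))$. This holds essentially by definition of the pullback of a vector bundle on a stack: the action of automorphisms of an object is transported along $f$. Hence $\rho_{f^*V}(x)=\rho_V(f(x'))\circ f_*\circ\phi$ as $\widehat{\mu}$-representations on the same vector space. Decomposing through the common $\mu_\ell$ into characters $\tau^{a_i}$ gives identical exponents $a_i$, so the two ages agree.

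The only delicate point, and the one I expect to be the main obstacle, is the compatibility of the age with non-injective maps $\mu_k\to\Aut(f(x'))$. The image sector may have a smaller order $k'$ with $k'\mid k$, so a priori the age is computed with respect to $\mu_{k'}$ rather than $\mu_k$. This is handled precisely by the independence of $\ell$: computing with respect to $\mu_k$ or $\mu_{k'}$ gives the same value, since the exponents rescale by $k/k'$.
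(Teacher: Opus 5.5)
Your proposal is correct and follows essentially the same approach as the paper: choose a point of $\mathcal X$, observe via the commutative square $\Aut(x)\to\Aut(f(x))\to\GL(V(f(x)))$ that the $\widehat{\mu}$-representation on $(f^*V)(x)=V(f(x))$ is exactly the one defining the age at the image point, and conclude by constancy of age on sectors. Your explicit appeal to the independence-of-$\ell$ lemma to handle non-injective maps into $\Aut(f(x))$ is left implicit in the paper's use of Definition~\ref{defofagelimproj}.
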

\begin{proof}
	Let~$k$ be an algebraically closed field over $\overline F$ such that there exists a $k$-point $x\in\mathcal X$.
	The point~$x$ corresponds to a point $x\in X(k)$ and a $k$-homomorphism $\widehat{\mu}\to \Aut(x).$ We have a commutative diagram
	\[\begin{tikzcd}
		{\Aut(x)} & {\GL(f^*V(x))=\GL(V(f(x)))} \\
		{\Aut(f(x))} & {\GL(V(f(x)))}
		\arrow[from=1-1, to=1-2]
		\arrow[from=1-1, to=2-1]
		\arrow["{=}", from=1-2, to=2-2]
		\arrow[from=2-1, to=2-2]
	\end{tikzcd}\]
	Let~$\rho$ be the induced representation $\rho:\widehat{\mu}\to \GL((f^*V)(x))$ from $\Aut(x)\to \GL((f^*V)(x))$. Then $\rho$ coincides with the representation $\Aut(x)\to\Aut(f(x))\to\GL(V(f(x)))$. We obtain $$\age_V(\mathcal X)=\age_{f^*V}(x)=\age({\rho})=\age_V(f(x))=\age_V(\mathcal Y)$$
	as claimed.
\end{proof}
\begin{cor}\label{etaleage}
Suppose we have $f:X\to Y$ as in Lemma~\ref{pullbackage} which is also \'etale at a point $x$ of $X$. Let $\mathcal X\in\pi_0\mathcal J_0X$ be any sector above $x\in |X|$, and let $\mathcal Y=(\pi_0\mathcal J_0f)(\mathcal X)$. Then $$\age(\mathcal X)=\age (\mathcal Y).$$
\end{cor}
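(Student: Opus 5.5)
The plan is to reduce Corollary~\ref{etaleage} to Lemma~\ref{pullbackage} applied to the tangent bundle. Since $f$ is étale at $x$, the differential $df\colon T_X\to f^*T_Y$ is an isomorphism of vector bundles on some open substack $U\subset X$ containing $x$, because the locus where a morphism of vector bundles of the same rank is an isomorphism is open. Both stacks are smooth and $f$ is étale at $x$, so $\dim_x X=\dim_{f(x)}Y$ and the ranks agree near $x$.

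First I will check that this isomorphism is equivariant for automorphisms. For a geometric point $x'\in U$ and $g\in\Aut(x')$, the action of $g$ on $T_X(x')$ corresponds under $df$ to the action of $f(g)\in\Aut(f(x'))$ on $T_Y(f(x'))=(f^*T_Y)(x')$. This is just the functoriality of the tangent map: the stabilizer acts on tangent spaces through the stack structure, and $f$ is a morphism of stacks. So for any $\widehat{\mu}\to\Aut(x')$, the representations $\rho$ on $T_X(x')$ and on $(f^*T_Y)(x')$ are isomorphic. Hence $\age_{T_X}=\age_{f^*T_Y}$ as functions on the points of $\mathcal J_0X$ lying over $U$.

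Next I use constancy on sectors. By \cite[Lemma 2.22]{dardayasudabm}, $\age_{T_X}$ and $\age_{f^*T_Y}$ are constant on the connected component $\mathcal X$. Any point of $\mathcal X$ lying over $x$ lies over $U$. So I compute both ages at such a point and get $\age(\mathcal X)=\age_{T_X}(\mathcal X)=\age_{f^*T_Y}(\mathcal X)$.

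Finally, Lemma~\ref{pullbackage} with $V=T_Y$ gives $\age_{f^*T_Y}(\mathcal X)=\age_{T_Y}(\mathcal Y)=\age(\mathcal Y)$, which proves the corollary.

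The only point needing care is the equivariance of $df$ with respect to the map on stabilizers. This is formal, but should be stated via the description of the inertia action on $T_X$, for instance by working on an étale atlas where the automorphism acts on a strictly henselian neighborhood. I expect no real obstacle beyond that.
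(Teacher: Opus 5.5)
Your proposal is correct and matches the paper's proof: étaleness at $x$ gives $T_X\cong f^*T_Y$ on a neighborhood of $x$, and Lemma~\ref{pullbackage} with $V=T_Y$ then gives $\age(\mathcal X)=\age(\mathcal Y)$. Your extra steps (computing the ages at a point of $\mathcal X$ lying over $x$, using constancy of ages on sectors) just make explicit what the paper leaves implicit. The equivariance you flag is in fact automatic, since any morphism of vector bundles on a stack, such as $df$, commutes with the stabilizer actions on fibers.
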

\begin{proof}
As $f$ is \'etale at $x$, then $f^*T_Y \cong T_X$ on a neighborhood of $x$. Then the assertion follows from Lemma~\ref{pullbackage}. 
\end{proof}

\subsubsection{Weighted projective stacks} \label{subsubsec:wtproj_stack}
Let $n\geq 1$ be an integer. Suppose that $a_1\doots a_n\geq 1$ are given integers. 
Recall the weighted projective stack $\mathcal P(a_1\doots a_n):=[(\AAA^n-\{0\})/\Gm]$. 
For any $F$-scheme~$S$ and any morphism $f:S\to\PPP(a_1\doots a_n)$, we have a canonical identification $\Aut(f)=\mu_{n(f)}$ where $n(f)|\lcm (a_i)$. 
Fix a bijection $$\delta:\bigcup_{n\geq 1}\Hom(\widehat{\mu},\mu_n)\xrightarrow{\sim}\QQ/\ZZ.$$  
Observe that when $n | k$, then the canonical surjection $\widehat \mu \to \mu_n$ factors through another canonical surjection $\widehat \mu \to \mu_k$.
If $\theta_n \colon \widehat\mu\to\mu_n$ is the canonical surjection, then let $\delta(\theta_n) \colonequals \frac1n \in \QQ/\ZZ$.
In general, a homomorphism $\iota:\widehat{\mu}\to\mu_n$ is a composite of $\theta_n$ and a homomorphism $\mu_n\to \mu_n$ given by $x\mapsto x^k$. Then we define $\delta(\iota)=k/n$.  The image of $\Hom(\widehat{\mu},\mu_n)$ via $\delta$ is the group $(\frac 1n\ZZ/\ZZ)$.

\begin{lem} \label{shift}
	Suppose that $\mathcal Z$ is a closed substack of a weighted projective stack.  Consider a function $$|\mathcal J_0\mathcal Z|\to\QQ/\ZZ,\hspace{1cm}(x,\phi:\widehat{\mu}\to \Aut(x)=\mu_{n(x)})\mapsto \delta{(\phi)}.$$
	This function descends to a function~$\gamma:\pi_0\mathcal J_0\mathcal Z\to\QQ/\ZZ.$
\end{lem}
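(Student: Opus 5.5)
The plan is to show that the function $x\mapsto\delta(\phi_x)$ is locally constant on $|\mathcal J_0\mathcal Z|$. Since sectors are by definition the connected components of $\mathcal J_0\mathcal Z$, this is exactly what is needed for it to descend to $\pi_0\mathcal J_0\mathcal Z$. The key input is that on a weighted projective stack $\mathcal P=\mathcal P(a_1\doots a_n)$ the identification $\Aut(f)=\mu_{n(f)}$ is not merely pointwise but comes from a global closed embedding of group schemes. The inertia stack $\mathcal I_{\mathcal P}$ sits inside $(\Gm)_{\mathcal P}$, because $\mathcal P=[(\AAA^n-\{0\})/\Gm]$ and the $\Gm$-action on $\AAA^n-\{0\}$ has finite stabilizers. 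For a closed substack $\mathcal Z\subset\mathcal P$, the inertia $\mathcal I_{\mathcal Z}$ is the restriction of $\mathcal I_{\mathcal P}$, since the embedding is a monomorphism and hence fully faithful on automorphisms. So $\mathcal I_{\mathcal Z}\hookrightarrow(\Gm)_{\mathcal Z}$ is also a closed subgroup scheme.

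Next I will describe $\mathcal J_0\mathcal Z$ using the $\widehat\mu$-description (\cite[Proposition 2.11]{dardayasudabm}). It parametrizes pairs $(x,\phi\colon\widehat\mu_S\to\Aut(x))$, and composing with the embedding gives a homomorphism $\widehat\mu_S\to(\Gm)_S$. Such a homomorphism factors through some $\mu_\ell$, and homomorphisms $(\mu_\ell)_S\to(\Gm)_S$ are characters. Over a connected base $S$ they correspond to a locally constant, hence constant, element of $\ZZ/\ell$; this uses that $\mathrm{Hom}(\mu_\ell,\Gm)$ is the constant group scheme $\ZZ/\ell\ZZ$ (Cartier duality). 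Passing to the limit, $\mathrm{Hom}(\widehat\mu,\Gm)=\varinjlim\ZZ/\ell\ZZ=\QQ/\ZZ$ as a constant sheaf, and $\delta$ is exactly this identification. To check this, note that the canonical surjection $\theta_\ell$ followed by the inclusion $\mu_\ell\subset\Gm$ corresponds to $1/\ell$, and composing with $x\mapsto x^k$ gives $k/\ell$. One must verify that the value is unchanged when $\Aut(x)=\mu_{n(x)}$ varies with $x$. It is, because $\delta$ only sees the composite $\widehat\mu\to\mu_{n(x)}\subset\Gm$, and the inclusions $\mu_m\subset\mu_{m'}\subset\Gm$ are compatible with the transition maps. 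This compatibility is the same computation as in the lemma showing that the age is independent of $\ell$.

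It follows that the map $|\mathcal J_0\mathcal Z|\to\QQ/\ZZ$ is induced by a morphism $\mathcal J_0\mathcal Z\to\underline{\QQ/\ZZ}$ to a constant (discrete) sheaf. Concretely, for the equivalence relation defining sectors, take a connected $S$ with $y\colon S\to\mathcal Z$ and $h\colon(\mu_k)_S\hookrightarrow\Aut(y)$. The composite $(\mu_k)_S\to(\Gm)_S$ is a character with constant value $c\in\ZZ/k$, so both geometric fibres $y_1,y_2$ receive the same $\delta$-value $c/k$. Hence $\gamma$ is well defined on $\pi_0\mathcal J_0\mathcal Z$.

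The main obstacle is the bookkeeping in the first step. One has to make sure that the pointwise identification $\Aut(f)=\mu_{n(f)}$ used to define $\delta(\phi)$ really is the fibre of a single global embedding $\mathcal I_{\mathcal Z}\subset(\Gm)_{\mathcal Z}$, rather than a choice made separately at each point. I would settle this directly from the quotient presentation. For $f$ corresponding to a $\Gm$-torsor $P\to S$ with equivariant map $P\to\AAA^n-\{0\}$, automorphisms are elements $t\in\Gm(S)$ fixing the map, namely $t^{a_i}x_i=x_i$. This gives the canonical embedding, which is functorial in $S$ and depends on no choices.
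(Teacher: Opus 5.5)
Your proposal is correct and is essentially the paper's argument: the paper simply observes that for a connected $F$-scheme $S\to\mathcal J_0\mathcal Z$ given by $(f,\phi\colon\widehat{\mu}\to\Aut(f)=\mu_{n(f)})$, every geometric point of $S$ takes the same value $\delta(\phi)$. You supply the details the paper leaves implicit. These are the global closed embedding $\mathcal I_{\mathcal Z}\subset(\Gm)_{\mathcal Z}$, which really comes from automorphisms of a $\Gm$-torsor being $\Gm(S)$ (commutativity, as in your last paragraph) rather than from finiteness of stabilizers, and Cartier duality, which makes a character of $(\mu_k)_S$ over a connected $S$ a constant element of $\ZZ/k\ZZ$.
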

\begin{proof}
Suppose that $S\to\mathcal J_0\mathcal Z$ is a morphism with~$S$ connected $F$-scheme corresponding to 
$$(f:S\to \mathcal Z, \phi:\widehat{\mu}\to\Aut(f)=\mu_{n(f)}).$$ 
Then the value of the function at any geometric point of~$S$ is~$\delta(\phi)$, so the claim follows.
\end{proof}
If~$z$ is a geometric point of~$\mathcal Z$ realized as $\widetilde z:\Spec(K)\to\mathcal Z$ with~$K$ an algebraically closed $F$-field, we write $\Aut(z)=\Aut(\widetilde z)$.
\begin{lem} \label{secgroup}Denote by $\omega:|\mathcal J_0\mathcal Z|\to|\mathcal Z|$ the canonical morphism. Given~$z\in |\mathcal Z|$, let $$(\pi_0\mathcal J_0\mathcal Z)_z:=\{\mathcal Y\in\pi_0\mathcal J_0\mathcal Z \; | \; \hspace{0,1cm}\exists y \in |\mathcal Y| \text{ s.t. } \omega(y)=z\}.$$
We have a bijection $$\frac{1}{\#\Aut(z)}\ZZ/\ZZ\xrightarrow{\sim}(\pi_0\mathcal J_0\mathcal Z)_z,\hspace{1cm}x\mapsto [z,\delta^{-1}(x)],$$ where $[\cdot]$ denotes the corresponding connected component.
\end{lem}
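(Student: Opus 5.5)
The plan is to reduce to the definition of $\mathcal J_0\mathcal Z$ as the stack of pairs $(x,\phi:\widehat{\mu}\to\Aut(x))$ and to use the canonical identification $\Aut(f)=\mu_{n(f)}$ for morphisms into a weighted projective stack. First, I will show that the map $x\mapsto[z,\delta^{-1}(x)]$ is well defined. Write $n=\#\Aut(z)$, so that $\Aut(z)=\mu_n$ canonically, since $\mathcal Z$ is a closed substack and the stabilizers are those of the ambient $\mathcal P(a_1\doots a_n)$. The bijection $\delta$ restricts to $\Hom(\widehat{\mu},\mu_n)\xrightarrow{\sim}\frac1n\ZZ/\ZZ$, as noted just before Lemma~\ref{shift}. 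Thus every $x\in\frac1n\ZZ/\ZZ$ gives a homomorphism $\delta^{-1}(x):\widehat{\mu}\to\Aut(z)$, hence a geometric point $(z,\delta^{-1}(x))$ of $\mathcal J_0\mathcal Z$ lying over $z$, and its connected component lies in $(\pi_0\mathcal J_0\mathcal Z)_z$. The class of this point does not depend on the representative geometric point $\widetilde z$ of $z$: two representatives become isomorphic over a common algebraically closed field, and the isomorphism respects the canonical identification with $\mu_n$, because that identification comes from the $\Gm$-action on $\AAA^n-\{0\}$.

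Surjectivity should be formal. A sector $\mathcal Y\in(\pi_0\mathcal J_0\mathcal Z)_z$ contains a point $y$ with $\omega(y)=z$. Such a $y$ is a pair $(z',\phi)$ with $z'$ a geometric point representing $z$ and $\phi:\widehat{\mu}\to\Aut(z')=\mu_n$. Setting $x=\delta(\phi)$ gives $\mathcal Y=[z,\delta^{-1}(x)]$.

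Injectivity is where Lemma~\ref{shift} enters. Suppose $[z,\delta^{-1}(x)]=[z,\delta^{-1}(x')]$. The function $\gamma:\pi_0\mathcal J_0\mathcal Z\to\QQ/\ZZ$ of Lemma~\ref{shift} takes the values $x$ and $x'$ on these two points, and it is constant on connected components, so $x=x'$ in $\QQ/\ZZ$. Both $x$ and $x'$ lie in $\frac1n\ZZ/\ZZ\subset\QQ/\ZZ$, so this gives equality in the source.

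The main point requiring care is the compatibility of the identifications $\Aut(f)=\mu_{n(f)}$ in families. Lemma~\ref{shift} already packages this: over a connected $S$ the stabilizer is identified with $\mu_{n(f)}$ compatibly with restriction to geometric points. I will check that this identification is the restriction of the generic $\Gm$-stabilizer, namely the subgroup of $\Gm$ fixing the lifted point, so that the value of $\delta$ is literally constant on any family. With that in hand, the argument above is complete.
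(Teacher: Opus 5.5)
Your proof is correct and matches the paper's argument. Surjectivity comes from taking a point of the sector lying over $z$ and reading off $\delta$ of its homomorphism $\widehat{\mu}\to\Aut(z)=\mu_{\#\Aut(z)}$, and injectivity holds because the function $\gamma$ of Lemma~\ref{shift} is constant on sectors and takes the value $x$ on $[z,\delta^{-1}(x)]$. Your extra checks are points the paper leaves implicit, and you handle them correctly: that the class does not depend on the chosen geometric representative of $z$, and that the identification $\Aut=\mu_n$ is cut out by the $\Gm$-stabilizer so that $\delta$ is constant in families.
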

\begin{proof} 
	For surjectivity of $\gamma$, suppose that $z$ is  realized as $\widetilde z:\Spec(K)\to\mathcal Z$ with~$K$ an algebraically closed $F$-field. We verify that the map is surjective: if $\mathcal Y\in(\pi_0\mathcal J_0\mathcal Z)_z$, let $y\in\mathcal Y$ be such that $\omega(y)=z$. By increasing $K$ if needed, we realize $y$ as a $K$-point. 
	The point $y$ then corresponds to a pair $(y_0,f)$ of a $K$-point of $\mathcal J_0 \mathcal Z$, where $y_0 \cong \widetilde z$ and $f:\widehat{\mu}\to \Aut(y_0)\cong \Aut(\widetilde z)=\Aut(z)=\mu_{\#\Aut(z)}$. 
	Hence $\delta(f)$ maps to $\mathcal Y$, proving surjectivity. 
	
	For injectivity, suppose that $\frac{k_1}{\#\Aut(z)}$ and  $\frac{k_2}{\#\Aut(z)}$ map to the same component. The value of $\gamma$ at these components is $\frac{k_1}{\#\Aut(z)}$ and $\frac{k_2}{\#\Aut(z)}$, respectively. Hence $k_1=k_2$ and so $\gamma$ is injective.
\end{proof}

\section{Modular curves}\label{sec:modular_curve}
We recall some properties of the moduli stacks of our interest.
Let~$\YYY_0(1)$ be the moduli stack of elliptic curves over~$F$.
Let~$\XXX_0(1)$ be the Deligne--Mumford compactification of~$\YYY_0(1)$. 
It is well known that
$$\XXX_0(1)\cong\PPP(4,6),$$
so $\XXX_0(1)^{\rig}\cong \PPP(2,3)$. 
Its coarse moduli space is $X_0(1) \cong \PP^1$. 
The pullback of the line bundle $\OO_{\PP^1}(1)$ is the line bundle $\OO_{\PPP(4,6)}(12)$ on~$\XXX_0(1)$, and the Hodge bundle $\Lambda$ on $\XXX_0(1)$ corresponds to $\OO_{\PPP(4,6)}(1)$, which is a polarizing line bundle.

Let $N\geq 1$ be an integer. The following is a special case of \cite[IV.3.2]{DeligneRapoport}:
\begin{mydef}
Let $\YYY_0(N)$ be the moduli $F$-stack of elliptic curves equipped with cyclic order $N$ subgroup schemes.
\end{mydef}
So for every $F$-scheme $S$, every $S$-point of $\YYY_0(N)$ is of the form $(E \to S, H \subset E[N])$, where $E \to S$ is an elliptic curve defined over $S$ and $H$ is a $S$-flat cyclic subgroup scheme in $E[N]$ of order $N$;
so $H$ is \'etale locally isomorphic to $\ZZ/N\ZZ$ as a group scheme.
Then any point of~$\YYY_0(N)$ has a non-trivial automorphism group containing $\mu_2$ generated by the involution $[-1]$ sending $P \in E$ to $-P$. 

There is a canonical morphism $\YYY_0(N)\to \YYY_0(1)$ given by $(E,\phi)\mapsto E$, which forgets the specified cyclic order $N$ subgroup scheme of $E[N]$.  This morphism is finite and \'etale.
The open immersion $\YYY_0(1) \hookrightarrow \XXX_0(1)$ is affine because it is representable and the induced open immersion $Y_0(1) \hookrightarrow X_0(1)$ of coarse moduli spaces is affine.
Thus, $\YYY_0(N)\to\XXX_0(1)$ is affine as well. 

Using the relative normalization of an affine (so representable) morphism $f \colon \mathcal C \to \mathcal S$ of algebraic stacks as in \cite[Tag 0822]{stacks-project}, we obtain the following definition of $\XXX_0(N)$:
\begin{mydef}[{\cite[IV.3.3]{DeligneRapoport}}] \label{defx0n}
Define $\mathscr X_0(N)$ to be the normalization of $\XXX_0(1)$ with the respect to the affine morphism $\YYY_0(N)\to \YYY_0(1)\to\XXX_0(1)$. 
\end{mydef} Clearly, one can identify $\YYY_0(N)$ with the preimage of $\YYY_0(1)$ in $\XXX_0(N)$.  We denote by $Y_0(N)$ and $X_0(N)$ the coarse moduli spaces of~$\mathscr Y_0(N)$ and $\XXX_0(N)$ respectively. 
The stacks $\YYY_0(N)$ and $\XXX_0(N)$ are smooth by \cite[IV.6.7]{DeligneRapoport}, and their coarse spaces are irreducible (see discussion in \cite[Section 1.1]{YangModular}).  
Thus $\YYY_0(N)$ and $\XXX_0(N)$ are irreducible as well. 

\subsubsection{}
	There exists a unique line bundle $\Lambda^{\rig}$ on $\XXX_0(1)^{\rig}$, such that its pullback for the rigidification morphism $\XXX_0(1)\to\XXX_0(1)^{\rig}$ is $\Lambda^2$. 
	One has a canonical isomorphism $\XXX_0(1)\xrightarrow{\sim}\sqrt[2]{\Lambda^{\rig}/\XXX_0(1)^{\rig}}$ from Lemma~\ref{lem:root_stack_of_rigidification} induced by $\Lambda^2\xrightarrow{=}(\XXX_0(1)\to\XXX_0(1)^{\rig})^*(\Lambda^{\rig}).$ 
	Let $\Lambda_N^{\rig}:=(J_N^{\rig})^*(\Lambda^{\rig})$ and $\Lambda_N:=J_N^*(\Lambda)$. 
	Let $\eta^{\rig}_N:\Spec(\oF(X_0(N)))\to\XXX_0(N)^{\rig}$ be the generic point of the stacky curve $\XXX_0(N)^{\rig}$.
\begin{lem}
	The stack~$\XXX_0(N)$ is isomorphic to $\sProj R_{J_N^*\Lambda}$ and its generic stabilizer is $\mu_2$.
	Furthermore, $\XXX_0(N) \cong \sqrt[2]{\Lambda_N^{\rig}/\XXX_0(N)^{\rig}}$.
\end{lem}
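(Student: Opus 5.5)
The plan is to apply Lemma~\ref{lem:stacky_proj} with $\mathcal L=\Lambda_N=J_N^*\Lambda$, then read off the generic stabilizer, and finally invoke Lemma~\ref{lem:root_stack_of_rigidification}.

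\emph{Step 1: $\XXX_0(N)$ is cyclotomic and proper.} By Definition~\ref{defx0n}, $\XXX_0(N)$ is the normalization of $\XXX_0(1)$ in a finite \'etale cover of the open $\YYY_0(1)$. Hence $J_N$ is finite; in particular it is representable and proper. Since $\XXX_0(1)\cong\PPP(4,6)$ is proper, so is $\XXX_0(N)$. It is geometrically connected because it is irreducible and smooth, with irreducible coarse space $X_0(N)$; we apply this over $\oF$.

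Representability of $J_N$ gives, for every geometric point $x$, an injection $\Aut(x)\hookrightarrow\Aut(J_N(x))\cong\mu_m$ with $m\in\{2,4,6\}$, where the target is a stabilizer of $\PPP(4,6)$. A subgroup scheme of $\mu_m$ is some $\mu_k$, so $\XXX_0(N)$ is cyclotomic.

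\emph{Step 2: $\Lambda_N$ is polarizing.} It is uniformizing: $\Aut(x)$ acts on $(\Lambda_N)_x=\Lambda_{J_N(x)}$ through the injection $\Aut(x)\hookrightarrow\Aut(J_N(x))$. The latter group acts faithfully because $\Lambda=\OO_{\PPP(4,6)}(1)$ is uniformizing. Equivalently, the composite $\XXX_0(N)\to\XXX_0(1)\to B\Gm$ of representable morphisms is representable.

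For the ample power, use $\Lambda^{12}\cong c^*\OO_{\PP^1}(1)$. Then $\Lambda_N^{12}$ is the pullback, via the coarse map of $\XXX_0(N)$, of $j_N^*\OO_{\PP^1}(1)$, where $j_N\colon X_0(N)\to X_0(1)$ is the induced map of coarse spaces. The map $j_N$ is finite, being a quasi-finite proper map of curves, so this pullback is ample. Lemma~\ref{lem:stacky_proj} then gives $\XXX_0(N)\cong\sProj R_{\Lambda_N}$.

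\emph{Step 3: generic stabilizer.} Every point of $\YYY_0(N)$ carries the automorphism $[-1]$, which preserves any subgroup $H$. So $\mu_2\subseteq\Aut$ at the generic point. Conversely, a generic elliptic curve has $j\neq 0,1728$, so its automorphism group is $\{\pm1\}$. Via the injection into $\Aut(J_N(x))$, the generic stabilizer is exactly $\mu_2$.

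Here $[-1]$ acts on the Hodge bundle by $-1$, which is consistent with uniformizing. Finally, Lemma~\ref{lem:root_stack_of_rigidification} with $n=2$ and $\mathcal L=\Lambda_N$ gives $\XXX_0(N)\cong\sqrt[2]{\Lambda_N^{\rig}/\XXX_0(N)^{\rig}}$. Here $\Lambda_N^{\rig}$ is the line bundle with $\pi^*\Lambda_N^{\rig}\cong\Lambda_N^2$.

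\emph{Expected obstacle.} The remaining point is that this $\Lambda_N^{\rig}$ agrees with $(J_N^{\rig})^*\Lambda^{\rig}$. I would obtain this from Corollary~\ref{cor:rigid-by-pol-LB_base-change}. Its hypotheses hold because $J_N$ is representable and dominant and induces an isomorphism $\mu_2\to\mu_2$ of generic stabilizers. Its proof then gives $(J_N^*\Lambda)^{\rig}\cong(J_N^{\rig})^*\Lambda^{\rig}$.

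The main care point is making the properness and finiteness of $J_N$ rigorous from the normalization definition. I would handle this by citing that relative normalization is integral, and that it is finite because $\XXX_0(1)$ is of finite type over a field, hence Nagata/excellent.
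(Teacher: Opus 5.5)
Your proposal is correct and is essentially the paper's argument. The paper invokes Corollary~\ref{cor:rigid-by-pol-LB_base-change}, whose proof is precisely your application of Lemma~\ref{lem:root_stack_of_rigidification} to $J_N^*\Lambda$ combined with $(J_N^*\Lambda)^{\rig}\cong (J_N^{\rig})^*\Lambda^{\rig}$, and then Lemma~\ref{lem:stacky_proj}; you reorder these steps and spell out hypotheses the paper leaves implicit (finiteness of $J_N$, cyclotomicity, and that $J_N^*\Lambda$ is polarizing).
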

\begin{proof} 
	It is clear that generic stabilizer of $\XXX_0(N)$ is $\mu_2$.
	As $\Lambda$ is a polarizing line bundle on $\XXX_0(1)$, we have $\XXX_0(1) \cong \sProj \Lambda$ by Lemma~\ref{lem:stacky_proj}.
	So $\XXX_0(1) \cong \sqrt[2]{\Lambda^{\rig}/\XXX_0(1)^{\rig}}$.
	Because $\XXX_0(N) \to \XXX_0(1)$ is a finite morphism that induces isomorphism of generic stabilizers, Corollary~\ref{cor:rigid-by-pol-LB_base-change} implies that $\XXX_0(N) \cong \XXX_0(N)^{\rig} \times_{\XXX_0(1)^{\rig}} \XXX_0(1)$.
	Since the formation of root stack commutes with base change, $\XXX_0(N) \cong \sqrt[2]{\Lambda_N^{\rig}/\XXX_0(N)^{\rig}}$ by the definition of the polarizing line bundle $\Lambda_N$.
	Therefore, $\XXX_0(N) \cong \sProj R_{J_N^*\Lambda}$ by Lemma~\ref{lem:stacky_proj}.
\end{proof}

\begin{lem}\label{Zariski-dense}
	The set $\XXX_0(N)\langle F\rangle$ is Zariski-dense in $\XXX_0(N)$.
\end{lem}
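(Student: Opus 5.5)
The plan is to reduce density to the existence of infinitely many $F$-points on the coarse curve $X_0(N)$, and then to lift those points to the stack using the square root stack description from the preceding lemma. Since $\XXX_0(N)$ is irreducible of dimension one, any infinite set of points of $|\XXX_0(N)|$ is Zariski-dense. So it suffices to produce infinitely many pairwise non-isomorphic $F$-points of $\XXX_0(N)$.

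\emph{Step 1: a dense open of the rigidification is a scheme.} The rigidification $\XXX_0(N)^{\rig}$ has trivial generic stabilizer, because $\XXX_0(N)$ has generic stabilizer $\mu_2$. It is also a smooth one-dimensional DM stack. Hence there is a dense open substack $U\subset \XXX_0(N)^{\rig}$ that is a scheme, and this $U$ maps isomorphically onto an open subscheme of the coarse space $X_0(N)$. Concretely, $U$ is the complement of the finitely many points lying over $j=0$ and $j=1728$ with extra automorphisms.

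\emph{Step 2: $X_0(N)(F)$ is infinite.} For $N\in\mathfrak N_0$ the curve $X_0(N)$ is smooth, projective and geometrically connected of genus $0$. It has an $F$-rational point, namely the cusp $\infty$, which is represented by the Tate curve with its canonical cyclic subgroup $\mu_N$ and is defined over $\QQ$. Therefore $X_0(N)\cong\PP^1_F$, and $U(F)$ is infinite because it is $\PP^1(F)$ minus finitely many points. This is the step I expect to need the most care: one has to justify that the cusp at $\infty$ really is $F$-rational on the Deligne--Rapoport compactification. I would cite \cite{DeligneRapoport} for this, or alternatively use the explicit Hauptmoduln from the literature.

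\emph{Step 3: lifting.} By the preceding lemma, $\XXX_0(N)\cong\sqrt[2]{\Lambda_N^{\rig}/\XXX_0(N)^{\rig}}$. An $F$-point of this root stack lying over $u\in U(F)$ is a triple $(u,M,\varphi)$, where $M$ is a line bundle on $\Spec F$ and $\varphi\colon M^{\otimes 2}\xrightarrow{\sim}u^*\Lambda_N^{\rig}$. Every line bundle on $\Spec F$ is trivial, so we may take $M=\OO$ and choose any trivialization $\varphi$ of $u^*\Lambda_N^{\rig}\cong\OO$. Thus every $u\in U(F)$ lifts to a point of $\XXX_0(N)\langle F\rangle$. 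Distinct $u$ give non-isomorphic lifts, since their images in the coarse space differ. This yields infinitely many points of $\XXX_0(N)\langle F\rangle$, and by the irreducibility remark above their images in $|\XXX_0(N)|$ are Zariski-dense.
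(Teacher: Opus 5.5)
Your proof is correct and is essentially the paper's argument: both find a dense open $U$ of the coarse space $X_0(N)\cong\PP^1_F$ over whose $F$-points the stack has $F$-points, and then use density of $U(F)$. The one difference is that the paper trivializes the gerbe over all of $U$, asserting $\XXX_0(N)\times_{X_0(N)}U\cong U\times B\mu_2$; this implicitly needs $\Lambda_N^{\rig}|_U$ to be a square, which is arranged by shrinking $U$. You instead lift pointwise using $\Pic(\Spec F)=0$ in the root-stack description, which avoids that issue, and you also justify $X_0(N)\cong\PP^1_F$ via the rational cusp, which the paper takes for granted.
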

\begin{proof}
	There exists a non-empty open $U\subset X_0(N)\cong\PP^1$, such that $\XXX_0(N) \times_{X_0(N)} U \cong U\times B\mu_2$. 
	As $U\langle F \rangle$ is Zarski-dense in $U$, the assertion follows.
\end{proof}

\begin{lem}\label{degcano}
	The degree of $J_N \colon \XXX_0(N)\to\XXX_0(1)$ is equal to the number of cyclic subgroups of order~$N$ of~$(\ZZ/N\ZZ)^2$. This number is $$\kappa(N):=N\cdot\prod_{p|N}\bigg(1+\frac{1}{p}\bigg).$$
\end{lem}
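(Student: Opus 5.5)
The degree of the representable finite morphism $J_N$ equals the cardinality of a generic geometric fiber, counted in the stacky sense. Since $J_N$ induces an isomorphism of generic stabilizers, the stacky count agrees with the naive set-theoretic count of points in the fiber. So the plan is to compute the degree over the dense open substack $\YYY_0(1)$, where $J_N$ restricts to the finite \'etale morphism $\YYY_0(N)\to\YYY_0(1)$.

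First, I reduce to $\YYY_0(N)\to\YYY_0(1)$. This morphism is finite \'etale, and $\XXX_0(N)$ is the normalization of $\XXX_0(1)$ in it. Hence the degree of $J_N$ equals the degree of the restricted morphism over the connected open $\YYY_0(1)$.

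Second, I compute the fiber over a geometric point $E$ defined over $\overline F$, chosen with $j(E)\neq 0,1728$ so that $\Aut(E)=\mu_2$. The fiber of $\YYY_0(N)\to\YYY_0(1)$ over $\Spec\overline F\to\YYY_0(1)$ is the scheme of cyclic subgroups $H\subset E[N]$ of order $N$. Representability is what matters here: $[-1]$ acts trivially on every subgroup, so the stabilizers match and no quotient by $\Aut(E)$ enters the count. Since $E[N](\overline F)\cong(\ZZ/N\ZZ)^2$ (we are in characteristic $0$), the fiber has exactly as many points as $(\ZZ/N\ZZ)^2$ has cyclic subgroups of order $N$.

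Third, I count these subgroups. A cyclic subgroup of order $N$ is generated by an element of exact order $N$, and each such subgroup has $\varphi(N)$ generators. By CRT the count is multiplicative in $N$, so it suffices to treat $N=p^e$. The elements of exact order $p^e$ in $(\ZZ/p^e)^2$ number $p^{2e}-p^{2e-2}$. Dividing by $\varphi(p^e)=p^e-p^{e-1}$ gives
\[
\frac{p^{2e-2}(p^2-1)}{p^{e-1}(p-1)} = p^{e-1}(p+1) = p^e\Bigl(1+\tfrac1p\Bigr).
\]
Taking the product over $p\mid N$ yields $\kappa(N)$.

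The only delicate point is the justification in the first two steps that the stacky degree of $J_N$ equals this naive fiber count. I would handle it by noting that the base change of $J_N$ along the atlas $\Spec\overline F\to\YYY_0(1)$ is the finite \'etale scheme just described. The degree of a representable finite flat morphism is stable under such base change, so the fiber count computes the degree.
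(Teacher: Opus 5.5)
\textbf{Verdict: correct, but a different route from the paper.} Your proof is correct. It differs from the paper's in how the number $\kappa(N)$ is obtained.

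\textbf{The paper's route.} The paper calls the identification of $\deg J_N$ with the number of cyclic subgroups clear. It then identifies that number with the index $[\SL_2(\ZZ):\Gamma_0(N)]$ (an orbit--stabilizer statement for the action of $\SL_2(\ZZ/N\ZZ)$) and quotes \cite[Proposition 1.43]{shimura-book} for the value of the index.

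\textbf{Your route.} You count directly. You divide the $p^{2e}-p^{2e-2}$ elements of exact order $p^e$ in $(\ZZ/p^e\ZZ)^2$ by $\varphi(p^e)$, then glue the prime-power cases by CRT multiplicativity. This is elementary and self-contained. The paper's route instead ties $\kappa(N)$ to the classical index of $\Gamma_0(N)$, and hence to the degree of $X_0(N)\to X_0(1)$ via complex uniformization, at the cost of an external citation. You also supply the justification of the first part that the paper omits, by computing the fiber of the finite \'etale map $\YYY_0(N)\to\YYY_0(1)$ over a geometric point.

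\textbf{Two small points of wording.}
\begin{enumerate}
\item $\Spec\overline F\to\YYY_0(1)$ is not an atlas, since it is not smooth and surjective. It is simply a geometric point, and that suffices: the degree of a finite locally free representable morphism over a connected base can be read off the fiber at any geometric point.
\item For the same reason, the restriction $j(E)\neq 0,1728$, the remark about $[-1]$, and the appeal to matching generic stabilizers are unnecessary. For every $E$, the fiber product of the representable morphism with $\Spec\overline F$ is already a finite \'etale $\overline F$-scheme whose points are exactly the cyclic subgroups of order $N$ of $E[N]$. No quotient by $\Aut(E)$ enters.
\end{enumerate}
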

\begin{proof}
The first part is clear. 
Also, the number of cyclic subgroups of order~$N$ is equal to the index $[\SL_2(\ZZ):\Gamma_0(N)]$ where $\Gamma_0(N)$ consists of elements of $\SL_2(\ZZ)$ that's upper-triangular in $\SL_2(\ZZ/N\ZZ)$.
This number is exactly $\kappa(N)$ as above by \cite[Proposition 1.43]{shimura-book}.
\end{proof}

Because $\XXX_0(N)\to\XXX_0(1)$ is representable, by \cite[Theorem 10.1]{Arithmetic_of_Elliptic}, the only points of~$\XXX_0(N)$ where the stabilizer may not be $\mu_2$ lie inside the fibers of $\XXX_0(N)\to\XXX_0(1)$ where the $j$-invariant is $0$ or $1728$;
in those cases, the stabilizer group can be $\mu_6$ or $\mu_4$ respectively.
The following lemma is a slight restatement of \cite[Lemma 3.5]{valcritstack}, where $\sqrt[n]{\Spec R}$ for a DVR $R$ with the special point $0$ means $\sqrt[n]{0/\Spec R}$. 

 \begin{lem}\label{rootstackextension}
	Let $\phi:R\to R'$ be a morphism of DVRs, with ramification index~$e$.  Let $K$ and $K' $ be the the corresponding fields of fractions. Let $n\geq 1$ be an integer.   There exists a unique representable morphism $$ \sqrt[m]{\Spec(R')}\to\sqrt[n]{\Spec(R)}$$
	extending $\Spec(K')\to \Spec(K)$ for some $m\geq 1$. The only~$m$ for which the morphism exists is $m=n/\gcd (n,e)$, and in this case  its restriction to the closed fiber is the morphism $B\mu_m\to B\mu_n$, induced by $$\mu_m\to \mu_n,\hspace{1cm}x\mapsto x^{e/\gcd(n,e)}.$$
\end{lem}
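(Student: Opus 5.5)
The claim is local around the closed point of $\Spec(R)$, so the plan is to reduce it to a computation with $\mu_m$-torsors over the strict henselization (or completion), where root stacks become explicit quotient stacks. First I will make the objects explicit. After an étale base change on $R$ (which does not change $e$), $\sqrt[n]{\Spec(R)}$ is $[\Spec(R[t]/(t^n-\pi))/\mu_n]$, where $\pi$ is a uniformizer and $\mu_n$ acts by $t\mapsto \zeta t$. Similarly, $\sqrt[m]{\Spec(R')}=[\Spec(R'[s]/(s^m-\pi'))/\mu_m]$.

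Next I will use the moduli description from Subsection~\ref{subsec:root_stacks}. A morphism $\sqrt[m]{\Spec(R')}\to\sqrt[n]{\Spec(R)}$ over $\Spec(R')\to\Spec(R)$ is a line bundle $M$ on $\sqrt[m]{\Spec(R')}$ together with a section $u$ and an isomorphism $u^n=\phi(\pi)$. Write $\phi(\pi)=v\pi'^e$ with $v$ a unit; after an étale extension $v$ has an $n$-th root and can be absorbed. On $\sqrt[m]{\Spec(R')}$ the Picard group near the closed point is $\ZZ/m$, generated by $\OO(\frac1m D)$ with tautological section $s$. So $M=\OO(\frac km D)$ and, up to units, $u=s^k$. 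The equation $u^n=\pi'^e$ becomes $kn/m=e$, i.e. $kn=em$ with $0\le k$.

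Then I will impose representability. The induced map on stabilizers at the closed point is $\mu_m\to\mu_n$, $x\mapsto x^k$, because $\mu_m$ acts on the fiber of $M$ through the $k$-th power character. Representability means this map is injective, i.e. $\gcd(k,m)=1$. From $kn=em$ we get $k/m=e/n$, and in lowest terms $m=n/\gcd(n,e)$ and $k=e/\gcd(n,e)$. This proves the formula for $m$ and the formula for the closed fiber map. Existence follows by taking these values. Uniqueness follows because any two such extensions agree on the dense open $\Spec(K')$, and a representable morphism from a normal (separated) DM stack into a separated DM stack is determined by its generic restriction. Equivalently, the data $(M,u)$ is forced up to unique isomorphism by the computation above.

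The hardest step is making rigorous that every such morphism has this local form. This means justifying the reduction to strict henselization and that $\Pic$ and sections near the closed point are exactly as described. I would do this by descent along $\Spec(R'[s]/(s^m-\pi'))\to\sqrt[m]{\Spec(R')}$, where a $\mu_m$-equivariant line bundle on a local ring is trivial with a character. Alternatively, I could simply cite \cite[Lemma 3.5]{valcritstack}, of which this statement is a restatement.
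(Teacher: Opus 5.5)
Your proof is correct, but it takes a different route from the paper. The paper does no computation here. It cites \cite[Proof of Lemma 3.5]{valcritstack} for existence with $m=n/\gcd(n,e)$ and for the closed-fiber description, and \cite[Theorem 3.4]{valcritstack} for uniqueness.

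You instead derive everything from the fiber-product description of root stacks in Subsection~\ref{subsec:root_stacks}:
\begin{itemize}
\item A morphism over $\Spec(R')\to\Spec(R)$ is a pair $(M,u)$ with $u^n\mapsto\phi(\pi)$.
\item Since $\Pic(\sqrt[m]{\Spec(R')})\cong\ZZ/m\ZZ$ is generated by the tautological bundle, comparing valuations gives $kn=em$.
\item Representability, i.e.\ injectivity of $x\mapsto x^k$ on $\mu_m$, gives $\gcd(k,m)=1$.
\end{itemize}
These two conditions pin down both $m$ and the closed-fiber homomorphism at once.

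Your version is transparent: it shows exactly why no other $m$ works and where the exponent $e/\gcd(n,e)$ comes from. It uses only tools the paper already has, including the local Picard computation in Subsection~\ref{subsec:local_situation}. The citation gives brevity and the generality of tame stacks in any characteristic, with no local calculation.

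Some smaller remarks:
\begin{itemize}
\item No \'etale base change is needed. For a principal divisor, $\sqrt[m]{\Spec(R')}\cong[\Spec(R'[s]/(s^m-\pi'))/\mu_m]$ holds globally.
\item The unit $v$ in $\phi(\pi)=v\pi'^e$ can be absorbed into the isomorphism $\varphi\colon M^{\otimes n}\xrightarrow{\sim}\OO$; you do not need to extract an $n$-th root.
\item Strictly, $M$ depends only on $k$ modulo $m$, and $u=a s^{k_0}$ with $a\in R'$, so your $k$ is really $k_0+m\,v_{R'}(a)$. This is harmless, because the stabilizer character depends only on $k$ modulo $m$.
\item The Picard computation is graded Nakayama on the $\ZZ/m\ZZ$-graded local ring $R'[s]/(s^m-\pi')$.
\item Your uniqueness argument (a $2$-isomorphism over the dense open extends over a normal source because the target has finite unramified diagonal) is what replaces the appeal to \cite[Theorem 3.4]{valcritstack}.
\end{itemize}
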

\begin{proof}
	The existence for $m=n/\gcd(n,e)$ is proven in \cite[Proof of Lemma 3.5]{valcritstack}, as well as its description to the closed fiber. The uniqueness is a part of the general result \cite[Theorem 3.4]{valcritstack}.
\end{proof}

We end the paragraph by proving the following lemmas.
\begin{lem}\label{cmsunr}
Suppose that $\widehat{ E}\in\XXX_0(N)(\overline{F})$ satisfies that $j(J_N(\widehat{ E}))\in \{0,1728\}$. The following three properties are equivalent: 
\begin{enumerate}
	\item  the map on the coarse moduli spaces $\PP^1\cong X_0(N)\to X_0(1)\cong \PP^1$ is unramified at~$r(\widehat{ E})$, where $r:\XXX_0(N)\to X_0(N)$ is the coarse moduli map; 
	\item  $J_N$ is \'etale at~$\widehat{ E}$ and $\Aut(\widehat{ E})=\mu_6$ (respectively $\Aut(\widehat E)=\mu_4$) if $j(J_N(\widehat{ E}))=0$ (respectively $j(J_N(\widehat{ E}))=1728$);
 	\item $J_N^{\rig}:\XXX_0(N)^{\rig}\to \XXX_0(1)^{\rig}$ is \'etale at $s_{0,N}(\widehat{ E})$, where $s_{0,N}:\XXX_0(N)\to \XXX_0(N)^{\rig}$ 
 	is the rigidification and  $\Aut(s_{0,N}(\widehat{ E}))=\mu_3$ (resp., $\Aut(s_{0,N}(\widehat E))=\mu_2$) if $j(J_N(\widehat{ E}))=0$ (resp., $j(J_N(\widehat{ E}))=1728$).
\end{enumerate}
\end{lem}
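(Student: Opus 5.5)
We prove the chain (1)$\Rightarrow$(3)$\Rightarrow$(2)$\Rightarrow$(1), reducing everything to a local computation over the strict henselization at the points $j=0$ and $j=1728$.

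\textbf{Local models.} Write $x_0:=J_N(\widehat E)$ and suppose $j(x_0)=0$; the case $j=1728$ is identical with $3,6$ replaced by $2,4$. Near $x_0$, the stack $\XXX_0(1)^{\rig}\cong\PPP(2,3)$ is étale locally the root stack $\sqrt[3]{\Spec R}$, where $R$ is the strictly henselian local ring of $X_0(1)\cong\PP^1$ at $j=0$. Likewise, $\XXX_0(N)^{\rig}$ near $s_{0,N}(\widehat E)$ is a smooth stacky curve with cyclic stabilizer. Its stabilizer is trivial or $\mu_3$, because $J_N^{\rig}$ is representable and hence injective on stabilizers. So it is étale locally $\sqrt[m]{\Spec R'}$, where $R'$ is the strictly henselian local ring of $X_0(N)$ at $r(\widehat E)$ and $m\in\{1,3\}$.

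The induced map of DVRs $R\to R'$ has ramification index $e$, which by definition is the ramification index of $X_0(N)\to X_0(1)$ at $r(\widehat E)$. Since $J_N^{\rig}$ is representable and extends the generic map, Lemma~\ref{rootstackextension} forces
\[
m=\frac{3}{\gcd(3,e)},
\]
and the map on closed fibres is $B\mu_m\to B\mu_3$, $x\mapsto x^{e/\gcd(3,e)}$.

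\textbf{(1)$\Leftrightarrow$(3).} If $e=1$, then $m=3$ and the local map $\sqrt[3]{\Spec R'}\to\sqrt[3]{\Spec R}$ is the base change of the root stack construction along the étale map $\Spec R'\to\Spec R$. It is therefore étale, and the stabilizer is $\mu_3$, which gives (3).

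Conversely, suppose (3) holds, so the stabilizer is $\mu_3$ and hence $m=3$. Then $\gcd(3,e)=1$. A morphism of stacks that is étale at a point induces an étale map of coarse spaces there when it identifies stabilizers, which holds here because $m=3$ and the closed-fibre map $x\mapsto x^{e}$ is an isomorphism on $\mu_3$. Alternatively, one can compute directly in the local chart: $\sqrt[3]{\Spec R'}$ has a smooth atlas $\Spec R'[u]/(u^3-\pi')$, mapping to the atlas $\Spec R[t]/(t^3-\pi)$. With $\pi=\text{unit}\cdot\pi'^{e}$, étaleness forces $t\mapsto\text{unit}\cdot u^{e}$ to have $e=1$. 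Hence (1).

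\textbf{(2)$\Leftrightarrow$(3).} Apply Corollary~\ref{cor:rigid-by-pol-LB_base-change} to $J_N$ with $\mathcal L=\Lambda$. It gives a Cartesian square
\[
\XXX_0(N)\cong\XXX_0(N)^{\rig}\times_{\XXX_0(1)^{\rig}}\XXX_0(1).
\]
Since the vertical maps are $\mu_2$-gerbes, hence smooth and surjective, $J_N$ is étale at $\widehat E$ if and only if $J_N^{\rig}$ is étale at $s_{0,N}(\widehat E)$; this uses that étaleness is fppf-local on the source.

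For the stabilizers, the gerbe $\XXX_0(N)\to\XXX_0(N)^{\rig}$ gives an exact sequence $1\to\mu_2\to\Aut(\widehat E)\to\Aut(s_{0,N}(\widehat E))\to1$. Because $\Aut(\widehat E)$ is cyclic inside $\Aut(x_0)=\mu_6$, we get $\Aut(\widehat E)=\mu_6$ if and only if $\Aut(s_{0,N}(\widehat E))=\mu_3$.

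\textbf{Main obstacle.} The delicate point is justifying the local model: that $\XXX_0(N)^{\rig}$ near a point with stabilizer $\mu_3$ (or trivial) is étale locally a root stack over the henselized coarse space, and that the local ring of the coarse space maps to the ramification computation. For the first, I would use that a smooth tame DM curve with cyclic stabilizer $\mu_m$ at a point is étale locally $[\Spec A/\mu_m]$ with $\mu_m$ acting faithfully on the cotangent line, hence isomorphic to $\sqrt[m]{\Spec R'}$. This also shows that a trivial stabilizer corresponds to $m=1$, where étaleness must fail because $3\nmid 1$ forces $e=3$.
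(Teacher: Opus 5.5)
Your proof is correct and has the same skeleton as the paper's. For (2)$\Leftrightarrow$(3) you compare $J_N$ and $J_N^{\rig}$ through the $\mu_2$-gerbes $\XXX_0(N)\to\XXX_0(N)^{\rig}$ and $\XXX_0(1)\to\XXX_0(1)^{\rig}$. The paper uses only that both rigidifications are étale; you use the Cartesian square of Corollary~\ref{cor:rigid-by-pol-LB_base-change}. For (1)$\Leftrightarrow$(3), both arguments apply Lemma~\ref{rootstackextension} to the local root-stack models to get $m=n/\gcd(n,e)$.

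The real difference is how you pass from this formula to $e=1$. The paper quotes Schoeneberg for the possible ramification indices and concludes that $e=1$ iff $m_1=m_2$. For that step one actually needs $e\in\{1,3\}$ over $j=0$ and $e\in\{1,2\}$ over $j=1728$: an $e=2$ over $j=0$ would give $m_1=m_2=3$ while being ramified. The paper also leaves the étaleness clause of (3) implicit.

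You use étaleness directly instead:
\begin{itemize}
\item For (1)$\Rightarrow$(3), you use the uniqueness in Lemma~\ref{rootstackextension} to identify the local map with the base change of $\sqrt[3]{\Spec R}$ along the étale map $\Spec R'\to\Spec R$.
\item For (3)$\Rightarrow$(1), you use that a stabilizer-preserving étale morphism is étale on coarse spaces, or alternatively the atlas computation $t\mapsto(\text{unit})\cdot u^{e}$.
\end{itemize}
This makes your argument independent of the classical theory of elliptic points and actually verifies the étaleness assertion in (3). The cost is a bit more local bookkeeping.

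There are two minor corrections.

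First, in (2)$\Leftrightarrow$(3) the principle you need is that étaleness is fpqc-local on the \emph{target}. The square exhibits $J_N$ as the base change of $J_N^{\rig}$ along the smooth surjection $\XXX_0(1)\to\XXX_0(1)^{\rig}$. Alternatively, you can use étale-locality on the source, since $\mu_2$-gerbes are étale in characteristic $0$. ``fppf-local on the source'' is not the right justification.

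Second, the last sentence of your final paragraph is false. At a point of $\XXX_0(N)^{\rig}$ over $j=0$ with trivial stabilizer one has $3\mid e$, but $J_N^{\rig}$ is still étale there. Compare the étale atlas $\Spec R[u]/(u^3-\pi)\to\sqrt[3]{\Spec R}$, whose coarse map has $e=3$. In fact, $\YYY_0(N)\to\YYY_0(1)$ is finite étale and the square is Cartesian. So the étaleness clauses in (2) and (3) hold automatically at every point over $j=0,1728$, and only the stabilizer condition can fail. This aside is not used in your chain of implications, so the proof is unaffected.
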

\begin{proof}
The equivalence between (2) and (3) follows because the rigidifications $s_{0,N}:\XXX_0(N)\to\XXX_0(N)^{\rig}$ and $s_1:\XXX_0(1)\to\XXX_0(1)^{\rig}$ are \'etale at~$\widehat{ E}$ and~$J_N(\widehat{ E})$ by \cite[Theorem A.1]{Abramovich_Olsson_Vistoli}. We show the equivalence between (1) and (3). Let $\overline{J_N}:X_0(N)\to X_0(1)$ be the morphism between coarse moduli spaces induced by $J_N$. Let $\widehat{R}$ be the local ring at~$r(\widehat{ E}) $ and $R$ the local ring at $\overline{J_N}(r(\widehat{ E}))$. Let $m_1:=\#\Aut(s_{0,N}(\widehat{ E}))$ and $m_2:=\#\Aut(s_1(J_N(\widehat{ E})))$. We have a commutative diagram
\[\begin{tikzcd}
	{\sqrt[m_1]{\Spec(\widehat R)}} & {\Spec(\widehat{R})} \\
	{\sqrt[m_2]{\Spec(R)}} & {\Spec(R)}
	\arrow[from=1-1, to=1-2]
	\arrow[from=1-1, to=2-1]
	\arrow[from=1-2, to=2-2]
	\arrow[from=2-1, to=2-2]
\end{tikzcd}\]
where the left vertical morphism is induced from~$J_N^{\rig}$ and so representable. By Lemma~\ref{rootstackextension}, this means that $m_1=m_2/\gcd(m_2,e)$ where $e$ is the ramification degree of $R\to \widehat{R}$. By \cite[Page 97]{Schoeneberg}, one has that $e\in\{1,2,3\}$. As~$m_2\in\{2,3\}$, this means that that $\overline{J_N}$ is unramified at~$r(\widehat{ E})$ if and only if $m_1=m_2$, which gives equivalence between (1) and (3). 
\end{proof}

\begin{lem}\label{stacky-points-xon}
There are $$\varepsilon_3(N):=\begin{cases}
0,&\text{ if } 9|N\\
\prod_{p|N}\bigg(1+ \bigg(\dfrac{-3}{p}\bigg)\bigg),&\text{otherwise.}
\end{cases} $$ points in~$\XXX_0(N)(\overline F)$ with automorphism group~$\mu_6$ and $$\varepsilon_2(N):=\begin{cases}
0&\text{ if } 4|N\\
\prod_{p|N}\bigg(1+\bigg(\dfrac{-1}{p}\bigg)\bigg) &\text{otherwise.}
\end{cases}$$ points in~$\XXX_0(N)(\overline F)$ with automorphism group~$\mu_4$. Below we give the table with concrete values:
\begin{center}
		\begin{tabular}{|c|c|c|}
				\hline
				$N$	& $\varepsilon_2(N)$& $ \varepsilon_3(N)$ \\
				\hline
				1	&  1       &     1    \\
				\hline 
				2	&   1       &    0  \\
				\hline 
				3	&    0    &  1        \\
			\hline
			4	&         0  & 0 \\
			\hline
			5	&      2  &     0\\
			\hline
		6	&       0      &  0  \\
			\hline
			7	&       0   &  2    \\
			\hline
		8	&        0    &   0   \\
			\hline
				9	&        0    &   0   \\
			\hline
				10	&        2    &   0   \\
			\hline
			12	&        0    &   0   \\
			\hline
			13	&        2    &   2   \\
			\hline
			16	&        0   &   0  \\
			\hline
			18	&        0   &   0  \\
			\hline
			25	&        2  &   0  \\
			\hline
			\end{tabular}
	\end{center}
\end{lem}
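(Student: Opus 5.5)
We reduce the count of stacky points to the classical count of elliptic points of $\Gamma_0(N)$. By Lemma~\ref{cmsunr}, a point $\widehat E\in\XXX_0(N)(\oF)$ lying over $j=0$ (resp.\ $j=1728$) has automorphism group $\mu_6$ (resp.\ $\mu_4$) exactly when the coarse map $\overline{J_N}\colon X_0(N)\to X_0(1)$ is unramified at $r(\widehat E)$. Points not over $j\in\{0,1728\}$ have stabilizer $\mu_2$, as recalled before Lemma~\ref{rootstackextension}. So it suffices to count the unramified points of $\overline{J_N}$ in the fibers over $j=0$ and $j=1728$.

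We count these fibers moduli-theoretically. Fix $E_0$ with $j=0$, so $\Aut(E_0)=\mu_6$ acting via $\ZZ[\zeta_3]^\times$. Since $\mathscr X_0(N)$ is the normalization, the $\oF$-points of $\XXX_0(N)$ over $E_0$ are the pairs $(E_0,H)$ with $H\subset E_0[N]$ cyclic of order $N$. The points of the coarse fiber are the orbits of $\Aut(E_0)/\{\pm1\}\cong\ZZ/3$ on this set. A point is unramified with stabilizer $\mu_6$ exactly when $H$ is fixed by $\Aut(E_0)$, i.e.\ when the orbit is a singleton, which is ramification index $1$. So $\varepsilon_3(N)$ equals the number of cyclic subgroups $H\subset E_0[N]\cong \ZZ[\zeta_3]/N$ of order $N$ that are stable under multiplication by $\zeta_3$. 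Such an $H$ is a $\ZZ[\zeta_3]$-submodule of $\ZZ[\zeta_3]/N$ that is cyclic of order $N$, i.e.\ $H=I/N\ZZ[\zeta_3]$ for an ideal $I\supset N\ZZ[\zeta_3]$ with $\ZZ[\zeta_3]/I\cong\ZZ/N$. The same argument with $\ZZ[i]$ applies to $j=1728$.

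The count is multiplicative in $N$ by CRT, so we reduce to a prime power $p^k$ and count ideals $I$ of $\mathcal O=\ZZ[\zeta_3]$ (resp.\ $\ZZ[i]$) with $\mathcal O/I\cong\ZZ/p^k$ cyclic.
\begin{itemize}
\item If $p$ splits, there are exactly two such ideals, $\mathfrak p^k$ and $\bar{\mathfrak p}^k$, giving the factor $2=1+\left(\frac{D}{p}\right)$.
\item If $p$ is inert, $\mathcal O/p$ is $\mathbb F_{p^2}$, which is not cyclic of order $p$, so there are none; this gives the factor $0$.
\item If $p$ ramifies, with $p=3$ (resp.\ $p=2$) and $\mathfrak p^2=(p)$, then $\mathcal O/\mathfrak p\cong\ZZ/p$ works for $k=1$. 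For $k\ge2$ the only candidate of index $p^k$ with cyclic quotient fails, since $\mathcal O/\mathfrak p^2=\mathcal O/p$ is not cyclic. This gives the factor $1$ when $k=1$ and $0$ when $p^2\mid N$.
\end{itemize}
Here $D=-3$ (resp.\ $-1$), and $\left(\frac{D}{p}\right)=0$ at the ramified prime. Taking the product yields exactly $\varepsilon_3(N)$ and $\varepsilon_2(N)$.

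The main subtlety is the middle step: the identification of $\oF$-points of $\XXX_0(N)$ over $E_0$ with cyclic subgroups, and of the automorphism group of $(E_0,H)$ with the stabilizer of $H$ in $\Aut(E_0)$. This uses representability of $J_N$ and the fact that over $\YYY_0(1)$ the normalization is just $\YYY_0(N)$. Equivalently, one may simply cite the classical formula for the number of elliptic points of order $2$ and $3$ of $\Gamma_0(N)$ (Shimura, Prop.~1.43) and combine it with Lemma~\ref{cmsunr}. The table then follows by direct evaluation, e.g.\ $13\equiv1\bmod 12$ gives $\varepsilon_2=\varepsilon_3=2$.
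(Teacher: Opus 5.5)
Your proof is correct, but it gets the count by a different route than the paper.

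The paper uses Lemma~\ref{cmsunr} only to translate ``automorphism group $\mu_6$ (resp.\ $\mu_4$)'' into ``unramified point of $X_0(N)\to X_0(1)$ over $j=0$ (resp.\ $j=1728$)'', i.e.\ an elliptic point of $\Gamma_0(N)$. It then simply cites the classical count from Schoeneberg.

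You instead compute stabilizers directly from the moduli interpretation over $\YYY_0(N)$. Since $\Aut(E_0,H)$ is the stabilizer of $H$ in $\Aut(E_0)$, the points with full automorphism group are exactly the $\mathcal O$-stable cyclic subgroups. These are the ideals $I\supseteq N\mathcal O$ with $\mathcal O/I\cong\ZZ/N$, which you count prime by prime in $\ZZ[\zeta_3]$ and $\ZZ[i]$.

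What your route buys:
\begin{itemize}
\item It is self-contained. The ramification detour, and with it Lemma~\ref{cmsunr} (whose proof itself leans on Schoeneberg for $e\in\{1,2,3\}$), becomes essentially unnecessary; the ramification-index remark is only a cross-check.
\item It fixes the convention at the ramified prime, which the displayed formula needs. Read as a Kronecker symbol, $(\tfrac{-1}{2})=1$, which would give $\varepsilon_2(2)=2$ and contradict the table. Your ``$0$ at the ramified prime'' (the character of $\QQ(i)$, discriminant $-4$) gives the correct value $1$.
\item It is the same CM/ideal viewpoint the paper later uses in Lemmas~\ref{nmbc7}, \ref{nmbc10} and \ref{str25} to determine fields of definition.
\end{itemize}
The paper's route is shorter but rests entirely on the classical elliptic-point formula.

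Two small steps deserve a line each:
\begin{itemize}
\item The equivalence ``$I/N\mathcal O$ cyclic $\iff$ $\mathcal O/I$ cyclic'' for order-$N$ subgroups of $(\ZZ/N)^2$. A cyclic subgroup of order $N$ is a direct summand, since $\ZZ/N$ is self-injective. Conversely, a cyclic quotient splits off, and cancellation for finite abelian groups finishes.
\item The inert case with $k\ge 2$ even, which your sentence about $\mathcal O/p$ does not literally cover. There the only ideal of norm $p^k$ is $p^{k/2}\mathcal O$, and its quotient surjects onto $\mathcal O/p\cong\mathbb F_{p^2}$, so it is not cyclic.
\end{itemize}
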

\begin{proof}
	By Lemma~\ref{cmsunr} we need to count the number of unramified points above $j=0$ and $j=1728$. Now the claim follows from \cite[IV, \S~7 - 8]{Schoeneberg}.
\end{proof}
\subsubsection{}
We are ready to prove the description of the sectors of $\XXX_0(N)$ from Theorem~\ref{thmonsectors}.  Let $$N\in\{1,2,3,4,5,6,7,8,9,10,12,13,16,18,25\}.$$  We have a canonical morphism of stacks $J_N:\XXX_0(N)\to\XXX_0(1)$ which is representable.
\begin{proof}[Proof of Theorem~\ref{thmonsectors}]
First, consider $N=1$ case. 
Recall that the Hodge bundle $\Lambda$ induces an identification $\XXX_0(1)\cong \PPP(4,6)$ with $\Lambda $ corresponding to $\mathcal O(1)$.  
The untwisted sector is $(x,\widehat{\mu}\xrightarrow{y\mapsto 1}\Aut(x))$. 
We also have the twisted sector $(x,\widehat{\mu}\to\mu_{12}\xrightarrow{y\mapsto y^6}\mu_2)$ for any geometric point $x$ with $\#\Aut(x)=2$. 
Assume that $\#\Aut(x)=6,$ hence $x=[0,1]$ and $\Aut(x)=\mu_6$. 
We have $6$ such sectors by Lemma~\ref{secgroup}, namely the ones containing $(x, \widehat{\mu}\to\mu_{12}\xrightarrow{x\mapsto x^{2\ell}}\mu_6)$ for $\ell=0, 1, 2, 3, 4, 5$. 
When $\ell=0,3$, $\gamma$-values of the corresponding sectors are precisely $0$ and $1/2$. 
One treats analogously the sectors corresponding to $\#\Aut(x)=4$. 

Let's calculate the ages. 
The anticanonical line bundle on $\PPP(4,6)$ is $\mathcal O(1)^{\otimes 10}$. 
The linearization describing $\mathcal O(1)$ is $(x,y,a)\mapsto (t^4x,t^6y, t^1a)$ on the trivial line bundle $(\mathbb A^2-\{0\})\times \mathbb A^1\to (\mathbb A^2-\{0\})$. 
Then the action of the stabilizer $\mu_6$ of $[0,1]$ on the fiber $\mathcal O(1)([0,1])$ is given by $\zeta_6\cdot a=\zeta_6 a$;
this action corresponds to the canonical inclusion $\mu_6\to\overline F^*=\Gm(\overline F)$. 
Now let's calculate the age of the sector containing $([0,1],\widehat{\mu}\to\mu_{12}\xrightarrow{y\mapsto y^{2\ell}}\mu_6)$. 
When $\ell=0$, the ages with respect to $\mathcal O(1)$ and $\mathcal O(1)^{\otimes 10}$ are $0$. 
When $\gcd(\ell,6)=1$, then the induced inclusion of $\mu_k$ for some $k\geq 1$ to the stabilizer of $[0,1]$ is given by $\mu_6\to\mu_6, x\mapsto x^{\ell}$. 
The age with respect to $\mathcal O(1)$ is thus $\ell/6$. 
It is immediate that the age with respect to $\mathcal O(10)$ is $\{10\ell/6\}=\{4\ell/6\}=\{2\ell /3\}$. 
Assume that $\ell=2$. 
Then $\widehat{\mu}\to\mu_{6}$ is given by $x\mapsto x^{2}$. The induced inclusion $\mu_k\to\mu_6$ is given by $\mu_3\to\mu_6, x\mapsto x$. 
The age with respect to $\mathcal O(1)$ is $1/3$, while the age with respect to $\mathcal O(10)=\{10/3\}=1/3$. 
Similarly, when $\ell=4$, the induced morphism is $\mu_3\to\mu_6, x\mapsto x^{2}$. 
The age with respect to $\mathcal O(1)$ is $2/3$, while the age with respect to $\mathcal O(10)$ is $2/3$. 
Finally, when $\ell=3$, the induced morphism is $\mu_2\to\mu_6$, which has the age $1/2$ with respect to $\mathcal O(1)$ and the age $0$ with respect to $\mathcal O(10)$. 
The case when the automorphism group is of order $4$ is treated similarly. The proof in the case $N=1$ is completed.

We now treat the general case.	
The stack $\XXX_0(N)$ has the untwisted sector, denoted by $0$, whose age is $0$.
Choose $x\in \XXX_0(N)(\oF)$. 
Suppose that $j(J_N(x)) \neq 0,1728$. 
Let $U$ be the complement of $j=0$ and $j=1728$ in $\PP^1 \cong X_0(1)$. 
Then $\Aut(x)$ is $\mu_2$. 
In this case, $x$ maps to a non-stacky point of $\XXX_0(N)^{\rig}$. 
Let $s_2:\widehat{\mu}\to\mu_2$ be the canonical surjection defined over $\Spec(\ZZ)$. 
The points $(x,\widehat{\mu}\xrightarrow{x\mapsto 1}\mu_2)$ and $(x,\widehat{\mu}\xrightarrow{(s_2)_{\oF}}\mu_2)$ of $\mathcal J_0\XXX_0(N)(\oF)$ map to points of $\mathcal J_0\XXX_0(1)(\oF)$ which lie in distinct connected components, so they lie in distinct connected components of $\mathcal J_0\XXX_0(N)$. 
The point $(x,\widehat{\mu}\xrightarrow{x\mapsto 1}\mu_2)$ lies in the sector $0$. We show that $(x_1,\widehat{\mu}\xrightarrow{(s_2)_{\oF}}\mu_2)$ and $(x_2,\widehat{\mu}\xrightarrow{(s_2)_{\oF}}\mu_2)$, with $x_1$ and $x_2$ not mapping to $j=0,1728$, lie in the same component. Indeed, let $\overline{x_1}$ and $\overline{x_2}$ be the images in $U_{\oF}$ of $x_1$ and $x_2$. We have that $\XXX_0(N)|_U\cong U\times B\mu_2$ because $\Pic(U)=\{0\}$.  Consider $(U\xrightarrow{(\Id_U,1)} U\times B\mu_2\to \XXX_0(N),((\widehat{\mu})_U\xrightarrow{(s_2)_U}(\mu_2)_U)$. Restricting it to $\overline{x_i}$ gives $(x_i,\widehat{\mu}\xrightarrow{(s_2)_{\oF}}\mu_2)$ for $i=1,2$, which means that they are in the same connected component. We call this sector $1/2$. Now it maps to the sector $1/2$ of $\XXX_0(1)$, and Corrolary~\ref{etaleage} implies that its age is $0$. 

Consider now  a geometric point $x$ above $j=0$ with automorphism group scheme $\mu_2$.  Let $S$ be the closed substack of $\XXX_0(N)$ with automorphism group scheme larger than $\mu_2$. Note that $S$ is finite, i.\ e.\ $S\langle \oF\rangle$ is finite. Let $y$ be another finite closed substack of $\XXX_0(N)$ which is geometrically a singleton, disjoint from $\{x\} \cup S$. Set $S':=S\cup\{y\}$. One has that $V:=(\XXX_0(N)-S')^{\rig}$ is an open subscheme of $\PP^1$ and not equal to $\PP^1$. Hence $\Pic(V)=\{0\}$ and $\XXX_0(N)|_V\cong V\times \PP^1$. Now as above, we have that $(x,(\widehat{\mu})_{\oF}\xrightarrow{s_{\oF}}(\mu)_{\oF})$ is in the sector $1/2$. The same arguments apply if $x$ is a point with automorphism group $\mu_2$ lying above $j=1728$.

Consider now a geometric point $x$ above $j=0$, with automorphism group isomorphic to $\mu_6$. Clearly, the point $(x,\widehat{\mu}\xrightarrow{x\mapsto 1}\Aut(x))$ belongs to the untwisted sector. Consider the point $(x,\widehat{\mu}\xrightarrow{\phi}\Aut(x))$, where $\phi$ is the only homomorphism with the image of size $2$.  Let $S_0:=S-\{x\}$ and let $S_0':=S_0\cup\{y\}$ with $y$ as above. Then $(\XXX_0(N)-S_0')^{\rig}$ is isomorphic to an open substack $\mathcal U$ of the stacky curve which is $\AAA^1_{\oF}$ with stackiness $\mu_3$ at $0$. Let us write $X|_Y$ for the fiber product $X\times_{f,Z}Y$ when $f:X\to Z$ is clear from the context. As $\Pic(\mathcal U)$ is $\mathbb Z/3\mathbb Z$ \cite[Theorem 1.1]{lopez2023picardgroupsstackycurves} in which every element is a square, we obtain that $\XXX_0(N)|_{(\XXX_0(N)-S_0')^{\rig}}\cong\mathcal U\times B\mu_2$ so that the point $x$ corresponds to $(0,1)$. The homomorphism $\phi$ becomes $(\widehat{\mu}\xrightarrow{x\mapsto (1,s_2(x))}\mu_3\times \mu_2).$ We remove finitely many non-stacky points of $\mathcal U$, if needed, so that $\mathcal U$ admits an \'etale cover by $U'$ which is a non-empty open subscheme of $\AAA^1$. We have a morphism $a:U'\to \mathcal U\to\mathcal U\times B\mu_2\to \XXX_0(N)$ and $\Aut(a)=\mu_2$. Now $(x,\phi)$ is a restriction of $(a, (s_2)_{U'}) $ and so is $(x',(s_2)_{\oF})$ for some $x'$ with $\Aut(x')\cong \mu_2$. We deduce that $(x,\phi)$ belongs to the sector $1/2$.

Assume that $x$ is a geometric point of $\XXX_0(N)$ with automorphism group scheme isomorphic to $\mu_6$ and that $\widehat{\mu}\to \Aut(x)$ has image of at least $3$ elements. The induced homomorphism $\Aut(x)\to \Aut(J_N(x))\cong\mu_6$ is an isomorphism.  We deduce from Lemma~\ref{secgroup} that the sectors of $(x, \widehat{\mu}\to \Aut(x))$ are distinct and moreover distinct from the sectors $0$ and $1/2$. If $N=3$, there exists precisely one possibility for $x$, so the sectors are described in this case. If $N=7,13$ we have $2$ geometric points of $\XXX_0(N)$ with automorphism group scheme isomorphic to $\mu_6$. Moreover, they are defined over $F$ if and only if $\omega\in F$, otherwise they form a connected closed substack of dimension $0$. We deduce that in the case $\omega\in F$, we have two sectors of $\XXX_0(N)$ above the sector $j/6$, where $j\in\{1,2,4,5\}$, and if $\omega\not\in F$ we have a single sector above $j/6$. In either case, by Corollary~\ref{etaleage}, the age is equal to the age of the corresponding sector $j/6$ of $\XXX_0(1)$, which is $\{-2j/3\}$ by \cite[Example 5.9]{dardayasudabm}.

Assume that $x$ is a geometric point with automorphism group $\mu_4$. Clearly $(x,\widehat{\mu}\xrightarrow{x\mapsto 1}\Aut(x))$ belongs to the trivial sector. Consider the point $(x,\widehat{\mu}\xrightarrow{\psi}\Aut(x))$, where $\psi$ is the only homomorphism with the image of size $2$.  Let $S_1:=S-\{x\}$ and let $S_1':=S_1\cup\{y\}$ with $y$ as before. Then $(\XXX_0(N)-S_0')^{\rig}$ is isomorphic to an open substack $\mathcal V$ of the stacky curve which is $\AAA^1_{\oF}$ with stackiness $\mu_2$ at $0$. As $\Pic(\mathcal V)=\mathbb Z/2\mathbb Z$ by \cite[Theorem 1.1]{lopez2023picardgroupsstackycurves}, we are in one of the following two situations: $\XXX_0(N)|_{(\XXX_0(N)-S_0')^{\rig}} \cong \mathcal V\times B\mu_2$ or $\XXX_0(N)|_{(\XXX_0(N)-S_0')^{\rig}} \cong \PPP(2,4)|_{\mathcal V}$. In the first case, one can use arguments as before. Assume we are in the second case.  The point $(x,\psi)$ corresponds to the point $([0:1],\widehat{\mu}\xrightarrow{(s_2)_{\oF}}\mu_2\hookrightarrow\mu_4)$. Let $\widetilde{V}$ be the preimage of $\mathcal V$ in $\AAA^{2}-\{0\}$ for $\AAA^2-\{0\})\to \PPP(2,4)\to\PPP(1,2)$. The induced morphism $d:\widetilde V\to\PPP(2,4)|_{\mathcal V}\to \XXX_0(N)$ has automorphism group scheme $\mu_2$. The restriction of $(d,\widehat{\mu}\xrightarrow{(s_2)_{\widetilde V}}\mu_2)_{\widetilde V}) $ at $(0,1)$ is the point $(x,\widehat{\mu}\xrightarrow{\psi}\Aut(x))$ while the restriction at some other point is a point $(x_0,\widehat{\mu}\xrightarrow{(s_2)_{\oF}}\mu_2)_{\oF})$. The associated sector is $1/2$.

The case $x$ has automorphism group scheme isomorphic to $\mu_4$ and $\widehat{\mu}\to\mu_4$ is surjective is treated similarly to the case $\widehat{\mu}\to\mu_6$ with the image of size at least $3$. The proof is completed.
\end{proof}
We may write below for the sector $(\mathcal K_*, t)$ only $t$.
\subsection{Naive height}\label{subsec:naive_height}
 We recall the definition of the naive height $H:\XXX_0(N)\langle F\rangle \to\RR_{>0}$. When $N=1$, these heights are called {\it toric heights} in \cite{darda:tel-03682761}. For $v\in M_F^0$, we define $|\cdot|_v$ by $|x|_v=q_v^{-v(x)} $, where $q_v$ is the norm of the maximal ideal $\mathfrak m_v$ of the ring of integers $\OO_v$ of $F_v$. For $v$ a real place let $|\cdot|_v$ be the usual absolute value on $F_v\cong \RR$. For $v$ a complex place, let $|\cdot|_v$ be the square of the absolute value on $F_v^2\cong \CC$.  The Hodge line bundle defines an isomorphism $\XXX_0(1)\cong \PPP(4,6)$. For $v$ finite, we let $$\mathcal D_v:=\OO_v^2-\mathfrak m_v^4\times \mathfrak m_v^6,$$where $\mathfrak m_v$ is the maximal ideal of the ring of integers~$\OO_v$ of $F_v$. We define a function $r_v:F_v^2-\{0\}\to\ZZ$, given by $$r_v(x_1,x_2)=\max\bigg(\bigg\lceil\frac{-v(x_1)}{4}\bigg\rceil,\bigg\lceil\frac{-v(x_2)}{6}\bigg\rceil\bigg).$$
 It follows from \cite[Lemma 3.3.4.4 and Lemma 4.4.3.1]{darda:tel-03682761} that $\pi_v^{r_v(x_1,x_2)}\cdot (x_1,x_2)\in\mathcal D_v$. We set $$f_v(x_1,x_2)=q_v^{-12r_v(x_1,x_2)}.$$ For $v$ infinite we define $$f_v:F_v^2-\{0\}\to\RR_{\geq 0},\hspace{1cm}(x_1,x_2)\mapsto \max(|x_1^3|_v,|x_2^2|_v).$$
\begin{mydef}\label{naive}
We define a naive height on $\XXX_0(1)$ by $$H: \XXX_0(1)\langle F\rangle\to\RR_{> 0},\hspace{1cm}(x_1,x_2)\mapsto \prod_{v\in M_F}f_v(x_1,x_2).$$
We define a naive height on $\XXX_0(N),$ which by abuse of notation is also denoted by $H$: $$H:\XXX_0(N)\langle F\rangle\to\RR_{>0},\hspace{1cm}x\mapsto H(J_N(x)).$$
\end{mydef}
\begin{rem}
\normalfont By \cite[Example 4.4.3.5]{darda:tel-03682761}, we have when $N=1$ and $F=\QQ$, that $$H(x_1,x_2)=\max(|x_1^3|, |x_2^2|)$$ if the coordinates $x_1,x_2$ are in $\ZZ$ with the condition that for all $p$ prime, one has that $p^4\nmid x_1\text{ or }p^6\nmid x_2$.
\end{rem}

	Let $v$ be a finite place not extending the places $2$ and $3$ and let $K/F_v^{\un}$ be the unique extension of degree $12$. The extension is totally ramified and Galois. The Galois group $\Gal(K/F_v^{\un})$ canonically identifies with $\mu_{12}$. We deduce a canonical action of $\mu_{12}$ on $\Spec(\OO_K)$. Fix $12$-th root of unity $\zeta_{12}$ and a $12$-th root $\sqrt[12]{\pi_v}$ of $\pi_v$. Note that the stack $[\Spec(\OO_K)/\mu_{12}]$ has one closed point and one open point. The automorphism group of the closed points canonically identifies with $\mu_{12}$ while the open point corresponds to the open immersion $\Spec(F^{\un}_v)\to [\Spec(\OO_K)/\mu_{12}]$.  Let $\psi_v:\PPP(4,6)\langle F_v^{\un}\rangle\to \pi_0\mathcal J_0\XXX_0(1)$ be the residue map \cite[Definition 2.17]{dardayasudabm}. Note that the map $\pi_0\mathcal J_0\XXX_0(1)\to (\frac14\ZZ\cup\frac16\ZZ)\cap[0,1[$ defined by $(\mathcal K_4,t)\mapsto t$, by $(\mathcal K_6,t)\mapsto t$, by $1/2\mapsto 1/2$ and $0\mapsto 0$, is a bijection. We may use this map to denote the sectors.
	\begin{lem}
		Suppose that $x=(x_1,x_2)\in\XXX_0(1)\langle F_v^{\un}\rangle\cong\PPP(4,6)\langle F_v^{\un}\rangle$.  One has that $$\psi_v(x_1,x_2)=1-\min(v(x_1)/4,v(x_2)/6)-r_v(x_1,x_2).$$
	\end{lem}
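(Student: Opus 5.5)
We compute the residue map $\psi_v$ from its definition in \cite[Definition 2.17]{dardayasudabm}: a point $x\in\PPP(4,6)\langle F_v^{\un}\rangle$ extends uniquely to a representable morphism $\widetilde x\colon[\Spec(\OO_K)/\mu_{12}]\to\PPP(4,6)$. The restriction of $\widetilde x$ to the closed point gives a homomorphism $\mu_{12}\to\Aut(\widetilde x(0))$. Composed with $\widehat\mu\to\mu_{12}$, this is the sector $\psi_v(x)$, which we read off via the bijection $\delta$ of Lemma~\ref{secgroup} and the identification $\gamma$ of Lemma~\ref{shift}.

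\textbf{Step 1: reduce to a normalized representative.} Replacing $(x_1,x_2)$ by $(\pi_v^{4r}x_1,\pi_v^{6r}x_2)$ with $r=r_v(x_1,x_2)$ does not change the $F_v^{\un}$-point. Both sides of the formula are invariant under this substitution: $\min(v(x_1)/4,v(x_2)/6)$ increases by $r$, and $r_v$ decreases by $r$. So we may assume $(x_1,x_2)\in\mathcal D_v$ and $r_v=0$. The claim then becomes $\psi_v(x)=1-m$ modulo $\ZZ$, where $m:=\min(v(x_1)/4,v(x_2)/6)\in[0,1)$; the value $0$ is read as the untwisted sector.

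\textbf{Step 2: construct the extension.} Set $\varpi=\sqrt[12]{\pi_v}$, a uniformizer of $K$. We seek $t\in K^\times$ with $(t^4x_1,t^6x_2)\in\OO_K^2$ not both in the maximal ideal. The right choice is $t=\varpi^{-12m}$. Indeed $12m\in\ZZ$, and $v_K(t^4x_1)=12v(x_1)-48m\ge0$, $v_K(t^6x_2)=12v(x_2)-72m\ge 0$, with equality in at least one of the two. The pair $(t^4x_1,t^6x_2)$ defines a $\Gm$-torsor with a $\mu_{12}$-equivariant map to $\AAA^2-\{0\}$, and hence a morphism $[\Spec\OO_K/\mu_{12}]\to\PPP(4,6)$ extending $x$. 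The $\mu_{12}$-equivariance is twisted by the cocycle $\sigma\mapsto\sigma(t)/t$: for $\sigma=\zeta_{12}$ we get $\zeta_{12}^{-12m}$. This cocycle is exactly the homomorphism $\mu_{12}\to\Gm$ landing in $\Aut(\widetilde x(0))\subset\Gm$. Representability holds because the image of $\mu_{12}$ in $\Aut$ is all of $\mu_{12}/\ker$, consistent with Lemma~\ref{rootstackextension}. By uniqueness in \cite[Theorem 3.4]{valcritstack}, this is the extension.

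\textbf{Step 3: read off the sector.} The homomorphism $\widehat\mu\to\mu_{12}\to\Aut(\widetilde x(0))$ is $y\mapsto y^{-12m}$, so $\delta$ of it is $-m\equiv 1-m\pmod\ZZ$. Under the bijection $\pi_0\mathcal J_0\XXX_0(1)\to(\frac14\ZZ\cup\frac16\ZZ)\cap[0,1[$, this is the sector $1-m$ (or $0$ if $m=0$). We also check the case distinction. If $12m\in 6\ZZ+\{\pm2\}$, the closed point is $[0:1]$, since then $v(x_1)/4>m$; if $12m\in 6\ZZ\pm3$, the closed point is $[1:0]$; and $m=1/2$ gives the sector $1/2$. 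This matches the labelling $(\mathcal K_6,t)$, $(\mathcal K_4,t)$, $1/2$.

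\textbf{Main obstacle.} The hardest point is the sign and orientation convention. One must check whether the cocycle gives $y\mapsto y^{-12m}$ or $y^{12m}$, and this depends on identifying $\Gal(K/F_v^{\un})$ with $\mu_{12}$ via $\sigma\mapsto\sigma(\varpi)/\varpi$, on the direction of the $\Gm$-action, and on the convention for $\psi_v$ in \cite{dardayasudabm}. The target formula, with its $1-\cdot$, indicates the inverse. I would fix conventions by testing on a point such as $(\pi_v,1)$, which should land in the untwisted sector: there $m=0$ and $r_v=0$, so the formula gives $1$, which must be read as $0$ modulo $1$.
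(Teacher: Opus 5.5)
Your argument is the paper's: normalize by $r_v$ into $\mathcal D_v^{\un}$, rescale by $\varpi^{-12m}$ over the degree-$12$ tame extension, and read off the character $\zeta_{12}\mapsto\zeta_{12}^{-12m}$, which gives the sector $\{-m\}=1-m$ (read modulo $1$). Taking $\rho(\sigma)=\sigma(t)/t$ directly from the descent datum is in fact cleaner than the paper's route, which solves for $\rho^4,\rho^6$ and then has to choose between $\zeta_{12}^{-12m}$ and $\zeta_{12}^{6-12m}$.

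There are two small corrections. First, $[\Spec(\OO_K)/\mu_{12}]\to\PPP(4,6)$ is never representable, since no stabilizer of $\PPP(4,6)$ contains $\mu_{12}$; before invoking uniqueness, pass to $\mu_{12}/\ker\rho$ acting on $\OO_{K^{\ker\rho}}$, which leaves the induced element of $\Hom(\widehat{\mu},\Aut)$ unchanged. Second, your proposed sanity check at a point with $m=0$ cannot detect the sign, because both conventions give the trivial sector there.
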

	\begin{proof} One can extend $r_v$ to $\Fv^{\un} $ in the obvious way. We define $\mathcal D_v^{\un}$ to be $(\mathcal O_v^{\un})^2-\mathfrak m_v^{\un})^4\times (\mathfrak m_v^{\un})^6,$ with $\mathcal O_v$ the ring of the integers in $F_v^{\un}$ and $\mathfrak m_v^{\un}$ its maximal ideal.  One has $(x_1^{\circ},x_2^{\circ}):=\pi_v^{r_v(x_1,x_2)}\cdot (x_1,x_2)\in\mathcal D^{\un}_v$ similarly as above. Let $m=\min({v(x_1^{\circ})}/{4},{v(x_2^{\circ})}/{6})$. We  have that
		$$m=\min({v(x_1)}/{4},{v(x_2)}/{6})+r_v(x_1,x_2).$$
		As $r_v(x_1^{\circ},x_2^{\circ})=0$, it is sufficient to show that $$\psi_v(x_1^{\circ},x_2^{\circ})=1-\min(v(x_1^{\circ})/4,v(x_2^{\circ})/6) $$ for $(x_1,x_2)\in\mathcal D_v^{\un}$. 
	We search for character $\rho:\mu_{12}\to \Gm$ corresponding to $[\Spec(\OO_K)/\mu_{12}]\to\PPP(4,6)$ that's induced by a $\rho$-equivariant morphism $\Spec(\OO_K)\to\AAA^2-\{0\}$ given by $(x_1,x_2)$. 
		 
		 Assume that $3v(x_1)<2v(x_2)$. The point $(x_1,x_2)$ is $K$-isomorphic to the restriction of the point $h:\Spec(\OO_K)\to\AAA^2-\{0\}$ where $h^*$ satisfies $X\mapsto (\sqrt[12]{\pi_v})^{-48m} x_1$, $Y\mapsto (\sqrt[12]{\pi_v})^{-72m}x_2$. 
		 We have a {commutative diagram below:} 
		 \[\begin{tikzcd}
		 	{\sqrt[12]{\pi_v}} & {((\sqrt[12]{\pi_v})^{-48m} x_1,(\sqrt[12]{\pi_v})^{-72m}x_2)} \\
		 	{\sqrt[12]{\pi_v}\zeta_{12}} &
		 	{((\sqrt[12]{\pi_v})^{-48m}\zeta_{12}^{-48m} x_1,(\sqrt[12]{\pi_v})^{-72m}\zeta_{12}^{-72m}x_2)}
		  	\arrow["h", maps to, from=1-1, to=1-2]
		 	\arrow[maps to, from=1-1, to=2-1]
		 	\arrow[ maps to, from=1-2, to=2-2]
		 	\arrow["h", maps to, from=2-1, to=2-2]
		 \end{tikzcd}\]  with vertical maps given by $\mu_{12}$ and $\Gm$-actions respectively.
		 We deduce $\rho(\zeta_{12})^4=\zeta_{12}^{-48m}$ and $\rho(\zeta_{12})^6=\zeta_{12}^{-72m}$. Thus $\rho(\zeta_{12})^2=\zeta_{12}^{-24m}$ from where we get that $\rho(\zeta_{12})=\zeta_{12}^{-12m}$ or $\rho(\zeta_{12})=\zeta_{12}^{6-12m}$. Note that as the automorphism group of the point $((\sqrt[12]{\pi_v})^{-48m}\zeta_{12}^{-48m} x_1,(\sqrt[12]{\pi_v})^{-72m}\zeta_{12}^{-72m}x_2)$ given by the reduction $(\AAA^2-\{0\})(\OO_K)\to(\AAA^2-\{0\})(\overline {\kappa_v})$, where $\overline {\kappa_v}$ is the separable closure of the residue field $\kappa_v$ at $v$, is $\mu_6$.  Hence, the induced morphism $\mu_{12}\to \mu_6$ is given by $x\mapsto x^{-12m}$. The associated sector is  $\{-12m/12\}=1-m$
		  The case $3v(x_1)>2v(x_2)$ is treated similarly. Assume that $3v(x_1)=2v(x_2)$. Assume at least on of the inequalities $v(x_1)<4, v(x_2)<6$ is valid, we get either $v(x_1)=v(x_2)=0$ or $v(x_1)=2, v(x_2)=3. $ In the first case the associated sector is the untwisted sector, which is $0$ in our notation. In the second case, as above, we get that $\rho(\zeta_{12})=-1$. This implies that the associated sector is $1/2$. The proof is completed.
	\end{proof} 
{	\begin{lem}
		The element of $\NS_{\orb}(\XXX_0(1)) $ defining the naive height is $(\Lambda, (12-12t)_{t}) $ for $t\in\pi_0\mathcal J_0\XXX_0(1)$.
	\end{lem}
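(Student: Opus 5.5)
We plan to verify that the naive height $H$ on $\XXX_0(1)$ agrees, up to a bounded factor, with the height attached in \cite{dardayasudabm} to the pair $(\Lambda,(c_t)_t)$, where $c_t=12-12t$. Recall from \cite{dardayasudabm} that the height of $(L,c)$ is, up to $O(1)$, a product of local factors. At almost all finite places $v$, the factor equals $q_v^{-(\text{local degree of }L)}\cdot q_v^{c(\psi_v(x))}$, or equivalently it is computed from a model: the stacky degree contributes $q_v$ raised to an integral intersection number, and the residue $\psi_v(x)$ contributes the correction $q_v^{c(\psi_v(x))}$. Since heights attached to the same element of $\NS_{\orb}$ agree up to bounded functions, it suffices to compare the local factors $f_v$ with these at all but finitely many places. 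At the archimedean places and at $v\mid 6$ a bounded discrepancy is harmless, because $f_v$ and any admissible local height for $\OO_{\PPP(4,6)}(12)$ differ by a bounded continuous function on the compact quotient.

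At a finite place $v\nmid 6$, write $x=(x_1,x_2)$ and normalize to $(x_1^\circ,x_2^\circ)=\pi_v^{r_v}(x_1,x_2)\in\mathcal D_v$. We will take the standard local height of $\Lambda^{\otimes12}=\OO(12)$, which pulls back $\OO_{\PP^1}(1)$. In terms of the integral model $\PPP_{\OO_v}(4,6)$, this local height is $q_v^{-12\,r_v}\cdot q_v^{12 m}$, where $m=\min(v(x_1^\circ)/4,v(x_2^\circ)/6)\in\frac1{12}\ZZ\cap[0,1)$. The reason is that the coarse height of $[x_1^3:x_2^2]$ is $\max(|x_1^3|_v,|x_2^2|_v)$, which after scaling equals $q_v^{-12r_v}q_v^{-12m}$, with the sign fixed by normalization. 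Equivalently, the stacky height of $\Lambda$ alone measures $q_v^{-12 r_v}$ times the fractional defect. We then substitute the preceding lemma, $\psi_v(x)=1-m-r_v$; since $r_v$ is an integer, the sector is $t=\{1-m\}$, so $m\equiv 1-t$ modulo integers. Because $m\in[0,1)$, we get $12m=12-12t$ when $t\neq0$, and $12m=0$ when $t=0$.

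Thus $f_v=q_v^{-12r_v}$ equals the coarse local height times $q_v^{-12m}=q_v^{-(12-12t)}$ for twisted $t$. This is exactly the orbifold correction term with coefficient $c_t=12-12t$ on the sector $t=\psi_v(x)$, and the coefficient on the untwisted sector is $0$ as required.

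The main obstacle is pinning down normalizations and signs. We must check that the convention of \cite{dardayasudabm} for the height of $(L,c)$ uses $q_v^{c(\psi_v(x))}$ with the same orientation, and that the $\NS(\XXX_0(1))$ component is $\Lambda$ rather than $\Lambda^{12}$ under the chosen scaling. We would check both against the untwisted case $t=0$, where $f_v$ must equal the pulled-back coarse height, and against \cite[Example 5.9]{dardayasudabm}. We would also check that the sector $1/2$ (the case $v(x_1)=2,v(x_2)=3$) gives $12-6=6$, consistent with $m=1/2$.
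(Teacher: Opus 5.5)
Your route is the paper's. The paper puts on $\Lambda^{\otimes 12}=j^*\OO_{\PP^1}(1)$ the metric pulled back from $\PP^1$, and works with the section $j^*X$ on $\{j^*X\neq 0\}$ (and $j^*Y$ on the other chart). It checks via the residue formula that the local factors of $H$ and of $\prod_v\|j^*X\|_v^{-1}q_v^{12-12\psi_v}$ coincide for $v$ outside a finite set $S$ (the archimedean places and those above $2,3$), and it bounds the ratio on $S$. Comparing $f_v$ with $\max(|x_1^3|_v,|x_2^2|_v)$ directly, as you do, is the same thing after the product formula.

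Where your write-up does not close is the sign bookkeeping. You give the coarse local factor once as $q_v^{-12r_v}q_v^{12m}$ and once as $q_v^{-12r_v}q_v^{-12m}$, and with the first you conclude $f_v=(\text{coarse})\cdot q_v^{-(12-12t)}$. That is a lowering. The height of $(L,c)$ multiplies the height of $L$ by $q_v^{+c(\psi_v(x))}$, both in your own recollection and in the form the paper uses. So as written you would identify the element with coefficients $-(12-12t)$.

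The correct computation, with your normalized $m$, is $\max(|x_1^3|_v,|x_2^2|_v)=q_v^{12r_v}q_v^{-12m}$. The naive local factor has to be $q_v^{12r_v}$ rather than the printed $q_v^{-12r_v}$. Indeed $r_v(\lambda\cdot x)=r_v(x)-v(\lambda)$, so only $q_v^{12r_v}$ scales by $|\lambda|_v^{12}$ like the archimedean factors, which is what makes $H$ well defined on $\XXX_0(1)\langle F\rangle$. Hence $f_v=\max(|x_1^3|_v,|x_2^2|_v)\cdot q_v^{12m}$, with $12m=12-12\psi_v(x)$ when $\psi_v(x)$ is twisted and $12m=0$ otherwise. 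This is a genuine raising. Over $\QQ$ it just says that for minimal $(e,f)$ the naive height $\max(|e|^3,|f|^2)$ is the Weil height of $[e^3:f^2]$ times $\gcd(e^3,f^2)=\prod_p p^{12m_p}$, e.g.\ a factor $p^6$ in the sector $1/2$.

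Second, the check you plan on the line-bundle component cannot succeed as phrased. Your computation, like the paper's proof (which uses a section of $\Lambda^{\otimes 12}$), produces $\Lambda^{\otimes 12}$. No rescaling turns the line bundle into $\Lambda$ while keeping the coefficients $12-12t$, since $(\Lambda^{\otimes 12},(12-12t)_t)=12\cdot(\Lambda,(1-t)_t)$. The statement has to be read with $\Lambda^{\otimes 12}$, and that is what all later computations use. For example, $a=(5/12)/\deg(\Lambda^{\otimes 12})=(5/12)/(1/2)=5/6$ for $N=1$, and $\deg J_N^*(\Lambda^{\otimes 12})=\kappa(N)/2$ enters the anticanonical normalization.
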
}
	\begin{proof}
		Consider the morphism $j:\XXX_0(1)\to X_0(1)\cong\PP^1$. The pullback of the line bundle $\OO(1)$ is the $12$-th power of the Hodge line bundle $\Lambda^{\otimes 12}$. Endow it with the adelic metric which is the pullback of an adelic metric defining a Weil's height corresponding to $\OO(1)$. Consider the section $j^*X$ of $\Lambda^{\otimes 12}$. There exists a finite set of places $S$ containing infinite places and all places above $2$ and $3$, such that for $v\not\in S$, one  has whenever $X\neq 0$ that $$||(j^*X)(x_1,x_2)||_v=||X(j(x_1,x_2))||_v=|x_1^3|_v\cdot \max(|x_1^3|_v,|x_2^2|_v)^{-1}=|x_1|^3_v\cdot q_v^{12\min(v(x_1)/4,v(x_2)/6)}.$$
		As above, by abuse of notation let us write $t$ for the sector $(*,t)$. Write temporarily $q_v=1$ for $v\in S$.  We will show that in the domain $j^*X\neq 0$, the functions $(x_1,x_2)\mapsto H(x_1,x_2)=\prod_v f_v(x_1,x_2)$ and $(x_1,x_2)\mapsto \prod_{v}||(j^*X)(x_1,x_2)||_v^{-1}$ have quotients bounded from below and above by strictly positive constants. Indeed the quotient of the first by the latter is given by  
		$$(x_1,x_2)\mapsto\prod_{v} \frac{{f_v(x_1,x_2)}}{|x_1^3|_v\cdot \max(|x_1^3|_v,|x_2^2|_v)}q_v^{12-12\psi_v(x_1,x_2)}=\prod_{v\not\in S}\frac{q_v^{-12r_v(x_1,x_2)+12-12\psi_v(x_1,x_2)}}{q_v^{12\min (v(x_1)/4, v(x_2)/6)}}\prod_{v\in S}\frac{f_v(x_1,x_2)}{\max(|x_1^3|_v,|x_2^2|_v)^{-1}},$$ which is just $$(x_1,x_2)\mapsto \prod_{v\in S}\frac{f_v(x_1,x_2)}{\max(|x_1^3|_v, |x_2^2|_v}. $$ Now fixing $x_2$ and letting $x_1\to 0 $ gives $\max(|x_1^3|_v, |x_2|_v^2)\to |x_2|_v^2$ and $f_v(x_1,x_2)=q_v^{-12\max(\lceil -v(x_1)/4\rceil,\lceil-v(x_2)/6\rceil)}\to q_v^{-12\lceil -v(x_2)/6\rceil}.$ Similarly fixing $x_1$ and letting $x_2\to 0$ gives $\max(|x_1^3|_v,|x_2^2|_v)\to |x_1^3|_v$ and $f_v(x_1,x_2)\to q_v^{-12\lceil -v(x_1)/4\rceil}.$  
	Hence in the domain $j^*X\neq 0$, we have that the two functions $(x_1,x_2)\mapsto \prod_{v}f_v(x_1,x_2)= H(x_1,x_2)$ and $(x_1,x_2)\mapsto \prod_{v} ||(j^*X)(x_1,x_2)||^{-1}\cdot q_v^{12-12\psi_v(x_1,x_2)}$ have bounded quotients from above and below by strictly positive constants. The latter is the height function in the sense of \cite[Definition 4.3]{dardayasudabm} for the element $(\Lambda, (12-12t)_{t})$ restricted to the domain $j^*X\neq 0$. Analogous conclusions are valid in the domain $j^*Y\neq 0$. The claim follows.
	\end{proof}
		\begin{cor}\label{naivforx0n}
		The naive height $H:\XXX_0(N)\langle F\rangle\to\RR_{\geq 0}$ corresponds to the element $(J_N^*\Lambda, (12-12t)_{k\in J_N^{-1}(t)}) $ where by abuse of notation $J_N$ is used for the canonical map $J_N:\pi_0\mathcal J_0\XXX_0(N)\to\pi_0\mathcal J_0\XXX_0(1)$. If for some $t\in\pi_0\mathcal J_0\XXX_0(1)$ one has that $J_N^{-1}(t)=\emptyset$, the corresponding coordinate is understood to be ignored.
\end{cor}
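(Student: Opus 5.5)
The statement to prove is Corollary~\ref{naivforx0n}: the naive height on $\XXX_0(N)$ is defined by $(J_N^*\Lambda,(12-12t)_{k\in J_N^{-1}(t)})$. The plan is to deduce it from the preceding lemma, which gives the element $(\Lambda,(12-12t)_t)$ for $\XXX_0(1)$, by pulling heights back along the representable morphism $J_N\colon\XXX_0(N)\to\XXX_0(1)$.

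First we use functoriality of heights in the sense of \cite[Definition 4.3]{dardayasudabm}. For an element $(L,c)\in\NS_{\orb}(\XXX_0(1))$, the associated height is a product over places. It consists of a metric part, $\prod_v\|s(x)\|_v^{-1}$ for a metrized section of $L$. It also has a correction factor $\prod_{v}q_v^{c(\psi_v(x))}$, where $\psi_v$ is the residue map. For $y\in\XXX_0(N)\langle F\rangle$ we set $x=J_N(y)$.

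The metric part is handled by pulling back the adelic metric and the sections $j^*X$, $j^*Y$ along $J_N$. This gives $\|J_N^*s(y)\|_v=\|s(x)\|_v$, i.e.\ the metric part for $J_N^*\Lambda$ evaluated at $y$ equals the metric part for $\Lambda$ at $x$.

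For the correction factor, the key point is that residue maps are compatible with $J_N$:
\[
\psi_v^{\XXX_0(1)}(J_N(y))=(\pi_0\mathcal J_0J_N)\bigl(\psi_v^{\XXX_0(N)}(y)\bigr).
\]
The plan is to verify this from the definition \cite[Definition 2.17]{dardayasudabm}. A point $y\in\XXX_0(N)\langle F_v^{\un}\rangle$ extends over $\OO_v^{\un}$ to a representable morphism $[\Spec(\OO_K)/\mu_{l}]\to\XXX_0(N)$. Composing with the representable $J_N$ gives a morphism to $\XXX_0(1)$; after passing to the induced representable morphism (replacing $\mu_l$ by its image if needed), this is the extension of $x$. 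The residue is read off from the homomorphism $\widehat{\mu}\to\Aut$ at the closed point, and the map $\pi_0\mathcal J_0J_N$ is defined by composing with $\Aut(y)\to\Aut(J_N(y))$, so the two residues agree. We expect this step, which needs uniqueness of the extension as in the analogue of Lemma~\ref{rootstackextension}, to be the main point requiring care.

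Given the compatibility, the correction factor at $y$ is $q_v^{12-12t}$, where $t$ is the image of the sector of $y$. Hence the orbifold coordinate at a sector $k$ of $\XXX_0(N)$ is $12-12t$ with $t=J_N(k)$, as claimed. Sectors $t$ of $\XXX_0(1)$ with empty preimage are never attained as residues of points of $\XXX_0(N)$, so their coordinates play no role. Since both the metric parts and the correction factors agree, $H_N=H\circ J_N$ coincides, up to bounded factors, with the height of $(J_N^*\Lambda,(12-12J_N(k))_k)$. This proves the corollary.
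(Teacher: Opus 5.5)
Your proposal is correct and is essentially the paper's argument. The paper gives no separate proof: it treats the corollary as immediate from the preceding lemma for $\XXX_0(1)$ and the definition $H_N=H\circ J_N$, i.e.\ from functoriality of orbifold heights along $J_N$ (pulled-back metric, raising function composed with $\pi_0\mathcal J_0 J_N$ via compatibility of residue maps), which is exactly what you spell out. As a minor remark, since $J_N$ is representable, the composite of the twisted arc with $J_N$ is already representable, so your hedge about replacing $\mu_l$ by its image is unnecessary; and the finitely many bad places (e.g.\ $v\mid 6N$) contribute only bounded factors, which your ``up to bounded factors'' covers.
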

\subsection{Canonical element}\label{subsec:cano_elt}
\begin{mydef}
Given a smooth Deligne--Mumford stack~$X$ over~$F$, we set $$K_{X,\orb}:=(K_X, (\age(y)-1)_{y\in\pi^*_0\mathcal J_0X})$$ where~$K_X$ is the canonical line bundle on~$X$ and $\age$ is defined in \cite[Definition 2.2]{dardayasudabm}.
\end{mydef}
\begin{lem}\label{degcanonical}
One has that
$$\deg(K_{\XXX_0(N)})=-1+\frac{\varepsilon_3(N)}{3}+\frac{\varepsilon_2(N)}{4}.$$
\end{lem}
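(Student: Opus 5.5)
The plan is to compute $\deg(K_{\XXX_0(N)})$ by passing to the rigidification, and then to apply the canonical bundle formula for root stacks over the coarse space $X_0(N)\cong\PP^1$. First I fix the normalization of $\deg$. It must be the one under which the pullback of a point of the coarse space $\PP^1$ to the $\mu_2$-gerbe $\XXX_0(N)$ has degree $1/2$. As a check, for $N=1$ we have $\XXX_0(1)\cong\PPP(4,6)$ with $K=\OO(-10)$ and $\OO(12)$ equal to the pullback of $\OO_{\PP^1}(1)$. This normalization then gives $\deg K=-10\cdot\frac{1}{24}=-\frac{5}{12}=-1+\frac13+\frac14$, which agrees with the statement.

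\textbf{Step 1 (reduction to the rigidification).} By the lemma stating that $\XXX_0(N)\cong\sqrt[2]{\Lambda_N^{\rig}/\XXX_0(N)^{\rig}}$, the rigidification $s_{0,N}\colon\XXX_0(N)\to\XXX_0(N)^{\rig}$ is a $\mu_2$-gerbe. In particular it is étale (\cite[Theorem A.1]{Abramovich_Olsson_Vistoli}), so $K_{\XXX_0(N)}\cong s_{0,N}^*K_{\XXX_0(N)^{\rig}}$. Pullback along a $\mu_2$-gerbe multiplies degrees by $1/2$, so
$$\deg K_{\XXX_0(N)}=\tfrac12\deg K_{\XXX_0(N)^{\rig}}.$$

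\textbf{Step 2 (structure of the rigidification).} $\XXX_0(N)^{\rig}$ is a stacky curve with coarse space $X_0(N)\cong\PP^1$, and its only nontrivial stabilizers lie over $j\in\{0,1728\}$. Lemma~\ref{rootstackextension} shows that a point over $j=0$ (resp.\ $1728$) has stabilizer of order $m_2/\gcd(m_2,e)$, where $m_2=3$ (resp.\ $2$) and $e$ is the ramification index of $X_0(N)\to X_0(1)$. By Lemma~\ref{cmsunr}, the stabilizer is $\mu_3$ (resp.\ $\mu_2$) exactly at the unramified points, and it is trivial at the ramified ones. By Lemma~\ref{stacky-points-xon} there are $\varepsilon_3(N)$ geometric points with stabilizer $\mu_3$ and $\varepsilon_2(N)$ with stabilizer $\mu_2$. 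A smooth tame stacky curve with trivial generic stabilizer is the root stack of its coarse space along its stacky points, so geometrically $\XXX_0(N)^{\rig}$ is a root stack of $\PP^1$ along these $\varepsilon_3(N)+\varepsilon_2(N)$ points.

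\textbf{Step 3 (canonical bundle formula).} For a root stack $\rho\colon\sqrt[n]{D/Z}\to Z$ along a reduced divisor $D$ on a smooth curve, $K=\rho^*K_Z+\frac{n-1}{n}\rho^*D$, in $\NS_\QQ$ measured by coarse-space degree. Applying this and summing over the stacky points,
$$\deg K_{\XXX_0(N)^{\rig}}=-2+\tfrac23\varepsilon_3(N)+\tfrac12\varepsilon_2(N).$$
Halving this as in Step 1 gives $-1+\varepsilon_3(N)/3+\varepsilon_2(N)/4$. Degree is invariant under extension to $\oF$, so working geometrically is harmless.

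The main obstacle I expect is Step 2. We have to be sure that the stacky structure of $\XXX_0(N)^{\rig}$ is exactly the root stack along the unramified CM points, with no extra stackiness at the ramified ones. This relies on the local classification in Lemma~\ref{rootstackextension} together with $e\in\{1,2,3\}$. We also have to be consistent about the normalization of $\deg$ on the gerbe. I would double-check the latter against the $N=1$ computation above and, for instance, against $N=2$: there $\varepsilon_3(2)=0$ and $\varepsilon_2(2)=1$, so the formula predicts $-1+\tfrac14=-\tfrac34$, consistent with $\XXX_0(2)^{\rig}\cong\sqrt[2]{E_{1728}/\PP^1}$ in Theorem~\ref{isomclas}.
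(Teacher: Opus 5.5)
Your proposal is correct and follows essentially the paper's argument. You pass to the rigidification, which is an étale $\mu_2$-gerbe, so $\deg K_{\XXX_0(N)}=\tfrac12\deg K_{\XXX_0(N)^{\rig}}$. You then apply the stacky canonical-divisor formula over $X_0(N)\cong\PP^1$; the paper cites \cite[Page 58]{VZB} for this, which is the same formula as your root-stack computation. Finally you count the $\mu_3$- and $\mu_2$-points of the rigidification via Lemmas~\ref{cmsunr} and~\ref{stacky-points-xon}, as the paper does.
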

\begin{proof}
We have a rigidification morphism $\XXX_0(N)\to\XXX_0(N)^{\rig}$ and $\XXX_0(N)^{\rig}$ has trivial generic stabilizer. Moreover, the coarse moduli morphism $\XXX_0(N)\to X_0(N)$ factorizes through $\XXX_0(N)^{\rig}$. Applying \cite[Page 58]{VZB} gives: 
\begin{align*}\deg(K_{\XXX_0(N)^{\rig}})&=\deg(K_{X_0(N)})+\sum_{x\in\XXX_0(N)^{\rig}}\bigg(1-\frac{1}{\# \Aut(x)}\bigg) .\\
	&=-2+\sum_{x\in\XXX_0(N)^{\rig}(\overline{F})}\bigg(1-\frac{1}{\# \Aut(x)}\bigg).
	\end{align*}
The rigidification $p:\XXX_0(N)\to\XXX_0(N)^{\rig}$ is \'etale by \cite[Theorem A1]{Abramovich_Olsson_Vistoli}, hence $K_{\XXX_0(N)}=p^*K_{\XXX_0(N)^{\rig}}$ and thus $\deg(K_{\XXX_0(N)})=\frac12\cdot\deg(K_{\XXX_0(N)^{\rig}})$. 
We obtain \begin{align*}\deg(K_{\XXX_0(N)})&=\frac{1}{2}\bigg(-2+\sum_{x\in\XXX_0(N)^{\rig}(\overline{F})}\bigg(1-\frac 1{\#\Aut(x)}\bigg)\bigg)\\
	&=-1+\sum_{x\in\XXX_0(N)^{\rig}(\overline{F})}\bigg(\frac12-\frac1{2\#\Aut(x)}\bigg)\\
	&=-1+\sum_{x\in\XXX_0(N)(\overline{F})}\bigg(\frac12-\frac1{\#\Aut(x)}\bigg)\\
	&=-1+\sum_{\substack{x\in \XXX_0(N)(\overline F)\\j(J_N(x))=0}}\frac13+\sum_{\substack{x\in \XXX_0(N)(\overline F)\\j(J_N(x))=1728}}\frac14\\
	&=-1+\frac{\varepsilon_3(N)}3+\frac{\varepsilon_2(N)}4.
\end{align*}
 Now Lemma~\ref{cmsunr} implies that the number of the points in~$\XXX_0(N)(\overline F)$ with $\#\Aut(x)=6 $ is $\varepsilon_3(N)$ while the number of points in~$\XXX_0(N)(\overline F)$ with $\# \Aut(x)=4$ is $\varepsilon_2(N)$.
\end{proof}
\begin{lem}
	The pullback $J_N^*(\Lambda^{\otimes 12})$ for the canonical morphism $J_N:\XXX_0(N)\to \XXX_0(1)$ has degree $$\deg(J_N^*(\Lambda^{\otimes 12}))=\frac{\kappa(N)}{2}.$$
\end{lem}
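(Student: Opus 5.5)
We first compute the degree of $\Lambda^{\otimes 12}$ on $\XXX_0(1)$ and then multiply by $\deg J_N$ using Lemma~\ref{degcano}.

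On $\XXX_0(1)\cong\PPP(4,6)$, the line bundle $\Lambda^{\otimes 12}\cong\OO_{\PPP(4,6)}(12)$ is the pullback of $\OO_{\PP^1}(1)$ along the coarse moduli map $c\colon\XXX_0(1)\to X_0(1)\cong\PP^1$, as recalled at the beginning of Section~\ref{sec:modular_curve}. Degrees of pulled-back line bundles are scaled by the degree of $c$. Over a general point of $\PP^1$ the map $c$ is a $\mu_2$-gerbe, since the generic stabilizer is $\mu_2$. Hence $\deg c=1/2$, and
\[\deg_{\XXX_0(1)}\Lambda^{\otimes 12}=\tfrac12\deg\OO_{\PP^1}(1)=\tfrac12.\]
To double-check this, factor $c$ through the rigidification $\XXX_0(1)\to\XXX_0(1)^{\rig}\cong\PPP(2,3)$ and then $\PPP(2,3)\to\PP^1$. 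The first map has degree $1/2$, being a $\mu_2$-gerbe. The second is birational, so it has degree $1$.

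Next we use that $J_N\colon\XXX_0(N)\to\XXX_0(1)$ is finite and representable, with generic degree $\kappa(N)$ by Lemma~\ref{degcano}. The projection formula for degrees of line bundles on proper DM curves, $\deg J_N^*L=\deg(J_N)\deg L$, then gives
\[\deg J_N^*(\Lambda^{\otimes 12})=\kappa(N)\cdot\tfrac12=\tfrac{\kappa(N)}2.\]

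The only delicate point is the degree convention: degrees on DM curves must be normalized so that a $\mu_2$-gerbe halves them. This is the convention already used in Lemma~\ref{degcanonical}, where $\deg K_{\XXX_0(N)}=\tfrac12\deg K_{\XXX_0(N)^{\rig}}$.

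We also need that $\deg J_N$ is genuinely $\kappa(N)$ as a morphism of stacks, rather than $\kappa(N)/2$ or similar. Since $J_N$ induces an isomorphism of generic stabilizers (both are $\mu_2$, generated by $[-1]$), its degree equals the degree of the induced map on coarse spaces over a generic point. That map is $\kappa(N)$ by Lemma~\ref{degcano}.

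Alternatively, one can compute directly on coarse spaces. The map $X_0(N)\to X_0(1)$ has degree $\kappa(N)$, so $J_N^*\Lambda^{\otimes12}$ is the pullback of $\OO_{\PP^1}(\kappa(N))$ under the coarse map $\XXX_0(N)\to X_0(N)$, which has degree $1/2$. This gives the same answer $\kappa(N)/2$.
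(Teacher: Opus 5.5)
Your proof is correct and has the same structure as the paper's: multiply $\deg\Lambda^{\otimes 12}=1/2$ by $\deg J_N=\kappa(N)$ from Lemma~\ref{degcano}. The only difference is that the paper cites $\deg\Lambda=1/24$ from Deligne--Rapoport. You instead derive $\deg\Lambda^{\otimes 12}=1/2$ from $\Lambda^{\otimes 12}\cong c^*\OO_{\PP^1}(1)$ and the factor $1/2$ contributed by the generic $\mu_2$-gerbe, which matches the degree convention the paper uses elsewhere.
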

\begin{proof}
By \cite[VI.4.4]{DeligneRapoport}, one has that $\deg(\Lambda)=\frac1{24}$. Lemma~\ref{degcano} gives $$\deg(J_N^*(\Lambda^{\otimes 12}))=\kappa(N)\cdot \frac{12}{24}=\frac{\kappa(N)}{2}.$$
\end{proof}
\begin{mydef}
The anticanonical naive height is the pullback of the naive height, scaled so that the line bundle is the anticanonical line bundle on~$\XXX_0(N)$.
\end{mydef}
\begin{cor}\label{ant-form}
The anticanonical naive height corresponds to the element 
$$\bigg(1-\frac{\varepsilon_3(N)}3-\frac{\varepsilon_2(N)}4,\bigg(\frac{24(1-y)\cdot (1-\frac{\varepsilon_3(N)}{3}-\frac{\varepsilon_2(N)}{4}}{\kappa(N)}\bigg)_{t\in J_N^{-1}(y)}\bigg),$$
the rational orbifold N\'eron-Severi space of~$\XXX_0(N)$.
If for certain $y\in\pi_0\mathcal J_0\XXX_0(1)$ one has that $J_N^{-1}(y)$ is empty then the corresponding coordinate is understood to be ignored.
\end{cor}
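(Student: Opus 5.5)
The statement to prove is Corollary~\ref{ant-form}. The plan is to rescale the element defining the naive height from Corollary~\ref{naivforx0n} so that its $\NS$-component becomes the anticanonical class. Under the identification $\NS(\XXX_0(N))_\QQ\cong\QQ$ via $\deg$, the naive height corresponds to
\[
\bigl(\deg J_N^*\Lambda,\ (12-12y)_{t\in J_N^{-1}(y)}\bigr).
\]
By the preceding lemma, $\deg J_N^*\Lambda^{\otimes 12}=\kappa(N)/2$, so $\deg J_N^*\Lambda=\kappa(N)/24$. By Lemma~\ref{degcanonical}, the anticanonical degree is
\[
\deg(-K_{\XXX_0(N)})=1-\frac{\varepsilon_3(N)}{3}-\frac{\varepsilon_2(N)}{4}.
\]

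The scaling factor is therefore
\[
\lambda=\frac{24\bigl(1-\frac{\varepsilon_3(N)}{3}-\frac{\varepsilon_2(N)}{4}\bigr)}{\kappa(N)}.
\]
This choice is legitimate because $\NS_{\orb}$ is a $\QQ$-vector space. Heights attached to $c\cdot(L,x)$ are the $c$-th powers of heights attached to $(L,x)$, up to bounded factors, so rescaling the element is exactly the same as raising $H$ to the power $\lambda$. This agrees with the definition of the anticanonical naive height as a power of the pullback naive height.

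Multiplying the whole element by $\lambda$ gives the following two components.
\begin{itemize}
\item The $\NS$-component is $\lambda\cdot\kappa(N)/24=1-\varepsilon_3(N)/3-\varepsilon_2(N)/4$.
\item Each sector coordinate $t\in J_N^{-1}(y)$ becomes $\lambda(12-12y)=12\lambda(1-y)$.
\end{itemize}
The display in the statement writes this sector coordinate as $24(1-y)\bigl(1-\tfrac{\varepsilon_3(N)}{3}-\tfrac{\varepsilon_2(N)}{4}\bigr)/\kappa(N)$. That is half of $12\lambda(1-y)$: the factor $12$ from the sector coordinates of Corollary~\ref{naivforx0n} appears to have been dropped.

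This disagreement is the one step that needs care. I would first recheck two normalizations.
\begin{itemize}
\item Whether the naive-height element in Corollary~\ref{naivforx0n} uses $\Lambda$ or $\Lambda^{\otimes 12}$ as its $\NS$-component. If it were $\Lambda^{\otimes 12}$, the factor would be $\lambda'=2(1-\varepsilon_3/3-\varepsilon_2/4)/\kappa(N)$, and the sector coordinates would become $24(1-y)(1-\varepsilon_3/3-\varepsilon_2/4)/\kappa(N)$, matching the display exactly.
\item The factor $q_v^{12-12\psi_v}$ that appears in the proof of the lemma defining the naive-height element.
\end{itemize}
On the face of that proof, the $\NS$-component is the metrized bundle $\Lambda^{\otimes 12}=j^*\OO(1)$, even though the lemma writes $\Lambda$. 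So the consistent reading is $\Lambda^{\otimes 12}$, which gives degree $\kappa(N)/2$ and the stated formula. I would use that reading and note the typo. Sectors with empty preimage contribute no coordinate, exactly as in Corollary~\ref{naivforx0n}.
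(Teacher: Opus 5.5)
Your argument is correct and essentially the same as the paper's, which simply rescales the element of Corollary~\ref{naivforx0n} using the degree computations. Your reading of the $\NS$-component as $J_N^*\Lambda^{\otimes 12}$, of degree $\kappa(N)/2$, is exactly how the paper uses it later, e.g.\ $(\kappa(N)/2,6)$ for $N=12,16,18$ and $T=(4,\dots)$ for $N=7$. One slip: under the literal reading the sector coordinates are off by a factor of $12$, not $2$, so ``half'' should be ``one twelfth''.
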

\begin{proof}
	This follows immediately from Corollary~\ref{naivforx0n} and the previous calculations.
\end{proof}
\subsection{Adequate cases}\label{subsec:adequate}
\begin{thm}\label{aandb}
	Let~$c:\pi_0^*\mathcal J_0\XXX_0(N)\to\QQ_{\geq 0}$ be the raising function  corresponding to the anticanonical naive height. 	Suppose that for every $y\in\pi_0^*\mathcal J_0\XXX_0(N)$, one has that $\age(y)+c(y)\geq 1$. We call this case adequate.
	\begin{enumerate}
		\item The cases $N=1,2,3,4,5,6, 8,9$ are adequate.
		\item  The expected $a$- and $b$-invariants in the adequate cases are given in the table below: %
		\begin{center}
			\begin{tabular}{|c|c|c|}
				\hline
				$N$	& $a$& $ b$ \\
				\hline
				1	&  5/6       &     1    \\
				\hline 
				2	&   1/2       &    1  \\
				\hline 
				3	&    1/3    &  2       \\
				\hline
				4	&         1/3  & 1 \\
				\hline
				5	&      1/6  &     5-[F(i):F] \\
				\hline
				6	&       {1/6}      &  {2}  \\
				\hline
			
				8	&       1/6    &   2   \\
				\hline
				9	&        1/6   &   2  \\
				\hline
			%
			\end{tabular}
				\end{center}
		\end{enumerate}
		\end{thm}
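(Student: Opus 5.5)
The plan is to reduce everything to the anticanonical normalization and then read off $a$ and $b$ from the adequate case of \cite[Conjecture 5.6]{dardayasudabm}.

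\textbf{Rescaling.} By Corollary~\ref{naivforx0n}, the naive height corresponds to $M_{\naive}=(J_N^*\Lambda^{\otimes 12},(12-12t))$. By the degree lemma above, the line bundle $J_N^*\Lambda^{\otimes 12}$ has degree $\kappa(N)/2$. By Lemma~\ref{degcanonical}, $-K_{\XXX_0(N)}$ has degree
$$d(N):=1-\varepsilon_3(N)/3-\varepsilon_2(N)/4.$$
Put $\lambda(N):=2d(N)/\kappa(N)$. Then $\lambda(N)\cdot M_{\naive}$ is the anticanonical naive element of Corollary~\ref{ant-form}, and its raising function is
$$c(y)=\lambda(N)\,(12-12t)=\frac{24\,d(N)}{\kappa(N)}\,(1-t)$$
for $y$ lying over the sector $t$ of $\XXX_0(1)$. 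Since $H^{\lambda}$ is the anticanonical height, counting $H\le B$ is the same as counting $H^{\lambda}\le B^{\lambda}$. In the adequate case, \cite[Conjecture 5.6]{dardayasudabm} predicts $B\log(B)^{b-1}$ for the anticanonical height. The definitions of $a(L,x)$ and $b(L,x)$ in the introduction are homogeneous, so this gives $a(M_{\naive})=\lambda(N)$ and $b(M_{\naive})=b$.

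For $b$ I would use the adequate-case formula
$$b=\rho(\XXX_0(N))+\#\{\text{$F$-sectors } y \text{ with } \age(y)+c(y)=1\},$$
where $\rho=1$ because $\NS(\XXX_0(N))_\QQ\cong\QQ$ via the degree. As a sanity check, $N=1$ gives $\lambda=2\cdot(5/12)/1=5/6$, matching the table.

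\textbf{Case check.} Part (1) is then a finite check over the sectors listed in Theorem~\ref{thmonsectors}, and I would handle each $N$ in turn.
\begin{itemize}
\item The sector $1/2$ always has age $0$ and $c=12d/\kappa$. Adequacy there requires $12d(N)\ge\kappa(N)$, and equality contributes $1$ to $b$.
\item The sectors $(T_6,\zeta)$ exist only when $\varepsilon_3(N)\neq0$, i.e. for $N=1,3$ among the cases treated. They have age $\{4\zeta\}$ and $c=24d(1-\zeta)/\kappa$.
\item The sectors $(T_4,\zeta)$ exist for $N=1,2,5$ and have age $\{2\zeta\}$.
\end{itemize}
Using the values of $\varepsilon_2,\varepsilon_3,\kappa$ from Lemmas~\ref{degcano} and~\ref{stacky-points-xon}, I would compute $\lambda$, then every $\age+c$. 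For example:
\begin{itemize}
\item $N=3$: $d=2/3$, $\kappa=4$, so $\lambda=1/3$ and $c(1/2)=2$. For the $\mu_6$ sectors, $\zeta=1/6$ gives age $2/3$ and $c=10/3$.
\item $N=6,8,9$: $d=1$, $\kappa=12$, so $\lambda=1/6$ and $c(1/2)=1$, so the sector $1/2$ is exactly on the boundary. This gives $b=2$.
\item $N=3$: one expects the sector $1/2$ to be strict, $c=2>1$. Here I would look for the boundary sector among the $\mu_6$ sectors: $\zeta=5/6$ has age $\{10/3\}=1/3$ and $c=4\cdot(1/6)=2/3$, summing to $1$. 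This gives $b=2$.
\end{itemize}
I would confirm in each case that all remaining sectors have $\age+c>1$.

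\textbf{Rationality and the main obstacle.} The delicate point is counting sectors up to the Galois action, i.e. sectors of $\XXX_0(N)$ as an $F$-stack. For $N=5$, there are two $\mu_4$ points; by the proof of Theorem~\ref{thmonsectors}, they give two distinct sectors over each $(\mathcal K_4,\zeta)$ if $i\in F$ and a single one otherwise. So the number of boundary $\mu_4$ sectors (here $\zeta=3/4$: age $1/2$, and $c=24\cdot(1/2)\cdot(1/4)/6=1/2$) is $2$ or $1$, and together with the sector $1/2$ this should give $5-[F(i):F]$. I expect this orbit bookkeeping, and making sure the boundary sectors are identified correctly in each case, to be the main source of error. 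The arithmetic itself is routine, and I would organize it as a table of $(\lambda,\age+c)$ per sector.
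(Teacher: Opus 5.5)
Your proposal is correct and follows the paper's proof essentially verbatim. You rescale the naive element by $2d(N)/\kappa(N)$ to the anticanonical normalization, read off $a$ and $b=1+\#\{y:\age(y)+c(y)=1\}$ from the adequate case of \cite[Conjecture 5.6]{dardayasudabm}, and check $\age+c\ge 1$ sector by sector. The boundary sectors are then $1/2$ for $N=6,8,9$, $(\mathcal K_6,5/6)$ for $N=3$, and $1/2$ together with the one or two sectors over $(\mathcal K_4,3/4)$ for $N=5$. (You also rightly use $J_N^*\Lambda^{\otimes 12}$, of degree $\kappa(N)/2$, where Corollary~\ref{naivforx0n} writes $J_N^*\Lambda$.)
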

		\begin{proof}
			We will use \cite[Conjecture 5.6]{dardayasudabm} for the adequate cases. The $a$-invariant in this expected to be $$a=\frac{1-\varepsilon_3(N)/3-\varepsilon_2(N)/4}{\kappa(N)/2},$$ while the $b$-invariant is $$b:=\{y\in\pi_0^*\mathcal J_0\XXX_0(N)|\hspace{0,1cm}\age(y)+c(y)=1\}+1.$$
			For $\kappa(N)=N\cdot\prod_{p|N}\big(1+\frac1p\big), $ we have the following table:
			$$	\begin{tabular}{|c|c|}
				\hline
				$N$	& $\kappa(N)$ \\
				\hline
				1	&  1           \\
				\hline
				2	&  3           \\
				\hline 
				3	&    4         \\
				\hline
				4	&         6   \\
				\hline
				5	&    6 \\
				\hline
				6	&       12  \\
				\hline
				
				8	&    12   \\
				\hline
				9	&      12  \\
				\hline
				%
			\end{tabular}$$
			 The case $N=1$ is adequate by \cite[Example 5.9]{dardayasudabm}. The corresponding $a$- and $b$-invariants, have been calculated in {\it id.\ cit.\ }Assume that $N\in\{4,6,8,9\}$. From Lemma~\ref{stacky-points-xon}  we see that $\varepsilon_2(N)=\varepsilon_3(N)=0$. Hence, we have only one twisted sector of age equal to~$0$. 
			In particular $\kappa(N)\leq 12$  
			and so $0+24\cdot\frac{1}2\cdot \frac{1}{\kappa(N)}\geq 1$ which by Lemma~\ref{ant-form} implies that~$c$ is adequate. In this case we clearly have $$a=\frac{1-\varepsilon_3(N)/3-\varepsilon_2(N)/4}{\kappa(N)/2}=\frac{2}{\kappa(N)}.$$ The $b$-invariant is clearly~$1$ if $\kappa(N)<12$ and~$2$ if $\kappa(N)=12$.  Let us treat the case~$N=2$. By Lemma~\ref{stacky-points-xon}, we have that $\varepsilon_2(2)=1$ and $\varepsilon_3(2)=0$. In particular, the only $\overline F$-point of~$\XXX_0(2)$ with the automorphism group scheme not equal to~$\mu_2$, has the automorphism group scheme equal to~$\mu_4$ and is defined over~$F$. The $j$-invariant of the corresponding elliptic curve is~$1728$. We deduce that~$\XXX_0(2)$ has~$4$ sectors lying above the sectors $0, 1/4, 1/2, 3/4$ of~$\XXX_0(1)$ and having the same ages as them that is, the ages $0, 1/2, 0, 1/2$, respectively.  For any twisted sector~$t$, we have that $$\age(t)+12\cdot \bigg(\frac{1-1/4}{\kappa(2)/2}\bigg)\cdot (1-J_2(t))=\age(t)+6\cdot (1-J_2(t))=\begin{cases}
				3&J_2(t)=1/2\\
				5&J_2(t)=1/4\\
				2&J_2(t)=3/4.
			\end{cases} $$
			We obtain that the case $N=2$ is adequate, that $a= (1-1/4)/(\kappa(2)/2)=1/2$ and that $b=1$. Suppose that $N=3$. Then $\varepsilon_3(3)=1$, while $\varepsilon_2(3)=0$. In particular, the only $\overline F$-point of $\XXX_0(3)$ with the automorphism group scheme not equal to~$\mu_2$, has the automorphism group scheme equal to~$\mu_6$ and is defined over~$F$. The  $j$-invariant of the corresponding elliptic curve is~$0$.  We deduce that $\XXX_0(3)$ has~$6$ sectors lying above the sectors $0, 1/6, 1/3, 1/2, 2/3, 5/6$ and having the same ages as them, that is the ages $0, 2/3, 1/3, 0, 2/3, 1/3$. For any twisted sector~$t$, we have that
			$$\age(t)+12\cdot \bigg(\frac{1-1/3}{\kappa(3)/2}\bigg)\cdot (1-J_3(t))=\age(t)+4\cdot (1-J_3(t))=\begin{cases}
				4&J_3(t)=1/6\\
				3&J_3(t)=1/3\\
				2&J_3(t)=1/2\\
				2& J_3(t)=2/3\\
				1& J_3(t)=5/6.
			\end{cases} $$
			 We obtain that the case $N=3$ is adequate, that $ a=(1-1/3)/(3-\phi(3)/2)=1/3$ and that $b=2$. Assume now $N=5$. By Lemma~\ref{stacky-points-xon}, we obtain that $\varepsilon_2(5)=2$, while $\varepsilon_3(5)=0$. We deduce that the only $\overline F$-points of~$\XXX_0(5)$ with the automorphism group scheme not equal to~$\mu_2$ have the automorphism group scheme equal to~$\mu_4$ and that the $j$-invariant of corresponding elliptic curves is~$1728$. Moreover, by \cite[Lemma 2.8]{AHPP25}, we have that~$\XXX_0(N)$ is  is isomorphic to the substack  of $\PPP(4,4,2)$ given by $X^2+Y^2=Z^4.$ We deduce that the closed substack given by the points with automorphism group schemes not equal to  $\mu_2,$ that is with the automorphism group scheme isomorphic to~$\mu_4$, is given by $Z=0$. The two $\overline F$-points of this stack are $(i,1,0)$ and $(-i,1,0)$. We deduce that we have only one closed point of~$X_0(5)$ lying above the point $j=1728$ of~$X_0(1)$ if $i\not\in F$ and precisely two points if $i\in F$. Suppose that $i\not\in F$ (respectively, that $i\in F$). We deduce that $\XXX_0(5)$ has $4$  (respectively, $8$) sectors lying above the sectors $0, 1/4, 1/2, 3/4$ of $\XXX_0(1)$ such that above each mentioned sector of~$\XXX_0(1)$ there exists precisely~$1$ sector (respectively, $2$ sectors, except for the sector $1/2$, above which we have only one sector) of~$\XXX_0(5)$. For any twisted sector~$t$, we have that $$\age(t)+12\cdot\bigg(\frac{1-1/2}{\kappa(5)/2}\bigg)\cdot (1-J_5(t))=\age(t)+2\cdot (1- J_5(t))=\begin{cases}
			 5/2,	&J_5(t)=1/4\\
			 1,	& J_5(t)=1/2\\
			 1	&J_5(t)=3/4.
			 \end{cases} $$
			 Hence $a=1/6$. If $i\not\in F$, we have that $b=3$ and if $i\in F$, then $b=4$. The verification has been completed.
		\end{proof}
 \subsubsection{}For the sake of completeness, we express $\XXX_0(2), \XXX_0(3)$ and $\XXX_0(4)$ as root stacks.  One has that $\XXX_0(2)^{\rig}\cong \sqrt[2]{E_{1728}}$ where $E_{1728}:X-1728Y=0$, because $\XXX_0(2)^{\rig}$ has only one stacky point which is defined over $F$. In particular, one has that $\XXX_0(2)^{\rig}\cong\PPP(2,1)$. We deduce from \cite[Section 7.3]{mann} that $\Pic(\XXX_0(2)^{\rig})=\ZZ$. Note that $\deg(\Pic(\XXX_0(2)^{\rig}))=\frac{1}{2}\ZZ,$ as every effective Cartier divisor has degree $k/2$ for some $k\geq 0$. Now $(J_2^{\rig})^*(\Lambda^{\rig})$ is not a square, because its degree is $3/2.$ Taking the root with respect to $(J_2^{\rig})^*(\Lambda^{\rig})$ gives:
 \begin{cor} One has that $\XXX_0(2)\cong \PPP(4,2)$ and, in particular, one has that $\XXX_0(2)\to\XXX_0(2)^{\rig}$ is the only (up to isomorphism) non-trivial square root stack over $\XXX_0(2)^{\rig}\cong \PPP(1,2)$.
 	\end{cor}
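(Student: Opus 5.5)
Since $\XXX_0(2)$ is a $\mu_2$-gerbe over $\XXX_0(2)^{\rig}\cong\PPP(1,2)$, namely the square root stack $\sqrt[2]{\Lambda_2^{\rig}/\XXX_0(2)^{\rig}}$ from the lemma giving $\XXX_0(N)\cong\sqrt[2]{\Lambda_N^{\rig}/\XXX_0(N)^{\rig}}$, I would identify this root stack explicitly with $\PPP(4,2)$. The starting point is the rigidification description in Paragraph~\ref{sec:stacky_proj}: if $\mathcal X\cong\sProj\bigoplus_{d\in 2\NN}R_d$ with generic stabilizer $\mu_2$, then $\mathcal X\thickslash\mu_2\cong\sProj\bigoplus_{d}R_{2d}$. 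Applied to $\PPP(4,2)=\sProj F[u,v]$ with $\deg u=4$, $\deg v=2$, this gives $\PPP(4,2)^{\rig}\cong\PPP(2,1)$. Under this identification $\OO_{\PPP(4,2)}(1)$ is uniformizing: the generic stabilizer $\mu_2$ acts faithfully on the fiber, and at the $\mu_4$-point $[1:0]$ the action is by the standard character. Moreover, $(\OO(1))^{\rig}=\OO_{\PPP(2,1)}(1)$, since its pullback to $\PPP(4,2)$ is $\OO(2)$. Lemma~\ref{lem:root_stack_of_rigidification} then yields $\PPP(4,2)\cong\sqrt[2]{\OO_{\PPP(2,1)}(1)/\PPP(2,1)}$.

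The next step is classification. Square root stacks over $\mathcal Y:=\PPP(1,2)$ are classified by $\Pic(\mathcal Y)/2\Pic(\mathcal Y)$, as recalled in the introduction: $\sqrt[2]{L/\mathcal Y}\cong\sqrt[2]{L'/\mathcal Y}$ over $\mathcal Y$ when $L'\cong L\otimes M^2$, via $(x,P,\varphi)\mapsto(x,P\otimes x^*M,\ldots)$, and the root stack is the trivial gerbe $\mathcal Y\times B\mu_2$ exactly when $L$ is a square. Since $\Pic(\mathcal Y)=\ZZ$ is generated by $\OO(1)$ of degree $1/2$, the quotient is $\ZZ/2$. So there is exactly one nontrivial square root stack, namely the one attached to $\OO(1)$, and by the previous paragraph it is $\PPP(4,2)$.

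Finally, I need $\Lambda_2^{\rig}$ to be a non-square. The preceding paragraph computes its degree as $3/2$; squares in $\Pic(\mathcal Y)$ have degrees in $\ZZ$, so $\Lambda_2^{\rig}$ is an odd multiple of $\OO(1)$. Its class in $\Pic/2\Pic$ is therefore the nontrivial one, and $\XXX_0(2)\cong\sqrt[2]{\Lambda_2^{\rig}/\mathcal Y}\cong\sqrt[2]{\OO(1)/\mathcal Y}\cong\PPP(4,2)$. The step I expect to require the most care is identifying $\XXX_0(2)^{\rig}$ with $\PPP(1,2)$ compatibly with the degree computations. By this I mean checking that a stacky curve with coarse space $\PP^1$ and a single $\mu_2$-point is $\sqrt[2]{E_{1728}/\PP^1}\cong\PPP(1,2)$, and that the degree $\deg\Lambda_2^{\rig}=2\deg\Lambda_2=2\cdot 3\cdot\tfrac1{24}\cdot\ldots$ is normalized consistently. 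I would verify this normalization by using $\deg\Lambda=1/24$ on $\XXX_0(1)$ and the identity $\deg$ on a $\mu_2$-gerbe $=\tfrac12\deg$ on its rigidification. An alternative consistency check is that $\XXX_0(2)$ has a point with stabilizer $\mu_4$ and so cannot be a trivial gerbe $\mathcal Y\times B\mu_2$, whose stabilizers are $\mu_2\times\mu_2$ at the stacky point. This independently forces the nontrivial class.
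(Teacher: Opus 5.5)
Your proposal is correct and follows the paper's own route. You identify $\XXX_0(2)^{\rig}$ with $\sqrt[2]{E_{1728}/\PP^1}\cong\PPP(1,2)$, use $\Pic(\PPP(1,2))=\ZZ$ with degrees in $\tfrac12\ZZ$ to see that $\Lambda_2^{\rig}$ is a non-square, and conclude that $\XXX_0(2)$ is the unique non-trivial square root stack, which is $\PPP(4,2)$. Your check via Lemma~\ref{lem:root_stack_of_rigidification} that $\PPP(4,2)\cong\sqrt[2]{\OO_{\PPP(2,1)}(1)/\PPP(2,1)}$, and your $\mu_4$-stabilizer argument, usefully fill in what the paper leaves implicit.

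One numerical correction, which does not affect parity. Since $\pi^*\Lambda_2^{\rig}\cong\Lambda_2^{\otimes 2}$ and pulling back along a $\mu_2$-gerbe halves degrees, $\deg\Lambda_2^{\rig}=4\deg\Lambda_2=4\cdot\tfrac{3}{24}=\tfrac12=\kappa(2)\deg\Lambda^{\rig}$. It is not $3/2$ (the value you quote from the preceding paragraph is a slip), nor $2\deg\Lambda_2$. So in fact $\Lambda_2^{\rig}\cong\OO_{\PPP(2,1)}(1)$ exactly.
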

 	Similarly, one has that $\XXX_0(3)^{\rig}\cong \PPP(1,3)$ hence $\Pic(\XXX_0(3)^{\rig})=\ZZ$. Note that $\deg(\Pic(\XXX_0(3)^{\rig}))=\frac{1}{3}\ZZ,$ as every effective Cartier divisor has degree $k/3$ for some $k\geq 0$. Now $(J_3^{\rig})^*(\Lambda^{\rig})$ is  a square, because its degree is $2/3, $ which is twice the degree of $\OO(\mathcal K_3^{\rig})$.
 \begin{cor}
 One has that $\XXX_0(3)\cong B\mu_2\times \XXX_0(3)^{\rig}\cong B\mu_2\times \PPP(1,3)$ and, in particular, that $\XXX_0(3)\to\XXX_0(3)^{\rig}$ is the trivial square root stack.
\end{cor}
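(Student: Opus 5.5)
The plan is to combine the identification $\XXX_0(3)\cong\sqrt[2]{\Lambda_3^{\rig}/\XXX_0(3)^{\rig}}$, proved in the lemma at the start of Section~\ref{sec:modular_curve}, with the elementary fact that the square root stack of a line bundle which is itself a square is a trivial $\mu_2$-gerbe. Everything then reduces to showing that $\Lambda_3^{\rig}$ is a square in $\Pic(\XXX_0(3)^{\rig})$. I would do this by a degree computation, once I know that the Picard group is $\ZZ$ and is detected by degree.

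\textbf{Step 1: identify $\XXX_0(3)^{\rig}$ with $\PPP(1,3)$.} By Lemma~\ref{stacky-points-xon} we have $\varepsilon_3(3)=1$ and $\varepsilon_2(3)=0$. So $\XXX_0(3)^{\rig}$ is a stacky curve with coarse space $\PP^1$ and exactly one stacky point, with stabilizer $\mu_3$. This point is unique, hence Galois-stable, hence $F$-rational. Choosing coordinates that place it at $[0:1]$, the stacky curve is the root stack $\sqrt[3]{[0:1]/\PP^1}$. This root stack is isomorphic to $\PPP(1,3)=[(\AAA^2-\{0\})/\Gm]$: the coordinate of weight $1$ cuts out the stacky point.

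\textbf{Step 2: compute the Picard group.} Line bundles on $[(\AAA^2-\{0\})/\Gm]$ correspond to $\Gm$-linearizations of the trivial bundle, since $\Pic(\AAA^2-\{0\})=0$ and the only units on $\AAA^2-\{0\}$ are the constants. Hence $\Pic(\PPP(1,3))\cong\ZZ$, generated by $\OO(1)$, already over $F$. Since $\deg\OO(1)=1/3$, the degree map $\Pic\to\QQ$ is injective, with image $\frac13\ZZ$. This is the step I expect to need the most care: one has to make sure the Picard group is computed over $F$ rather than only geometrically, so that no Brauer-type obstruction can interfere. The quotient presentation handles this directly.

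\textbf{Step 3: compute $\deg\Lambda_3^{\rig}$.} By Lemma~\ref{degcano} and \cite[VI.4.4]{DeligneRapoport} we have
\[
\deg\Lambda_3=\kappa(3)\cdot\tfrac1{24}=\tfrac16 .
\]
The rigidification $p\colon\XXX_0(3)\to\XXX_0(3)^{\rig}$ is a $\mu_2$-gerbe, so degrees pull back multiplied by $1/2$. From $p^*\Lambda_3^{\rig}\cong\Lambda_3^{\otimes2}$ we get $\frac12\deg\Lambda_3^{\rig}=\frac13$, that is, $\deg\Lambda_3^{\rig}=\frac23$. By Step 2 this forces $\Lambda_3^{\rig}\cong\OO(2)=\OO(1)^{\otimes 2}=\OO(\mathcal K_3^{\rig})^{\otimes2}$.

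\textbf{Step 4: trivialize the gerbe.} For any stack $\mathcal Y$ and line bundle $M$ on it, there is an isomorphism
\[
\sqrt[2]{M^{\otimes2}/\mathcal Y}\xrightarrow{\sim}\mathcal Y\times B\mu_2,\qquad (y,L,\varphi)\mapsto\big(y,\;L\otimes y^*M^{-1}\big).
\]
Here $L\otimes y^*M^{-1}$ comes with the trivialization of its square induced by $\varphi$, which is exactly a $\mu_2$-torsor. The inverse sends $(y,P)$ to $P\otimes y^*M$. Applying this with $\mathcal Y=\XXX_0(3)^{\rig}$ and $M=\OO(1)$ yields
\[
\XXX_0(3)\cong B\mu_2\times\PPP(1,3),
\]
so the square root stack is the trivial one, i.e.\ the class of $\Lambda_3^{\rig}$ in $\Pic/2\Pic$ vanishes.
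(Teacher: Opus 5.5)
Your proposal is correct and follows the paper's argument. The paper likewise identifies $\XXX_0(3)^{\rig}$ with $\PPP(1,3)$, uses $\Pic(\XXX_0(3)^{\rig})=\ZZ$ with degrees in $\frac13\ZZ$, and concludes from $\deg\Lambda_3^{\rig}=\frac23=2\deg\OO(\mathcal K_3^{\rig})$ that $\Lambda_3^{\rig}$ is a square, so the $\mu_2$-gerbe is trivial. You additionally write out details the paper leaves implicit: the quotient-presentation computation of $\Pic$ over $F$, and the explicit isomorphism $\sqrt[2]{M^{\otimes 2}/\mathcal Y}\cong\mathcal Y\times B\mu_2$.
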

For $N=4$, we have that $\XXX_0(4)^{\rig}\cong \PP^1$. Hence one has that $\XXX_0(4)\cong \PPP(2,2)$ if and only if $(J_4^*)^{\rig}(\Lambda^{\rig})$ is of odd degree. But its degree is $\kappa(4)/6=1$. We obtain:
\begin{cor}
One has that $\XXX_0(4)\cong \PPP(2,2)$ and, in particular, one has that $\XXX_0(4)\to\XXX_0(4)^{\rig}$ is the only (up to isomorphism) non-trivial square root stack over $\PP^1$.
\end{cor}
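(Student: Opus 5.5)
The plan is to combine the root stack description of $\XXX_0(4)$ with the fact that square root stacks over a base are classified by $\Pic/2\Pic$, which for $\PP^1$ is just the parity of the degree.

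First I identify the rigidification. By Lemma~\ref{stacky-points-xon}, $\varepsilon_2(4)=\varepsilon_3(4)=0$, so every geometric point of $\XXX_0(4)$ has automorphism group $\mu_2$. Hence $\XXX_0(4)^{\rig}$ is a smooth proper stacky curve with trivial generic stabilizer and no stacky points. It is therefore an algebraic space equal to its coarse space, which gives $\XXX_0(4)^{\rig}\cong X_0(4)\cong\PP^1$. By the lemma expressing $\XXX_0(N)$ as a square root stack (via Lemma~\ref{lem:root_stack_of_rigidification}), we then have
\[
\XXX_0(4)\cong\sqrt[2]{\Lambda_4^{\rig}/\PP^1}.
\]

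Next I compute $\deg \Lambda_4^{\rig}$. Write $p$ for the rigidification map; it is a $\mu_2$-gerbe, so pulling back along $p$ halves degrees. By Lemma~\ref{degcano} and $\deg\Lambda=1/24$,
\[
\deg \Lambda_4=\frac{\kappa(4)}{24}=\frac14.
\]
Since $p^*\Lambda_4^{\rig}\cong\Lambda_4^{\otimes 2}$, which has degree $\frac12$, it follows that $\deg\Lambda_4^{\rig}=2\cdot\frac12=1=\kappa(4)/6$. Because $\Pic(\PP^1)/2\Pic(\PP^1)\cong\ZZ/2\ZZ$ via the parity of the degree, $\Lambda_4^{\rig}\cong\OO(1)$ is not a square. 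The gerbe is therefore non-trivial, and it is the unique non-trivial square root stack over $\PP^1$ up to isomorphism, since the root stack depends only on the class of the line bundle modulo squares.

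Finally, to identify this gerbe with $\PPP(2,2)$, I observe that $\PPP(2,2)$ has generic stabilizer $\mu_2$ and rigidification $\PPP(1,1)=\PP^1$. The bundle $\OO_{\PPP(2,2)}(1)$ is uniformizing, and its square $\OO(2)$ is the pullback of $\OO_{\PP^1}(1)$. Lemma~\ref{lem:root_stack_of_rigidification} then gives $\PPP(2,2)\cong\sqrt[2]{\OO(1)/\PP^1}$, which matches the description above.

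The only delicate step is the degree bookkeeping for the gerbe, namely the factor of $2$ relating degrees on $\XXX_0(4)$ and on $\XXX_0(4)^{\rig}$; I would double-check it against the case $N=1$, where $\deg\Lambda^{\rig}=1/12$ should hold on $\PPP(2,3)$.
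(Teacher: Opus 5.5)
Your proposal is correct and is essentially the paper's argument: since $\varepsilon_2(4)=\varepsilon_3(4)=0$ one has $\XXX_0(4)^{\rig}\cong\PP^1$, and $\deg\Lambda_4^{\rig}=\kappa(4)/6=1$ is odd, so $\XXX_0(4)\cong\sqrt[2]{\Lambda_4^{\rig}/\PP^1}$ is the non-trivial square root stack, which you correctly identify with $\PPP(2,2)\cong\sqrt[2]{\OO(1)/\PP^1}$ (a step the paper leaves implicit). One slip in your proposed sanity check: under the convention you and the paper use, $\deg\Lambda^{\rig}=1/6$ on $\PPP(2,3)$, not $1/12$ (that is $\deg\Lambda^{\otimes 2}$ on $\PPP(4,6)$), and it is precisely $\kappa(4)\cdot\frac16$ that gives $\deg\Lambda_4^{\rig}=1$.
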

	\section{Non-adequate cases}\label{sec:non-adequate}
	In this section we treat the remaining cases.
\subsection{Local situation} \label{subsec:local_situation}
We want to treat the other cases. The choice of line bundle $\Lambda_N^{\rig}$ on $\XXX_0(N)^{\rig}$ induces an isomorphism of formal neighborhoods of $\oF$-points on $\XXX_0(N)^{\rig}$ with $\sqrt[m]{\Spec(\oF[[T]])}$ with $m\in\{1,2,3\}$, where $\sqrt[m]{\Spec(\oF[[T]])}$ denotes the $m$-th root stack with respect to the Cartier divisor $m\cdot(T)$. The closed point of $\sqrt[m]{\Spec(\oF[[T]])}$ has the automorphism group scheme {\it canonically }isomorphic to $\mu_m$. Given formal neighborhood $U$ of an $\oF$-point in $\XXX_0(N)^{\rig}$, the choice of line bundle $\Lambda_N$ gives an isomorphism $J_N^{-1}(U)$ with the root stack $\sqrt{L/\sqrt[m]{\Spec(\oF[[T]])}}$ with $L$ corresponding to $\Lambda_N^{\rig}$.

 Let $n\geq 1$. Given a morphism $\Spec(\oF((T)))\to V$ with $V$ some of the $\oF[[T]]$-Deligne--Mumford stacks from below, we find the extension of the morphism $\sqrt[n]{\Spec(\oF[[T]])}\to V$. 
\begin{lem} Consider the morphism $\AAA^1\to\AAA^1, x\mapsto x^m$. Let $[x^m]:[\AAA^1/\Gm]\to [\AAA^1/\Gm]$ be the induced morphism from $\Gm$-morphism $x\mapsto x^m$, where the action of $\Gm$ on $\AAA^1$ is $(t,x)\mapsto tx$.
	The diagram
	\[\begin{tikzcd}
		{\PPP(1,m)\hspace{0,4cm}} & {[\AAA^1/\Gm]} \\
		{\PP^1} & {[\AAA^1/\Gm]}
		\arrow["{(\mathcal O(1)_{\PPP(1,m)},x)}"', from=1-1, to=1-2]
		\arrow[from=1-1, to=2-1]
		\arrow["{[x^m]}"'', from=1-2, to=2-2]
		\arrow["{(\mathcal O(1),x)}"', from=2-1, to=2-2]
	\end{tikzcd}\]
	is Cartesian.
\end{lem}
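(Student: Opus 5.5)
We will verify the Cartesian property by computing the fiber product directly as a quotient stack. Write $\PP^1=[(\AAA^2-\{0\})/\Gm]$ with coordinates $(x,y)$ of weight $(1,1)$, so that the bottom map $(\mathcal O(1),x)\colon\PP^1\to[\AAA^1/\Gm]$ is induced by the equivariant map $(x,y)\mapsto x$. The morphism $[x^m]$ is induced by $\AAA^1\to\AAA^1$, $u\mapsto u^m$, equivariant along $\Gm\to\Gm$, $t\mapsto t^m$. By the description of root stacks recalled in Subsection~\ref{subsec:root_stacks}, the fiber product $\PP^1\times_{[\AAA^1/\Gm]}[\AAA^1/\Gm]$ is exactly the root stack $\sqrt[m]{D/\PP^1}$ for the divisor $D=\{x=0\}$ with section $x$ of $\mathcal O(1)$. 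So the statement reduces to an identification $\PPP(1,m)\cong\sqrt[m]{D/\PP^1}$ compatible with the given maps.

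We first construct the comparison morphism. On $\PPP(1,m)=[(\AAA^2-\{0\})/\Gm]$ with coordinates $(u,v)$ of weights $(1,m)$, the coarse map to $\PP^1$ is $[u:v]\mapsto[u^m:v]$. This map is equivariant along $t\mapsto t^m$, so it pulls $\mathcal O(1)$ on $\PP^1$ back to $\mathcal O(m)_{\PPP(1,m)}$, and it pulls the section $x$ back to $u^m$. The line bundle $\mathcal O(1)_{\PPP(1,m)}$, its section $u$, and the isomorphism $\mathcal O(1)^{\otimes m}\cong\mathcal O(m)$ sending $u^m\mapsto x$ form a quadruple $(\pi,M,u,\varphi)$. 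This quadruple defines a morphism $\PPP(1,m)\to\sqrt[m]{D/\PP^1}$, and it makes the square commute.

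Next we show this morphism is an isomorphism, working locally on $\PP^1$. On $\{x\neq0\}$ the root stack is an isomorphism onto the base. There the preimage $\{u\neq0\}\subset\PPP(1,m)$ is $[\AAA^1\times\Gm/\Gm]\cong\AAA^1$, with coordinate $v/u^m=y/x$, so the map is an isomorphism over this chart. On $\{y\neq0\}$, with $s=x/y$, the root stack is $[\Spec \oF[s][w]/(w^m-s)\,/\,\mu_m]\cong[\AAA^1_w/\mu_m]$. The preimage $\{v\neq0\}$ is $[\AAA^1_u\times\Gm/\Gm]\cong[\AAA^1_u/\mu_m]$, by slicing at $v=1$ with stabilizer $\mu_m$ acting on $u$ by weight $1$. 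Under the comparison map, $w\mapsto u$, so it is an isomorphism over this chart as well. The two charts cover $\PP^1$, so the comparison map is an isomorphism and the diagram is Cartesian.

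The main obstacle is the bookkeeping of the $\mu_m$-actions and of the section $u$ versus $w$ in the second chart. Rather than check it by hand, I would try first to cite the standard fact (\cite[Appendix B]{AGV08} or \cite{impose_tangency}) that the root stack of a divisor $\{s=0\}$ in $\AAA^1=\Spec\oF[s]$ is $[\AAA^1_w/\mu_m]$ with $w^m=s$.
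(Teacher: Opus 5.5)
Your argument is correct, and it takes a different route from the paper's.

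\textbf{The paper's route.} The paper argues purely on functors of points. It writes a $T$-point of the fiber product as $(f\colon T\to\PP^1,L,s,\alpha)$ and builds the map to $\PPP(1,m)$ by hand. It takes the $\Gm$-torsor attached to $L$, trivializes it over $f^{-1}(\{x\neq 0\})$ and $f^{-1}(\{y\neq 0\})$ using $s$, and glues the two local pieces into a $\Gm$-equivariant map to $\AAA^2-\{0\}$. The opposite direction, pulling back $\OO(1)_{\PPP(1,m)}$ and its first coordinate, is exactly your comparison morphism. The paper then simply asserts that the two constructions are mutually inverse.

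\textbf{Your route.} You construct only that easy direction and identify the fiber product with $\sqrt[m]{D/\PP^1}$, which is the definition in Subsection~\ref{subsec:root_stacks}. You then prove the comparison map is an isomorphism after base change to the same two charts. Over $\{x\neq 0\}$ this is the triviality of the root stack where the section is invertible. Over $\{y\neq 0\}$ it is the standard presentation $[\AAA^1_w/\mu_m]$ of the root stack of a principal divisor, matched with the slice isomorphism $[(\AAA^1\times\Gm)/\Gm]\cong[\AAA^1/\mu_m]$ at $v=1$.

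\textbf{What each buys.} Your route avoids the torsor-gluing bookkeeping and the unchecked claim that the two constructions are inverse. In exchange it cites the local model of root stacks (\cite{impose_tangency}, \cite[Appendix B]{AGV08}) and uses that being an isomorphism of stacks over $\PP^1$ is local on $\PP^1$. The paper's route is self-contained but leaves more of the verification implicit. One convention to watch in your second chart: the universal $m$-th root on $[\AAA^1_w/\mu_m]$ may correspond to the inverse of the standard character. At worst this composes your identification $w\mapsto u$ with inversion on $\mu_m$, which is harmless.
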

\begin{proof}
	To give a morphism to the fiber product consists to give $(f:T\to\PP^1,L,s,\alpha)$ with $f:T\to\PP^1$ a morphism, $(L,s)$ a line bundle over $T$ endowed with $s\in\Gamma(T,L)$ and $\alpha:(L^{\otimes m},s^{\otimes m}) \xrightarrow{\sim}(f^*\OO(1) ,f^*(x))$. Let $X\to T$ be the total space of $f^*\mathcal O(1)$. It is a $\Gm$-torsor for the action induced by $\Gm\xrightarrow{x\mapsto x^{-1}}\Gm$ and the standard action of $\Gm$ on $X$. We choose coordinate system $\PP^1=U\cup V$ with $V:=x\neq 0$ and $U:=y\neq 0$. We have  trivializations $(\OO(1)|_{V},x|_V)\xrightarrow{\sim}(V\times \AAA^1, 1)$ and  $(\OO(1)|_{U},x|_U)\xrightarrow{\sim}(U\times \AAA^1, 1).$ We obtain trivializations  $(X|_{f^{-1}(V)},s|_{f^{-1}(V)})\xrightarrow{\sim}(f^{-1}(V)\times\AAA^1,1)$ and $(X|_{f^{-1}(U)},s|_{f^{-1}(U)})\xrightarrow{\sim}(f^{-1}(U),1).$ We have two $\Gm$-equivariant morphisms  $(f^{-1}(V)\times (\AAA^1-\{0\}))\to (\AAA^1-\{0\})\times (\AAA^1-\{0\})$ given by $(r,t)\mapsto (f(r),t)$ and $(f^{-1}(U)\times (\AAA^1-\{0\}))\to (\AAA^1-\{0\})\times (\AAA^1-\{0\})$ given by $(r,t)\mapsto (f(r)^{-1},t).$  They glue to a morphism $X\to \AAA^2-\{0\}$ which is $\Gm$-equivariant. { Conversely, starting with a morphism $g: T\to \PPP(1,m)$ with $T$ an $F$-scheme, we take  $f:=(\PPP(1,m)\to\PP^1)\circ g$, $L:=g^*(\OO(1)_{\PPP(1,m)})$, $s:=g^*x$ and $\alpha$ induced from commutative diagrams. The two processes are inverse  one to another. } 
\end{proof}
We obtain a Cartesian diagram:
\begin{equation}\label{diaenum}\begin{tikzcd}
		{\sqrt[m]{\Spec(\oF[[T]])} } & {\PPP(1,m)} \\
		{\Spec (\oF[[T]])} & {\PP^1}
		\arrow["{x=0}"', from=1-1, to=1-2]
		\arrow[from=1-1, to=2-1]
		\arrow[from=1-2, to=2-2]
		\arrow["{x=0 }"', from=2-1, to=2-2]
\end{tikzcd}\end{equation} The Cartesian diagram~(\ref{diaenum}) identifies the automorphism group of the closed point of $\sqrt[m]{\Spec(\oF[[T]])}$ with~$\mu_m.$
\begin{lem}Let $n\geq 1$. The canonical map $$\Pic\bigg(\sqrt[n]{\Spec(\overline F[[T]])}\bigg)\to \Pic(B\mu_n)=\Hom(\mu_n,\Gm)=\ZZ/n\ZZ $$ is an isomorphism.  A morphism $\sqrt[m]{\Spec(\oF[[T]])}\to\sqrt[n]{\Spec(\oF((T)))},$ maping the stacky point to the stacky point, induces a morphism $\Pic(\sqrt[n]{\Spec(\oF((T)))})\to \Pic(\sqrt[m]{\Spec(\oF((T)))})$ which is precisely the Cartier dual homomorphism $\ZZ/n\ZZ=\Hom(\mu_n,\Gm)\to\Hom(\mu_n,\Gm)=\ZZ/m\ZZ$ to the homomorphism of automorphism group schemes $\mu_m\to\mu_n$.
\end{lem}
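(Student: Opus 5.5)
The plan has two parts. First show that the restriction map to the closed point is an isomorphism on Picard groups. Then identify the pullback map by functoriality of that restriction.

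For the first part, present $\sqrt[n]{\Spec(\oF[[T]])}$ as the quotient stack $[\Spec(\oF[[S]])/\mu_n]$, where $\mu_n$ acts by $\zeta\cdot S=\zeta S$ and $S^n=T$. This presentation comes from the Cartesian diagram (\ref{diaenum}): pull back the presentation $\PPP(1,n)=[(\AAA^2-\{0\})/\Gm]$ over the chart $y\neq 0$, where it becomes $[\AAA^1/\mu_n]$. A line bundle on this quotient is then a $\mu_n$-equivariant invertible module over the local ring $\oF[[S]]$. Any invertible module over a local ring is free, so the line bundle is $\oF[[S]]\cdot e$ with $\zeta\cdot e=u_\zeta e$, where $\zeta\mapsto u_\zeta$ is a $1$-cocycle of $\mu_n$ with values in $\oF[[S]]^\times$. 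Restricting to the closed point $S=0$ gives a character $\chi\in\Hom(\mu_n,\Gm)=\ZZ/n\ZZ$, and surjectivity is clear by taking the constant cocycle $\chi$.

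Injectivity is the main step: every cocycle is cohomologous to its constant term. Since $n$ is invertible in $\oF$, the group $\mu_n$ is linearly reductive, so I would average. Write $u_\zeta=\chi(\zeta)v_\zeta$ with $v_\zeta\in 1+S\oF[[S]]$. Put $w=\frac1n\sum_\zeta \zeta^*(v_\zeta)$, suitably twisted so that it is the standard coboundary candidate, and check $w\in 1+S\oF[[S]]$. Alternatively, use the logarithm on $1+S\oF[[S]]$, which is valid in characteristic $0$ and turns the multiplicative cocycle into an additive one with values in $S\oF[[S]]$. Then $H^1(\mu_n,S\oF[[S]])=0$ because $n$ is invertible. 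Either route shows $u$ is cohomologous to $\chi$, so the kernel is trivial. I would also note the standard reduction: equivariant line bundles trivial at the closed point are trivial, by Nakayama plus the vanishing of higher group cohomology of a linearly reductive group.

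For the second part, let $f$ be the given morphism. It sends the stacky point to the stacky point, so it restricts to a morphism of residual gerbes $B\mu_m\to B\mu_n$, which is induced by a homomorphism $\phi:\mu_m\to\mu_n$ on automorphism groups. Restriction to the closed point is natural, so the square formed by $f^*$, the two restriction isomorphisms, and $(B\phi)^*:\Pic(B\mu_n)\to\Pic(B\mu_m)$ commutes. Finally, $(B\phi)^*$ on $\Pic(B\mu_\bullet)=\Hom(\mu_\bullet,\Gm)$ is precomposition with $\phi$, i.e. the Cartier dual map $\ZZ/n\ZZ\to\ZZ/m\ZZ$. By Lemma~\ref{rootstackextension}, $\phi$ is $x\mapsto x^{e/\gcd(n,e)}$, so the dual map is multiplication by $e/\gcd(n,e)$. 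Here I read the statement's $\oF((T))$ in the source and target as $\oF[[T]]$.
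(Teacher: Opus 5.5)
Your argument is correct, but it takes a different route from the paper. Both proofs start from the Cartesian square~(\ref{diaenum}) realizing $\sqrt[n]{\Spec(\oF[[T]])}$ as a base change of $\PPP(1,n)\to\PP^1$. From there the paper argues globally and by citation. Surjectivity comes from the surjectivity of $\Pic(\PPP(1,n))\to\ZZ/n\ZZ$ (Lopez's Theorem~1.1), which factors through the local Picard group. Injectivity is quoted from Olsson's Proposition~6.2, and the functoriality claim is called immediate.

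You instead compute the local Picard group directly from the presentation $[\Spec(\oF[[S]])/\mu_n]$. Because $\oF[[S]]$ is local, $\Pic$ is $H^1(\mu_n,\oF[[S]]^\times)$. The $\mu_n$-stable splitting $\oF[[S]]^\times=\oF^\times\times(1+S\oF[[S]])$ isolates the character at the closed point. The second factor contributes nothing because $1+S\oF[[S]]$ is uniquely divisible in characteristic $0$: use your logarithm, or note that $H^1(\mu_n,-)$ is killed by $n$ while $x\mapsto x^n$ is an equivariant bijection of $1+S\oF[[S]]$. This gives a self-contained proof that yields surjectivity and injectivity at once and shows that only the invertibility of $n$ is used. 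The paper's version is shorter on the page but outsources the content, and it uses a global curve that the local statement does not need.

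Your second part is exactly what the paper leaves as immediate: restriction to the residual gerbes $B\mu_m\to B\mu_n$ is natural, and $(B\phi)^*$ is precomposition with $\phi$. Your reading of the typos is also right ($\oF[[T]]$ in place of $\oF((T))$, and $\Hom(\mu_m,\Gm)$ in the target).

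Two small remarks, neither affecting the proof of the statement:
\begin{enumerate}
\item In the averaging variant no twist is needed. With the cocycle rule $v_{\zeta\zeta'}=v_\zeta\,\zeta^*(v_{\zeta'})$, the element $w=\frac1n\sum_\zeta v_\zeta\in 1+S\oF[[S]]$ already satisfies $v_\zeta=w/\zeta^*(w)$.
\item Your closing sentence invoking Lemma~\ref{rootstackextension} presupposes that the morphism is representable, which forces $m=n/\gcd(n,e)$. For an arbitrary morphism sending the stacky point to the stacky point, the map on stabilizers is $x\mapsto x^{me/n}$, with $n\mid me$.
\end{enumerate}
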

\begin{proof}
	Consider the smooth stacky curve $\mathcal P(1,n)$. Then $\sqrt[n]{\Spec(\overline F[[T]])}$ is the fiber product of the coarse moduli space map and $\Spec(\overline F[[T]])\to\PP^1$ given by the formal neighborhood of $0$ in $\PP^1$. We know by \cite[Theorem 1.1]{lopez2023picardgroupsstackycurves} that $\Pic(\mathcal P(1,n))\to \ZZ/n\ZZ$ is surjective. Clearly this homomorphism factorizes through one in the statement, hence the latter is also surjective. The injectivity follows from \cite[Proposition 6.2]{Olsson12}. The second claim is immediate.
\end{proof}
The diagram~(\ref{diaenum}) induces an equivalence $$[\Spec(\oF[[T]][Y,Y^{-1}])/_{(1,m)}\Gm]\xrightarrow{\sim}\sqrt[m]{\Spec(\oF[[T]])}$$ where the indices denote the weights of the action.  The pullback of the line bundle $\OO(1)_{\PPP(1,m)}$ for $$\sqrt[m]{\Spec(\oF[[T]])}\xrightarrow{x=0}\PPP(1,m)$$ induces a line bundle on $\sqrt[m]{\Spec(\oF[[T]])}$. The square of the line bundle $\OO(1)_{\PPP(2,m)}$ is isomorphic to the pullback of $\OO(1)_{\PPP(2,3)}$. The pullback of the line bundle $\OO(1)_{\PPP(2,m)}$ for the base change $Z:=\sqrt[m]{\Spec(\oF[[T]])}\times _{x=0,\PPP(1,m)}\PPP(2,m) \to\PPP(2,m)$ yields an identification 
\[\begin{tikzcd}
	Z & {[\Spec(\oF[[T]][Y,Y^{-1}])_{(2,2m)}/\Gm]} \\
	{\sqrt[m]{\Spec(\oF[[T]])}} & {[\Spec(\oF[[T]][Y,Y^{-1}])_{(1,m)}/\Gm]}
	\arrow["{=}", from=1-1, to=1-2]
	\arrow[from=1-1, to=2-1]
	\arrow[from=1-2, to=2-2]
	\arrow["{=}", from=2-1, to=2-2]
\end{tikzcd}\]
The following lemmas describe the extension of a morphism $\Spec(\oF((T)))$ to a representable morphism $\sqrt[m]{\Spec(\oF[[T]])}$.
\begin{lem}\label{imagemu2}
	Consider the morphism $\Spec (\oF((T)))\to \Spec(\oF((T)))\times B\mu_2$ corresponding to $f\in T\oF[[T]]-\{0\}$ and $g\in H^1(\oF((T)),\mu_2)$. The morphism extends uniquely to a representable morphism of stacks $$\sqrt[m]{\Spec(\oF[[T]])}\to \Spec(\oF[[T]])\times B\mu_2.$$
	with $m|2$. Moreover,  we have that $m=1$ if and only if $g=0$.
\end{lem}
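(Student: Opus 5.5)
The plan is to treat the two factors separately and to work throughout in the presentation $\sqrt[m]{\Spec(\oF[[T]])}\cong[\Spec(\oF[[T]][Y,Y^{-1}])/_{(1,m)}\Gm]$ coming from diagram~(\ref{diaenum}). A morphism from $\Spec(\oF((T)))$ to $\Spec(\oF((T)))\times B\mu_2$ is the pair consisting of the map $T\mapsto f$ to the first factor and a $\mu_2$-torsor, the class $g$, on the second factor.

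First, the scheme factor. Since $f\in T\oF[[T]]-\{0\}$, the map $\Spec(\oF((T)))\to\Spec(\oF((T)))$ given by $T\mapsto f$ extends uniquely to $\Spec(\oF[[T]])\to\Spec(\oF[[T]])$. Composing with the coarse map $\sqrt[m]{\Spec(\oF[[T]])}\to\Spec(\oF[[T]])$ gives the first component for every $m$. Because the target $\Spec(\oF[[T]])$ is a scheme, this component is forced by the coarse moduli property.

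Second, the $B\mu_2$ factor. Here I use the Kummer description $H^1(\oF((T)),\mu_2)=\oF((T))^\times/(\oF((T))^\times)^2=\ZZ/2\ZZ$, with generator $T$ because $\oF$ is algebraically closed. A morphism $\sqrt[m]{\Spec(\oF[[T]])}\to B\mu_2$ is the same as a $\mu_2$-torsor, that is, a line bundle $M$ with a trivialization of $M^{2}$. By the previous lemma, $\Pic(\sqrt[m]{\Spec(\oF[[T]])})=\ZZ/m\ZZ$. The restriction of these torsors to the generic point is governed by the $m$-th root $T^{1/m}$ of the tautological section. So a torsor over $\sqrt[m]{\Spec}$ restricts to the class of $T^{k/m}$ for suitable $k$, which is nontrivial generically only if $m$ is even.

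This gives the two cases.
\begin{itemize}
\item If $g=0$, the morphism extends already over $m=1$ via the trivial torsor. By representability and the uniqueness part of \cite[Theorem 3.4]{valcritstack}, as used in Lemma~\ref{rootstackextension}, this extension is the unique representable one, so $m=1$.
\item If $g\neq0$, then $g$ is the class of $T$, and the extension over $m=1$ is impossible because $T$ is not a unit square in $\oF[[T]]$. For $m=2$, take the torsor $\sqrt{T}$, i.e. $Y\mapsto$ a square root of the tautological section $Y$. On the closed point $B\mu_2\to B\mu_2$ this is the identity, so the morphism is representable.
\end{itemize}

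Uniqueness of the representable extension and the constraint $m\mid 2$ follow from the general uniqueness result \cite[Theorem 3.4]{valcritstack}. Alternatively, one can argue via Lemma~\ref{rootstackextension} applied to the composite with $B\mu_2$: representability forces $\mu_m$ to inject into $\mu_2$, so $m\mid 2$.

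The step I expect to be the main obstacle is checking that the first factor does not interfere. The map to the first factor is through the coarse space, so it kills the stabilizer, and representability must come entirely from the torsor being nontrivial on the stacky point. This in turn needs the identification of the restriction map $\Pic(\sqrt[2]{\Spec})\to H^1(\oF((T)),\mu_2)$ with an isomorphism $\ZZ/2\to\ZZ/2$.
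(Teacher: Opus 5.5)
Your argument is correct, but it is organized differently from the paper's.

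\textbf{The paper's route.} It takes existence, uniqueness and $m\mid 2$ directly from \cite[Theorem 3.1]{valcritstack}. It then determines $m$ by a computation in quotient presentations. After the substitution $T\mapsto T_0^2W_0$, $W\mapsto T_0$, with $\Gm$ acting on $(T_0,W_0)$ with weights $(1,-2)$, it extends the equivariant map across $T_0=0$. It then reads off the stabilizer homomorphism $\mu_2\to\mu_2$: the identity if $g\neq 0$, trivial if $g=0$.

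\textbf{Your route.} You work with the functor of points. The scheme factor is forced through the coarse space, and the $B\mu_2$ factor is a $\mu_2$-torsor, i.e.\ a line bundle with trivialized square.
\begin{itemize}
\item For $g=0$, the trivial torsor already extends over $\Spec(\oF[[T]])$.
\item For $g\neq 0$, no extension over $m=1$ exists. Every $\mu_2$-torsor over the strictly henselian ring $\oF[[T]]$ is trivial, since its Kummer class lies in $\oF[[T]]^\times/(\oF[[T]]^\times)^2=1$, whereas $T$ is not a square in $\oF((T))$.
\item Over $\sqrt[2]{\Spec(\oF[[T]])}$, the tautological pair $(M,\,M^{\otimes 2}\cong\OO)$ has generic Kummer class $T$ and faithful stabilizer character. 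So it gives the representable extension explicitly, and \cite{valcritstack} is needed only for uniqueness.
\end{itemize}

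\textbf{Comparison.} Your approach constructs the extension rather than quoting its existence. It gives a clean cohomological reason for both directions of ``$m=1\iff g=0$''. It also avoids the bookkeeping of $2$-isomorphisms that explicit equivariant formulas require. The paper's coordinate computation, in turn, is the template it reuses in Lemmas~\ref{localmu6} and~\ref{localmu4}. There the target has a stacky first factor or is a non-trivial $\mu_2$-gerbe, so your ``scheme times $B\mu_2$'' splitting is not directly available.

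\textbf{Two minor points of phrasing.}
\begin{itemize}
\item The obstruction at $m=1$ is the one just described, not that ``$T$ is not a unit square in $\oF[[T]]$''.
\item The restriction map in your last paragraph should be $H^1(\sqrt[2]{\Spec(\oF[[T]])},\mu_2)\to H^1(\oF((T)),\mu_2)$. This agrees with $\Pic$ here only because all units of $\oF[[T]]$ are squares. Once you have the explicit tautological torsor, that paragraph is unnecessary.
\end{itemize}
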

\begin{proof}
	The existence and uniqueness follow from \cite[Theorem 3.1]{valcritstack} applied to the commutative diagram:
	\[\begin{tikzcd}
		{\Spec(\oF((T)))} & {\Spec(\oF[[T]])\times B\mu_2} \\
		{\Spec(\oF[[T]])} & {\Spec(\oF[[T]]).}
		\arrow[from=1-1, to=1-2]
		\arrow[from=1-1, to=2-1]
		\arrow[from=1-2, to=2-2]
		\arrow[from=2-1, to=2-2]
	\end{tikzcd}\]
	We get a representable morphism $\sqrt[m]{\Spec(\oF[[T]])}\to \Spec(\oF[[T]])\times B\mu_2$ which implies that $m|2$.
		Write $$\Spec(\oF((T)))=[\Spec(\oF((T))[W,W^{-1}])/_{(0,1)}\Gm].$$ Write $\Spec(\oF((X)))\times B\mu_2=[\Spec(\oF((X))[Y,Y^{-1}])/_{(0,2)}\Gm].$ The morphism $(f,g)$ corresponds to $\Gm\to\Gm$-equivariant morphism 
		$$\Spec(\oF((T))[W,W^{-1}])\to\Spec(\oF((X))[Y,Y^{-1}]),\hspace{1cm}X\mapsto f(T), Y\mapsto \widetilde g(T)W^2 $$
		where $\widetilde g(T)=1$ if $g=0$ and $\widetilde g(T)=T$ if $g=1$. Endow $\Spec(\oF((T_0))[W_0,W_0^{-1}]) $ with the action of $\Gm$ given by $t\cdot (T_0,W_0)=(t^{1}T_0,t^{-2}W_0)$.  We have an isomorphism 
		$$\Spec(\oF((T_0))[W_0,W_0^{-1}])\to \Spec(\oF((T))[W,W^{-1}]),\hspace{1cm} T\mapsto T_0^{2}W_0^{1}, W\mapsto T_0.$$
		Composing it with $(f,g)$ we get the morphism
		$$\Spec(\oF((T_0))[W_0,W_0^{-1}])\to \Spec(\oF((X))[Y,Y^{-1}]),\hspace{1cm} X\mapsto f(T_0^{2}W_0^{1}), Y\mapsto \widetilde g(T_0^{2}W_0^{1})T_0^2.$$
		Note that $X\mapsto f(T_0^{2}W_0^{1}), Y\mapsto \widetilde g(T_0^{2}W_0^{1})T_0^2$ defines a $\Gm$-equivariant morphism $\Spec(\oF[[T_0]][W_0,W_0^{-1}])\to \Spec (\oF[[X]][Y,Y^{-1}])$. The map on automorphism group schemes at $T_0=0$ is given by $\mu_2\to\mu_2, W_0\mapsto W_0^{1}$ if $g=1$ and $\mu_2\to\mu_2, W_0\mapsto 1$ if $g=0$. The claim follows.
	\end{proof}
{	We will identify $\mu_2\times\mu_3$ with $\mu_6$ via the isomorphism induced by the canonical inclusions $\mu_2\subset\mu_6$ and $\mu_3\subset\mu_6$. We construct an isomorphism $\gamma:\sqrt[3]{\Spec(\oF[[X_0]])}\times B\mu_2\xrightarrow{\sim} [\Spec(\oF[[X]][Y,Y^{-1}])/_{(2,6)}\Gm].$ Indeed, set~$\gamma$ to be the one given by $\Gm^2\to\Gm, (t_0,p_0)\mapsto t^2_0p_0$-equivariant morphism  $$\Spec(\oF[[X_0]][Y_0,Y_0^{-1},Z_0,Z_0^{-1}])\to \Spec(\oF[[X]][Y,Y^{-1}]),\hspace{1cm}X\mapsto X_0Y_0Z_0  , Y\mapsto Y_0^4Z_0^3.$$
Indeed, let us show that an inverse is provided by $\Gm\to \Gm^{2}, t\mapsto (t^2,t^3)$-equivariant morphism $$X_0\mapsto X, Y_0\mapsto Y, Z_0\mapsto Y.$$ 
Indeed, one of the two composites of the morphisms is given by $(X_0,Y_0,Z_0)\mapsto (X_0Y_0Z_0,Y_0^4Z_0^3,Y_0^4Z_0^3)$ which is 
isomorphic to the identity via multiplying by $(Y_0Z_0,Y_0^2Z_0).$ The other composite is given by $(X,Y)\mapsto (XY^2,Y^7)$ which is isomorphic to the identity via multiplication by $Y.$ Let us find the induced homomorphism on automorphism group schemes of the closed points. Set $X_0=0, Y_0=1, Z_0=1$, the corresponding  automorphism group scheme is given by $\mu_3\times \mu_2$. The image of $(\zeta_3,\zeta_2)$ is given by $\zeta_3^4\zeta_2^2=\zeta_3\zeta_2,$ hence with the above identification, the induced isomorphism is $\mu_6\xrightarrow{=}\mu_6$. }
	\begin{lem}\label{localmu6} Consider the stack $\sqrt[3]{\Spec(\oF[[T]])}$. Let $f\in T\oF[[T]]-\{0\}$ and $g\in H^1(\oF((T)),\mu_2)$. Let $e=\ord_T(f)$. Consider the morphism $$\Spec (\oF((T)))\xrightarrow{(f,g)} \Spec(\oF((T)))\times B\mu_2\to \sqrt[3]{\Spec(\oF[[T]])}\times B\mu_2$$ where the second morphism is the open immersion. The morphism extends uniquely to a representable morphism of stacks $$\sqrt[m]{\Spec(\oF[[T]])}\to \sqrt[3]{\Spec(\oF[[T]])}\times B\mu_2$$
		for $m=6/\gcd(6,e) $ if $g=0$ and for $m=6/\gcd(6,3+e)$ if $g=1$.  The induced homomorphism is given by $\mu_m\to\mu_6, x\mapsto x^e$ if $g=0$ and $x\mapsto x^{e+3}$ if $g=1$. 
	\end{lem}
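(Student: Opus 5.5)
Following the pattern of the proof of Lemma~\ref{imagemu2}, we transport the problem through the isomorphism $\gamma:\sqrt[3]{\Spec(\oF[[X_0]])}\times B\mu_2\xrightarrow{\sim}[\Spec(\oF[[X]][Y,Y^{-1}])/_{(2,6)}\Gm]$ constructed just before the statement. Under $\gamma$, the target becomes a quotient by $\Gm$ acting with weights $(2,6)$. Existence and uniqueness of a representable extension from some $\sqrt[m]{\Spec(\oF[[T]])}$ follow from \cite[Theorem 3.1]{valcritstack}, exactly as in Lemma~\ref{imagemu2}. The inertia of the target at the closed point is $\mu_6$, so representability forces $m\mid 6$. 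It therefore remains to compute $m$ and the map on stabilizers, which we do by writing down an explicit equivariant lift.

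We write $\Spec(\oF((T)))=[\Spec(\oF((T))[W,W^{-1}])/_{(0,1)}\Gm]$. Composing $(f,g)$ with $\gamma$, we describe the morphism as a $\Gm$-equivariant map sending $X\mapsto$ (a unit multiple of) $T^{e'}W^2$ and $Y\mapsto \widetilde g(T)W^6$. Here $\widetilde g=1$ if $g=0$ and $\widetilde g=T^3$ if $g=1$ (equivalently $T$, up to a sixth power), and $e'$ accounts for $X_0^3=T$. The effect of $g$ is to shift the relevant exponent by $3$, since the $\mu_2$-torsor $T^{1/2}$ contributes $T^{3/6}$. Up to reparametrization, the classifying data is thus ``$T$ to the power $e/6$'' if $g=0$ and ``$T$ to the power $(e+3)/6$'' if $g=1$, read inside $\mu_6$ via the identification $\mu_2\times\mu_3=\mu_6$ fixed above.

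Next we perform the substitution used in Lemma~\ref{imagemu2}. Let $\Gm$ act on $\Spec(\oF((T_0))[W_0,W_0^{-1}])$ with weights $(1,-m)$, and set $T\mapsto T_0^mW_0$ and $W\mapsto T_0^{k}$, with $k$ chosen so that all exponents of $T_0$ become nonnegative integers. We will check that the resulting formulas extend over $T_0=0$, giving a $\Gm$-equivariant morphism $\Spec(\oF[[T_0]][W_0^{\pm1}])\to\Spec(\oF[[X]][Y^{\pm1}])$ that does not pass through the origin. We then read off the homomorphism $\mu_m\to\mu_6$ at $T_0=0$ by setting $W_0$ to an $m$-th root of unity. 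This gives $x\mapsto x^{e}$, respectively $x\mapsto x^{e+3}$, and it is injective precisely when $m=6/\gcd(6,e)$, respectively $m=6/\gcd(6,e+3)$. This is consistent with Lemma~\ref{rootstackextension} applied to the $\mu_6$-gerbe picture.

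The main obstacle is bookkeeping. We must choose $k$ and the unit normalisation of $f$ so that the map is genuinely equivariant, and we must track the identification $\mu_2\times\mu_3\cong\mu_6$ through $\gamma$ (where $(\zeta_3,\zeta_2)\mapsto\zeta_3^4\zeta_2^2$) so that the exponent comes out as exactly $e$ or $e+3$ rather than some multiple. We would handle this by first treating $g=0$, which reduces to Lemma~\ref{rootstackextension} for $n=3$ combined with the trivial $\mu_2$-factor. We would then treat $g=1$ as the composite with the $\mu_2$-twist of Lemma~\ref{imagemu2}, which multiplies the character by the order-$2$ element and hence adds $3$ to the exponent in $\mathbb Z/6$.
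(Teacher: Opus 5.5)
Your scaffold is the same as the paper's: transport along $\gamma$ to the $(2,6)$-chart, get existence and uniqueness from the valuative criterion, then build an explicit equivariant lift over a $\sqrt[m]{\Spec(\oF[[T]])}$-chart and read off the stabilizer map. But the one step with content, computing $m$ and the character, is asserted rather than carried out, and what you do sketch does not support the stated values. Over $\oF((T))$ there is no lift $X\mapsto T^{e/3}W^2$ when $3\nmid e$. The identification of $g=0$ with ``$T^{e/6}$'' is also unjustified: in the $(2,6)$-chart the sector $\delta$ is only constrained by $2\delta\equiv e/3$, and which root occurs, $e/6$ or $(e+3)/6$, is exactly what $g$ must pin down.

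More seriously, the route in your last paragraph produces different numbers. Lemma~\ref{rootstackextension} with $n=3$ and a trivial $\mu_2$-factor gives $m=3/\gcd(3,e)$ and a stabilizer map with image in $\mu_3\subset\mu_6$, i.e.\ $\delta=e/3$. Two concrete failures:
\begin{itemize}
\item For $f=T$ and $g=0$, the morphism $(\Id,\mathrm{triv})\colon\sqrt[3]{\Spec(\oF[[T]])}\to\sqrt[3]{\Spec(\oF[[T]])}\times B\mu_2$ is already representable, so $m=3$, not $6$.
\item For $e=3$, $g=1$, the claimed $m=1$ is impossible, since the ramified $\mu_2$-torsor over $\oF((T))$ does not extend over $\Spec(\oF[[T]])$.
\end{itemize}
For the morphism as literally written (with $g$ the $B\mu_2$-component and $\mu_3\times\mu_2\cong\mu_6$ via canonical inclusions), your method yields $m=6/\gcd(6,2e+3g)$ and $\delta=(2e+3g)/6$. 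Adding $3$ for $g=1$ is right, but it is added to $2e$, not to $e$.

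The stated values are what one gets when $g$ is the square class of the weight-$2$ coordinate $X$ of the $(2,6)$-chart and stabilizers are labelled in that chart. Write the $\oF((T))$-point as $(a,b)$ with $a^3/b=f$, $\alpha=\ord_T(a)$ and $\beta=\ord_T(b)=3\alpha-e$. Over $s^m=T$, rescaling by $t=s^{-m\beta/6}$ is possible iff $6\mid m\beta$, so minimally $m=6/\gcd(6,e+3\alpha)$. Then $X$ becomes $s^{me/3}$ times a unit and $Y$ becomes a unit. The element $\zeta\in\mu_m$ acts through $\zeta\mapsto\zeta^{m(e-3\alpha)/6}$, so $\delta\equiv(e+3\alpha)/6$.

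This is in effect what the paper's proof computes. Its displayed composite $X\mapsto\widetilde g(T)W^4$ gives $X$ the square class $g$. Composing with $\gamma$ as defined ($X\mapsto X_0Y_0Z_0$) instead gives $X\mapsto f\widetilde g\,W^6$, whose class is $g+e$.

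To finish, you must fix this convention for $g$ (and the identification of the stabilizer with $\mu_6$) and then actually perform the lift. The shortcut through Lemmas~\ref{rootstackextension} and~\ref{imagemu2} proves the product-convention formula instead.
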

	\begin{proof}
		The uniqueness  is immediate from the separatedness of the target $\sqrt[3]{\Spec(\oF[[T]])}\times B\mu_2$. We will show that there exists an extension $\sqrt[6]{\Spec(\oF[[T]])}\to\sqrt[3]{\Spec(\oF[[T]])}\times B\mu_2$ and that the map on the automorphism group scheme is given by $x\mapsto x^e$ if $g=0$ and $x\mapsto x^{e+3}$ if $g=1$. 
					%
					We have that $$\Spec(\oF((T)))=[\Spec(\oF((T))[W,W^{-1}])/_{(0,1)}\Gm].$$
					The morphism $f:\Spec(\oF((T)))\to\Spec(\oF((T)))$ corresponds to $\Gm$-equivariant morphism $$\Spec(\oF((T))[W,W^{-1}])\to \Spec(\oF((T))[W,W^{-1}]),\hspace{1cm}T\mapsto f(T), W\mapsto W $$
					Let $\Gm$ act on $\Spec(\oF((X_0))[Y_0,Y_0^{-1}])$ by $t\cdot (X_0,Y_0)=(t^{1}X_0,t^{3}Y_0).$ We have an open immersion \begin{align*}[\Spec(\oF((T))[W,W^{-1}])/_{(0,1)}\Gm]=\Spec(\oF((T)))&\to\sqrt[3]{\Spec(\oF[[X_0]])}=[\Spec(\oF[[X_0]][Y_0,Y_0^{-1}])/_{(1,3)}\Gm],\\X_0\mapsto W, \hspace{0,1cm}& Y_0\mapsto W^3T.
					\end{align*} The morphism $f:\Spec(\oF((T)))\to\Spec(\oF((T)))\to\sqrt[3]{\Spec(\oF[[X_0]])}$ corresponds to $\Gm$-equivariant morphism $$\Spec(\oF((T)))[W,W^{-1}] \to \Spec(\oF[[X_0]][Y_0,Y_0^{-1}]),\hspace{1cm} X_0\mapsto W, Y_0\mapsto W^3f(T)$$ 
					We have an identification $$\sqrt[3]{\Spec(\oF[[X_0]]}\times B\mu_2=[\Spec(\oF[[X_0]][Y_0,Y_0^{-1},Z_0,Z_0^{-1}])/\Gm^2],\hspace{1cm} (t_0,p_0)\cdot(X_0,Y_0,Z_0)=(t_0^{1}X_0,t_0^{3}Y_0,p_0^{2}Z_0).$$
					The morphism $(f,g):\Spec(\oF((T)))\to [\Spec(\oF[[X_0]][Y_0,Y_0^{-1},Z_0,Z_0^{-1}])/\Gm^2]$ corresponds to the $\Gm\to \Gm^2, t\mapsto (t,t)$-equivariant morphism
					$$\Spec(\oF((T))[W,W^{-1}])\to \Spec(\oF[[X_0]][Y_0,Y_0^{-1},Z_0,Z_0^{-1}]),\hspace{1cm}X_0\mapsto W, Y_0\mapsto W^3f(T), Z_0\mapsto \widetilde g(T)W^2,$$
					with $\widetilde g(T)=1$ if $g=0$ and $\widetilde g(T)=T$ if $g=1$.
						The composite $\gamma\circ (f,g)$, where $\gamma$ is defined just before the lemma, corresponds to the $\Gm\to \Gm, t\mapsto t^2$-equivariant morphism
					$$\Spec(\oF((T))[W,W^{-1}]) \to[\Spec(\oF[[X]][Y,Y^{-1}])/\Gm],\hspace{1cm}X\mapsto \widetilde g(T)W^4, Y\mapsto f(T)\widetilde g(T)^3W^{12}.$$
					We have an isomorphism $$[\Spec(\oF((T_1))[W_1,W_1^{-1}])/\Gm]\xrightarrow{\sim}\Spec(\oF((T)))=[\Spec(\oF((T))[W,W^{-1}])/_{(0,1)}\Gm],$$ where the action to the left is given by $t\cdot (T_1,W_1)=(t^{1}T_1,t^{6}W_1)$ induced by the $\Gm$-equivariant isomorphism $$\Spec(\oF((T_1))[W_1,W_1^{-1}])\to \Spec(\oF((T))[W,W^{-1}]),\hspace{1cm}T\mapsto W_1T_1^{-6}, W\mapsto T_1.$$ Composing it with $\gamma\circ (f,g)$ yields the morphism $\Spec(\oF((T_1))[W_1,W_1^{-1}])/\Gm\to [\Spec(\oF((X))[Y,Y^{-1}])/\Gm]$ given by the $\Gm\to\Gm, t\mapsto t^2$-equivariant morphism $$\Spec(\oF((T_1))[W_1,W_1^{-1}])\to  \Spec(\oF((X))[Y,Y^{-1}]),\hspace{0,6cm}X\mapsto \widetilde g(W_1T_1^{-6})T_1^4, Y\mapsto f(W_1T_1^{-6})\widetilde g(W_1T_1^{-6})^3T_1^{12}.$$
					We write $f(T)=T^ef_0(T)$ with $f_0(T)$ not divisible by $T$. The morphism becomes $$\lambda: X\mapsto \widetilde g(W_1T_1^{-6})T_1^4,\hspace{0,2cm} Y\mapsto f_0(W_1T_1^{-6})\cdot (W_1T_1^{-6})^e\widetilde g(W_1T_1^{-6})^3T_1^{12}=\widetilde g(W_1T_1^{-6})^3f_0(W_1T_1^{-6})\cdot W_1^eT_1^{12-6e}.$$
					If $g=0$, the morphism is $2$-isomorphic to 
					$$X\mapsto T_1^{2e}, Y\mapsto f_0(W_1T_1^{-6})\cdot W_1^e$$
					which extends to a morphism $$[\Spec(\oF[[T_1]][W_1,W_1^{-1}])/_{(2,6)}\Gm]\to [\Spec(\oF[[X]][Y,Y^{-1}])/_{(2,6)}\Gm].$$
					$$X\mapsto \widetilde g(W_1T_1^{-6})T_1^{2e}, Y\mapsto \widetilde g(W_1T_1^{-6})f_0(W_1T_1^{-6})\cdot W_1^e$$
					with the induced morphism $\mu_6\to\mu_6$ being given by $W_1\mapsto W_1^e$. Now, we obtain that $m=\gcd(6,6/e)$ and that $\mu_m\to\mu_6$ is given by $x\mapsto x^e$. If $g=1$, that is $\widetilde g(W_1T_1^{-6})=W_1T_1^{-6}$, the morphism $\lambda$ is given by
					$$X\mapsto W_1T_1^{-2}, Y\mapsto W_1^{3+e}T_1^{-18}f_0(W_1T_1^{-6})T_1^{12-6e}.$$
					It is  $2$-isomorphic with
					$$X\mapsto W_1T_1^{2e}, Y\mapsto W_1^{3+e}$$ which extends to a morphism $$[\Spec(\oF[[T_1]][W_1,W_1^{-1}])/_{(2,6)}\Gm]\to [\Spec(\oF[[X]][Y,Y^{-1}])/_{(2,6)}\Gm].$$ The induced morphism on automorphism group schemes is $\mu_6\to \mu_6, W_1\mapsto W_1^{3+e}$ if $g=1$. Now we get that $m=6/\gcd(e+3,6)$ and $\mu_m\to \mu_6$ is given by $x\mapsto x^{e+3}$. The proof is completed.
					\end{proof}
		Thanks to \cite[Theorem 1.1]{lopez2023picardgroupsstackycurves}, there exists a line bundle on $\PPP(1,2)$ which maps to a non-trivial line bundle in $\Pic(\sqrt{\Spec(\oF[[X]])})$. Now it is clear that $\OO(1)_{\PPP(1,2)}$ must be such a line bundle. Write $L:=\OO(1)|_{\sqrt{\Spec(\oF[[X]])}}$ for the restriction. We have that $ \sqrt{\Spec(\oF[[X]])}=[\Spec(\oF[[X]][Y, Y^{-1}])/_{(1,2)}\Gm]$. Consider the stack $[\Spec(\oF[[T]][W,W^{-1}])/_{(2,4)}\Gm]$. Let $L_1=\OO(1)_{[\Spec(\oF[[T]][W,W^{-1}])/_{(2,4)}\Gm]}$ be the restriction of $\OO(1)_{\PPP(2,4)}$. The identity $L_1^{\otimes 2}=p^*L$, where $$p:[\Spec(\oF[[T]][W,W^{-1}])/_{(2,4)}\Gm]\to[ \Spec(\oF[[X]][Y,Y^{-1}])/_{(1,2)}\Gm]$$ is the canonical morphism, induces an isomorphism  $$\sqrt{L/([\Spec(\oF[[X]][Y,Y^{-1}])/_{(1,2)}\Gm])}\xrightarrow{\sim} [\Spec(\oF[[T]][W,W^{-1}])/_{(2,4)}\Gm].$$
		 On the automorphism group schemes (as subgroup schemes of $\Gm$) at the closed point is given by $\mu_4\xrightarrow{=}\mu_4$. 
		 Furthermore, one has an isomorphism $$\rho:\Spec(\oF((T_0)))\times B\mu_2\xrightarrow{\sim}[\Spec(\oF((T))[W,W^{-1}])/_{(2,4)}\Gm].$$
		 Indeed, one can identify $\Spec(\oF((T_0)))\times B\mu_2$ with $[\Spec(\oF((T_0))[W_0,W_0^{-1}]/_{(0,2)}\Gm]$. Then we have a $\Gm$-equivariant isomorphism $\Spec(\oF((T_0))[W_0,W_0^{-1}])\xrightarrow{\sim}\Spec(\oF((T))[W,W^{-1}]$, $(T_0,W_0)\mapsto (W_0,W_0^2T_0)$ where the action with weights $(0,2)$ corresponds to the action with the weights $(2,4)$.
	{		\begin{lem}\label{localmu4} 
		Let $f:=T_0^ef_0(T_0)$ be such that $e\geq 1$ and $f_0$ not divisible by~$T$.  Let $g\in \ZZ/2\ZZ=H^1(\oF((T_0)),\mu_2)$. 
	The morphism $$\Spec(\oF((T_0)))\xrightarrow{(f,g)}\Spec(\oF((T_0)))\times B\mu_2\xrightarrow{\rho}\Spec(\oF((T))[W,W^{-1}])/_{(2,4)}\Gm\subset [\Spec(\oF[[T]][W,W^{-1}])/_{(2,4)}\Gm] $$
	extends uniquely to a representable morphism $$\lambda:\sqrt[m]{\Spec(\oF[[T_0]])}\to [\Spec(\oF[[T]][W,W^{-1}])/_{(2,4)}\Gm],$$ for $m=4/\gcd(e,4)$ if $g=0$ and for $m=4/\gcd (e+2,4)$ if $g=1$. The induced homomorphism $\mu_m\to\mu_4$ is given by $x\mapsto x^e$ if $g=0$ and $x\mapsto x^{e+2}$ if $g=1$ 
  					\end{lem}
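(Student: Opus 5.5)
Model the argument on the proof of Lemma~\ref{localmu6}, replacing the isomorphism $\gamma$ by $\rho$. Uniqueness is immediate: the target $[\Spec(\oF[[T]][W,W^{-1}])/_{(2,4)}\Gm]$ is separated, so by \cite[Theorem 3.4]{valcritstack} there is at most one representable extension. For existence we first build an extension $\sqrt[4]{\Spec(\oF[[T_0]])}\to[\Spec(\oF[[T]][W,W^{-1}])/_{(2,4)}\Gm]$ and compute its effect on automorphism groups. We then cut $\mu_4$ down to the image's faithful part, as in Lemma~\ref{rootstackextension}, to get the representable one.

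First we write the composite $\rho\circ(f,g)$ as an equivariant map of charts. Write $\Spec(\oF((T_0)))=[\Spec(\oF((T_0))[V,V^{-1}])/_{(0,1)}\Gm]$. The morphism $(f,g)$ into $[\Spec(\oF((X))[W_0,W_0^{-1}])/_{(0,2)}\Gm]$ is then given, as in the proof of Lemma~\ref{imagemu2}, by $X\mapsto f(T_0)$ and $W_0\mapsto \widetilde g(T_0)V^2$, where $\widetilde g=1$ if $g=0$ and $\widetilde g=T_0$ if $g=1$. Composing with $\rho\colon (T_0,W_0)\mapsto (W_0,W_0^2T_0)$ gives a $t\mapsto t^2$-equivariant map into the $(2,4)$-chart:
\[
T\mapsto \widetilde g(T_0)V^2,\qquad W\mapsto \widetilde g(T_0)^2V^4 f(T_0).
\]

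Next we reparametrize the source by $\sqrt[4]{\Spec(\oF[[T_1]])}=[\Spec(\oF[[T_1]][W_1,W_1^{-1}])/_{(1,4)}\Gm]$. Over the generic point we use the isomorphism $T_0\mapsto W_1T_1^{-4}$, $V\mapsto T_1$. Substituting $f=T_0^ef_0$ gives:
\begin{itemize}
\item for $g=0$, the map $T\mapsto T_1^2$ and $W\mapsto f_0(W_1T_1^{-4})W_1^eT_1^{4-4e}$;
\item for $g=1$, the map $T\mapsto W_1T_1^{-2}$ and $W\mapsto f_0(\cdot)W_1^{e+2}T_1^{-4-4e}$.
\end{itemize}
We then twist by a suitable power of $T_1$ through the $\Gm$-action, which is a $2$-isomorphism. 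In the first case we rescale by $T_1^{e-1}$ (weights $(2,4)$); this turns the map into $T\mapsto T_1^{2e}$ and $W\mapsto f_0W_1^e$. In the second case we rescale by $T_1^{e+1}$, giving $T\mapsto W_1T_1^{2e}$ and $W\mapsto f_0W_1^{e+2}$.

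Now $f_0(W_1T_1^{-4})$ is a problem, and this is the step I expect to be the main obstacle. Since $f_0$ is a unit power series it is not literally regular in these coordinates, so one must first absorb it. The plan is to replace $f_0$ by its constant term after multiplying by a $4$-th root unit, using a further unit change of coordinates on $T_1$, exactly the step tacitly used in Lemma~\ref{localmu6}. After that, both maps visibly extend over $T_1=0$ as $\Gm$-equivariant maps.

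At $T_1=0$, $W_1=1$ the stabilizer $\mu_4$ maps to $\mu_4$ by $x\mapsto x^e$ when $g=0$, and by $x\mapsto x^{e+2}$ when $g=1$. The kernel of this map is $\mu_{\gcd(4,e)}$, respectively $\mu_{\gcd(4,e+2)}$. Rigidifying by this kernel, equivalently applying Lemma~\ref{rootstackextension} with the $4$-th root stack replaced by the $m$-th one, yields the representable extension. It has $m=4/\gcd(e,4)$, respectively $m=4/\gcd(e+2,4)$, and the stated homomorphisms $\mu_m\to\mu_4$.
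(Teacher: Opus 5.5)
Your proposal is correct and follows the paper's proof essentially step by step: the same equivariant chart for $\rho\circ(f,g)$, the same substitution $T_0\mapsto W_1T_1^{-4}$, $V\mapsto T_1$, and the same rescalings by $T_1^{e-1}$ resp.\ $T_1^{e+1}$, after which the map is equivariant for $t\mapsto t^{e}$ resp.\ $t\mapsto t^{e+2}$. You then read off the map on $\mu_4$ before passing to $\mu_m$, exactly as the paper does; note that $\rho\circ(f,g)$ itself is equivariant for the identity of $\Gm$, not for $t\mapsto t^2$ as you wrote, but your final stabilizer maps are the ones coming from the identity.

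The issue you flag with $f_0(W_1T_1^{-4})$ is passed over in the paper (``clearly extends''). It is most cleanly removed by replacing the uniformizer $T_0$ at the outset by $T_0\,(f_0/f_0(0))^{1/e}$, which makes $f_0$ constant and leaves $g$ unchanged, since units of $\oF[[T_0]]$ are squares.
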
}
						\begin{proof}
						The uniqueness is clear. With identification $\Spec(\oF((T_0)))=[\Spec(\oF((T_0))[X,X^{-1}])/_{(0,1)}\Gm]$, the morphism $ (f,g)$ becomes the one induced by the $\Gm$-equivariant morphism
						$$[\Spec(\oF((T_0))[X,X^{-1}])/_{(0,1)}\Gm]\rightarrow [\Spec(\oF((T_0))[W_0,W_0^{-1}])/_{(0,2)}\Gm],\hspace{0,7cm} T_0\mapsto f(T_0), W_0\mapsto \widetilde g(T_0)X^2,$$
						where $\widetilde g(T_0)=1$ if $g=1$ and $\widetilde g(T_0)=T_0$ if $g\neq 1$. 
						The morphism $\rho\circ (f,g)$ is given by the $\Gm$-equivariant morphism $$[\Spec(\oF((T_0))[X,X^{-1}])/_{(0,1)}\Gm]\rightarrow [{\Spec(\oF((T))[W,W^{-1}])/_{(2,4)}\Gm} ], \hspace{0,4cm}T\mapsto \widetilde{g}(T_0)X^2 , W\mapsto \widetilde g(T_0)^2 X^4 f(T_0) .$$ 
							Consider the stack $[\Spec(\oF((T_1))[W_1,W_1^{-1}])/\Gm]$ where the action is $t\cdot (T_1,W_1)=(t^{1}T_1,t^{4}W_1).$ This stack is in fact a scheme, and we will provide an isomorphism $$[\Spec(\oF((T_1))[W_1,W_1^{-1}])/_{(1,4)}\Gm]\xrightarrow{\sim}\Spec \oF((T_0))=[\Spec (\oF((T_0))[W_0,W_0^{-1}])/_{(0,1)}\Gm].$$ Indeed, the isomorphism is provided by the $\Gm$-equivariant isomorphism $$\Spec(F((T_1))[W_1,W_1^{-1}])\xrightarrow{\sim}\Spec(\oF((T_0))[X,X^{-1}]),\hspace{1cm} T_0\mapsto T_1^{-4}W_1^{1}, X\mapsto T_1.$$
							Now $[\Spec(\oF((T_1))[W_1,W_1^{-1}])/_{(1,4)}\Gm]\to [\Spec(\oF((T))[W,W^{-1}]/_{(2,4)}\Gm]$ is given by the $\Gm$-equivariant morphism 
						$$T\mapsto \widetilde g(T_1^{-4}W_1) T_1^{2}, W\mapsto  \widetilde g(T_1^{-4}W_1)^2T_1^{4}f(T_1^{-4}W_1).$$
							We have  $f(T_0)=T_0^ef_0(T_0)$ with $f_0(T_0)$ not divisible by $T_0$. The morphism is $1$-isomorphic to:
							$$T\mapsto    \widetilde g(T_1^{-4}W_1)  ,\hspace{0,5cm} W\mapsto \widetilde g(T_1^{-4}W_1)^2T_1^{-4 e}W_1^{ e}f_0(T_1^{-4}W_1),$$
							which is $1$-isomorphic with
							$$T\mapsto \widetilde g(T_1^{-4}W_1)\cdot T_1^{2e}, W\mapsto \widetilde g(T_1^{-4}W_1)^2 W_1^{ e}f_0(T_1^{-4}W_1). $$
							Assume that $g=1$, that is $\widetilde g=1$.
							The morphism clearly extends to a morphism $$[\Spec([[T_1]][W_1,W_1^{-1})/_{(1,4)}\Gm]\to [\Spec(\oF[[T]][W,W^{-1}])/_{(2,4)}\Gm] $$ and the morphism on the automorphism group schemes at the closed points is given by $\mu_4\to \mu_4, W_1\mapsto W_1^e$. We obtain that $m=4/\gcd(e,2)$ and that $\mu_m\to \mu_4$ is given by $x\mapsto x$. Assume that $g\neq 1$, that is $\widetilde g(T_0)=T_0$. The morphism is $1$-isomorphic to $$T\mapsto W_1T_1^{2e}, W\mapsto W_1^{2+e} f_0(T_1^{-4}W_1). $$	The morphism clearly extends to a morphism $$[\Spec([[T_1]][W_1,W_1^{-1})/_{(1,4)}\Gm]\to [ \Spec(\oF[[T]][W,W^{-1}])/_{(2,4)}\Gm] $$ and the morphism on the automorphism group schemes at the closed points is given by $\mu_4\to \mu_4, W_1\mapsto W_1^{2+e}$. We obtain that $m=4/\gcd(e+2,2)$ and that $\mu_m\to \mu_4$ is given by $x\mapsto x^{-1}$.
							
						 The proof is completed.
								\end{proof}
								\subsubsection{Stacky curves on $\XXX_0(N)$}
							 {Let us fix a generic section $\theta_N\in \Lambda_N$.} It induces an isomorphism $\OO_{\Spec(\oF((T)))}\xrightarrow{\sim}\Lambda_N|_{\eta_N^{\rig}}$ and hence an isomorphism $\Spec(\oF(X_0(N)))\times B\mu_2\xrightarrow{\theta_N}\sqrt{\Lambda_N^{\rig}|_{\eta^{\rig}_N}}$, which, by abuse of notation, we have also denoted by $\theta_N$.
								\begin{lem}\label{relnormfg}
									Let $C$ be a smooth projective irreducible $\overline F$-curve.   Given a pair $(f,g)\in \oF(C)\times H^1(\oF(C),\mu_2)$ with $f$ non-constant, the morphism $$\iota(f,g):\Spec(\oF(C))\xrightarrow{(f,g)}\Spec(\oF(T))\times B\mu_2\xrightarrow{\sim}\Spec(\oF(X_0(N)))\times B\mu_2\xrightarrow{\theta_N}\sqrt{\Lambda_N^{\rig}|_{\eta^{\rig}_N}}\to\sqrt{\Lambda_N^{\rig}}\cong \XXX_0(N)$$ is affine. The relative normalization $\iota(f,g)^{\nu}:\mathcal C\to\XXX_0(N)$ of $\XXX_0(N)$ with respect to $\iota(f,g)$ is representable and $\mathcal C$ has $C$ for its coarse moduli space. 
									
									Conversely, for any surjective representable $\oF$-morphism $h:\mathcal C\to \XXX_0(N)$, with $\mathcal C$ a stacky curve having $C$ for its coarse moduli space, there exists a unique pair $(f,g)\in \oF(C)\times H^1(\oF(C),\mu_2)$ with $f$ non-constant, and an isomorphism $\iota(f,g)^{\nu}\implies h$.
								\end{lem}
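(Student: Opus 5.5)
We would split the statement into its two directions: the forward direction (affineness, representability, coarse space of $\iota(f,g)^{\nu}$) and the converse (classification of representable maps from stacky curves).

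\emph{Affineness.} Since $f$ is non-constant, the induced map $\Spec(\oF(C))\to \Spec(\oF(X_0(N)))$ is a finite field extension. The morphism $\iota(f,g)$ is therefore the composite of a finite morphism $\Spec(\oF(C))\to \Spec(\oF(X_0(N)))\times B\mu_2$ with the inclusion of the generic gerbe $\sqrt{\Lambda_N^{\rig}|_{\eta_N^{\rig}}}\hookrightarrow \XXX_0(N)$. Affineness can be checked after base change along a smooth atlas, or along the coarse map. The generic point of $X_0(N)\cong\PP^1$ is an inverse limit of affine opens $U$, and over each such $U$ the stack $\XXX_0(N)|_U$ is $U\times B\mu_2$ once the stacky points are removed (as in the proof of Lemma~\ref{Zariski-dense}). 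Hence $\iota(f,g)$ is a limit of affine morphisms and is itself affine. Representability is immediate: the source is a scheme, and relative normalization of an affine morphism is affine over the target (\cite[Tag 0822]{stacks-project}), hence representable.

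\emph{Coarse space.} Let $\mathcal C$ be the relative normalization. It is normal and of dimension one, and it contains $\Spec(\oF(C))$ as a dense open, so its coarse space is a normal curve with function field $\oF(C)$. Properness of $\mathcal C$ over $\XXX_0(N)$ comes from finiteness of integral closure over the excellent stack $\XXX_0(N)$; together with properness of $\XXX_0(N)$ this makes the coarse space proper. Hence the coarse space equals $C$. Locally at a point over a stacky or non-stacky point of $\XXX_0(N)^{\rig}$, the formal-local computations of Lemmas~\ref{imagemu2}, \ref{localmu6}, \ref{localmu4} show explicitly that $\mathcal C$ is a root stack $\sqrt[m]{\Spec\oF[[T]]}$. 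This matches the uniqueness of representable extensions.

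\emph{Converse.} Given a surjective representable $h:\mathcal C\to\XXX_0(N)$, restrict to the generic point of $\mathcal C$, which is $\Spec\oF(C)$ because $\mathcal C$ is a stacky curve with trivial generic stabilizer. Since $h$ is surjective and the generic point maps into the open locus where $\XXX_0(N)\cong U\times B\mu_2$, via $\theta_N$ the restriction is a pair consisting of a map $\Spec\oF(C)\to\Spec\oF(X_0(N))$, i.e.\ a non-constant $f$, and a $\mu_2$-torsor, i.e.\ $g\in H^1(\oF(C),\mu_2)$. Uniqueness of the pair follows because isomorphism classes of such objects of $U\times B\mu_2$ over a field are exactly such pairs. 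Finally, $h$ factors through the relative normalization by its universal property: $\mathcal C$ is normal and $h$ is representable, hence quasi-finite and separated, and in fact finite since $\mathcal C$ is proper. The resulting map $\mathcal C\to\iota(f,g)^{\nu}$ is a birational finite representable morphism of normal stacky curves. It is therefore an isomorphism, by the uniqueness of representable extensions in Lemma~\ref{rootstackextension}/\cite[Theorem 3.4]{valcritstack}, applied at each closed point.

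\emph{Main obstacle.} The delicate step is this last one: showing that a representable birational map between normal stacky curves with the same coarse space is an isomorphism. We would handle it formally locally, using that both sides are $\sqrt[m]{\Spec\oF[[T]]}$ with $m$ forced by representability as in Lemma~\ref{rootstackextension}.
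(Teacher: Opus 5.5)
Your proposal is correct and follows essentially the paper's route. Affineness comes from factoring $\iota(f,g)$ through affine pieces lying over an open of the coarse space, and representability holds because the relative normalization of an affine morphism is affine. The converse is obtained by reading off $(f,g)$ from $h$ at the generic point via $\theta_N$ and then appealing to the universal property of relative normalization. You also justify the coarse-space claim, which the paper leaves implicit.

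One small correction: the universal property produces a map from the source of $\iota(f,g)^{\nu}$ to $\mathcal C$, not the reverse. That map is an isomorphism for an elementary reason: after base change to an \'etale atlas of $\mathcal C$ it is a finite birational morphism onto a normal scheme. So the formal-local root-stack argument you flag as the main obstacle is not needed.
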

								\begin{proof}
									Let us show the morphism $\iota(f,g)$ is  affine. The morphism $g:\Spec(\oF(C))\to B\mu_2$ is affine as its base change by a morphism from a scheme to $B\mu_2$ is a $\mu_2$-torsor over this scheme. The morphism $f$ is affine, hence the product $(f,g)$ is affine. We are left to verify why the inclusion $\sqrt{\Lambda^{\rig}_{\eta_N^{\rig}}}\to \sqrt{\Lambda^{\rig}_N} $ is affine. This morphism is a base change of the inclusion $\Spec(\oF(X_0(N)))\to\XXX_0(N)^{\rig}$ so to show it is affine, it will be sufficient to show that the latter morphism is affine. Let $\mathcal U\subset\XXX_0(N)^{\rig}$ be the largest open of $\XXX_0(N)$ which is a scheme. Then $\mathcal U=\XXX_0(N)^{\rig}\cong\PP^1$ or $\mathcal U$ is affine subvariety of $\PP^1$. In the first case, the claim follows because the inclusion of the generic point to the projective line is clearly affine (cover the line by two affine open subvarieties) and in the latter case $\Spec(\oF(X_0(N))) \to\XXX_0(N)^{\rig}$ factorizes through $\mathcal U$, hence is affine.  
									Now the first claim follows because relative normalizations are representable morphisms. Let us prove the second claim. Let $\eta_C:\Spec(\oF(C))\to\mathcal C$ be the generic point.   For $(f,g)$ we choose the element given by the identification $$\Spec(\oF(C))\xrightarrow{h|_{\eta_C}}\XXX_0(N)|_{\eta^{\rig}_{N}}\cong\sqrt{(\Lambda^{\rig}_N)_{\eta^{\rig}_N}}\xrightarrow{\theta^{-1}_N}\Spec(\oF(X_0(N)))\times B\mu_2\xrightarrow{\sim}\Spec(\oF(T))\times B\mu_2.$$ Let $\eta_{\mathcal C}:\Spec(\oF(C))\to \mathcal C$ be the generic point. Now, by the universal property of the relative normalization, there exists an isomorphism $\iota(f,g)^{\nu}\implies h$. 
								\end{proof}
								Recall that $\mathcal K_4$ and $\mathcal K_6$ are the closed substacks of $\XXX_0(N)$ where the automorphism group is isomorphic to $\mu_4$ and $\mu_6$, respectively. The two substacks are of dimension $0$. One has that $\mathcal K_4$ (respectively, $\mathcal K_6$) is connected if and only $i\in F$, (respectively, $\omega\in F$) and in case it is not, then it has precisely two connected components.  Write $(\mathcal K_4)_{F(i)}=\mathcal K_4^{-}\cup \mathcal  K_4^{+}$, (respectively, $(\mathcal K_6)_{\oF(\omega)}=\mathcal K_6^{-} \cup\mathcal K_6^+$) for the splitting of $\mathcal K_4$, (respectively, $\mathcal K_6$) over $F(i)$ (respectively, over $F(\omega)$). Write $K_4$ and $K_6$ for the closed points of $X_0(N)$ corresponding to $\mathcal K_4$ and $\mathcal K_6$ for the coarse moduli morphism $\XXX_0(N)\to X_0(N)\cong \PP^1$. Write $(K_4)_{F(i)}=K_4^-\cup K_4^+$ and $(K_6)_{F(\omega)}=K_6^-\cup K_6^+$ with $\mathcal K_i^*$ corresponding to $K_i^* $ for $(i,*)\in\{4,6\}\times \{+,-\}$. We use the following notation: if $\mathcal X$ is a smooth DM stack of dimension $1$ and $x\in \mathcal X(\oF)$ is a point, we denote by $V(x)$ the formal neighborhood of $x$ in $\mathcal X. $ When $\mathcal X$ is not a scheme, for our purposes it means the fiber product of the formal neighborhood of the image of $x$ in the coarse moduli space.
								\begin{lem}\label{associated_sector}
									Let $C$ be a smooth projective irreducible $\oF$-curve.  Let $(f,g)\in \oF(C)\times H^1(\oF(C),\mu_2)$ with $f$ non-constant and let $\iota(f,g)^{\nu}:\mathcal C\to \XXX_0(N)$ be the  representable morphism from a smooth stacky curve $\mathcal C$ having $C$ for coarse moduli space given by Lemma~\ref{relnormfg}. Let $\pi:\mathcal C\to C$ be the coarse moduli morphism. For $y\in C(\oF)$, denote by $e_{f,y}$ the ramification of $f:C\to \PP^1\cong  X_0(N)$ at $y$. The following properties are valid:
									\begin{enumerate}
										\item For any point $x\in  C(\oF)$ which satisfies that $f(x)\not\in K_4\cup K_6$, one has that the associated sector of $\iota(f,g)^{\nu}$ at $\pi^{-1}(x)$ is $0$ if $g=0\in H^1(V(x),\mu_2) $ and is $1/2$ otherwise;
										\item for $*\in\{+,-\}$ and for any point $x\in C(\oF)$ with $f(x)\in K_4^{*}$ , we have that the associated sector of $\iota(f,g)^{\nu}$ at $\pi^{-1}(x)$ is $(\mathcal K_4^{*},\{(e_{f,x})/4\})$ if $g=0\in H^1(V(x),\mu_2)$ and $(\mathcal K_4^{*},\{(e_{f,x}+2)/4\})$ otherwise;
										\item for $*\in\{+,-\}$ and  for any point $x\in  C(\oF)$ with $f(x)\in K_4^*$, we have that the associated sector of $\iota(f,g)^{\nu}$ at $\pi^{-1}(x)$ is $(\mathcal K_6^*,\{(e_{f,x})/6\})$ if $g=0\in H^1(V(x),\mu_2)$ and $(\mathcal K_6^*,\{(e_{f,x}+3)/6\})$ otherwise;
									\end{enumerate}
								\end{lem}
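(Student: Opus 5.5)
The plan is to reduce everything to the formal neighborhood of $x$ and then invoke the three local lemmas, Lemma~\ref{imagemu2}, Lemma~\ref{localmu6} and Lemma~\ref{localmu4}, which were written for exactly this situation. Fix $x\in C(\oF)$. The associated sector of $\iota(f,g)^{\nu}$ at $\pi^{-1}(x)$ depends only on the restriction of $\iota(f,g)^{\nu}$ to $\pi^{-1}(V(x))$. By the universal property of relative normalization, and because relative normalization commutes with flat base change such as passing to formal neighborhoods, this restriction is the unique representable extension of the morphism $\Spec(\oF((T)))\to\XXX_0(N)$ obtained from $(f,g)$ by restricting to the punctured formal disc at $x$. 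Here $T$ is a uniformizer at $x$. After a change of coordinate on the target we may write $f=T^{e}f_0(T)$ with $e=e_{f,x}$ and $f_0(0)\neq0$, where $f$ is measured against a local coordinate at $f(x)$ on $X_0(N)\cong\PP^1$.

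Next I identify the formal neighborhood of the image point $J$. It is $J_N^{-1}(U)\cong\sqrt{L/\sqrt[m]{\Spec(\oF[[T]])}}$ as in Paragraph~\ref{subsec:local_situation}, where $m=\#\Aut(s_{0,N}(\cdot))\in\{1,2,3\}$.

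\textbf{Case (1), $f(x)\notin K_4\cup K_6$.} Here $m=1$, and $L$ is trivial on the formal disc, so the neighborhood is $\Spec(\oF[[T]])\times B\mu_2$. I have to match the trivialization with $\theta_N$; the only ambiguity is a unit, which becomes a square after completion, so it changes nothing. Lemma~\ref{imagemu2} then gives the extension from $\sqrt[m']{\Spec}$ with $m'=1$ exactly when $g|_{V(x)}=0$. In that case the morphism on closed-point automorphism groups is trivial and the sector is $0$. Otherwise it is $\mu_2\xrightarrow{=}\mu_2$, which is the sector $1/2$ in the description of Theorem~\ref{thmonsectors}.

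\textbf{Case (2), $\mu_4$ points.} Here $m=2$. As noted before Lemma~\ref{localmu4}, $L$ is the nontrivial class, since the gerbe has a $\mu_4$ point. So the neighborhood is $[\Spec(\oF[[T]][W,W^{-1}])/_{(2,4)}\Gm]$, and $\rho$ identifies its generic part with $\Spec(\oF((T_0)))\times B\mu_2$ compatibly with $\theta_N$, up to a unit square as before. Lemma~\ref{localmu4} then gives the homomorphism $\mu_{m'}\to\mu_4$ as $x\mapsto x^{e}$ or $x\mapsto x^{e+2}$. Under $\delta$ and Lemma~\ref{secgroup}, the induced map $\widehat\mu\to\mu_{m'}\to\mu_4$ is identified with $\{e/4\}$, respectively $\{(e+2)/4\}$.

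\textbf{Case (3), $\mu_6$ points.} Here $m=3$. Since $\Pic$ of the $\mu_3$-disc is $\ZZ/3$, every class is a square, the gerbe is trivial, and the neighborhood is $\sqrt[3]{\Spec(\oF[[T]])}\times B\mu_2$. Lemma~\ref{localmu6} together with the identification $\gamma$ gives $\{e/6\}$, respectively $\{(e+3)/6\}$. Which component $\mathcal K^{*}_i$ occurs is determined by which point $f(x)$ lands on. The statement's item (3) presumably means $f(x)\in K_6^*$.

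The main obstacle is compatibility of identifications. The local models in the three lemmas use specific presentations and specific trivializations, and these must agree with the global $\theta_N$ and with the identification of $\Aut$ with $\mu_{4}$ or $\mu_6$ induced by $\Lambda_N$. The convention for $g$ must agree as well: the class $0$ versus $1$ in $H^1(\oF((T)),\mu_2)=\ZZ/2$, corresponding to $\widetilde g=1$ versus $T$. I would handle this by checking that every change of trivialization multiplies by a unit of $\oF[[T]]$. Such a unit has a $2$-nd, $4$-th and $6$-th root, so it can be absorbed into $2$-isomorphisms, exactly as in the proofs of the local lemmas. The residual sign ambiguity in $x\mapsto x^{\pm1}$ is fixed by the fact that $\Lambda_N$ gives the canonical inclusion.
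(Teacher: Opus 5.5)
Your route is the paper's: restrict $\iota(f,g)^{\nu}$ to the formal neighbourhood of $x$, and identify the neighbourhood of the image point (via $\Lambda_N$) with $\Spec(\oF[[T]])\times B\mu_2$, $[\Spec(\oF[[T]][W,W^{-1}])/_{(2,4)}\Gm]$ or $[\Spec(\oF[[T]][W,W^{-1}])/_{(2,6)}\Gm]$. Then read off the stabilizer map from Lemmas~\ref{imagemu2}, \ref{localmu4}, \ref{localmu6}, using uniqueness of the representable extension. You are also right that item (3) should read $f(x)\in K_6^*$.

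The step you add beyond the paper is where the argument fails: matching the trivialization given by the global rational section $\theta_N$ with the local trivializations used in those lemmas. The paper's proof skips this point entirely. The discrepancy is an element $u\in\oF((T))^\times$, with $T$ a coordinate at $f(x)$, and it is not in general a unit of $\oF[[T]]$. Its order is the order of $\theta_N$ at $f(x)$ relative to a local generator of $\Lambda_N^{\rig}$, which is nonzero whenever $f(x)$ lies on the divisor of $\theta_N$. Changing the trivialization by $u$ twists the $\mu_2$-torsor by the Kummer class of $u\circ f$. So the class that actually governs the sector is $g|_{V(x)}+e_{f,x}\ord_T(u)\bmod 2$, and the case distinction in (1)--(3) flips when $e_{f,x}$ and $\ord_T(u)$ are both odd.

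This really happens. For $N=4$ one has $\XXX_0(4)\cong\PPP(2,2)$, $\Lambda_4^{\rig}\cong\OO_{\PP^1}(1)$ and $K_4\cup K_6=\emptyset$. Take $\theta_4=X$, $C=\PP^1$, $f=\mathrm{id}$ and $g=0$. The generic point is then $(1,t)$ in weighted coordinates. It extends over $t=\infty$ only after adjoining $\sqrt{t}$, so the sector there is $1/2$ although $g=0$. Since $\deg\Lambda_4^{\rig}$ is odd, no choice of $\theta_4$ avoids this.

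Hence the statement holds only if ``$g=0\in H^1(V(x),\mu_2)$'' is measured against a local trivialization of $\Lambda_N^{\rig}$ at $f(x)$; equivalently, the criterion must be applied to $g+[u\circ f]$ rather than to $g$. With that reading your unit argument is right, because two local trivializations differ by an element of $\oF[[T]]^\times$, which is a square. The rest of your proof then goes through.
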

								\begin{proof}  The choice of the line bundle $J_N^*(\Lambda)$ on $\XXX_0(N)$ identifies the commutative diagram
									\[\begin{tikzcd}
										{\mathscr X_0(N)} & {\mathscr X_0(N)^{\rig}} \\
										{\mathscr X_0(1)} & {\mathscr X_0(1)^{\rig}}
										\arrow[from=1-1, to=1-2]
										\arrow[from=1-1, to=2-1]
										\arrow[from=1-2, to=2-2]
										\arrow[from=2-1, to=2-2]
									\end{tikzcd}\]
									with the commutative diagram
									\[\begin{tikzcd}
										{\sqrt{(J_N^{\rig})^*(\mathcal O(1)_{\mathscr P(2,3)})}} & { \sqrt[2,3]{K_4,K_6}} \\
										{{\mathscr P(4,6) }} & {\mathscr P(2,3)}
										\arrow[from=1-1, to=1-2]
										\arrow[from=1-1, to=2-1]
										\arrow[from=1-2, to=2-2]
										\arrow[from=2-1, to=2-2]
									\end{tikzcd}\]
									Here the following convention is used: if $K_6=\emptyset$, then $\sqrt[2,3]{K_4,K_6}$ is interpreted as $\sqrt[2]{K_4}$ and similarly if $K_4=\emptyset$. If both are empty simultaneously, we interpret $\sqrt[2,3]{K_4,K_6}$ as $\PP^1$.  By denoting by $r$ the rigidification morphism whatever the origin is, taking the formal neighborhoods yields a commutative diagram
									\[\begin{tikzcd}
										{V(\pi^{-1}(x))} & {V({\iota(f,g)^{\nu}(\pi^{-1}(x)) })} & {V({r(\iota(f,g)^{\nu}(\pi^{-1}(x)))})} \\
										& {V(J_N(\iota(f,g)^{\nu}(\pi^{-1}(x))))} & {V(J^{\rig}_N(r(\iota(f,g)^{\nu}(\pi^{-1}(x)))))}
										\arrow[from=1-1, to=1-2]
										\arrow[from=1-2, to=1-3]
										\arrow["{J_N}", from=1-2, to=2-2]
										\arrow["{J_N^{\rig}}"', from=1-3, to=2-3]
										\arrow[from=2-2, to=2-3]
									\end{tikzcd}\]
									Assume that $f(x)\not\in K_4\cup K_6$.    The choice of the line bundle $(\mathcal O(1)_{\PPP(4,6)})|_{V(\iota(f,g)^{\nu})}$ yields the diagram
									\[\begin{tikzcd}
										{\sqrt[m]{\Spec(\oF[[T]])}} & {\Spec(\oF[[T]])\times B\mu_2} & {\Spec(\oF[[T]])} \\
										& {\Spec(\oF[[T]])\times B\mu_2} & {\Spec(\oF[[T]])}
										\arrow[from=1-1, to=1-2]
										\arrow[from=1-2, to=1-3]
										\arrow["{}", from=1-2, to=2-2]
										\arrow["{}"', from=1-3, to=2-3]
										\arrow[from=2-2, to=2-3]
									\end{tikzcd}\]
									for some $m|2$. 
									Now observing that the composite of the left horizontal and left vertical morphism is representable and using Lemma~\ref{imagemu2} implies the first claim.  Assume that $f(x)\in K_4$. The indices in the notation below, when the quotient is taken by $\Gm$, represent the weights of the action. The choice of line bundle $(\mathcal O(1)_{\PPP(4,6)})|_{V(\iota(f,g)^{\nu})}$ yields the diagram:
									\[\begin{tikzcd}
										{\sqrt[m]{\Spec(\oF[[T]])}} & {[\Spec(\oF[[T]][Y,Y^{-1}])/_{(2,4)}\Gm]} & {[\Spec(\oF[[T]][Y,Y^{-1}])/_{(1,2)}\Gm]} \\
										& {[\Spec(\oF[[T]][Y,Y^{-1}])/_{(2,4)}\Gm]} & {[\Spec(\oF[[T]][Y,Y^{-1}])/_{(1,2)}\Gm]}
										\arrow[from=1-1, to=1-2]
										\arrow[from=1-2, to=1-3]
										\arrow["{}", from=1-2, to=2-2]
										\arrow["{}"', from=1-3, to=2-3]
										\arrow[from=2-2, to=2-3]
									\end{tikzcd}\]
									for some $m|4$. The composite of the left horizontal and the left vertical morphism is representable. Using Lemma~\ref{localmu4} implies that the induced homomorphism on the automorphism group schemes is given by $\mu_m\to \mu_4$, with $m=1$ if $e\equiv 0\pmod 4$ and $g=0$ or if $e\equiv 2\pmod 4$ and $g=1$, with $m=2$ if $e\equiv 2\pmod 4$ and $g=0$ or $e\equiv 0\pmod 4$ and $g=1$ and with $m=4$ if $e\equiv1\pmod 4 $ and $g=0$ or $e\equiv 3\pmod4 $ and $g=1$.  The claim in this case follows. 
									 Finally, assume that $f(x)\in K_6$. The choice of line bundle $(\mathcal O(1)_{\PPP(4,6)})|_{V(\iota(f,g)^{\nu})}$ yields the diagram:
									\[\begin{tikzcd}
										{\sqrt[m]{\Spec(\oF[[T]])}} & {[\Spec(\oF[[T]][Y,Y^{-1}])/_{(2,6)}\Gm]} & {[\Spec(\oF[[T]][Y,Y^{-1}])/_{(1,3)}\Gm]} \\
										& {[\Spec(\oF[[T]][Y,Y^{-1}])/_{(2,6)}\Gm]} & {[\Spec(\oF[[T]][Y,Y^{-1}])/_{(1,3)}\Gm]}
										\arrow[from=1-1, to=1-2]
										\arrow[from=1-2, to=1-3]
										\arrow["{}", from=1-2, to=2-2]
										\arrow["{}"', from=1-3, to=2-3]
										\arrow[from=2-2, to=2-3]
									\end{tikzcd}\]
									for some $m|6$. The composite of the left horizontal and the left vertical morphism is representable. Using Lemma~\ref{localmu6} and arguing as in the case $m|4$, implies the third claim. 
								\end{proof} 
								%
\subsection{Orbifold pseudoeffective cone} \label{subsec:orb_peff_cone}
The {\it orbifold pseudoeffective cone} plays the same role in the Batyrev--Manin conjecture for stacks as the pseudoeffective cone plays for varieties.
We recall below its definition and properties most of which are established in \cite{dardayasudabm}. We then establish some properties which will helps us calculate it for $\XXX_0(N)$ (for some that we need).
%
%
\begin{mydef}{\cite[Definition 8.4]{dardayasudabm}}
	A stacky curve on $\XXX_0(N)_{\overline F}$ is a representable morphism $f:\mathcal C\to \XXX_0(N)_{\overline F}$ with $\mathcal C$ a smooth stacky curve. A covering family of stacky curves on $\XXX_0(N)_{\overline F}$ is a pair $$(\pi:\widetilde {\mathcal{C}}\to T, \widetilde f:\widetilde{\mathcal C}\to \XXX_0(N)_{\overline F})$$ of $\overline F$-morphisms of Deligne--Mumford stacks such that:
	\begin{itemize}
		\item $\pi$ is smooth and surjective,
		\item $T$ is an integral scheme of finite type over $\overline F$,
		\item for each point $t\in T(\overline F)$, the morphism $\widetilde f|_{\pi^{-1}(t)}:\pi^{-1}(t)\to \XXX_0(N)_{\overline F} $ is a stacky curve on $\XXX_0(N)_{\overline F}$ and
		\item $\widetilde f$ is dominant.
	\end{itemize}
\end{mydef}
\begin{mydef}
Let $f:\mathcal D\to \XXX_0(N)_{\overline F}$ be a morphism with $\mathcal D$ a stacky curve, and let $p\in\mathcal D(\overline F)$. We define  associated sector  of $f$ at $p$ to be the sector of $\XXX_0(N)$ containing the point $(f(p), \widehat{\mu}\to \Aut(p)\to\Aut(f(p)))$. We denote it by $\psi(p)$.
\end{mydef}
Given $(L,c)\in \NS_{\orb}(\XXX_0(N))_{\RR}$ and $f:\mathcal C\to \XXX_0(N)_{\overline F}$ a stacky curve on $\XXX_0(N)_{\overline F}$, we define an intersection number by
\begin{align*}((L,c),f)&=\deg(f^*L)+\sum_{p\text{ stacky point of }\mathcal C}c(\psi(p))
	\end{align*}
By \cite[Lemma 8.6]{dardayasudabm}, we have that for a covering family $(\pi:\widetilde {\mathcal C}\to T,\widetilde f:\widetilde{\mathcal{C}}\to \XXX_0(N)_{\overline F})$ and any $(L,c)\in \NS_{\orb}(\XXX_0(N)_{\overline F})$, there exists a non-empty open $T_0\subset T$ such that $((L,c), \widetilde f|_{\pi^{-1}(t_0)})$ is constant for $t_0\in T_0(\overline F)$. We define $((L,c),(\pi,\widetilde f)) $ to be the generic value. 
\begin{mydef}
The orbifold pseudo-effective cone of $\XXX_0(N)$ is given by $$\Eff_{\orb}(\XXX_0(N)):=\{(L,c)|\hspace{0,1cm}\forall\widetilde{f}\text{ covering family of stacky curves on $\XXX_0(N)_{\overline F},$ one has }((L,c),\widetilde f)\geq 0)\} $$
\end{mydef}
Given a family of covering curves, there is a closed substack of $\XXX_0(N)_{T} = \XXX_0(N)_{\oF} \times_{\oF} T$ defined by the support of the proper image $\widetilde f_T(\widetilde{\mathcal C})$, so the dimension of the fibers of $\widetilde f_T(\widetilde{\mathcal C})$ is upper semicontinuous.
Hence, there exists an open subscheme $T_0$ of the base $T$ such that the family restricted to $T_0$ surjects to $\XXX_0(N)_{\overline F}$ or degenerates to a point.  
\begin{lem}\label{twotypes}
Let $(\pi:\widetilde{\mathcal C}\to T,\widetilde f:\widetilde{\mathcal C}\to \XXX_0(N)_{\overline F})$ be a covering family of stacky curves on $\XXX_0(N)_{\overline F}$. Then  there exists a non-empty open $T_0\subset T$ such that 1) for $t\in T_0(\overline F)$ one has that $\widetilde f|_{\pi^{-1}(t)}$ surjects onto $\XXX_0(N)_{\overline F}$ for $t\in T_0(\overline F)$ or 2) $\widetilde f(\pi^{-1}(t))$ is a point for $t\in T_0(\overline F)$. In the latter case,  we have that every associated sector  of every stacky point of $\widetilde f_{\pi^{-1}(t)}$ for $t\in T_0(\overline F)$ is the sector $0$ or the sector $1/2$. 
\end{lem}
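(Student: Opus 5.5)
The plan has two parts. First we obtain the dichotomy from the semicontinuity remark made just before the lemma. Then we analyse the degenerate case through the automorphism groups of points.

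For the dichotomy, consider the proper image $\widetilde f_T(\widetilde{\mathcal C})\subset \XXX_0(N)_{\oF}\times T$ and its fibres over $T$. Since $\XXX_0(N)_{\oF}$ is an irreducible curve, each fibre is either all of $\XXX_0(N)_{\oF}$ or a finite set of points. By upper semicontinuity of fibre dimension there is a non-empty open $T_0$ on which the fibre dimension equals its generic value.
\begin{itemize}
\item If the generic value is $1$, then for $t\in T_0(\oF)$ the image of the irreducible curve $\pi^{-1}(t)$ is closed, irreducible and one-dimensional. Hence it is all of $\XXX_0(N)_{\oF}$, and $\widetilde f|_{\pi^{-1}(t)}$ is surjective.
\item Otherwise the image of each fibre is a closed irreducible zero-dimensional set, i.e.\ a single point. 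This uses that $\pi^{-1}(t)$ is irreducible, which holds after shrinking $T_0$, since $\pi$ is smooth with stacky curve fibres.
\end{itemize}
(Dominance of $\widetilde f$ then forces these points to vary with $t$. This is not needed, but it shows that the image point avoids the finite set $\mathcal K_4\cup\mathcal K_6$ for generic $t$.)

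In the degenerate case, fix $t\in T_0(\oF)$ and let $\mathcal D=\pi^{-1}(t)$, with $\widetilde f(\mathcal D)=\{z\}$. The key step is to shrink $T_0$ further so that $z\notin \mathcal K_4\cup\mathcal K_6$. This is possible because $\mathcal K_4\cup\mathcal K_6$ is a finite set of points and $\widetilde f$ is dominant. Concretely, the set of $t$ whose fibre maps into $\mathcal K_4\cup\mathcal K_6$ is the preimage in $T$ of a proper closed subset: it is closed, and it is not all of $T$, since otherwise $\widetilde f$ would factor through a finite set and not be dominant. So we remove it. Then $\Aut(z)=\mu_2$.

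For each stacky point $p$ of $\mathcal D$, the associated sector contains the pair $(z,\widehat{\mu}\to\Aut(p)\to\Aut(z)=\mu_2)$. By Lemma~\ref{secgroup} the sectors lying over $z$ are indexed by $\frac12\ZZ/\ZZ$, so $\psi(p)$ is either $0$ or the sector $1/2$. This matches the description in Theorem~\ref{thmonsectors}, where the sector $1/2$ contains all pairs $(x,s_2)$ with $\Aut(x)=\mu_2$. This proves the claim.

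The main obstacle is the shrinking step. We must check that the degenerate fibres can be arranged to avoid the points with larger automorphism group. The justification is the dominance of $\widetilde f$ together with the constructibility of the locus of $t$ whose image lies in $\mathcal K_4\cup\mathcal K_6$. Even without this shrinking, any homomorphism $\widehat{\mu}\to\Aut(p)$ has image in $\Aut(p)$. So to conclude, in the case $z\in\mathcal K_4\cup\mathcal K_6$, that only $0$ or $1/2$ occur, one would need representability to force $\Aut(p)\to\Aut(z)$ to land in $\mu_2$. That route is unreliable, so I would rely on the shrinking argument above.
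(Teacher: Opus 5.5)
Your proof is correct. For the dichotomy you use the same upper-semicontinuity argument that the paper sketches in the paragraph just before the lemma. As in the paper, your first bullet is a little quick: a one-dimensional fibre of the closure of the image over $t$ does not by itself make $\widetilde f(\pi^{-1}(t))$ one-dimensional. Passing through the generic fibre and shrinking $T_0$ repairs this.

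The paper gives no argument at all for the claim about sectors, and your shrinking step supplies one. The locus of $t$ whose fibre lands in $\mathcal K_4\cup\mathcal K_6$ is closed, and by dominance of $\widetilde f$ it is a proper subset. Removing it is legitimate because $T_0$ is only required to exist, and intersection numbers with covering families are generic values anyway, which is all Lemma~\ref{partofcone} uses. Lemma~\ref{secgroup}, together with the description of the sector $1/2$ in Theorem~\ref{thmonsectors}, then finishes the argument.

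The route you set aside also works and avoids the shrinking. Since $\pi$ is smooth and we are in characteristic $0$, take an \'etale chart $[V/G]$ of $\widetilde{\mathcal C}$ at a stacky point; the fixed locus $V^{G}$ is smooth over $T$. Hence any stabilizer occurring in one fibre also occurs in nearby fibres. The generic fibres map representably to points with automorphism group $\mu_2$, so every stacky point of every degenerate fibre has stabilizer $\mu_2$. Finally, a pair $(z,\widehat{\mu}\twoheadrightarrow\mu_2\subset\Aut(z))$ with $z\in\mathcal K_4\cup\mathcal K_6$ lies in the sector $1/2$ by the proof of Theorem~\ref{thmonsectors}.
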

\begin{lem}\label{partofcone}   The inequalities 
	$L\geq 0, x_{1/2}\geq 0$ define boundaries of the orbifold pseudoeffective cone $\Eff_{\orb}(\XXX_0(N))$
For every $(L,c)$ satisfying these inequalities (that is $L\geq 0$ and $c(x_{1/2})\geq 0$) the intersection number with every covering family $(\pi:\widetilde{\mathcal C}\to T,\widetilde f:\widetilde{\mathcal C}\to \XXX_0(N)_{\overline F})$ which is of type 2) from Lemma \ref{twotypes} is non-negative.
	\end{lem}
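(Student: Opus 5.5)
The plan is to prove two things: every $(L,c)$ with $L\geq 0$ and $c(x_{1/2})\geq 0$ pairs non-negatively with every covering family of type 2), and the hyperplanes $L=0$ and $x_{1/2}=0$ are genuine boundaries (supporting hyperplanes) of $\Eff_{\orb}(\XXX_0(N))$.

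\emph{Type 2 families.} Let $(\pi,\widetilde f)$ be of type 2), and take $t\in T_0(\oF)$ general. Then $\widetilde f|_{\pi^{-1}(t)}$ factors through a point $y$ of $\XXX_0(N)_{\oF}$. Since $\widetilde f$ is dominant, the points $y$ sweep out a dense subset. Shrinking $T_0$, we may therefore assume $y$ avoids the finite substack $\mathcal K_4\cup\mathcal K_6$, so $\Aut(y)=\mu_2$.

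The first contribution is the degree term. Since $f^*L$ is pulled back from a point (a gerbe $B\mu_2$ over $\oF$), a power of it is trivial, so $\deg(f^*L)=0$.

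The second contribution comes from the stacky points. Representability of $\pi^{-1}(t)\to B\mu_2$ forces each stabilizer to inject into $\mu_2$. Hence every stacky point $p$ has $\Aut(p)=\mu_2\hookrightarrow\Aut(y)$, and its associated sector is $1/2$. This is the second assertion of Lemma~\ref{twotypes}, sharpened by the choice of $y$.

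Combining the two, the intersection number is
\[
((L,c),\widetilde f|_{\pi^{-1}(t)})=\#\{\text{stacky points}\}\cdot c(x_{1/2})\geq 0 .
\]
This holds for every $(L,c)$ with $c(x_{1/2})\geq 0$, regardless of the other coordinates, which proves the second claim of the lemma.

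\emph{Boundaries.} The two hyperplanes are handled by two families of curves.

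For $x_{1/2}\geq 0$, I will exhibit a type 2) family. Take $\mathcal C=\sqrt[2]{D/\PP^1}$ with $D$ a reduced divisor of degree $2$, mapping to $y\in B\mu_2\subset\XXX_0(N)$. The map is representable because the $\mu_2$-torsor over $\PP^1-D$ ramifies at $D$. Let $y$ vary over an open $U\subset X_0(N)$ on which $\XXX_0(N)|_U\cong U\times B\mu_2$. The total family is then $U\times\mathcal C\to U\times B\mu_2$, which is smooth over $T=U$ and dominant. Its intersection number is $2c(x_{1/2})$. This shows that $c(x_{1/2})\geq 0$ is a necessary inequality, i.e.\ the hyperplane $x_{1/2}=0$ supports the cone.

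For $L\geq 0$, I will use type 1) families. Take the relative normalizations $\iota(f,0)^\nu$ from Lemma~\ref{relnormfg}, with $C=\PP^1$ and $f$ ranging over a dense family of rational functions; by Lemma~\ref{relnormfg} these are stacky curves. If $f$ is unramified over $K_4\cup K_6$, Lemma~\ref{associated_sector} says the stacky points over $K_4$ and $K_6$ have sectors $(\mathcal K_*,\{1/m\})$. Choosing $f$ of large degree $d$, the degree term $d\cdot\deg L$ grows linearly in $d$. The sector contributions, however, are controlled by the finitely many sector coordinates.

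Thus positivity of $L$ is forced by the pairing with these families, up to finitely many bounded corrections. To conclude that $L\geq 0$ is exactly a supporting hyperplane, I will check that the pairing forces $L\geq0$ when all $c$ are large. Concretely, any element with $L<0$ fails against the family above, since the degree term $d\cdot\deg L\to-\infty$.

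\emph{Main obstacle.} The main difficulty is verifying smoothness of the total families $\pi$ and representability over the varying base, especially for the gerbe family. It is also delicate to interpret ``defines boundaries'' precisely as ``these are supporting inequalities'', rather than claiming they are facets.
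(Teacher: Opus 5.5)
Your treatment of type 2) families is correct and somewhat more careful than the paper's. You shrink $T_0$ so the image point avoids $\mathcal K_4\cup\mathcal K_6$, note that $\deg(f^*L)=0$, and use representability to get sector $1/2$ at every stacky point. Your family $U\times[\PP^1/\mu_2]\to U\times B\mu_2\cong\XXX_0(N)|_U$ is also a legitimate way to get $x_{1/2}\ge 0$; the paper instead uses a single curve $\mathcal C\to B\mu_2$ from \cite[Proof of Proposition 9.22]{dardayasudabm}.

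\textbf{The gap is in the argument for $L\ge 0$.} You claim that the sector contributions of $\iota(f,0)^\nu$ stay bounded as $d=\deg f\to\infty$, and this is false. If $f$ is unramified over $K_4\cup K_6$, then each of the $d\,(\varepsilon_2(N)+\varepsilon_3(N))$ geometric points of $C$ over $K_4\cup K_6$ is a stacky point. By Lemma~\ref{associated_sector} its sector is $(\mathcal K_4^*,1/4)$ or $(\mathcal K_6^*,1/6)$. So the pairing is
\[
d\Bigl(L+\sum_{*}c\bigl((\mathcal K_4^*,1/4)\bigr)+\sum_{*}c\bigl((\mathcal K_6^*,1/6)\bigr)\Bigr),
\]
which is linear in $d$ in every coordinate. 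These families therefore only cut out $L+\sum_* c((\mathcal K_4^*,1/4))+\sum_* c((\mathcal K_6^*,1/6))\ge 0$, not $L\ge 0$. For example, when $\varepsilon_2(N)>0$, the element with $L=-1$, a large value on $(\mathcal K_4^*,1/4)$ and zeros elsewhere pairs positively with all of them. Your test ``when all $c$ are large'' points the wrong way, since large $c$ makes these pairings more positive. The argument works as written only when $K_4\cup K_6=\emptyset$.

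To repair this you need curves with no twisted stacky points. One option is to cite \cite[Lemma 8.10]{dardayasudabm}, as the paper does. Alternatively, take $g=0$ and $f\colon C\to\PP^1$ (with $C$ not necessarily $\PP^1$) whose ramification index at every point over $K_4\cup K_6$ is divisible by $12$, e.g.\ a Kummer cover of exponent $12$. Ramification elsewhere is harmless because $g=0$. Then Lemma~\ref{associated_sector} makes every associated sector $0$, so $\mathcal C=C$ is a scheme. The map $C\to\XXX_0(N)$ is then a surjective stacky curve, i.e.\ a covering family over a point, and its pairing is $\deg(f)\cdot L$, which forces $L\ge 0$. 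A finite representable cover of $\XXX_0(N)$ by a fine-level modular curve would do the same job.
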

	\begin{proof}
	Let $x\in \XXX_0(N)_{\overline F}(\overline F)$ be such that $\Aut(x)=\mu_2$.  By \cite[Proof of Proposition 9.22]{dardayasudabm}, there exists a stacky curve $\mathcal C$ and a morphism  $g:\mathcal C\to(B\mu_2)_{\Spec(\overline F)}$ where every associated sector is the non-trivial sector of $B\mu_2$. Now for the morphism $$\mathcal C\xrightarrow{f}(B\mu_2)_{\overline F}\cong (\Spec(\overline F)\times_{(\XXX_0(N)_{\overline F}\circ \XXX_0(N)^{\rig}_{\overline F})\circ x,\XXX_0(N)^{\rig}_{\overline F}}\XXX_0(N)_{\overline F}\to \XXX_0(N)_{\overline F}$$
	every associated sector is $1/2$. We obtain the  inequality $x_{1/2}\geq 0$. The inequality $L\geq 0$ is included by \cite[Lemma 8.10]{dardayasudabm}. If $(L,c)$ satisfies these inequalities, by Lemma~\ref{twotypes}, the associated sectors of $(\pi,\widetilde f)$ are in the set $\{0,1/2\}$ and the intersection number is non-negative.
	\end{proof}

\begin{rem}
	The orbifold pseudoeffective cone of split toric stacks has been explicitly computed in \cite{orbifold-toric}.
	When $F=\QQ$ and $N \in \{1,2,3,4,6,8,9,12,16,18\}$, the stack $\XXX_0(N)$ is indeed a split toric stack.
\end{rem}

\subsection{Cases $N= 12, 16, 18$}
In this paragraph, we treat the cases $N= 12, 16, 18$. 

\begin{thm}
For $N\in\{ 6,8,9,12,16,18\}$, the stacks $\XXX_0(N)$ satisfy that $$\XXX_0(N)\cong \PP^1\times B\mu_2.$$
\end{thm}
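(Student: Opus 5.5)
The plan is to use the square root stack description $\XXX_0(N)\cong\sqrt{\Lambda_N^{\rig}/\XXX_0(N)^{\rig}}$ from the lemma in Section~\ref{sec:modular_curve}. Two things must be shown: that $\XXX_0(N)^{\rig}\cong\PP^1$, and that $\Lambda_N^{\rig}$ is a square in $\Pic(\XXX_0(N)^{\rig})$. Granting both, the gerbe is trivial, which gives $\XXX_0(N)\cong \XXX_0(N)^{\rig}\times B\mu_2\cong\PP^1\times B\mu_2$.

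\emph{First step.} By the table in Lemma~\ref{stacky-points-xon}, $\varepsilon_2(N)=\varepsilon_3(N)=0$ for every $N\in\{6,8,9,12,16,18\}$. Hence every $\oF$-point of $\XXX_0(N)$ has automorphism group $\mu_2$, by Lemma~\ref{cmsunr} and the remark preceding it that stabilizers larger than $\mu_2$ are $\mu_4$ or $\mu_6$ and occur only at unramified points over $j=1728$ or $j=0$. After rigidification all stabilizers are therefore trivial, so $\XXX_0(N)^{\rig}$ is an algebraic space. Being a smooth proper curve, it coincides with its coarse space $X_0(N)$.

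It remains to see that $X_0(N)\cong\PP^1$ over $F$ and not merely over $\oF$. The coarse space is a genus $0$ curve, and $X_0(N)$ has rational points, for instance the cusp $\infty$. So it is isomorphic to $\PP^1$. The paper already makes this identification in the introduction.

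\emph{Second step.} This is the parity of the degree of $\Lambda_N^{\rig}$ on $\PP^1$. Since $\Pic(\PP^1)=\ZZ$ via degree, $\Lambda_N^{\rig}$ is a square if and only if $\deg\Lambda_N^{\rig}$ is even. The pullback of $\Lambda_N^{\rig}$ to $\XXX_0(N)$ is $\Lambda_N^{2}$, and the rigidification map has degree $1/2$. Combining this with $\deg\Lambda=1/24$ and Lemma~\ref{degcano} gives
\[\deg\Lambda_N^{\rig}=2\cdot 2\cdot\deg\Lambda_N=4\kappa(N)/24=\kappa(N)/6.\]
This is the same computation as for $N=4$, where the answer was $\kappa(4)/6=1$.

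For $N=6,8,9$ we have $\kappa(N)=12$, and for $N=12,16,18$ we have $\kappa(N)=24,24,36$. In every case $\kappa(N)/6\in\{2,4,6\}$ is even. Hence $\Lambda_N^{\rig}\cong M^{2}$ with $M=\OO(\kappa(N)/12)$.

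\emph{Conclusion.} Since $\Lambda_N^{\rig}\cong M^{2}$, the $\mu_2$-gerbe $\sqrt{\Lambda_N^{\rig}/\PP^1}$ is isomorphic to $\sqrt{\OO/\PP^1}\cong\PP^1\times B\mu_2$. Explicitly, $(x,L,\varphi)\mapsto(x,L\otimes x^*M^{-1})$ sends this data to a $\mu_2$-torsor.

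The step most likely to need care is the normalization of degrees, specifically checking that $\deg\Lambda_N^{\rig}=\kappa(N)/6$ is correct given the convention that $\deg$ on a $\mu_2$-gerbe is half the degree downstairs. If the factor were off by $2$, the parity conclusion would change. I would cross-check against the $N=4$ case, where $\XXX_0(4)\cong\PPP(2,2)$ is non-trivial with degree $1$, and against $N=1$, where $\Lambda^{\rig}=\OO_{\PPP(2,3)}(1)$ has degree $1/6$. Both are consistent with the formula $\kappa(N)/6$.
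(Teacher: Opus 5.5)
Your proposal is correct and follows the paper's proof. As there, you use $\varepsilon_2(N)=\varepsilon_3(N)=0$ to get $\XXX_0(N)^{\rig}\cong\PP^1$, then note that $\deg\,(J_N^{\rig})^*\Lambda^{\rig}=\kappa(N)/6$ is even, so the square root gerbe is trivial. Your additions — the rational cusp giving $X_0(N)\cong\PP^1$ over $F$, and the explicit trivialization $(x,L,\varphi)\mapsto(x,L\otimes x^*M^{-1})$ — fill in details the paper leaves implicit.
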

\begin{proof}
For $N$ in the range, we have that $\varepsilon_2(N)=\varepsilon_3(N)=0$. It follows that the rigidification satisfies $\XXX_0(N)^{\rig}\cong\mathbb P^1$. We have that $\XXX_0(N)\cong \sqrt[2]{(J_N^{\rig})^*\Lambda^{\rig}}$.  The degree of $\Lambda^{\rig}$ is $\frac{1}{6}$. 
So we will prove the claim, if we can prove that $(J_N^{\rig})^*(\Lambda^{\rig})$ is a square in the Picard group of $\XXX_0(N)^{\rig}\cong \PP^1$. Its degree is $ \frac{\kappa(N)}{6}$. For $N$ in the range, this is an even number. Hence indeed $(J_N^{\rig})^*(\Lambda^{\rig})$ is a square in $\Pic(\XXX_0(N)^{\rig})$. The claim follows.
\end{proof}
\begin{cor}
	The orbifold pseudoeffective cone of $\PP^1\times B\mu_2$ is given by the inequalities $$\{L\geq 0\}, \{x_{1/2}\geq 0\} $$ with $L$ denoting the variable corresponding to the numerical class of the line bundle and $x_{1/2}$ the variable corresponding to the  only twisted sector.
\end{cor}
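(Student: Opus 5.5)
The plan is to show the two inclusions separately. Write $\mathcal X:=\PP^1\times B\mu_2$. Its only sectors are $0$ and the twisted sector $1/2$, so $\NS_{\orb}(\mathcal X)_{\QQ}\cong\QQ^2$ with coordinates $(L,x_{1/2})$, and the cone $\Eff_{\orb}(\mathcal X)$ is a closed convex cone in this plane.

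For the inclusion $\Eff_{\orb}(\mathcal X)\subset\{L\geq 0,\ x_{1/2}\geq 0\}$, I will invoke Lemma~\ref{partofcone}. It states that both inequalities are satisfied by every element of the cone; it also gives covering families realizing each of them. Concretely:
\begin{itemize}
\item the family of $\mu_2$-gerbe curves over points of $\PP^1$, whose associated sectors are all $1/2$, forces $x_{1/2}\geq 0$;
\item the family of sections $\PP^1\times\{t\}\to\PP^1\times B\mu_2$ induced by the trivial torsor has degree $1$ in $L$ and no stacky points, so it gives $L\geq 0$.
\end{itemize}

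For the reverse inclusion, take $(L,c)$ with $L\geq 0$ and $c(1/2)\geq 0$, and let $(\pi,\widetilde f)$ be any covering family. I apply Lemma~\ref{twotypes}: after shrinking $T$, either the generic member degenerates to a point, or it surjects onto $\mathcal X_{\oF}$.
\begin{itemize}
\item In the degenerate case, Lemma~\ref{partofcone} already gives non-negativity.
\item In the surjective case, the intersection number is
\[\deg(f^*L)+\sum_{p}c(\psi(p)).\]
Since $\mathcal X$ has no sectors other than $0$ and $1/2$, and $c(0)=0$ by convention, each summand $c(\psi(p))$ is either $0$ or $c(1/2)\geq 0$.
\end{itemize}

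The remaining point is $\deg(f^*L)\geq 0$. Here $f$ composed with the coarse map $\mathcal C\to\PP^1$ is a dominant map of curves, so its degree is positive. Hence $\deg f^*L=(\text{positive})\cdot L\geq 0$, and the intersection number is non-negative.

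So the only possible subtlety is making sure that the two inequalities are genuinely attained, i.e.\ that they give the cone exactly and not some larger region. This is exactly what Lemma~\ref{partofcone} provides. I therefore expect no real obstacle: the corollary follows by combining Lemma~\ref{partofcone} and Lemma~\ref{twotypes}, using the preceding theorem to identify $\XXX_0(N)$ with $\PP^1\times B\mu_2$.
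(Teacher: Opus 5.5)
Your proposal is correct and follows essentially the same route as the paper: Lemma~\ref{partofcone} gives the inclusion into the quadrant, and for the reverse inclusion Lemma~\ref{twotypes} reduces to the degenerate case (the last claim of Lemma~\ref{partofcone}) and the surjective case, where the intersection number is a positive multiple of $L$ plus a non-negative multiple of $c(1/2)$. The only difference is cosmetic: for $L\geq 0$ you exhibit the horizontal sections $\PP^1\to\PP^1\times B\mu_2$ explicitly, whereas the paper cites \cite[Lemma 8.10]{dardayasudabm}; with the paper's normalization of $\deg$, these sections pair with $L$ as a positive multiple of $L$ (namely $2L$) rather than exactly $L$, which does not affect the sign.
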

\begin{proof}
By Lemma~\ref{partofcone}, we have $\Eff_{\orb}(\XXX_0(N))\subset \{L\geq 0, c\geq 0\}$.
Consider surjective morphism $f:\mathcal C\to \XXX_0(N)\cong \PP^1\times B\mu_2$ with $\mathcal C$ a stacky curve.  We have that $$((L,c),f)=L+bc(x_{1/2})$$ for some $b\geq 0$ an integer. Hence the intersection number is positive. Combining with the last claim of Lemma~\ref{partofcone} completes the proof.
\end{proof}
\begin{thm}\label{thm:N=12,16,18}
The prediction is valid for every of the cases $N\in\{12,16,18\}$ (the other values we have already verified). We have the following values for the $a$- and $b$-invariants:
$$	\begin{tabular}{|c|c|c|}
	\hline
	$N$	& $a$&$b$ \\
	\hline
	12	&  1/6 &1          \\
	\hline
	16	&  1/6&1          \\
	\hline 
	18	&    1/6&1        \\
	\hline
	\end{tabular}$$
\end{thm}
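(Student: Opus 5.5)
The plan is to use the explicit description of $\Eff_{\orb}(\XXX_0(N))$ from the preceding corollary. Since $\XXX_0(N)\cong\PP^1\times B\mu_2$, the cone is cut out by $\{L\geq 0\}$ and $\{x_{1/2}\geq 0\}$. Both $a$ and $b$ then reduce to a linear computation in the two-dimensional space $\NS_{\orb}(\XXX_0(N))_{\QQ}\cong\QQ\oplus\QQ[1/2]$, with coordinates $(\deg L, x_{1/2})$.

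\textbf{Step 1: the two elements.} By Lemma~\ref{stacky-points-xon}, $\varepsilon_2(N)=\varepsilon_3(N)=0$ for $N\in\{12,16,18\}$. So by Theorem~\ref{thmonsectors} the only twisted sector is $1/2$, of age $0$.
\begin{itemize}
\item By Lemma~\ref{degcanonical}, $K_{\XXX_0(N),\orb}=(-1,-1)$.
\item By Corollary~\ref{naivforx0n}, the naive element has raising value $12-12\cdot\tfrac12=6$ on the sector $1/2$. Its line bundle part is $J_N^*\Lambda^{\otimes 12}$, of degree $\kappa(N)/2$. This normalization is the one used in Corollary~\ref{ant-form}, where the local factors $q_v^{-12r_v}$ force the twelfth power.
\item Lemma~\ref{degcano} gives $\kappa(12)=24$, $\kappa(16)=24$ and $\kappa(18)=36$.
\end{itemize}
Hence
\[
M_{\naive}=\bigl(\kappa(N)/2,\,6\bigr)\in\{(12,6),(12,6),(18,6)\}.
\]

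\textbf{Step 2: the $a$-invariant.} We have
\[
tM_{\naive}+K_{\XXX_0(N),\orb}=\bigl(t\kappa(N)/2-1,\;6t-1\bigr).
\]
This lies in the cone if and only if $t\geq 2/\kappa(N)$ and $t\geq 1/6$. Since $2/\kappa(N)\in\{1/12,1/18\}$ is smaller than $1/6$, the infimum is $a(M_{\naive})=1/6$.

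\textbf{Step 3: the $b$-invariant.} At $t=1/6$ the point is $(\kappa(N)/12-1,\,0)$, which equals $(1,0)$ for $N=12,16$ and $(2,0)$ for $N=18$. It lies in the relative interior of the facet $\{x_{1/2}=0\}$ and not on the facet $\{L=0\}$. The minimal face containing it is therefore this ray, of codimension $1$, so $b(M_{\naive})=1$.

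\textbf{Conclusion and main obstacle.} The values match those in Theorem~\ref{valuesaandb}. Combined with \cite[Theorem 1.2.4]{Molnar2023}, which gives counts of shape $CB^{1/6}$, this confirms the prediction. The only genuinely delicate point is fixing the normalization of $M_{\naive}$: whether the line bundle coordinate is $\deg J_N^*\Lambda$ or $\deg J_N^*\Lambda^{\otimes12}$. With the former one would wrongly get $a=1$ for $N=12$. I would settle this by rereading the proof of the lemma identifying the element of $\NS_{\orb}(\XXX_0(1))$ defining the naive height, where the comparison is made with the section $j^*X$ of $\Lambda^{\otimes 12}$, and by cross-checking against Corollary~\ref{ant-form}. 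Everything else is the routine two-inequality computation.
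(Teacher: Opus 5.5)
Your proposal is correct and follows essentially the same route as the paper's proof. Both use the cone $\{L\geq 0,\ x_{1/2}\geq 0\}$ of $\PP^1\times B\mu_2$, the elements $M_{\naive}=(\kappa(N)/2,6)$ and $K_{\XXX_0(N),\orb}=(-1,-1)$, and the resulting linear computation giving $a=1/6$ and $b=1$. Your resolution of the normalization is also right: the paper writes $\deg(J_N^*\Lambda)$ but actually uses the value $\kappa(N)/2=\deg(J_N^*\Lambda^{\otimes 12})$, which is consistent with Corollary~\ref{ant-form}.
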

\begin{proof}
	The naive height corresponds to the element $$(\deg(J_N^*\Lambda),6)=(\kappa(N)/2, 6).$$
	We have the following table of values for $\kappa(N)$:
		$$	\begin{tabular}{|c|c|}
		\hline
		$N$	& $\kappa(N)$ \\
		\hline
		12	&  24           \\
		\hline
		16	&  24          \\
		\hline 
		18	&    36         \\
		\hline
		%
	\end{tabular}$$
The canonical element of $\NS_{\orb}(\PP^1\times B\mu_2)$ is given by $(-1,-1)$. We have that: $$ a=\inf\{t|\hspace{0,1cm}t\cdot (\kappa(N)/2,6)+(-1,-1)\in \RR^2_{\geq 0}\}=\begin{cases}
1/6,& N=12\\
1/6,	& N=16 \\
1/6,	& N=18.
\end{cases} $$
Clearly, the $b$-invariant in each of these cases is~$1$. The proof is completed.
\end{proof} 
\subsection{Case $N=7$}
We recall the following fact. Let $\omega$ be a primitive third root of unity.
\begin{lem}
Let $E_0$ be an elliptic curve over~$F$ satisfying that $j(E_0)=0$. One has that $$\End(E_0)=\begin{cases}
\ZZ,	&\text{if }\omega\not\in F\\
\ZZ[\omega],&\text{ otherwise}
\end{cases} $$
\end{lem}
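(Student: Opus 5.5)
The plan is to compute the geometric endomorphism ring first and then take Galois invariants. Here $\End(E_0)$ means the ring of endomorphisms defined over $F$, sitting inside $\End_{\oF}(E_0)$.

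\textbf{Geometric endomorphism ring.} Since $j(E_0)=0$ and the characteristic is $0$, $E_0$ has a Weierstrass model $Y^2=X^3+b$ with $b\in F^{\times}$. Over $\oF$ it carries the automorphism $\rho\colon (X,Y)\mapsto(\omega X,Y)$, which has order $3$ and satisfies $\rho^2+\rho+1=0$. Hence $\ZZ[\rho]\cong\ZZ[\omega]\subseteq\End_{\oF}(E_0)$. In characteristic $0$ the geometric endomorphism ring is either $\ZZ$ or an order in an imaginary quadratic field. Because $\ZZ[\omega]$ is the maximal order of $\QQ(\sqrt{-3})$, we get $\End_{\oF}(E_0)=\ZZ[\rho]\cong\ZZ[\omega]$.

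\textbf{Normalized identification.} Let $\eta=dX/Y$ be the invariant differential, which is defined over $F$. Every $\phi\in\End_{\oF}(E_0)$ satisfies $\phi^*\eta=[\phi]\eta$ for a unique scalar $[\phi]\in\oF$. The map $\phi\mapsto[\phi]$ is an injective ring homomorphism in characteristic $0$; for instance $[\rho]=\omega$, since $\rho^*(dX/Y)=\omega\,dX/Y$. This gives an isomorphism of $\End_{\oF}(E_0)$ onto $\ZZ[\omega]\subset\oF$.

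\textbf{Galois equivariance and invariants.} For $\sigma\in\Gal(\oF/F)$ put $\phi^\sigma=\sigma\circ\phi\circ\sigma^{-1}$. Since $\eta$ is defined over $F$, applying $\sigma$ to $\phi^*\eta=[\phi]\eta$ gives $[\phi^\sigma]=\sigma([\phi])$. An endomorphism is defined over $F$ exactly when it is fixed by every $\sigma$. Therefore
\[
\End(E_0)\cong\{\alpha\in\ZZ[\omega]\mid \sigma(\alpha)=\alpha\ \text{for all }\sigma\}=\ZZ[\omega]\cap F.
\]
If $\omega\in F$, this is all of $\ZZ[\omega]$. If $\omega\notin F$, then $F\cap\QQ(\omega)=\QQ$ because $[\QQ(\omega):\QQ]=2$, so $\ZZ[\omega]\cap\QQ=\ZZ$.

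\textbf{Main obstacle.} The only step needing care is the equivariance identity $[\phi^\sigma]=\sigma([\phi])$; I would cite Silverman for it, or simply verify it on the generator $\rho$, where $\rho^\sigma=\rho$ if $\sigma(\omega)=\omega$ and $\rho^\sigma=\rho^2$ otherwise. If one prefers to avoid normalized embeddings altogether, a direct alternative is available: $\End(E_0)$ is a $\Gal$-stable subring of $\ZZ[\rho]$ containing $\ZZ$, and $\rho$ is $F$-rational precisely when $\omega\in F$.
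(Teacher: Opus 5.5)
Your proposal is correct and follows essentially the same route as the paper. Both arguments identify $\End((E_0)_{\oF})$ with $\ZZ[\omega]$, so that $\ZZ\subseteq\End(E_0)\subseteq\ZZ[\omega]$, and then take invariants under the conjugation action of $\Gal(\oF/F)$. Your normalization via the invariant differential, giving $[\phi^\sigma]=\sigma([\phi])$, makes rigorous the step the paper simply asserts, namely that $\gamma$ acts on $a+b\omega$ as $a+b\,\gamma(\omega)$.
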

\begin{proof}
One has that $\End((E_0)_{\overline F})=\ZZ[\omega]$, hence $\ZZ\subset\End(E_0)\subset \ZZ[\omega]$.  The absolute Galois group $\Gal(\overline F/F)$ acts on $\End ((E_{0})_{\overline F})$ by $\gamma\cdot \theta=\gamma\circ\theta\circ \gamma^{-1}$. If $\theta$ is represented by $a+b\omega$ with $a,b\in\ZZ$, the element $\gamma\cdot \theta$ corresponds to $$\gamma\cdot(a+b\omega)=a+b\gamma\cdot\omega =a+b(-1-\omega)=(a-b)-b\omega$$ if $\omega\not\in F$ and to
$$\gamma\cdot(a+b\omega)=a+b\omega$$
if $\omega\in F$. We have that $\End(E_0)=\End((E_0)_{\overline F})^{\Gal(\overline F/F)}$ which is thus $\ZZ$ iff $\omega\in F$ and $\ZZ[\omega]$ otherwise. The claim is proven.
\end{proof}
\begin{lem} \label{nmbc7} There exist precisely $3-[F(\omega):F]$ closed points of $\XXX_0(7)$ where the automorphism group is~$\mu_6$ (they are necessarily lying above the point $j=0$ in $\XXX_0(1)$ for the morphism $J_N:\XXX_0(7)\to\XXX_0(1)$.)
\end{lem}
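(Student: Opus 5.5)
We need to count closed points of $\XXX_0(7)$ (over $F$) whose geometric points have automorphism group $\mu_6$. By Lemma~\ref{stacky-points-xon}, $\varepsilon_3(7)=2$, so there are exactly two $\oF$-points of $\XXX_0(7)$ with automorphism group $\mu_6$, and both lie over $j=0$. The plan is to determine how $\Gal(\oF/F)$ permutes these two points. If it fixes both, there are two closed points; if it swaps them, they form a single closed point of degree $2$. The claim is then that the two points are $F$-rational exactly when $\omega\in F$, which gives $3-[F(\omega):F]$.

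To identify the two points, I would use the moduli description. Fix $E_0$ over $F$ with $j(E_0)=0$, say $E_0:Y^2=X^3+1$. A point of $\XXX_0(7)$ over $E_0$ is a cyclic subgroup $H\subset E_0[7]$ of order $7$. Its automorphism group is the subgroup of $\Aut(E_0)_{\oF}=\mu_6$ preserving $H$. Hence the points with automorphism group $\mu_6$ correspond to lines in $E_0[7]\cong\mathbb F_7^2$ stable under $\ZZ[\omega]$, that is, eigenlines of $\omega$ acting on $E_0[7]$. Since $7$ splits in $\ZZ[\omega]$ as $(7)=\pi\bar\pi$ with $\pi=3+\omega$ (norm $9-3+1=7$), these eigenlines are exactly $H_1=E_0[\pi]$ and $H_2=E_0[\bar\pi]$. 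This recovers the count $\varepsilon_3(7)=2$ independently.

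Next, compute the Galois action. For $\gamma\in\Gal(\oF/F)$ we have $\gamma(E_0[\alpha])=E_0[\gamma\cdot\alpha]$, where $\gamma$ acts on $\End((E_0)_{\oF})$ by conjugation, as in the preceding lemma. If $\omega\in F$, the preceding lemma shows the action on $\ZZ[\omega]$ is trivial, so $H_1$ and $H_2$ are each Galois-stable. They are then $F$-rational points of $\YYY_0(7)$, giving two closed points. If $\omega\notin F$, any $\gamma$ with $\gamma(\omega)=\omega^2$ sends $\pi=3+\omega$ to $3+\bar\omega=\bar\pi$ (up to a unit), so it swaps $H_1$ and $H_2$, giving one closed point with residue field $F(\omega)$. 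One should also check that a Galois-stable subgroup $H$ really defines an $F$-point (or $F$-closed point) of the stack. This follows because closed points of $|\XXX_0(7)|$ correspond to Galois orbits of $\oF$-points of the coarse space, and $X_0(7)$ is a scheme.

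The main obstacle is making the Galois-conjugation computation rigorous, in particular justifying $\gamma(E_0[\alpha])=E_0[\gamma\alpha]$ with the conjugation action. This is immediate from $\gamma(\alpha P)=(\gamma\alpha\gamma^{-1})(\gamma P)$. As an alternative cross-check, one could use the coarse map $X_0(7)\cong\PP^1\to X_0(1)$. The unramified points over $j=0$ (Lemma~\ref{cmsunr}) are the roots of a quadratic factor of the degree-$8$ polynomial $j(t)$ whose discriminant is $-3$ up to squares, matching $K_6:X^2+3Y^2=0$ in Theorem~\ref{isomclas}. This also shows the two points are conjugate precisely when $\sqrt{-3}\notin F$.
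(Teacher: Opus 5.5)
Your proposal is correct and follows essentially the paper's route. In both arguments, the two $\mu_6$-points over $j=0$ are the kernels $E_0[\pi]$ and $E_0[\bar\pi]$ of the norm-$7$ elements of $\ZZ[\omega]$, and whether they are $F$-rational or Galois-conjugate is decided by the Galois action on $\End((E_0)_{\oF})$; the paper phrases this as $\End(E_0)$ being $\ZZ[\omega]$ or $\ZZ$.

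Your explicit observation that any $\gamma$ with $\gamma(\omega)=\omega^2$ conjugates $\pi$ to $\bar\pi$, and so swaps $E_0[\pi]$ and $E_0[\bar\pi]$, is in fact a tighter justification of the case $\omega\notin F$. The paper only remarks that $\ZZ$ has no ideal of norm $7$, which does not by itself show that these kernels fail to be Galois-stable.
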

\begin{proof}
Suppose that $\omega\in F$. Then as $\End(E_0)=\ZZ[\omega]$, we deduce that every isogeny of $E_0$ is defined over $F$. By Lemma~\ref{cmsunr}, we have that the number of isogeny structures is $\varepsilon_3(7)=2$. Suppose that $\omega\not\in F$. Then there is no ideal~$\mathfrak a$ of $\ZZ$ satisfying that $N_{\QQ(\omega)/\QQ}(\mathfrak a)=7$, thus there is no isogeny of degree $7$ defined over~$F$. It follows that the number of closed points is $\varepsilon_3(7)/2=1$.  The proof is completed.
\end{proof}
\begin{lem}\label{picx03}
One has that $\Pic(\XXX_0(7)^{\rig})$ is $\mathbb Z$ if $\omega\not\in F$ and $\mathbb Z\oplus \ZZ/3\ZZ$ otherwise. In particular, if a line bundle $L\in\Pic(\XXX_0(7)^{\rig})$ satisfies that $\deg(L)\in 2\deg(\Pic(\XXX_0(7)^{\rig}))$, then it is a square.
\end{lem}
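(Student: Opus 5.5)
The plan is to identify $\XXX_0(7)^{\rig}$ explicitly as a root stack over $\PP^1$ and then apply the description of Picard groups of stacky curves in \cite[Theorem 1.1]{lopez2023picardgroupsstackycurves}, keeping track of the Galois action when $\omega\notin F$. Since $\varepsilon_2(7)=0$ and $\varepsilon_3(7)=2$ (Lemma~\ref{stacky-points-xon}), the only non-trivial stabilizers of $\XXX_0(7)^{\rig}$ are $\mu_3$. They sit at two geometric points lying over $j=0$. By Lemma~\ref{nmbc7} these two points form a single closed point $K_6$ of degree $2$ if $\omega\notin F$, and two $F$-rational points $K_6^+,K_6^-$ if $\omega\in F$. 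As $X_0(7)\cong\PP^1$, I would conclude that $\XXX_0(7)^{\rig}\cong\sqrt[3]{K_6/\PP^1}$, which is a smooth stacky curve with trivial generic stabilizer and stacky locus $K_6$.

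Next I would compute the Picard group.
\begin{itemize}
\item If $\omega\in F$, the root stack is $\sqrt[3,3]{K_6^+,K_6^-/\PP^1}$. The cited theorem, together with a presentation by generators, gives $\Pic\cong(\ZZ H\oplus\ZZ D_+\oplus\ZZ D_-)/(3D_+=H,\;3D_-=H)$. Here $H$ is the pullback of $\OO(1)$ and $D_\pm$ are the reduced stacky points, each of degree $1/3$. This group is isomorphic to $\ZZ\oplus\ZZ/3\ZZ$, generated by $D_+$ together with the torsion class $D_+-D_-$.
\item If $\omega\notin F$, the stacky divisor is the single reduced closed point $D$ over $K_6$. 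Since $3D$ is the pullback of $K_6$, which has degree $2$, we get $\deg D=2/3$. Then $\Pic$ is generated by $H$ and $D$ with the relation $3D=2H$, hence $\Pic\cong\ZZ$, generated by $D-H$, which has degree $-1/3$. For this I would either use the arithmetic version of the Picard computation (Galois descent from $F(\omega)$: $\Pic$ of the stack injects into the invariants of $\Pic$ over $F(\omega)$, and the Brauer obstruction vanishes because the stack has $F$-points), or directly use the exact sequence $0\to\Pic(\PP^1)\to\Pic(\mathcal X)\to\bigoplus_{x\text{ stacky}}\ZZ/3\ZZ$.
\end{itemize}
Descent is the step that needs care. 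Over $\overline F$ the invariants of $\ZZ\oplus\ZZ/3\ZZ$ under the swap $D_+\leftrightarrow D_-$ are $\ZZ\cdot D_+$ together with the torsion classes $t$ satisfying $t=-t$ in $\ZZ/3\ZZ$, i.e.\ $t=0$. So the invariants are torsion-free, consistent with the claim $\Pic\cong\ZZ$.

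For the ``in particular'': $\deg$ is injective on the torsion-free part in each case. If $\omega\notin F$, then $\Pic\cong\ZZ$ and $\deg$ is injective, so $\deg L=2\deg M$ forces $L=M^2$. If $\omega\in F$, suppose $\deg L=2\deg M$. Then $L-2M$ has degree $0$, so it lies in the torsion subgroup $\ZZ/3\ZZ$. Every element of $\ZZ/3\ZZ$ is twice an element, because $2$ is invertible mod $3$; write $L-2M=2t$. Then $L=2(M+t)$ is a square.

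The main obstacle I expect is justifying the identification $\XXX_0(7)^{\rig}\cong\sqrt[3]{K_6/\PP^1}$ over $F$ rather than just over $\overline F$, and the non-split Picard computation. I would handle this with the canonical comparison map from $\XXX_0(7)^{\rig}$ to the root stack of its coarse space along the stacky locus. That map exists by the universal property of root stacks (the stack is tame with $\mu_3$ stabilizers), and it is an isomorphism because it is so after base change to $\overline F$.
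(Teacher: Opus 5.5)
Your proposal is correct and follows the paper's proof. The paper also applies \cite[Theorem 1.1]{lopez2023picardgroupsstackycurves} to obtain exactly your presentations ($3D=2H$ when $\omega\notin F$; $3D_+=3D_-=H$ when $\omega\in F$), and it proves the ``in particular'' clause as you do: $L\otimes M^{-2}$ has degree $0$, so it is $3$-torsion and hence a square.

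There is one small slip in your optional descent check. $D_+$ is not Galois-invariant, because the swap sends it to $D_-\neq D_+$. The invariant subgroup is generated by $2D_+-D_-$, which has degree $1/3$ and agrees up to sign with your generator $D-H=D_--2D_+$, so your conclusion that the invariants are torsion-free and isomorphic to $\ZZ$ still holds.
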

\begin{proof}
We use \cite[Theorem 1.1]{lopez2023picardgroupsstackycurves}. Assume first $\omega\not\in F$. Then the quoted theorem gives us the following push-out commutative diagram, where $j_7^*$ is induced from the coarse moduli space map $j_7:\XXX_0(7)^{\rig}\to \PP^1$:
\[\begin{tikzcd}
	{\mathbb Z^1} & {\mathbb Z^1} \\
	{\Pic(\PP^1)\cong\mathbb Z} & {\Pic(\XXX_0(7)^{\rig})}
	\arrow["{x\mapsto 3x}", from=1-1, to=1-2]
	\arrow["{x\mapsto 2x}"', from=1-1, to=2-1]
	\arrow[from=1-2, to=2-2]
	\arrow["{j_7^*}", from=2-1, to=2-2]
\end{tikzcd}\]
The pushout is isomorphic to $(\ZZ\oplus\ZZ)/\{(3m,-2m)|\hspace{0,1cm}m\in\ZZ\}$ which is $\ZZ$.
Assume that $\omega\in F$. The the quoted theorem gives us the following push-out commutative diagram:
\[\begin{tikzcd}
	{\mathbb Z^2} & {\mathbb Z^2} \\
	{\Pic(\PP^1)\cong\mathbb Z} & {\Pic(\XXX_0(7)^{\rig})}
	\arrow["{(x,y)\mapsto (3x,3y)}", from=1-1, to=1-2]
	\arrow["{(x,y)\mapsto (x+y)}"', from=1-1, to=2-1]
	\arrow[from=1-2, to=2-2]
	\arrow["{j_7^*}", from=2-1, to=2-2]
\end{tikzcd}\]
We again get $\Pic(\XXX_0(7)^{\rig})$ is $\mathbb Z^3/\{(3m,3n,-m-n)|\hspace{0,1cm}m,n\in\ZZ\}$ which is $\ZZ\oplus (\ZZ/3\ZZ) $. Let us prove the last statement. Suppose that $\deg(L)=2\deg(L')=\deg((L')^{\otimes 2})$. We have that $L_0:=L\otimes (L')^{\otimes -2}$ is of degree $0$. There exists an integer $n$ such that $L_0^{\otimes n}$ descends to a line bundle on $\PP^1$. But the degree of the line bundle on $\PP^1$ is $0$, hence it is the trivial line bundle. Hence $L_0^{\otimes n}$ is the trivial line bundle, hence $L_0$ is a $3$-torsion line bundle. But this means that it is a square. Hence $L=(L')^{\otimes 2}\otimes L_0$ is a square too. 
\end{proof}
\begin{thm} Consider the closed point $D:X^2+3Y^2=0$ of $\PP^1$.  One has that $$\XXX_0(7)\cong \sqrt[3]{D}\times B\mu_2.$$
\end{thm}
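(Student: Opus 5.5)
We follow the same two-step pattern used for $N\in\{6,8,9,12,16,18\}$: first identify the rigidification $\XXX_0(7)^{\rig}$ as a root stack over $\PP^1$, then show that the $\mu_2$-gerbe $\XXX_0(7)\to\XXX_0(7)^{\rig}$ is trivial.

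\textbf{The rigidification.} By Lemma~\ref{stacky-points-xon}, $\varepsilon_2(7)=0$ and $\varepsilon_3(7)=2$. Hence $\XXX_0(7)^{\rig}$ is a stacky curve with coarse space $X_0(7)\cong\PP^1$. Its only stacky points are the two geometric points over $j=0$, and each has stabilizer $\mu_3$ by Lemma~\ref{cmsunr}.

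By Lemma~\ref{nmbc7}, these two geometric points form $3-[F(\omega):F]$ closed points. So they form a single degree-$2$ closed point exactly when $\omega\notin F$, and they are conjugate over $F(\omega)=F(\sqrt{-3})$. Since $X_0(7)\cong\PP^1$ over $F$, we choose coordinates in which this degree-$2$ divisor is $D\colon X^2+3Y^2=0$. This is possible because any pair of $\Gal(F(\sqrt{-3})/F)$-conjugate points can be moved by $\PGL_2(F)$ to $[\pm\sqrt{-3}:1]$. In practice we would read this off from an explicit Hauptmodul, or from the description of $\XXX_0(7)$ in the literature (e.g.\ \cite{7isogeny}).

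A smooth tame stacky curve whose coarse space is $\PP^1$ and whose stacky locus is the reduced divisor $D$, with generic $\mu_3$ stabilizers along it, is canonically the root stack $\sqrt[3]{D/\PP^1}$. This is the standard characterization of stacky curves with trivial generic stabilizer. We therefore get $\XXX_0(7)^{\rig}\cong\sqrt[3]{D}$.

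\textbf{Triviality of the gerbe.} By the lemma preceding Lemma~\ref{Zariski-dense} (together with Lemma~\ref{lem:root_stack_of_rigidification}), $\XXX_0(7)\cong\sqrt[2]{\Lambda_7^{\rig}/\XXX_0(7)^{\rig}}$. A square root stack is a trivial $\mu_2$-gerbe as soon as the line bundle is a square. So it suffices to show that $\Lambda_7^{\rig}$ is a square in $\Pic(\XXX_0(7)^{\rig})$. By the last assertion of Lemma~\ref{picx03}, it is enough to check that
\[
\deg\Lambda_7^{\rig}\in 2\deg\bigl(\Pic(\XXX_0(7)^{\rig})\bigr).
\]

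The degree of $\Lambda_7^{\rig}$ is computed as in the $N=12,16,18$ case. We have $\deg\Lambda^{\rig}=1/6$ and $\deg J_7^{\rig}=\kappa(7)=8$, so $\deg\Lambda_7^{\rig}=8/6=4/3$.

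For the degree group, we use the pushout description in the proof of Lemma~\ref{picx03}.
\begin{itemize}
\item If $\omega\notin F$, the group is generated by $\OO(\mathcal D)$, where $\mathcal D$ is the reduced stacky divisor. Since $3\mathcal D$ is the pullback of $D$, which has degree $2$, we get $\deg\mathcal D=2/3$. Thus the degree group is $\frac23\ZZ$, and $4/3=2\cdot\frac23$ lies in its double.
\item If $\omega\in F$, the group contains the degree-$1/3$ classes of the individual stacky points, so the degree group is $\frac13\ZZ$. Again $4/3\in\frac23\ZZ$.
\end{itemize}
In both cases $\Lambda_7^{\rig}$ is a square, the gerbe is trivial, and $\XXX_0(7)\cong\sqrt[3]{D}\times B\mu_2$.

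\textbf{Main obstacle.} The delicate point is the first step: descending the identification of the stacky locus with $D$ to $F$ itself, rather than only over $\oF$. We also need to justify cleanly that the tame stacky curve $\XXX_0(7)^{\rig}$ is determined by its coarse space and its stacky divisor with multiplicities. For the latter, we would invoke the universal property of root stacks together with the local description of \S\ref{subsec:local_situation}: \'etale-locally at each stacky point the curve is $\sqrt[3]{\Spec\oF[[T]]}$. This gives a representable morphism to $\sqrt[3]{D}$ that is an isomorphism on coarse spaces and on stabilizers, hence an isomorphism.
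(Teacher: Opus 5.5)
Your proposal is correct and follows the paper's proof: you identify $\XXX_0(7)^{\rig}\cong\sqrt[3]{D}$, compute $\deg\Lambda_7^{\rig}=4/3$, and apply the last assertion of Lemma~\ref{picx03}; the only difference is that the paper gets a degree-$2/3$ class from $K^{-1}_{\XXX_0(7)^{\rig}}$, while you use $\OO(\mathcal D)$. One harmless slip: for $\omega\notin F$ the degree group is $\frac13\ZZ$, not $\frac23\ZZ$, because the pullback of $\OO_{\PP^1}(1)$ has degree $1$ and so $\OO(\mathcal D)$ does not generate; but you only need $\frac23\in\deg\Pic(\XXX_0(7)^{\rig})$, which holds.
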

\begin{proof}
It follows from Lemma~\ref{nmbc7} that the rigidification is $\XXX_0(7)^{\rig}\cong\sqrt[3]{D}$. The degree of the morphism $J_7^{\rig}:\XXX_0(7)^{\rig}\to \XXX_0(1)^{\rig}$ is the same as the degree of $J_7:\XXX_0(7)\to \XXX_0(1)$ which is $\kappa(7)=8$. The morphism $J_7$ when restricted to $\YYY_0(7)$ is \'etale, hence $J_7^{\rig}$ is \'etale when restricted to $J^{\rig}_7:\YYY_0(7)^{\rig}\to \YYY_0(1)$.  The degree of $\mathcal O(1)$ on $\PPP(2,3)$ is $\frac{1}{6}$. The pullback $(J_7^{\rig})^*(\mathcal O(1))$ is of degree $\frac{8}{6}=\frac{4}{3}$. The anticanonical line bundle $K_{\XXX_0(7)^{\rig}}^{-1}$ is of degree $2\cdot \deg(K_{\XXX_0(7)}^{-1})=\frac{2}{3}$. It follows that $(K_{\XXX_0(7)^{\rig}}^{-1})^{\otimes 2}$ and $(J_7^{\rig})^*(\mathcal O(1))$ have the same degree. Hence by Lemma~\ref{picx03}, we have that   $(J_7^{\rig})^*(\mathcal O(1))$ is a square in the Picard group of $\XXX_0(7)^{\rig}$. But $\XXX_0(7)\cong \sqrt[2]{(J_7^{\rig})^*(\mathcal O(1))}$, hence $\XXX_0(7)\cong \sqrt[3]{D}\times B\mu_2$.
\end{proof}

We will implicitly fix an identification $\PP^1\cong X_0(N)$. Let $(\mathcal K_6)_{\overline{F}}=(\mathcal K_6)^+\cup (\mathcal K_6)^-$ be the decomposition of $(\mathcal K_6)_{\overline F(\omega)}$ into two disjoint points. Denote $(K_6)^*$ the image of $(\mathcal K_6)_*$ with $*\in\{+,-\}$ for the coarse moduli space map and $K_6=(K_6)^+\cup (K_6)^-$. Let $V(x)$ denotes the formal neighborhood of a point $x$ on a curve. Given a smooth projective irreducible $\oF$-curve $C$ and a pair $(f,g)\in(\oF(C)-\oF)\times H^1(\oF(C),\mu_2)$ and $(e,i)\in\{0,1,2,3,4,5\}\times\{0,1\}-\{(0,0)\}$, we define $n_{+,e,i}(f,g)$ (respectively, $n_{-,e,i}(f,g)$) to be the number of geometric points $x$ of $C$ which satisfy $f(x)\in (K_6)^+$ (respectively, $f(x)\in (K_6)^-$) and at which the ramification degree of $f$ is congruent to $e$ modulo $6$ and for which $g|_x=1\in H^1(V_x,\mu_2)$ (respectively, $g|_x=0\in H^1(V_x,\mu_2)$) if $i=1$ (respectively, if $i=0$). Specially, we define $n_{-,0,0}(f,g)=n_{+,0,0}(f,g)=0$ and $n_{1/2}(f,g)$ to be the number of geometric points $x$ of $C$ satisfying that $f(x)\not \in K_6$ and above which $g$ is ramified. If $k\equiv e\pmod 6$, by $n_{*,k,j}(f,g)$ we denote $n_{*,e,j}(f,g)$. 

It follows from Lemma~\ref{associated_sector}, that $\gamma(f,g)\in\NS_{\orb}(\XXX_0(N))^*$ is the linear form defined by the representable morphism $\iota(f,g)^{\nu}.$
For a pair $(f,g)$ as before, we denote by $\gamma (f,g)$ the following element of $\NS_{\orb}(\XXX_0(7))^*$:
\begin{itemize}
	\item Assume that $\omega\not\in F$. We set:
	\begin{multline*}
		\gamma(f,g)(L,x):=\deg(f)\cdot L+\bigg(\sum_{\substack{1\leq j\leq 5\\ j\neq 3}}x_{(\mathcal K_6,j/6)}\sum_{*\in\{+,-\}}n_{*,j,0}(f,g)+n_{*,3+j,1}(f,g)\bigg)+\\+x_{1/2}\bigg(n_{1/2}(f,g)+\sum_{*\in\{+,-\}}n_{*,3,0}(f,g)+n_{*,0,1}(f,g)\bigg).
	\end{multline*}
	\item Assume that $\omega\in F$. We set 
	\begin{multline*}\gamma(f,g)(L,x):=\deg(f)\cdot L+\sum_{*\in\{+,-\}}\sum_{\substack{1\leq j\leq 5\\j\neq 3}}x_{((\mathcal K_6)^*,j/6)}(n_{*,j,0}(f,g)+n_{*,3+j,1}(f,g))+\\+ x_{1/2}\bigg(n_{1/2}(f,g)+\sum_{*\in\{+,-\}} n_{*,3,0}(f,g)+n_{*,0,1}(f,g)\bigg).\end{multline*}
\end{itemize}
It follows from Lemma~\ref{associated_sector}, that $\gamma(f,g)\in\NS_{\orb}(\XXX_0(N))^*$ is the linear form defined by the representable morphism $\iota(f,g)^{\nu}.$
\begin{thm}\label{thm:N=7}
Suppose that $N=7$. The $a$-invariant is given by $1/6.$ The $b$-invariant is given by $b=2.$ 
\end{thm}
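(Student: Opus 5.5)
The plan is to compute $\Eff_{\orb}(\XXX_0(7))$ near the relevant point explicitly, using the linear forms $\gamma(f,g)$, and then read off $a$ and $b$. I will write an element of $\NS_{\orb}(\XXX_0(7))_{\QQ}$ as $(L;x_{1/2};x_{j/6})$ with $j\in\{1,2,4,5\}$. If $\omega\in F$, each $x_{j/6}$ is doubled into $x_{((\mathcal K_6)^\pm,j/6)}$; the argument is symmetric in $\pm$, so I suppress this.

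\textbf{Step 1: the two elements.} By Corollary~\ref{naivforx0n} and $\kappa(7)=8$, the naive element is
\[
M_{\naive}=(4;\,6;\,10,8,4,2).
\]
By Lemma~\ref{degcanonical}, Theorem~\ref{thmonsectors} and $\varepsilon_3(7)=2$, $\varepsilon_2(7)=0$, the canonical element is
\[
K_{\XXX_0(7),\orb}=(-1/3;\,-1;\,-1/3,-2/3,-1/3,-2/3).
\]
Hence
\[
tM_{\naive}+K=(4t-\tfrac13;\,6t-1;\,10t-\tfrac13,\,8t-\tfrac23,\,4t-\tfrac13,\,2t-\tfrac23).
\]
At $t=1/6$ this equals $P:=(1/3;\,0;\,4/3,2/3,1/3,-1/3)$. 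The $5/6$-coordinate is negative, so this case is non-adequate and a finer description of the cone is needed.

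\textbf{Step 2: a description of the cone.} By Lemma~\ref{twotypes} and Lemma~\ref{partofcone}, it suffices to test against families of type 1). These are the curves $\iota(f,g)^\nu$ of Lemma~\ref{relnormfg}, with intersection numbers $\gamma(f,g)$. For each of the two points of $K_6$, the ramification indices $e_x$ of $f$ over that point sum to $\deg f$. So I split $\deg(f)L=\sum_{x\in f^{-1}(K_6)}e_x\cdot\tfrac L2$ and regroup $\gamma(f,g)$ point by point.

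By Lemma~\ref{associated_sector}, a point over $K_6$ with ramification $e$ and $g$-value $i$ lands in the sector $\{(e+3i)/6\}$. This shows that the following inequalities imply $\gamma(f,g)\ge0$ for all $(f,g)$, using $x_{1/2}\ge0$ for the remaining points:
\[
L\ge0,\qquad x_{1/2}\ge0,\qquad \tfrac{e}{2}L+x_{\{(e+3i)/6\}}\ge0\quad (e\ge1,\ i\in\{0,1\}).
\]
Conversely, each inequality is necessary. To see this, take $f\colon\PP^1\to\PP^1$ of degree $e$ totally ramified over one point of $K_6$ and unramified over the other. Take $g$ ramified at the chosen point if $i=1$, and fix parity by ramifying $g$ at one extra point outside $K_6$, which costs only $x_{1/2}$. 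Varying $f$ by automorphisms and the extra branch point gives a covering family.

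Among these, the only binding ones at $P$ are $x_{1/2}\ge0$ and
\[
L+x_{5/6}\ge0,
\]
which comes from $(e,i)=(2,1)$. All others are strict at $P$; for example $L/2+x_{1/6}=3/2$ and $L/2+x_{4/6}=1/2$. Note also that $\tfrac52L+x_{5/6}=1/2$. If $\omega\in F$, the same holds for both $x_{((\mathcal K_6)^\pm,5/6)}$, since the curve may be chosen ramified over $K_6^+$ only or over $K_6^-$ only.

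\textbf{Step 3: conclusion.} For $t<1/6$, the $x_{1/2}$-coordinate $6t-1$ is negative, so $tM+K\notin\Eff_{\orb}$. At $t=1/6$, all the necessary and sufficient inequalities hold, so $a=1/6$. The point $P$ lies on exactly the two facets $x_{1/2}=0$ and $L+x_{5/6}=0$ (doubled to $L+x_{\pm,5/6}=0$ when $\omega\in F$). These are cut out by linearly independent forms, and $P$ lies in the relative interior of their intersection face, giving $b=2$.

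\textbf{Expected obstacle.} The main difficulty is the bookkeeping in the $\omega\in F$ case. There the two $5/6$-facets look like they give codimension $3$, which would contradict $b=2$. The first thing I will try is to check whether $L+x_{+,5/6}+x_{-,5/6}\ge0$ is the true facet. This would come from the single degree-$2$ map ramified over both $K_6^+$ and $K_6^-$, with $L$ then not split point by point. I would then redo the splitting of $\deg(f)L$ globally, rather than per point, to get the correct facet structure and confirm $b=2$.
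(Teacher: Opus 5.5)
Your computation of $M_{\naive}$, $K_{\XXX_0(7),\orb}$ and $P$ is correct. The sufficiency half of Step~2 also works: splitting $\deg(f)L=\sum_{x\in f^{-1}(K_6)}\tfrac{e_x}{2}L$ shows $P\in\Eff_{\orb}(\XXX_0(7))$, and with $x_{1/2}\ge 0$ this gives $a=1/6$, as in the paper. The gap is in the necessity half, on which $b$ depends.

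\textbf{The test curve is wrong.} A map $f$ of degree $e$ that is totally ramified over one point of $K_6$ and unramified over the other has $e$ simple points over the second one. By Lemma~\ref{associated_sector}, each of these lands in the sector $1/6$ or $2/3$. So $\gamma(f,g)=eL+x_{\{(e+3i)/6\}}+(\text{$e$ further twisted terms})+\cdots$. This does not certify $\tfrac e2L+x_{\{(e+3i)/6\}}\ge 0$, and it is strictly positive at $P$. For $\omega\notin F$ (in particular $F=\QQ$) this is repairable. Take $f$ totally ramified over \emph{both} geometric points of $K_6$, e.g.\ the paper's $f=M\circ h^2$, with $g$ branched exactly at the two preimages; then $\gamma(f,g)=2L+2x_{(\mathcal K_6,5/6)}$. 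Your sufficient inequalities give $\Eff_{\orb}^*\cap P^\perp\subseteq\mathrm{cone}(x_{1/2},\,L+x_{(\mathcal K_6,5/6)})$, and this curve realizes the second generator, so $b=2$.

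\textbf{The case $\omega\in F$.} Here your claimed facets $L+x_{((\mathcal K_6)^{\pm},5/6)}\ge 0$ are false, so the per-point splitting cannot be made sharp. A point over $K_6^+$ lies in $((\mathcal K_6)^+,5/6)$ only if its ramification index is at least $2$. Hence at most $\deg(f)/2$ points contribute to $x_{((\mathcal K_6)^+,5/6)}$. For example, the point with $L=1$, $x_{((\mathcal K_6)^+,5/6)}=-3/2$ and all other coordinates $0$ pairs to at least $\deg(f)/4>0$ with every $\gamma(f,g)$. So it lies in $\Eff_{\orb}$, yet it violates $L+x_{((\mathcal K_6)^+,5/6)}\ge 0$.

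Your inner cone therefore only bounds the codimension of the face at $P$ by $3$, as you noticed. Your proposed repair also has the wrong coefficient: $L+x_{+,5/6}+x_{-,5/6}$ equals $-1/3$ at $P$, which would contradict $P\in\Eff_{\orb}$. The degree-$2$ curve actually gives $2L+x_{((\mathcal K_6)^+,5/6)}+x_{((\mathcal K_6)^-,5/6)}$.

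\textbf{The missing idea} is the paper's global argument, which couples the two points of $K_6$ through $\deg f$. Suppose $\gamma(f,g)(P)=0$. Then all counts other than $n_{\pm,2,1}$ and the $x_{1/2}$-terms vanish, so $\deg f=n_{+,2,1}+n_{-,2,1}$. Separately, $\deg f\ge 2n_{+,2,1}$ and $\deg f\ge 2n_{-,2,1}$. Together these force $n_{+,2,1}=n_{-,2,1}$. So every form in $\Eff_{\orb}^*$ vanishing at $P$ lies in the cone spanned by $x_{1/2}$ and $2L+x_{((\mathcal K_6)^+,5/6)}+x_{((\mathcal K_6)^-,5/6)}$. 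Since the degree-$2$ curve realizes the latter, $b=2$.
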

\begin{proof}
Using notation $(\mathcal K_6)^*$ as before in the case $\omega\in F$ and using it for $\mathcal K_6$ in the case $\omega\not\in F$, one has that $$K_{\XXX_0(7),\orb}=(-1/3, (-1/3)_{((\mathcal K_6)^*,1/6)},(-2/3)_{((\mathcal K_6)^*,1/3)},(-1)_{1/2},(-1/3)_{((\mathcal K_6)^*,2/3)}, (-2/3)_{((\mathcal K_6)^*,5/6)}).$$ 
The naive height corresponds to $$T:=(4, (10)_{((\mathcal K_6)^*,1/6)},(8)_{((\mathcal K_6)^*,1/3)},(6)_{1/2},(4)_{((\mathcal K_6)^*,2/3)}, (2)_{((\mathcal K_6)^*,5/6)}).$$ We have that $x_{1/2}\geq 0$ is an inequality defining cone, so $a\geq 1/6$. Consider the point
\begin{align*}
	O:&=\frac16T+K_{\XXX_0(7),\orb}\\&=(2/3, (5/3)_{((\mathcal K_6)^*,1/6)},(4/3)_{((\mathcal K_6)^*,1/3)},(1)_{1/2},(2/3)_{((\mathcal K_6)^*,2/3)}, (1/3)_{((\mathcal K_6)^*,5/6)})+\\
	&\quad\quad+(-1/3, (-1/3)_{((\mathcal K_6)^*,1/6)},(-2/3)_{((\mathcal K_6)^*,1/3)},(-1)_{1/2},(-1/3)_{((\mathcal K_6)^*,2/3)}, (-2/3)_{((\mathcal K_6)^*,5/6)})\\
	&=(1/3, (4/3)_{((\mathcal K_6)^*,1/6)},(2/3)_{((\mathcal K_6)^*,1/3)},(0)_{1/2},(1/3)_{((\mathcal K_6)^*,2/3)}, (-1/3)_{((\mathcal K_6)^*,5/6)}).
\end{align*}
Consider smooth curve $C$ and $(f,g)\in(\oF(C)-\oF)\times H^1(\oF(C),g)$. 
\begin{multline*}\gamma(f,g)(O)=\frac{1}{3}\cdot\bigg( {\deg(f)}{+4\cdot\bigg(\sum_{*\in\{+,-\}}n_{*,1,0}(f,g)+n_{*,4,1}(f,g)\bigg)}+2\cdot \bigg(\sum_{*\in\{+,-\}}n_{*,2,0}(f,g)+n_{*,5,1}(f,g)\bigg)\\+\bigg(\sum_{*\in\{+,-\}}n_{*,4,0}(f,g)+n_{*,1,1}(f,g)\bigg)-\bigg(\sum_{*\in\{+,-\}}n_{*,5,0}(f,g)+n_{*,2,1}(f,g)\bigg)\bigg)\end{multline*}
Note that $\deg(f)\geq 5n_{+,5,0}(f,g)+2n_{+,2,1}(f,g)$ and that $\deg(f)\geq 5n_{-,5,0}(f,g)+2n_{-,2,1}(f,g)$, hence 
$$2\deg(f)\geq 5n_{+,5,0}(f,g)+5n_{-,5,0}(f,g)+2n_{+,2,1}(f,g)+2n_{-,2,1}(f,g).$$
We deduce $\gamma(f,g)(O)\geq 0.$ In other words, the point $O$ is contained in $\Eff_{\orb}(\XXX_0(7))$. If $\gamma(f,g)(O)=0$, for $*\in\{+,-\}$, we must have that 
{$$n_{*,5,0}(f,g)=n_{*,4,0}(f,g)=n_{*,1,1}(f,g)=n_{*,2,0}(f,g)=n_{*,5,1}(f,g)=n_{*,1,0}(f,g)=n_{*,4,1}(f,g)=0.$$}
We must have that $\deg(f)=n_{+,2,1}(f,g)+n_{-,2,1}(f,g)$ and thus, as $\deg(f)\geq 2\max_{*\in\{+,-\}}\{n_{*,2,1}(f,g)\}$, that $n_{+,2,1}(f,g)=n_{-,2,1}(f,g)$.  In particular, it follows that all $\gamma(f,g)$ for $(f,g)$ as above are of the form 
$$(L,x)\mapsto 2m L + mx_{((\mathcal K_6)_-,5/6)}+mx_{((\mathcal K_6)_+,5/6)} $$ if $\omega\in F$ and 
$$(L,x)\mapsto 2m L + 2mx_{((\mathcal K_6),1/6)}$$ if $\omega\in F$.  
 Let us show that there exists at least one such form $\gamma(f,g)$ for $(f,g)$ as above, not in the span of $(L,x)\mapsto x_{1/2}$. Let's take $f$ to be $f:\PP^1\to X_0(7)\cong\PP^1$, given by $$f(z)=\dfrac{\omega^2\cdot \big(\frac{z}{z-1}\big)^2+\omega}{\big(\frac{z}{z-1}\big)^2+1}.$$
Note that $f=M\circ h^2$, where $h(z)=z/(z-1)$ is of degree $1$ and $M(t)=\frac{\omega^2t+\omega}{t+1}$ is a M\"obius transformation. It follows that $\deg(f)=2$. Moreover, if $f(z)=\omega$, by using that $M(0)=\omega$, we have that $h(z)^2=0$, that is $z=0$. Similarly, if $f(z)=\omega^2$, by using that $M(\infty)=\omega^2$, we obtain $h(z)=\infty$, that is $z=1$.  Choose for $g$ any degree $2$ cover of $\PP^1$, ramified at both $z=0$ and $z=1$. Then $$\gamma(f,g)(L,x)=2L+x_{((\mathcal K_6)_-,5/6)}+x_{((\mathcal K_6)_+,5/6)}$$ if $\omega\in F$ and $$\gamma(f,g)(L,x)=2L+2x_{((\mathcal K_6),5/6)}$$ if $\omega\not \in F$. 
By recalling that $(L,x)\mapsto x_{1/2}$ is a form which vanishes on $O$ and which is in $\Eff_{\orb}(\XXX_0(7))^*$, we obtain that 
\begin{multline*}\Eff_{\orb}(\XXX_0(7))^*\cap \{\lambda\in \NS_{\orb}(\XXX_0(7))^*|\hspace{0,1cm}\lambda(x)=0\}=\\=\{(L,x)\mapsto t_1x_{1/2}+2t_2L+t_2x_{((\mathcal K_6)_-,5/6)}+t_2x_{((\mathcal K_6)_+,5/6)}|\hspace{0,1cm}t_1,t_2\geq 0\}\end{multline*}
if $\omega\in F$ and 
\begin{multline*}\Eff_{\orb}(\XXX_0(7))^*\cap \{\lambda\in \NS_{\orb}(\XXX_0(7))^*|\hspace{0,1cm}\lambda(x)=0\}=\\=\{(L,x)\mapsto t_1x_{1/2}+2t_2L+2t_2x_{((\mathcal K_6),5/6)}|\hspace{0,1cm}t_1,t_2\geq 0\}
\end{multline*}
if $\omega\in F$. We deduce that $$b=\dim(\Eff_{\orb}(\XXX_0(7))^*\cap \{\lambda\in\NS_{\orb}(\XXX_0(7))^*|\hspace{0,1cm}\lambda(O)=0\})= 2.$$
\end{proof}
\subsection{Case $N=10,25$}
Denote by $i$ a chosen square root of $-1$.
\begin{lem}
	Let $E_{1728}$ be an elliptic curve over~$F$ satisfying that $j(E_{1728})=1728$. One has that $$\End(E_{1728})=\begin{cases}
		\ZZ,	&\text{if }i\not\in F\\
		\ZZ[i],&\text{ otherwise}
	\end{cases} $$
\end{lem}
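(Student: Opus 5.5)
The plan is to mirror the proof of the analogous lemma for $E_0$ given above, with $\omega$ replaced by $i$ and $\ZZ[\omega]$ by $\ZZ[i]$.

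First, I will use that $(E_{1728})_{\oF}$ has $j$-invariant $1728$. Such a curve has a model $y^2=x^3+ax$, so it admits the automorphism $[i]\colon(x,y)\mapsto(-x,iy)$. Its geometric endomorphism ring is the maximal order $\ZZ[i]$ of $\QQ(i)$ (see \cite{Arithmetic_of_Elliptic}, as in the paper's citation of Theorem 10.1 there). This gives the chain $\ZZ\subset\End(E_{1728})\subset\ZZ[i]$.

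Next, I will use that $\End(E_{1728})=\End((E_{1728})_{\oF})^{\Gal(\oF/F)}$, where $\gamma\cdot\theta=\gamma\circ\theta\circ\gamma^{-1}$. The key point is to identify how $\gamma$ acts on the element $[i]$. The natural tool is the normalized identification $[\cdot]\colon\ZZ[i]\to\End$, characterised by $[\alpha]^*\omega_E=\alpha\,\omega_E$ for the invariant differential $\omega_E$, which may be taken defined over $F$. With this normalization one has $\gamma\cdot[\alpha]=[\gamma(\alpha)]$. Concretely, $\gamma$ sends $(-x,iy)$ to $(-x,\gamma(i)y)$.

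Hence, if $i\notin F$, some $\gamma$ acts on $a+bi$ by sending it to $a-bi$. The invariants are then $\{a+bi : b=0\}=\ZZ$. If $i\in F$, the action is trivial and the invariants are all of $\ZZ[i]$.

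The only mildly delicate step is justifying the Galois-compatibility $\gamma\cdot[\alpha]=[\gamma(\alpha)]$, which is where the normalization matters. I would simply verify it on the generator $[i]$ using the explicit formula above, since $\End$ is generated by $[i]$ over $\ZZ$. No real obstacle is expected.

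Note that the analogous $E_0$ lemma's final sentence has the cases swapped, stating $\ZZ$ iff $\omega\in F$. Here I will state the correct version: $\End(E_{1728})=\ZZ$ iff $i\notin F$.
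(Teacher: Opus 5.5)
Your proposal is correct and follows the same route as the paper: identify $\End((E_{1728})_{\oF})=\ZZ[i]$, take invariants under the conjugation action of $\Gal(\oF/F)$, and use that any $\gamma$ with $\gamma(i)=-i$ sends $a+bi$ to $a-bi$. Your explicit check $\gamma\circ[i]\circ\gamma^{-1}=\bigl((x,y)\mapsto(-x,\gamma(i)y)\bigr)$ on a model $y^2=x^3+ax$ over $F$ justifies the compatibility step that the paper simply asserts. Your final case assignment ($\ZZ$ exactly when $i\notin F$) is the correct one; the paper's proof of this lemma has the same swapped ``iff'' in its last sentence as the $E_0$ version.
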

\begin{proof}
	One has that $\End((E_{1728})_{\overline F})=\ZZ[i]$, hence $\ZZ\subset\End(E_0)\subset \ZZ[i]$.  The absolute Galois group $\Gal(\overline F/F)$ acts on $\End ((E_{1728})_{\overline F})$ by $\gamma\cdot \theta=\gamma\circ\theta\circ \gamma^{-1}$. If $\theta$ is represented by $a+b\omega$ with $a,b\in\ZZ$, the element $\gamma\cdot \theta$ corresponds to $$\gamma\cdot(a+bi)=a+b\gamma\cdot i =a+b(-i)=a-bi$$ if $i\not\in F$ and to
	$$\gamma\cdot(a+bi)=a+bi$$
	if $i\in F$. We have that $\End(E_{1728})=\End((E_{1728})_{\overline F})^{\Gal(\overline F/F)}$ which is thus $\ZZ$ iff $i\in F$ and $\ZZ[i]$ otherwise. The claim is proven.
\end{proof}
\begin{lem}
\label{nmbc10} There exist precisely $3-[F(i):F]$ closed points of $\XXX_0(10)$  where the automorphism group is~$\mu_4$ (they are necessarily lying above the point $j=1728$ in $\XXX_0(1)$ for the morphism $J_N:\XXX_0(10)\to\XXX_0(1)$).
\end{lem}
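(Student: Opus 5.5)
We follow the proof of Lemma~\ref{nmbc7}, with $\omega$ replaced by $i$, $j=0$ by $j=1728$, and $\ZZ[\omega]$ by $\ZZ[i]$.

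\emph{Geometric count.} By Lemma~\ref{stacky-points-xon}, $\XXX_0(10)(\oF)$ contains exactly $\varepsilon_2(10)=2$ points with automorphism group $\mu_4$. By Lemma~\ref{cmsunr}, these are exactly the points over $j=1728$ at which $X_0(10)\to X_0(1)$ is unramified. Concretely, they are pairs $(E_{1728},H)$ where $H\subset E_{1728}[10]$ is a cyclic subgroup of order $10$ stable under $[i]$; equivalently, $H=\ker(\alpha)$ for an endomorphism $\alpha\in\ZZ[i]$ of norm $10$. Up to units and the equivalence induced by $H$, the two choices are $\alpha=(1+i)(2+i)$ and $\alpha=(1+i)(2-i)$. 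They differ only in the factor over $5$, since $(1+i)$ is the unique ideal of norm $2$.

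\emph{Fields of definition.} Take $E_{1728}$ to be defined over $F$, e.g.\ $Y^2=X^3-X$. Both points $(E_{1728},\ker\alpha)$ are defined over $F(i)$, because $[i]$ and hence $\alpha$ are. The remaining step is to find the Galois action on the pair of points.

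If $i\in F$, the preceding lemma gives $\End(E_{1728})=\ZZ[i]$. Every such kernel is then $\Gal(\oF/F)$-stable, so we obtain two closed points of degree $1$.

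If $i\notin F$, take $\gamma\in\Gal(\oF/F)$ with $\gamma(i)=-i$. By the computation in the preceding lemma, $\gamma$ acts on $\End((E_{1728})_{\oF})$ by complex conjugation. Therefore it sends $\ker((1+i)(2+i))$ to $\ker((1-i)(2-i))$. Since $(1-i)=-i(1+i)$, the latter equals $\ker((1+i)(2-i))$. So $\gamma$ swaps the two geometric points, and they form a single closed point of degree $2$.

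Both cases give $3-[F(i):F]$ closed points, which proves the lemma. By Lemma~\ref{cmsunr} every such point lies over $j=1728$.

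\emph{Main obstacle.} The delicate step is the identification of the $\mu_4$-points with kernels of endomorphisms $\alpha\in\ZZ[i]$ of norm $10$. One must check that $[i]$-stability of a cyclic subgroup of order $10$ forces it to be such a kernel. The point is that $H[5]$ is an $[i]$-stable line in $E[5]\cong\ZZ[i]/5$, hence an eigenline $\ker(2\pm i)$, and $H[2]=\ker(1+i)$. One must also check that the $\mu_4$-stabilizer condition from Lemma~\ref{cmsunr} is exactly this stability: the automorphism $[i]$ lifts to $(E,H)$ if and only if $H$ is $[i]$-stable.

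An alternative would be to use the description $\XXX_0(N)^{\rig}\cong\sqrt[2]{K_4/\PP^1}$ with $K_4\colon X^2+Y^2=0$ from Theorem~\ref{isomclas}, since this closed point splits exactly over $F(i)$. However, that theorem appears to rely on this lemma, so the endomorphism argument is the one to use.
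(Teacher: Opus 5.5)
Your proof is correct and follows the paper's argument. Both proofs get the geometric count $\varepsilon_2(10)=2$ from Lemma~\ref{cmsunr}. Both then use $\End(E_{1728})=\ZZ[i]$ to show that both points are $F$-rational when $i\in F$, and a Galois-conjugacy argument when $i\notin F$. The only difference is in the case $i\notin F$: you exhibit the swap $\ker((1+i)(2+i))\leftrightarrow\ker((1+i)(2-i))$ explicitly, whereas the paper argues tersely via ideals of $\ZZ$ that no such cyclic $10$-isogeny is defined over $F$ (and writes ``norm $7$'' where $10$ is meant), so your version is the more complete of the two.
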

\begin{proof}
	Suppose that $i\in F$. Then as $\End(E_{1728})=\ZZ[i]$, we deduce that every cyclic $10$-isogeny of $E_{1728}$ is defined over $F$. By Lemma~\ref{cmsunr}, we have that this number is $\varepsilon_2(10)=2$. Suppose that $i\not\in F$. Then there is no ideal~$\mathfrak a$ of $\ZZ$ satisfying that $N_{\QQ(i)/\QQ}(\mathfrak a)=7$, thus there is no isogeny of degree $10$ defined over~$F$. It follows that the number of closed points in this case  is $\varepsilon_2(10)/2=1$.  The proof is completed.
\end{proof}
\begin{thm}
	Consider the closed point $K:X^2+Y^2=0$ of $\PP^1$.  One has that $\XXX_0(10)^{\rig} \cong \sqrt[2]{K}$ and that $\XXX_0(10)$ is a non-trivial root stack $\XXX_0(10)\cong\sqrt[2]{(J_{10}^{\rig})^*\Lambda^{\rig}}$.
\end{thm}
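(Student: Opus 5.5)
The plan follows the $N=7$ case, with $\mu_4$ in place of $\mu_6$, and has two parts: identifying $\XXX_0(10)^{\rig}$, and showing that the square root stack is non-trivial.

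\emph{Identifying the rigidification.} By Lemma~\ref{stacky-points-xon} we have $\varepsilon_3(10)=0$ and $\varepsilon_2(10)=2$. So the only points of $\XXX_0(10)$ with stabilizer larger than $\mu_2$ are the two geometric points with stabilizer $\mu_4$, lying over $j=1728$. By Lemma~\ref{cmsunr}, their images in $\XXX_0(10)^{\rig}$ have stabilizer $\mu_2$, and every other point of $\XXX_0(10)^{\rig}$ has trivial stabilizer. By Lemma~\ref{nmbc10}, these two points form a single closed point of degree $2$ when $i\notin F$, and two $F$-points when $i\in F$.

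We identify the coarse space $X_0(10)\cong\PP^1$ so that this degree-$2$ divisor is $K\colon X^2+Y^2=0$. Over $\oF$ its two points are $[\pm i:1]$, so this is compatible with the Galois structure in both cases. A smooth stacky curve with trivial generic stabilizer is the root stack of its coarse space along its stacky divisor with the corresponding orders. Hence $\XXX_0(10)^{\rig}\cong\sqrt[2]{K/\PP^1}$.

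\emph{Reducing to a parity statement.} By the lemma identifying $\XXX_0(N)$ with a square root stack, $\XXX_0(10)\cong\sqrt[2]{\Lambda_{10}^{\rig}/\XXX_0(10)^{\rig}}$. This $\mu_2$-gerbe is trivial if and only if $\Lambda_{10}^{\rig}$ is a square in $\Pic(\XXX_0(10)^{\rig})$. It therefore suffices to show that $\Lambda_{10}^{\rig}$ is not a square.

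\emph{Showing non-triviality by a degree computation.} Since $\deg\Lambda^{\rig}=1/6$ and $\kappa(10)=18$, we get
\[
\deg\Lambda_{10}^{\rig}=\frac{18}{6}=3.
\]
We then compute $\Pic(\sqrt[2]{K/\PP^1})$ using \cite[Theorem 1.1]{lopez2023picardgroupsstackycurves}, as in Lemma~\ref{picx03}.

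\begin{itemize}
\item If $i\notin F$, the group is the pushout of $\ZZ\xrightarrow{\cdot 2}\ZZ$ and $\ZZ\xrightarrow{\cdot 2}\Pic(\PP^1)$. This is $\ZZ$, generated by $\OO(\tfrac12 K)$, which has degree $1$. So the degree map is an isomorphism onto $\ZZ$, and a line bundle of degree $3$ cannot be a square.
\item If $i\in F$, the group is $\ZZ^3/\langle(2m,2n,-m-n)\rangle\cong\ZZ\oplus\ZZ/2\ZZ$, with generators of degree $1/2$. The degree image is $\tfrac12\ZZ$, so degree alone does not decide the question.
\end{itemize}

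\emph{Main obstacle: the case $i\in F$.} Squares in $\ZZ\oplus\ZZ/2\ZZ$ are exactly $(2k,0)$, so $\Lambda_{10}^{\rig}$ is a square if and only if $\deg\Lambda_{10}^{\rig}\in\deg(\Pic)$ and its $2$-torsion component vanishes. One option is to pin down the $2$-torsion component by computing the local characters of $\Lambda_{10}^{\rig}$ at the two stacky points. A cleaner option, which I will try first, avoids this entirely.

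Whether the gerbe $\XXX_0(10)\to\XXX_0(10)^{\rig}$ is trivial is a geometric question once we pass to $\oF$, where $i$ is always available. A nontrivial gerbe over $F$ might trivialize over $\oF$, but the converse cannot happen. So it suffices to show non-triviality after base change to $\oF$.

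Over $\oF$, I will use the explicit description of the residual gerbe at a stacky point. There $\Aut$ of a point of $\XXX_0(10)$ is $\mu_4$, surjecting onto the $\mu_2$ of its image in $\XXX_0(10)^{\rig}$. A trivial gerbe $\XXX_0(10)^{\rig}\times B\mu_2$ would instead have stabilizer $\mu_2\times\mu_2$ there. This contradiction shows that $\Lambda_{10}^{\rig}$ is not a square, so the root stack is non-trivial. This argument also covers the case $i\notin F$, and hence it decides both cases at once.
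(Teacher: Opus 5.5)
Your final argument is essentially the paper's. You read off $\XXX_0(10)^{\rig}\cong\sqrt[2]{K}$ from the count of $\mu_4$-points (Lemma~\ref{nmbc10}), and you rule out the trivial gerbe because $\XXX_0(10)^{\rig}\times B\mu_2$ would have stabilizer $\mu_2\times\mu_2$ at the stacky point, whereas $\XXX_0(10)$ has the cyclic stabilizer $\mu_4$ there.

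One correction to the Picard-group detour you set aside: for $i\notin F$ the pushout $(\ZZ\oplus\ZZ)/\langle(2,-2)\rangle$ is $\ZZ\oplus\ZZ/2\ZZ$, not $\ZZ$. The torsion class is $\OO(\tfrac12 K)\otimes\OO(-1)$. Your parity conclusion in that case still holds, since all degrees there are integers.
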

\begin{proof}
	It follows from Lemma~\ref{nmbc10} that the rigidification is $\XXX_0(10)^{\rig}\cong\sqrt[2]{K}$. Let us verify that $\XXX_0(10)\cong\sqrt[2]{J_{10}^*(\Lambda^{\rig}})$ is a non-trivial root stack. If we had $\XXX_0(10)\cong \sqrt[2]{K}\times B\mu_2,$ then $\XXX_0(10)$ would have a point with automorphism group scheme $\mu_2\times \mu_2$. But we know that such automorphism group schemes do not occur for points in $\XXX_0(10)$.
\end{proof}
\subsubsection{}{ Before continuing with the calculation of $a$- and $b$-invariants, let us show that $\XXX_0(25)\cong \XXX_0(10)$.
} In fact, both stacks are isomorphic to $\XXX_0(5)$.
\begin{lem} \label{str25} If $i\not\in F$, no elliptic curve $E_{1728}$ with $j(E_{1728})=1728$ admits a cyclic $25$-isogeny. If $i\in F$, there exist precisely two pairs consisting of an elliptic curve with $j$ invariant equal to $1728$ and a cyclic $25$-isogeny, which are not isomorphic over $\overline F$ and such that any other pair consisting of an elliptic curve with $j$ invariant equal to $1728$ and a cyclic $25$-isogeny is isomorphic to one of them.
\end{lem}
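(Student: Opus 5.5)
Lemma~\ref{str25} is the $N=25$ analogue of Lemma~\ref{nmbc10}, and I would prove it the same way, using CM theory of $E_{1728}$ together with the count $\varepsilon_2(25)=2$ from Lemma~\ref{stacky-points-xon}.

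First I would work geometrically. Over $\oF$, $E_{1728}$ has $\End=\ZZ[i]$. By Lemma~\ref{cmsunr} and Lemma~\ref{stacky-points-xon}, exactly $\varepsilon_2(25)=\prod_{p|25}(1+(\frac{-1}{p}))=2$ points of $\XXX_0(25)(\oF)$ have automorphism group $\mu_4$. By Lemma~\ref{cmsunr}(2), these are exactly the pairs $(E_{1728},H)$, with $H$ cyclic of order $25$, that are stable under $[i]$. Every other cyclic subgroup $H$ gives a pair with automorphism group $\mu_2$; for such $H$, $[i]H\neq H$, so $(E_{1728},H)\cong(E_{1728},[i]H)$ and these pairs come in orbits of size $2$.

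Next I would identify the two $[i]$-stable subgroups concretely. A cyclic $[i]$-stable subgroup of order $25$ is $\ker\alpha$ for an ideal $\alpha$ of $\ZZ[i]$ of norm $25$ with cyclic quotient. Since $5=(2+i)(2-i)$ splits, the only such ideals are $(2+i)^2$ and $(2-i)^2$; the ideal $(5)$ is excluded because $\ZZ[i]/(5)$ is not cyclic. This gives the two exceptional pairs, and they are not isomorphic over $\oF$ because they are distinct points of $\XXX_0(25)(\oF)$.

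The statement concerns $F$-rational pairs, i.e.\ rational points of $\XXX_0(25)$ over $j=1728$, and here I would split on whether $i\in F$.

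\textbf{Case $i\in F$.} The endomorphisms $2\pm i$ are defined over $F$, so $\ker(2\pm i)^2$ are $F$-rational subgroups and give two $F$-pairs. For the uniqueness claim, note that $\Aut(E_{1728})\cong\mu_4$ acts on cyclic subgroups. Any $F$-rational cyclic $25$-isogeny of an $F$-form of $E_{1728}$ corresponds to a rational point of $\XXX_0(25)$ above $j=1728$. I would show, via the ramification description from Lemma~\ref{cmsunr} or the explicit model from \cite[Lemma 2.8]{AHPP25}, that the only points above $j=1728$ with residue field $F$ are the two unramified $\mu_4$-points; the remaining points lie in the size-$2$ orbits, i.e.\ are ramified of index $2$.

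\textbf{Case $i\notin F$.} Complex conjugation swaps $2+i$ and $2-i$, so it swaps the two exceptional points, and these are not $F$-rational. The remaining subgroups have to be ruled out as well. For a $\mu_2$-point $H$, I would argue via Galois action on $E[25]$: the image of Galois lies in the normalizer of the Cartan subgroup $(\ZZ[i]/25)^\times$, and an element acting as $i\mapsto -i$ cannot stabilize $H$ unless $H$ is stable under $[i]$. Combined with $\End_F(E)=\ZZ$ from the preceding lemma, this should yield no rational cyclic $25$-isogeny.

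The hardest step is this last one: the Galois image need not be the whole normalizer, so stabilization of $H$ cannot be excluded purely formally. There I would instead appeal to the classification of rational points on $X_0(25)$ above $j=1728$ through \cite[IV]{Schoeneberg}.
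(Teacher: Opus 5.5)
Your geometric analysis is correct: the $\mu_4$-points over $j=1728$ are exactly the $[i]$-stable pairs $(E,\ker(2\pm i)^2)$, they are $F$-rational if $i\in F$, and otherwise any $\sigma\in G_F$ with $\sigma(i)=-i$ swaps them. The gap is in the two further steps where you try to exclude the $\mu_2$-points, i.e.\ the orbits $\{H,[i]H\}$. These steps cannot be repaired for a general number field $F$, because what they assert is false.

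\textbf{The case $i\in F$.} You conclude that these points do not have residue field $F$ because they are ramified of index $2$ over $X_0(1)$. Ramification is a geometric property and says nothing about residue fields. Concretely, take $E\colon y^2=x^3+x$, a generator $a$ of $\ker(2+i)^2$, a nonzero $b\in\ker(2-i)$, and set $P=a+b$. Let $F$ be the field generated by $i$ and the coordinates of $P$. Then $\langle P\rangle$ is an $F$-rational cyclic subgroup of order $25$. It is not $[i]$-stable, since $[i]$ acts on $\ker(2+i)$ and on $\ker(2-i)$ by different scalars. Hence $(E,\langle P\rangle)$ has automorphism group $\mu_2$ and is not $\oF$-isomorphic to either exceptional pair.

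\textbf{The case $i\notin F$.} You invoke the principle that an element acting by $i\mapsto -i$ cannot stabilize a non-$[i]$-stable $H$. This is false. Complex conjugation stabilizes $E(\RR)[25]\cong\ZZ/25\ZZ$ (here $E(\RR)$ is a circle), and this subgroup is not $[i]$-stable because $[i](x,y)=(-x,iy)$. Take $P\in E(\RR)$ of exact order $25$ and $F=\QQ(P)\subset\RR$. Then $E_F$ has an $F$-rational cyclic $25$-isogeny with kernel $\langle P\rangle$, although $i\notin F$. Deferring to \cite{Schoeneberg}, which only supplies geometric ramification data, therefore cannot close this step.

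\textbf{What to do instead.} The paper's proof argues only through $\End_F(E)$. For $i\notin F$, the only ideal of norm $25$ in $\End_F(E)=\ZZ$ is $(5)$, which gives the non-cyclic $[5]$. For $i\in F$, it uses $\End_F(E)=\ZZ[i]$ and the count $\varepsilon_2(25)=2$. In effect it controls only the $[i]$-stable pairs, i.e.\ the $\mu_4$-locus $\mathcal K_4$ of $\XXX_0(25)$. That is all that is used afterwards, to obtain $\XXX_0(25)^{\rig}\cong\sqrt{K_4/\PP^1}$. Read this way, your second paragraph plus the first sentence of each case is already a complete proof, and you should drop the attempt to rule out the $\mu_2$-points.

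If you want the literal statement, you need a hypothesis on $F$, for instance $\sqrt5\notin F(i)$, which covers $F=\QQ$ and $F=\QQ(i)$. The missing idea is then the Weil pairing:
\begin{itemize}
\item $G_{F(i)}$ acts on $E[5]=\ker(2+i)\oplus\ker(2-i)$ diagonally, with determinant the mod-$5$ cyclotomic character.
\item Under the hypothesis, this character takes non-square values, so the image is not scalar.
\item Hence every Galois-stable line in $E[5]$ is one of $\ker(2\pm i)$. For $i\notin F$ these are swapped, so there is no $G_F$-stable line at all, and hence no $F$-rational cyclic $25$-subgroup.
\item For $i\in F$, consider a cyclic $H=\langle a+b\rangle$ with $a$ generating $\ker(2\pm i)^2$ and $0\neq b\in\ker(2\mp i)$. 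It is $G_F$-stable only if $G_F$ acts on $E[5]$ by scalars, which is excluded. This rules out all $\mu_2$-points.
\end{itemize}
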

\begin{proof}
	Suppose that $i\not\in F$. Then $\End(E_{1728})=\ZZ$. Clearly, the only ideal $\mathfrak a$ with the norm $N_{\QQ(i)/\QQ}(\mathfrak a)=25$ in $\End(E_{1728})$ is $(5)$. It corresponds to the non-cyclic isogeny given by multiplication by $5$ on $E_0$. Suppose that $i\in F$. Then $\End(E_{1728})=\ZZ[i]$ and every cyclic $25$-isogeny is defined over $F$. But as $\varepsilon_2(25)=2$, the second claim follows.
\end{proof}
\begin{cor}
{The stacks $\XXX_0(5)$, $\XXX_0(10)$ and  $\XXX_0(25)$ are isomorphic.} 
\end{cor}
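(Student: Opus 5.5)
Since all three stacks have been (or can be) written as square root stacks over their rigidifications, I would use Lemma~\ref{lem:root_stack_of_rigidification}: $\XXX_0(N)\cong\sqrt[2]{\Lambda_N^{\rig}/\XXX_0(N)^{\rig}}$. An isomorphism of the three stacks therefore follows from two facts. First, the rigidifications are isomorphic stacky curves. Second, under this identification the classes of $\Lambda_N^{\rig}$ agree modulo $2\Pic$, since square root stacks are classified by $\Pic/2\Pic$.

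For the rigidifications, I use that each $X_0(N)\cong\PP^1$ for $N\in\{5,10,25\}$, and that by Lemma~\ref{stacky-points-xon} one has $\varepsilon_3(N)=0$ and $\varepsilon_2(N)=2$. By Lemma~\ref{cmsunr}, the rigidification therefore has exactly two geometric stacky points with stabilizer $\mu_2$ and is otherwise a scheme. By Lemma~\ref{nmbc10}, Lemma~\ref{str25} and the analysis of $\XXX_0(5)$ in Theorem~\ref{aandb} (via \cite[Lemma 2.8]{AHPP25}), these two points form either a single closed point of degree $2$ split by $F(i)$, or two rational points when $i\in F$. In both cases I would choose a coordinate on $X_0(N)\cong\PP^1$ so that this degree-$2$ divisor is $K:X^2+Y^2=0$. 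Then $\XXX_0(N)^{\rig}\cong\sqrt[2]{K/\PP^1}$ for all three $N$.

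For the gerbe class, I would compute $\Pic(\sqrt[2]{K/\PP^1})$ by \cite[Theorem 1.1]{lopez2023picardgroupsstackycurves}, exactly as in Lemma~\ref{picx03}. If $i\notin F$, the pushout of $\ZZ\xrightarrow{2}\Pic(\PP^1)$ and $\ZZ\xrightarrow{2}\ZZ$ gives $\ZZ\oplus\ZZ/2$, hmm, or more precisely a group whose quotient by $2\Pic$ has to be identified. If $i\in F$, one gets $\ZZ^3/\langle(2,0,-1),(0,2,-1)\rangle$. In both cases I would show that $\Pic/2\Pic$ is detected by two invariants: the degree modulo $2\deg\Pic$, together with the characters at the stacky points, i.e.\ the restriction to $B\mu_2$ at each point.

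These invariants for $\Lambda_N^{\rig}$ are then computable. The degree is $\kappa(N)/6$, which equals $1,3,5$ for $N=5,10,25$; each is odd times $\deg\OO(1)$, and the parity relative to $\deg\Pic=\tfrac12\ZZ$ needs to be matched. The local characters at the stacky points are forced to be non-trivial, because $\XXX_0(N)$ has stabilizer $\mu_4$ rather than $\mu_2\times\mu_2$ there; this is the argument in the theorem above for $N=10$. Non-triviality at both points, together with the degree relation, should pin down the class in $\Pic/2\Pic$ uniquely. The isomorphism $\XXX_0(5)\cong\XXX_0(10)\cong\XXX_0(25)$ then follows, since all three are non-trivial gerbes with the same class.

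The main obstacle is this last bookkeeping step: checking that the local character condition plus the degree leave exactly one class in $\Pic/2\Pic$. The difficulty is that degree $1$ versus $3$ versus $5$ could a priori differ by a twist by $\OO(K^{1/2})$-type classes. If needed, I would instead tensor $\Lambda_N^{\rig}$ by squares to reduce it to a normal form $\OO(\tfrac12\mathcal K)\otimes\OO(m)$ and compare directly.
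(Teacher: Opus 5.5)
Your plan is correct, and the bookkeeping you flag as the main obstacle does go through; your endgame is genuinely different from the paper's. Below is how to close it.

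Write $\mathcal Y=\sqrt[2]{K/\PP^1}$. Let $p_\pm$ be its geometric stacky points, $a_\pm=\OO(\tfrac12 p_\pm)$ the tautological bundles on $\mathcal Y_{\oF}$, and $h=\OO(1)$. Then $\Pic(\mathcal Y_{\oF})=\langle a_+,a_-\mid 2a_+=2a_-=h\rangle$, and $ma_++na_-$ has characters $m,n \bmod 2$ at $p_+,p_-$.

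\emph{Case $i\in F$.} This is already $\Pic(\mathcal Y)$, and the two characters alone give an isomorphism $\Pic(\mathcal Y)/2\Pic(\mathcal Y)\cong(\ZZ/2\ZZ)^2$.

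\emph{Case $i\notin F$.} The pushout is $\langle h,\mathcal M\mid 2\mathcal M=2h\rangle\cong\ZZ\oplus\ZZ/2\ZZ$, with $\mathcal M=\OO(\tfrac12 K)=a_++a_-$ of degree $1$. So here $\deg\Pic(\mathcal Y)=\ZZ$, not $\tfrac12\ZZ$ as you wrote. This matters: with $\tfrac12\ZZ$ your degree invariant would be blind to $h$, which is not a square over $F$. Every $L$ equals $(\deg L)\,h+\epsilon(\mathcal M-h)$, where $\epsilon$ is the common character at the two conjugate stacky points. Hence $\Pic/2\Pic\cong(\ZZ/2\ZZ)^2$ via (degree mod $2$, character).

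\emph{Conclusion.} $\deg\Lambda_N^{\rig}=\kappa(N)/6\in\{1,3,5\}$ is odd, and the $\mu_4$ stabilizers force nontrivial characters. So in both cases $[\Lambda_N^{\rig}]=[\OO(\tfrac12K)]$ in $\Pic(\mathcal Y)/2\Pic(\mathcal Y)$ for $N=5,10,25$. The twist by $\OO(\tfrac12K)$ you were worried about is caught by the characters. Because both invariants are preserved by every isomorphism of stacky curves, it does not matter which identifications $\XXX_0(N)^{\rig}\cong\mathcal Y$ you choose.

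\emph{Comparison with the paper.} The paper instead compares the gerbes through the forgetful maps $\XXX_0(25)^{\rig}\to\XXX_0(5)^{\rig}$ and $\XXX_0(10)^{\rig}\to\XXX_0(5)^{\rig}$, of odd degrees $5$ and $3$. Along these, $\Lambda_5^{\rig}$ pulls back to $\Lambda_{25}^{\rig}$ and $\Lambda_{10}^{\rig}$. The paper then argues that pullback acts on $\Pic/2\Pic$ as an isomorphism would: odd degree controls the non-torsion part, and étaleness at the $\mu_4$-points (Lemma~\ref{cmsunr}) sends the torsion class $\OO((K_4)_+-(K_4)_-)$ to the torsion class.

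Your route never pulls back along a non-isomorphism and needs no local ramification analysis. Its only modular inputs are the parity of $\kappa(N)/6$ and the cyclicity of the $\mu_4$ stabilizers (already used for $N=10$). It also identifies the common stack explicitly as $\sqrt{\OO(\tfrac12K)/\mathcal Y}$. It is more uniform as well: for $i\notin F$ one has $\Pic(\XXX_0(5)^{\rig})\cong\ZZ\oplus\ZZ/2\ZZ$, as you computed, whereas the paper's proof records $\ZZ$ there and tracks only degree parity. So the torsion part, which the paper tracks only when $i\in F$, is handled in both cases by your character invariant. What the paper's approach buys in exchange is that the comparison comes from natural modular morphisms rather than from an abstract classification of $\Pic/2\Pic$.
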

\begin{proof} Combining the proof of Theorem~\ref{aandb}, Lemma~\ref{nmbc10} and Lemma~\ref{str25} we see that the rigidifications of the three stacks are isomorphic to the root stack $\sqrt{K_4}$ where $K_4: X^2+Y^2=0$. Applying the analogous arguments to the ones  in the proof of Lemma~\ref{picx03}, we obtain that $\Pic(\XXX_0(5)^{\rig})\cong \ZZ$ if $i\not\in F$ and $\Pic(\XXX_0(5)^{\rig})\cong \ZZ$ if $i\in F$. We have a diagram $\XXX_0(25)\to\XXX_0(5)\to\XXX_0(1)$ which induces a diagram $\XXX_0(25)^{\rig}\to\XXX_0(5)^{\rig}\to\XXX_0(1)^{\rig}$. The composite morphism is of degree $30$, while the right homomorphism is of degree $6$. It follows that the morphism $\XXX_0(25)^{\rig}\to\XXX_0(5)^{\rig}$ is of degree $5$ and similarly, the morphism $\XXX_0(10)^{\rig}\to\XXX_0(5)^{\rig}$ is of degree $3$. Assume $i\not\in F$. Then the induced homomorphism $$\ZZ/2\ZZ\cong \Pic(\XXX_0(5)^{\rig})/2\Pic(\XXX_0(5)^{\rig})\to\Pic(\XXX_0(25)^{\rig})/2\Pic(\XXX_0(25)^{\rig}) \cong \ZZ/2\ZZ $$ is an isomorphism and similarly $$\ZZ/2\ZZ\cong \Pic(\XXX_0(5)^{\rig})/2\Pic(\XXX_0(5)^{\rig})\to\Pic(\XXX_0(10)^{\rig})/2\Pic(\XXX_0(25)^{\rig}) \cong \ZZ/2\ZZ.$$
Hence $(J^{\rig}_{10})^*(\Lambda^{\rig})(J^{\rig}_{25})^*(\Lambda^{\rig})^{-1}\in 2\Pic(\XXX_0(5)^{\rig})$ and in this case the claim follows. Assume that $i\in F$. An isomorphism $\ZZ\oplus (\ZZ/2\ZZ)\xrightarrow{\sim}\Pic(\XXX_0(5)^{\rig})$ is given by $$(m,n)\mapsto (J_5^{\rig})^*(\Lambda)^{\otimes m}\otimes\mathcal O((K_4)_+-(K_4)_-)^{\otimes n}$$for $m\in\ZZ$ and $n\in\ZZ/2\ZZ$. (As $\OO((K_4)_+-(K_4)_-)^{\otimes 2}=\mathcal O(2(K_4)_+-2(K_4)_-)=\pi^*\mathcal O((K_4)_+-(K_4)_-)=0$ where $\pi:\XXX_0(5)^{\rig}\to X_0(5)$ is the coarse moduli morphism, the notation is unambiguous.) We will show that the image of the class of the only non-trivial torsion line bundle of $\Pic(\XXX_0(5)^{\rig})$ for  $\Pic(\XXX_0(5)^{\rig})/2\Pic(\XXX_0(5)^{\rig})\to\Pic(\XXX_0(25)^{\rig})/2\Pic(\XXX_0(25)^{\rig})$ is the class of the only non-trivial line bundle in $\Pic(\XXX_0(25)^{\rig})$. Obviously, it suffices to show that the only non-trivial torsion line bundle for $\Pic(\XXX_0(5)^{\rig})\to\Pic(\XXX_0(5)^{\rig})$ is the only non-trivial torsion line bundle. It follows from Lemma~\ref{cmsunr} that the morphism $\XXX_0(25)^{\rig}\to\XXX_0(5)^{\rig}$ is \'etale at $K_4$. Hence the image of $\OO((K_4)_+-(K_4)_-)$ is the line bundle $\OO(5(K_4)_+-5(K_4)_-)$, which is the only non-trivial torsion line bundle. Moreover, it is clear that the image of the class of $(J_5^{\rig})^*(\Lambda^{\rig})$ for $\Pic(\XXX_0(5)^{\rig})/2\Pic(\XXX_0(5)^{\rig})\to\Pic(\XXX_0(25)^{\rig})/2\Pic(\XXX_0(25)^{\rig}) $  is the class of $(J_{25}^*)^{\rig}(\Lambda^{\rig})$. Hence we have an isomorphisms $\XXX_0(5)\xrightarrow{\sim}\XXX_0(25)$ and similarly one shows $\XXX_0(5)\xrightarrow{\sim}\XXX_0(25)$. 
	\end{proof}
\subsubsection{} We will fix an implicit identification $X_0(N)\cong\PP^1$. Given an $\oF$-point of a curve~$x$, we denote by $V_x$ the corresponding formal neighborhood. Given a pair $(f,c)\in (\oF(C)-\oF)\times H^1(\oF(C),\mu_2)$ with $C$ smooth projective irreducible $\oF$-curve, we define:
\begin{itemize}
	\item $n_{1/2}(f,g)$ to be the number of geometric points $x$ of $C$ above which $g$ is ramified and which satisfy that $f(x)\not \in K$;
	\item Let $(\mathcal K_4)_{\overline{F}}=(\mathcal K_4)_+\cup (\mathcal K_4)_-$ be the decomposition of $(\mathcal K_4)_{\overline F}$ into two disjoint points. Denote $(K_4)_*$ the image of $(\mathcal K_4)_*$ with $*\in\{+,-\}$ for the coarse moduli space map and $K_4=(K_4)_+\cup (K_4)_-$. We set $n_{+,i,0}(f,g)$ (respectively, $n_{-,1,0}(f,g)$) to be the number of geometric points $x$ of $C$ above which~$ g$ is unramified, which satisfy that $f(x)= ( K_4)_+$ (respectively, $f(x)=( K_4)_-$) and the ramification of $f$ at $x$ is congruent $i$ modulo $4$. 
	\item  We set $n_{+,1,1}(f,g)$ (respectively, $n_{-,1,1}(f,g)$) to be the number of geometric points $x$ of $C$ above which~$g$ is ramified, which satisfy that $f(x)= (K_4)_+$ (respectively, $f(x)=(K_4)_-$) and the ramification of $f$ is congruent $i$ modulo $4$.
\end{itemize}
For a pair $(f,g)$ as above, we denote by $\gamma (f,g)$ the following element of $\NS_{\orb}(\XXX_0(10))^*$:
\begin{itemize}
	\item Assume that $i\not\in F$. We set: \begin{multline*}\gamma(f,g)(L,x):=\deg(f)\cdot L+x_{((\mathcal K_4),1/4)}\cdot\sum_{*\in \{+,-\}} (n_{*,1,0}(f,g)+n_{*,3,1}(f,g))+\\+x_{1/2}\cdot \bigg(n_{1/2}(f,g)+\sum_{*\in\{+,-\}}n_{*,0,1}(f,g)+n_{*,2,1}(f,g)\bigg)+ x_{((\mathcal K_4),3/4)}\cdot\sum_{*\in\{+,-\}}(n_{*,1,1}(f,g)+n_{*,3,0}(f,g)).\end{multline*}
	\item Assume that $i\in F$. We set:
	\begin{multline*}\gamma(f,g)(L,x):=\deg(f)\cdot L+ \sum_{*\in\{+,-\}} x_{((\mathcal K_4)_*,1/4)}\cdot (n_{*,3,0}(f,g)+n_{*,1,1})+\\ + x_{1/2}\cdot \bigg(n_{1/2}(f,g)+\sum_{*\in\{+,-\}}n_{*,0,1}(f,g)+n_{*,2,1}(f,g)\bigg)+\sum_{*\in\{+,-\}}x_{((\mathcal K_4)_*,3/4)}\cdot (n_{*,1,0}(f,g)+n_{*,3,1}(f,g)).
	\end{multline*}
\end{itemize}
It follows from Lemma~\ref{localmu4}, that $\gamma(f,g)\in\NS_{\orb}(\XXX_0(N))^*$ is induced by $\iota(f,g)^{\nu}$.
\begin{thm}\label{thm:N=10,25}
	When $N=10$ we have that $a=1/6$ and that $b=1$. When $N=25$, we also have that $a=1/6$ and that $b=1$.
\end{thm}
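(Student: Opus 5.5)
The plan is to follow the same scheme as the proof of Theorem~\ref{thm:N=7}: write down $K_{\XXX_0(N),\orb}$ and the naive height element $M_{\naive}$ explicitly, locate the point $O=\tfrac16 M_{\naive}+K_{\XXX_0(N),\orb}$, and then determine the face of $\Eff_{\orb}(\XXX_0(N))$ at $O$ using the forms $\gamma(f,g)$ and the degenerate families of Lemma~\ref{twotypes}. By the Corollary identifying $\XXX_0(5)$, $\XXX_0(10)$ and $\XXX_0(25)$, the three stacks, and hence their orbifold cones, coincide. Only the height element differs, through $\kappa(N)$. The computation can therefore be done uniformly, with the forms $\gamma(f,g)$ defined just before the statement.

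\textbf{Step 1 (coordinates).} We have $\varepsilon_2=2$ and $\varepsilon_3=0$, so $\deg K=-1/2$ by Lemma~\ref{degcanonical}. By Theorem~\ref{thmonsectors}, the twisted sectors are $1/2$ (age $0$) and $(\mathcal K_4^{(*)},1/4)$, $(\mathcal K_4^{(*)},3/4)$ (age $1/2$ each). Hence
\[K_{\XXX_0(N),\orb}=\bigl(-\tfrac12,(-\tfrac12)_{1/4},(-1)_{1/2},(-\tfrac12)_{3/4}\bigr),\]
where each $K_4$-coordinate is repeated over $\pm$ when $i\in F$. By Corollary~\ref{naivforx0n}, $M_{\naive}=(\kappa(N)/2,(9)_{1/4},(6)_{1/2},(3)_{3/4})$, with $\kappa(10)/2=9$ and $\kappa(25)/2=15$. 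The inequality $x_{1/2}\ge 0$ defines the cone by Lemma~\ref{partofcone}, which forces $a\ge 1/6$. Computing
\[O=\bigl(\ell,(1)_{1/4},(0)_{1/2},(0)_{3/4}\bigr),\]
with $\ell=1$ for $N=10$ and $\ell=2$ for $N=25$.

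\textbf{Step 2 ($O\in\Eff_{\orb}$, so $a=1/6$).} The dual cone is generated, up to closure, by the linear forms of covering families. By Lemma~\ref{twotypes} these are of two kinds. For type~2 families, Lemma~\ref{partofcone} already gives nonnegativity, since $L_O\ge0$ and $x_{1/2}(O)=0$. For type~1 families, a generic member is a surjective representable stacky curve. By Lemma~\ref{relnormfg} it equals $\iota(f,g)^\nu$, and by Lemmas~\ref{localmu4} and~\ref{associated_sector} its form is $\gamma(f,g)$. Evaluating, $\gamma(f,g)(O)=\ell\deg(f)+(\text{a count of points})\ge \ell\deg f>0$, since every coefficient of $\gamma(f,g)$ is nonnegative and all coordinates of $O$ are nonnegative. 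Hence $O\in\Eff_{\orb}$ and $a=1/6$.

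\textbf{Step 3 ($b=1$).} This is the main point. I expect the obstacle to be that the dual cone is only the \emph{closure} of the cone spanned by the $\gamma(f,g)$ together with $x_{1/2}$, and that $n_{1/2}(f,g)$ is unbounded in terms of $\deg f$. To handle this, split
\[\gamma(f,g)=\gamma'(f,g)+n_{1/2}(f,g)\,x_{1/2},\]
where $\gamma'(f,g)$ collects the $L$-term and the $K_4$-terms. All counts $n_{*,e,i}$ and $n_{*,0,1}+n_{*,2,1}$ are at most $\#f^{-1}(K_4)\le 2\deg f$. Hence $\|\gamma'(f,g)\|\le C\deg f$ for a constant $C$ independent of $(f,g)$, while $\gamma'(f,g)(O)=\gamma(f,g)(O)\ge\ell\deg f$. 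Now let $\lambda$ be in the dual cone with $\lambda(O)=0$, and write it as a limit of combinations $\sum t_k\gamma'(f_k,g_k)+s\,x_{1/2}$ with $t_k,s\ge0$. Evaluating at $O$ forces $\sum t_k\deg f_k\to0$, so the $\gamma'$-parts tend to $0$ in norm. Therefore $\lambda\in\RR_{\ge0}x_{1/2}$. Conversely, $x_{1/2}$ lies in the dual cone and vanishes at $O$. So the dual face at $O$ is the single ray $\RR_{\ge0}x_{1/2}$, which has dimension $1$. Thus the minimal face of $\Eff_{\orb}$ containing $O$ has codimension $1$, and $b=1$ for both $N=10$ and $N=25$.
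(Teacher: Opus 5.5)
Your proposal is correct and follows the paper's proof. You use the same point $O=\tfrac16 M_{\naive}+K_{\XXX_0(N),\orb}$, the same bound $a\ge 1/6$ from $x_{1/2}\ge 0$, and the same observation that every $\gamma(f,g)$ has nonnegative coefficients, so $\gamma(f,g)(O)\ge \ell\deg f>0$. Your Step~3 estimate ($\|\gamma'(f,g)\|\le C\deg f$ against $\gamma'(f,g)(O)\ge \ell\deg f$) adds rigor the paper lacks: the paper deduces $b=1$ only from the strictness of each individual inequality and does not address limits in the closure of the dual cone. Your estimate handles those limits, and it also shows directly that a neighbourhood of $O$ in the hyperplane $x_{1/2}=0$ lies in $\Eff_{\orb}$, so the minimal face containing $O$ has codimension one.
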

\begin{proof}
Using notation $(\mathcal K_4)_*$ as before in the case $i\in F$ and using it for $(\mathcal K_4)$ in the case $i\not\in F$, one has that $$K_{\XXX_0(10),\orb}=(-1/2, (-1/2)_{((\mathcal K_4)_*,1/4)},(-1)_{1/2},(-1/2)_{(\mathcal K_4)_*,3/4}).$$ 
When $N=10$, the naive height corresponds to $$T:=(9, (9)_{((\mathcal K_4)_*,1/4)},(6)_{1/2},(3)_{((\mathcal K_4)_*,3/4)}).$$
We want the infimum of $a$ for which $aT+K_{\XXX_0(10),\orb}\in\Eff_{\orb}(\XXX_0(10))$. Recall that $x_{1/2}\geq 0$ is an equation defining cone, so $a\geq 1/6$. Consider the point \begin{align*}O:&=(1/6)\cdot T+K_{\XXX_0(10)}\\&=(3/2, (3/2)_{((\mathcal K_4)_*,1/4)},(1)_{1/2},(1/2)_{(K_*,3/4)})+(-1/2, (-1/2)_{((\mathcal K_4)_*,1/4)},(-1)_{1/2},(-1/2)_{K_*,3/4})\\&=(1, (1)_{((\mathcal K_4)_*,1/4)},(0)_{1/2},(0)_{(\mathcal K_4)_*,3/4}).
\end{align*} 
Consider $(f,g)$ as before.  
Assume that $\omega\in F$. We have that
\begin{align*}\gamma(f,g)(O)\geq \deg(f)\cdot 1>0.\end{align*}
Hence the inequalities defined by $(f,g)$ as above are all strict. We get $a=1/6$. We also get $$b=\dim(\Eff_{\orb}(\XXX_0(10))^*\cap \{\lambda\in\NS_{\orb}(\XXX_0(10))^*|\hspace{0,1cm}\lambda(O)=0\})=1.$$ 
 When $N=25$, the naive height corresponds to 
$$T':=(15, (9)_{(K_*,1/4)},(6)_{1/2},(3)_{(K_*,3/4)}).$$ We want the infimum of $a$ for which $aT'+K_{\XXX_0(10),\orb}\in\Eff(\XXX_0(10))$. Again, as $x_{1/2}\geq 0$ is an equation defining cone, we have $a\geq 1/6$. Consider the point
\begin{align*}O':&=(1/6)T'+K_{\XXX_0(10)}\\&=(5/2, (3/2)_{((\mathcal K_4)_*,1/4)},(1)_{1/2},(1/2)_{((\mathcal K_4)_*,3/4)})+(-1/2, (-1/2)_{((\mathcal K_4)_*,1/4)},(-1)_{1/2},(-1/2)_{(\mathcal K_4)_*,3/4})\\&=(2, (1)_{((\mathcal K_4)_*,1/4)},(0)_{1/2},(0)_{(\mathcal K_4)_*,3/4}).
\end{align*} 
Consider $(f:C\to \PP^1,g:\widetilde C\to C)$ as before. We have that  
$$\gamma(f,g)(O')\geq \deg(f)\cdot 2>0.$$ We obtain
$$a=1/6\text { and }b=\dim(\Eff_{\orb}(\XXX_0(10))^*\cap \{\lambda\in\NS_{\orb}(\XXX_0(10))^*|\hspace{0,1cm}\lambda(O')=0\})=1.$$ 
The proof is completed.
\end{proof}
\subsection{Case $N=13$}
\begin{lem}
	\label{nmbc13} There exist precisely $3-[F(i):F]$ closed points of $\XXX_0(13)$ with automorphism group scheme $\mu_4$ (they are necessarily lying above the point $j=1728$ in $\XXX_0(1)$ for the morphism $J_N:\XXX_0(13)\to\XXX_0(1)$). There exist precisely $3-[F(\omega):F]$ closed points of $\XXX_0(13)$ with automorphism group scheme $\mu_6$ (they are necessarily lying above the point $j=0$ in $\XXX_0(1)$ for the morphism $J_N:\XXX_0(13)\to\XXX_0(1)$)
\end{lem}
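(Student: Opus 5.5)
The plan follows the same template as Lemma~\ref{nmbc7} and Lemma~\ref{nmbc10}, applied once to the $\mu_4$-points and once to the $\mu_6$-points. By Lemma~\ref{stacky-points-xon}, $\varepsilon_2(13)=2$ and $\varepsilon_3(13)=2$. So there are exactly two $\overline F$-points of $\XXX_0(13)$ with automorphism group $\mu_4$ and exactly two with automorphism group $\mu_6$. By Lemma~\ref{cmsunr} and the remark preceding it, these lie over $j=1728$ and $j=0$ respectively. It remains to determine how $\Gal(\overline F/F)$ acts on each pair of points. The pair gives two closed points if the action is trivial, and one closed point of degree $2$ otherwise. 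That is exactly $3-[F(i):F]$, respectively $3-[F(\omega):F]$.

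For the $\mu_4$-points, fix $E_{1728}$ over $F$. The $\overline F$-points of $\XXX_0(13)$ over $j=1728$ with stabilizer $\mu_4$ are the cyclic $13$-subgroups $H\subset E_{1728}[13]$ stable under the full automorphism group $\mu_4=\langle [i]\rangle$. These are the unramified points by Lemma~\ref{cmsunr}. Such $H$ are exactly the kernels $E[\mathfrak a]$ for the two ideals $\mathfrak a=(2\pm 3i)$ of $\ZZ[i]$ of norm $13$, since $13$ splits in $\ZZ[i]$.

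If $i\in F$, then $\End(E_{1728})=\ZZ[i]$ by the lemma on $\End(E_{1728})$. Both kernels $E[2+3i]$ and $E[2-3i]$ are then Galois stable, so each pair $(E_{1728},H)$ is defined over $F$. This gives two closed points.

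If $i\notin F$, pick $\sigma$ with $\sigma(i)=-i$. Then $\sigma$ conjugates the endomorphism $2+3i$ to $2-3i$, so $\sigma(E[2+3i])=E[2-3i]$. The two points are therefore swapped. As in Lemma~\ref{nmbc10}, there is no $F$-rational ideal of $\End(E_{1728})=\ZZ$ of norm $13$ that cuts out a cyclic subgroup. This gives one closed point of degree $2$.

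The $\mu_6$ case is identical, replacing $\ZZ[i]$ by $\ZZ[\omega]$, $i$ by $\omega$, and the ideals by $(4+\omega)$ and $(4+\omega^2)$ of norm $13$ (note $13=16-4+1$). The lemma on $\End(E_0)$ supplies the rationality dichotomy.

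The main point to check carefully is that conjugation genuinely swaps the two subgroups when $i\notin F$ (resp.\ $\omega\notin F$), rather than both being rational for some other reason. The two kernels are distinct, since $E[\mathfrak a]\cap E[\bar{\mathfrak a}]=0$ for coprime $\mathfrak a,\bar{\mathfrak a}$. Also, $\sigma(E[\alpha])=E[\sigma\alpha\sigma^{-1}]=E[\bar\alpha]$ for any $\alpha\in\End(E_{\overline F})$. Together these make the swap immediate. It is also worth confirming, via the identification of the $\mu_4$-locus (resp.\ $\mu_6$-locus) with the fixed points of the CM action, that no other $\overline F$-points have stabilizer $\mu_4$ or $\mu_6$. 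This is guaranteed by the count $\varepsilon_2(13)=\varepsilon_3(13)=2$.
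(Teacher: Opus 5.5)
Your proposal is correct and takes the same approach as the paper, whose proof simply says that the arguments of Lemma~\ref{nmbc7} and Lemma~\ref{nmbc10} carry over: use $\varepsilon_2(13)=\varepsilon_3(13)=2$, then use the Galois action on the endomorphism ring of the CM curve to decide whether the two geometric points are each $F$-rational or form a single Galois orbit. You write the two points explicitly as $(E,E[\mathfrak a])$ and $(E,E[\bar{\mathfrak a}])$ for the split prime of norm $13$ and check that $\sigma(E[\alpha])=E[\sigma\alpha\sigma^{-1}]$, which makes the swap in the case $i\notin F$ (resp.\ $\omega\notin F$) more precise than the paper's brief remark about ideals of $\ZZ$.
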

\begin{proof}
An analogous proof as before applies.
\end{proof}
As usual we fix an identification $X_0(N)\cong \PP^1$, with the image of $\mathcal K_4$ and $\mathcal K_6$ under the coarse moduli space denoted by $K_4$ and $K_6$. Up to an automorphism we can identify $K_4$ and $K_6$ with the closed points given in the following lemma. 
\begin{thm} Consider the closed points $K_4:X^2+Y^2=0$ and $K_6=X^2+3Y^2$ of $\PP^1$.  One has that  $\XXX_0(13)^{\rig}\cong \sqrt[2,3]{K_4,K_6},$ where the notation means that $\mu_2$ stackiness is added along $K_4$ and $\mu_3$ stackiness is added along $K_6$. Moreover, one has that $\XXX_0(13)$ is a non-trivial root stack $\XXX_0(13)\cong\sqrt[2]{J_{13}^*(\Lambda^{\rig}})$
	\end{thm}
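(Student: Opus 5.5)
The plan follows the pattern already used for $N=7$ and $N=10$. First I determine the rigidification from the stacky points, then I decide triviality of the $\mu_2$-gerbe.

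\emph{Step 1: the rigidification.} By Lemma~\ref{nmbc13} and Lemma~\ref{cmsunr}, the points of $\XXX_0(13)^{\rig}$ with nontrivial stabilizer are as follows:
\begin{itemize}
\item the $\varepsilon_2(13)=2$ geometric points over $j=1728$, each with stabilizer $\mu_2$;
\item the $\varepsilon_3(13)=2$ geometric points over $j=0$, each with stabilizer $\mu_3$.
\end{itemize}
The points of each pair are Galois conjugate over $F(i)$, respectively $F(\omega)$, or are both rational if $i\in F$, respectively $\omega\in F$. So their images in $X_0(13)\cong\PP^1$ are two degree-$2$ reduced divisors, each splitting exactly over $F(i)$, respectively $F(\omega)$. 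After a suitable automorphism of $\PP^1$ over $F$, these divisors become $K_4:X^2+Y^2=0$ and $K_6:X^2+3Y^2=0$. The required normal form is possible because a pair of disjoint degree-$2$ divisors on $\PP^1$ can be moved to these models when the splitting fields agree. A smooth tame stacky curve with trivial generic stabilizer is the iterated root stack of its coarse space along the stacky points with their stabilizer orders. This is the same fact used for $N=7$ and $N=10$, and it gives $\XXX_0(13)^{\rig}\cong\sqrt[2,3]{K_4,K_6/\PP^1}$.

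\emph{Step 2: the isomorphism $\XXX_0(13)\cong\sqrt[2]{(J_{13}^{\rig})^*\Lambda^{\rig}}$.} This is already given by the general lemma following Lemma~\ref{lem:root_stack_of_rigidification}, which applies Corollary~\ref{cor:rigid-by-pol-LB_base-change} to $J_{13}$.

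\emph{Step 3: non-triviality.} I would copy the argument used for $N=10$. Suppose the gerbe were trivial, so that $\XXX_0(13)\cong\XXX_0(13)^{\rig}\times B\mu_2$. Then a geometric point over $K_4$ would have stabilizer $\mu_2\times\mu_2$, which is not cyclic. But $\XXX_0(13)\to\XXX_0(1)$ is representable and $\XXX_0(1)\cong\PPP(4,6)$ is cyclotomic, so every stabilizer of $\XXX_0(13)$ embeds in a $\mu_n$ and is cyclic; in fact it is $\mu_4$ at these points. This contradiction shows the gerbe is nontrivial.

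As a consistency check via degrees, the class of $(J_{13}^{\rig})^*\Lambda^{\rig}$ has degree $\kappa(13)/6=14/6=7/3$. By \cite[Theorem 1.1]{lopez2023picardgroupsstackycurves}, the degrees in $\Pic(\XXX_0(13)^{\rig})$ lie in $\tfrac16\ZZ$. Writing $7/3=2\cdot(7/6)$ shows that degree alone cannot rule out being a square. This is why Step 3 uses the stabilizer argument and not a degree count.

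\emph{Main obstacle.} The delicate point is Step 1, specifically the simultaneous normalization of $K_4$ and $K_6$ over a general $F$. Moving both divisors to the stated models with a single $\PGL_2(F)$ element may not be possible when $F$ is small. If so, I would state the isomorphism up to the identification of $X_0(13)$ with $\PP^1$ chosen so that the images are $K_4$ and $K_6$. This matches the phrase ``under a suitable isomorphism'' in Theorem~\ref{isomclas}. The only input actually needed is that each divisor is reduced of degree $2$ with the correct splitting field.
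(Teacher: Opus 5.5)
Your Steps 2 and 3 match the paper's argument and are correct. Step 2 is the general lemma $\XXX_0(N)\cong\sqrt[2]{\Lambda_N^{\rig}/\XXX_0(N)^{\rig}}$. Step 3 is the $N=10$ observation that a trivial gerbe would give the non-cyclic stabilizer $\mu_2\times\mu_2$ over $K_4$, which is impossible because $J_{13}$ is representable into $\PPP(4,6)$.

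The gap is the normalization in Step 1. It is not true that two disjoint reduced degree-$2$ divisors with the right splitting fields can always be moved by an automorphism of $\PP^1$ onto $\{X^2+Y^2=0\}$ and $\{X^2+3Y^2=0\}$. The obstruction is geometric, not a matter of $F$ being small. Such pairs form a $4$-dimensional family, while $\mathrm{PGL}_2$ has dimension $3$, so there is a modulus. For $\{z_1,z_2\},\{z_3,z_4\}$ the cross-ratio $\lambda=(z_1,z_2;z_3,z_4)$ is invariant up to $\lambda\mapsto\lambda^{-1}$, hence so is $\bigl(\frac{\lambda+1}{\lambda-1}\bigr)^2$. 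For the model pair this invariant equals $4/3$. For $X_0(13)$, with a Hauptmodul $t$ one has
\[
j=\frac{(t^2+5t+13)(t^4+7t^3+20t^2+19t+1)^3}{t},\qquad j-1728=\frac{(t^2+6t+13)(t^6+10t^5+46t^4+108t^3+122t^2+38t-1)^2}{t}.
\]
So $K_6=\{t^2+5t+13=0\}$ and $K_4=\{t^2+6t+13=0\}$, and the invariant equals $121/108\neq 4/3$. An isomorphism of stacky curves induces an isomorphism of coarse spaces that matches the stacky loci of each order. Therefore $\XXX_0(13)^{\rig}\not\cong\sqrt[2,3]{K_4,K_6/\PP^1}$ for the stated $K_4,K_6$, even over $\oF$.

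Consequences for the rest of your proposal:
\begin{itemize}
\item Your fallback, choosing $X_0(13)\cong\PP^1$ so that the images are exactly these $K_4$ and $K_6$, cannot be carried out.
\item Your closing claim, that one only needs each divisor to be reduced of degree $2$ with the correct splitting field, is wrong: the root stack also remembers the cross-ratio.
\item What your argument does prove is the statement with $K_4,K_6$ \emph{defined} as the images of $\mathcal K_4,\mathcal K_6$ in $X_0(13)$, e.g.\ $t^2+6t+13=0$ and $t^2+5t+13=0$. The same holds for the paper's one-line proof, which rests on the same unjustified identification made just before the theorem. This correction is harmless for the $a$- and $b$-invariants, which only use the stacky points and their stabilizers.
\end{itemize}

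A minor point: your degree remark fails when $i\notin F$, for instance $F=\QQ$. There the degrees of $\Pic(\XXX_0(13)^{\rig})$ form $\tfrac13\ZZ$, generated by $1$, the degree $1$ of the reduced stacky divisor over $K_4$, and $\tfrac23$ (or $\tfrac13$ each) over $K_6$. So $7/3\notin\tfrac23\ZZ$, and degree alone already excludes a square. The stabilizer argument is still preferable because it works over $\oF$.
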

\begin{proof}
	It follows from Lemma~\ref{nmbc13} that the rigidification is $\XXX_0(13)^{\rig}\cong\sqrt[2,3]{K,D}$. An analogous argument to the one for $N=10$ shows that $\XXX_0(13)$ is a non-trivial gerbe $\mu_2$-gerbe over $\sqrt[2,3]{K,D}$.
\end{proof}
Given a smooth projective $\oF$-curve $C$ and a  pair $(f,g)\in (\oF(C)-\oF)\times H^1(\oF(C),\mu_2)$. By $((\mathcal K_6)_*,a)$ we mean $((\mathcal K_6),a)$ if $\omega\not\in F$ and it has its standard meaning if $\omega \in F$ and by $((\mathcal K_4)_*,a)$ we mean $((\mathcal K_4),a)$ if $i \not\in F$ and it has its standard meaning if $i\in F$. We define $w(1,0)=w(4,1)=1/6$, $w(4,0)=w(1,1)=2/3$, $w(3,0)=w(3,1)=1/2$, $w(2,0)=w(5,1)=1/3$, and $w(5,0)=w(2,1)=5/6$. Similarly, we define $q(3,0)=q(1,1)=1/4$, $q(2,0)=q(2,1)=1/2$ and $q(1,0)=q(3,1)=3/4$. Write $n_{*,(3,0)}$ and $n_{*,(3,1)}$ for $n_{1/2}.$ We define:
\begin{multline*}
	\gamma(f,g)(L,x):=\deg(f)\cdot L+\sum_{*\in\{+,-\}}\sum_{j=1}^5\sum_{r\in w^{-1}(j/6)}n_{*,r}(f,g)x_{((\mathcal K_6)_*,j/6)}+\\+\sum_{*\in\{+,-\}}\sum_{j=1}^3\sum_{r\in q^{-1}(j/4)}n_{*,r}(f,g)x_{((\mathcal K_4)_*,j/4)}.
\end{multline*}
\begin{thm}\label{thm:N=13}
	In the case $N=13$, one has that $a=1/6$ and $b=1$.
\end{thm}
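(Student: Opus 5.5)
The plan follows the proof of Theorem~\ref{thm:N=10,25}. I first write down the two relevant elements of $\NS_{\orb}(\XXX_0(13))_{\QQ}$, then exhibit the point $O=\frac16 T+K_{\XXX_0(13),\orb}$, and finally show that every functional coming from a type 1) covering family is strictly positive on $O$.

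\textbf{The two elements.} By Lemma~\ref{degcanonical} and Lemma~\ref{stacky-points-xon}, with $\varepsilon_2(13)=\varepsilon_3(13)=2$, we get $\deg K_{\XXX_0(13)}=-1+\frac23+\frac12=\frac16$. The ages from Theorem~\ref{thmonsectors} then give the twisted coordinates of $K_{\XXX_0(13),\orb}$:
\begin{itemize}
\item $-\frac13,-\frac23,-\frac13,-\frac23$ on $((\mathcal K_6)_*,j/6)$ for $j=1,2,4,5$;
\item $-\frac12,-\frac12$ on $((\mathcal K_4)_*,1/4)$ and $((\mathcal K_4)_*,3/4)$;
\item $-1$ on the sector $1/2$.
\end{itemize}
Since $\kappa(13)=14$, Corollary~\ref{naivforx0n} shows the naive height is given by
\[
T=\bigl(7,\ (10,8,4,2)_{\mathcal K_6},\ (9,3)_{\mathcal K_4},\ (6)_{1/2}\bigr).
\]

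\textbf{Lower bound $a\ge 1/6$.} By Lemma~\ref{partofcone}, $x_{1/2}\ge 0$ is a boundary of $\Eff_{\orb}(\XXX_0(13))$. The $x_{1/2}$-coordinate of $tT+K_{\XXX_0(13),\orb}$ is $6t-1$, so this inequality forces $a\ge 1/6$.

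\textbf{The point $O$.} A direct computation gives
\[
O=\frac16T+K_{\XXX_0(13),\orb}=\bigl(\tfrac43,\ (\tfrac43,\tfrac23,\tfrac13,-\tfrac13)_{\mathcal K_6},\ (1,0)_{\mathcal K_4},\ (0)_{1/2}\bigr).
\]
To get $a=1/6$ it remains to check $O\in\Eff_{\orb}(\XXX_0(13))$.

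Type 2) covering families of Lemma~\ref{twotypes} pair non-negatively with $O$ by Lemma~\ref{partofcone}, since $L=\frac43\ge 0$ and $x_{1/2}=0$. For type 1) families, Lemma~\ref{relnormfg} lets me reduce to the forms $\gamma(f,g)$ with $(f,g)\in(\oF(C)-\oF)\times H^1(\oF(C),\mu_2)$. Lemma~\ref{associated_sector} identifies $\gamma(f,g)$ with the intersection form of $\iota(f,g)^{\nu}$.

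Evaluating at $O$, the only negative contribution is $-\frac13$ for each point $x$ whose associated sector is $((\mathcal K_6)_*,5/6)$. By the definition of $w$, these are points over $K_6^*$ with $(e_{f,x}\bmod 6,\,g)\in\{(5,0),(2,1)\}$, and in either case $e_{f,x}\ge 2$. For each of the (one or two) geometric points of $K_6$, the ramification indices over it sum to at most $\deg f$. Hence there are at most $\deg f/2$ such points over each geometric point of $K_6$, and at most $\deg f$ in total. Dropping the other non-negative terms gives
\[
\gamma(f,g)(O)\ \ge\ \tfrac43\deg f-\tfrac13\deg f=\deg f>0 .
\]
This is the step I expect to need the most care. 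I must check that the tables $w,q$ match Lemma~\ref{associated_sector}, so that no other sector with negative coefficient at $O$ occurs. Positivity of $\gamma(f,g)(O)$ for all $(f,g)$ shows $O$ lies in the cone, so $a=1/6$.

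\textbf{The $b$-invariant.} I compute $b$ as the dimension of $\Eff_{\orb}(\XXX_0(13))^*\cap\{\lambda(O)=0\}$. The dual cone is the closed cone generated by the forms coming from covering families. The normalized forms $\gamma(f,g)/\deg f$ satisfy $\gamma(f,g)(O)/\deg f\ge 1$, and this bound is uniform. So neither they nor any limit of their positive combinations can vanish on $O$.

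The only generators vanishing on $O$ are therefore those from type 2) families, which are non-negative multiples of $(L,x)\mapsto x_{1/2}$ by Lemma~\ref{twotypes}. Thus the face is the ray spanned by $x_{1/2}$, and $b=1$.
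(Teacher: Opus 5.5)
Your proposal is correct and follows the paper's proof essentially step for step. You use the same elements $T$, $K_{\XXX_0(13),\orb}$ and $O=\frac16T+K_{\XXX_0(13),\orb}$, the same lower bound $a\ge\frac16$ from the facet $x_{1/2}\ge 0$, and the same strict positivity of every $\gamma(f,g)$ on $O$ obtained by controlling the single negative coordinate $-\frac13$ at $((\mathcal K_6)_*,5/6)$. You reach the same conclusion, that $\{\lambda(O)=0\}$ meets the dual cone only in the ray of $x_{1/2}$.

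Your identification of the contributing points as those with $(e_{f,x}\bmod 6,\,g)\in\{(5,0),(2,1)\}$ is consistent with Lemma~\ref{associated_sector}, and the resulting uniform bound $\gamma(f,g)(O)\ge\deg f$ is cleaner than the paper's chain of inequalities. One statement is too strong: limits of positive combinations of type 1) forms can vanish on $O$ when their $x_{1/2}$-coefficients blow up. Such limits lie in the ray of $x_{1/2}$, however, so $b=1$ is unaffected.
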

\begin{proof} 
	One has that 
	\begin{multline*}K_{\XXX_0(13),\orb}=\big(1/6,(-1/2)_{((\mathcal K_4)_*,1/4)}, (-1/2)_{((\mathcal K_4)_*,3/4)}, \\(-1)_{1/2},(-2/3)_{((\mathcal K_6)_*,1/6)},(-1/3)_{((\mathcal K_6)_*,1/3)}, (-2/3)_{((\mathcal K_6)_*,2/3)},(-1/3)_{((\mathcal K_6)_*,5/6)}\big).\end{multline*}
	The naive height is given by the element
	\begin{align*}
		T:=(7, (9)_{((\mathcal K_4)_*,1/4)}, (3)_{((\mathcal K_4)_*,3/4)}, (6)_{1/2},(10)_{((\mathcal K_6)_*,1/6)},(8)_{((\mathcal K_6)_*,1/3)}, (4)_{((\mathcal K_6)_*,2/3)},(2)_{((\mathcal K_6)_*,5/6)})
	\end{align*}
	We want the infimum of $a$ for which $aT+K_{\XXX_0(13),\orb}\in\Eff_{\orb}(\XXX_0(13))$. Recall that $x_{1/2}\geq 0$ is an equation defining cone, so $a\geq 1/6$. Consider the point
	\begin{align*}O:&=(1/6)\cdot T+K_{\XXX_0(13),\orb} \\
		&=(4/3, (1)_{((\mathcal K_4)_*,1/4)}, (0)_{((\mathcal K_4)_*,3/4)}, (0)_{1/2},(4/3)_{((\mathcal K_6)_*,1/6)},(2/3)_{((\mathcal K_6)_*,1/3)}, (1/3)_{((\mathcal K_6)_*,2/3)},(-1/3)_{((\mathcal K_6)_*,5/6)}).
	\end{align*}
	Let $C$ be a smooth projective irreducible $\oF$-curve, and $(f,g)$ as usual. We have that 
	$$\gamma(f,g)(O)\geq 4/3\cdot \deg(f)+ (-1/3)\sum_{*\in\{+,-\}}(n_{*,5,0}(f,g)+n_{*,1,1}(f,g)).$$
	We have that $\deg(f)\geq 5 n_{+,5,0}(f,g)+n_{+,1,1}(f,g)$ and $\deg(f)\geq 5 n_{-,5,0}(f,g)+n_{-,1,1}(f,g)$. We obtain \begin{align*}4\deg(f)&\geq 10n_{+,5,0}(f,g)+2n_{+,1,1}(f,g)+10n_{n-,5,0}(f,g)+2n_{-,1,1}(f,g)\\&\geq \sum_{*\in\{+,-\}}(n_{*,5,0}(f,g)+n_{*,1,1}(f,g)).
	\end{align*}
	Hence $\gamma(f,g)(O)\geq 0$ for all $f,g$ as above. Thus $O\in\Eff_{\orb}(\XXX_0(13))$. Hence $a=1/6.$ For the equality $\gamma(f,g)(O)=0$ to hold, one must have $$n_{*,5,0}(f,g)=n_{*,1,1}(f,g)=0$$ for $*\in\{+,-\}$. But as $\deg(f)\geq 1$, we have that $\gamma(f,g)(O)>0$, so the equality is not achieved for any $f,g$ as above. It follows that $$\{\lambda\in \NS_{\orb}(\XXX_0(13))^*|\hspace{0,1cm}\lambda(O)=0, \lambda\in\Eff_{\orb}(\XXX_0(13))^*\}=\{(L,x)\mapsto tx_{1/2}|\hspace{0,1cm}t\geq 0\}$$ which is of dimension $1$. Hence $b=1$. The claim is proven.
\end{proof}

\bibliographystyle{amsalpha}
\bibliography{bibliography.bib}

\providecommand{\bysame}{\leavevmode\hbox to3em{\hrulefill}\thinspace}
\providecommand{\MR}{\relax\ifhmode\unskip\space\fi MR }
\providecommand{\MRhref}[2]{%
  \href{http://www.ams.org/mathscinet-getitem?mr=#1}{#2}
}
\providecommand{\href}[2]{#2}
\begin{thebibliography}{AHPP25}

\bibitem[AGV08]{AGV08}
Dan Abramovich, Tom Graber, and Angelo Vistoli, \emph{Gromov-{W}itten theory of
  {D}eligne-{M}umford stacks}, Amer. J. Math. \textbf{130} (2008), no.~5,
  1337--1398. \MR{2450211}

\bibitem[AH11]{stable_with_twist}
Dan Abramovich and Brendan Hassett, \emph{Stable varieties with a twist},
  Classification of algebraic varieties. Based on the conference on
  classification of varieties, Schiermonnikoog, Netherlands, May 2009.,
  Z{\"u}rich: European Mathematical Society (EMS), 2011, pp.~1--38 (English).

\bibitem[AHPP25]{AHPP25}
Santiago {Arango-Pi{\~n}eros}, Changho {Han}, Oana {Padurariu}, and Sun~Woo
  {Park}, \emph{{Counting 5-isogenies of elliptic curves over $\mathbb{Q}$}},
  arXiv e-prints (2025), arXiv:2504.07750.

\bibitem[AOV08]{Abramovich_Olsson_Vistoli}
Dan Abramovich, Martin Olsson, and Angelo Vistoli, \emph{Tame stacks in
  positive characteristic}, Ann. Inst. Fourier \textbf{58} (2008), no.~4,
  1057--1091 (English).

\bibitem[Bru92]{Brumer}
Armand Brumer, \emph{The average rank of elliptic curves. {I}}, Invent. Math.
  \textbf{109} (1992), no.~3, 445--472. \MR{1176198}

\bibitem[BS24]{boggesssankar}
Brandon Boggess and Soumya Sankar, \emph{Counting elliptic curves with a
  rational {{\(N\)}}-isogeny for small {{\(N\)}}}, J. Number Theory
  \textbf{262} (2024), 471--505 (English).

\bibitem[BT98]{BatyrevTschinkel}
Victor~V. Batyrev and Yuri Tschinkel, \emph{Tamagawa numbers of polarized
  algebraic varieties}, Nombre et r\'epartition de points de hauteur born\'ee,
  Paris: Soci{\'e}t{\'e} Math{\'e}matique de France, 1998, pp.~299--340
  (English).

\bibitem[BV24]{valcritstack}
Giulio Bresciani and Angelo Vistoli, \emph{An arithmetic valuative criterion
  for proper maps of tame algebraic stacks}, Manuscr. Math. \textbf{173}
  (2024), no.~3-4, 1061--1071 (English).

\bibitem[Cad07]{impose_tangency}
Charles Cadman, \emph{Using stacks to impose tangency conditions on curves},
  Am. J. Math. \textbf{129} (2007), no.~2, 405--427 (English).

\bibitem[Dar21]{darda:tel-03682761}
Ratko Darda, \emph{{Rational points of bounded height on weighted projective
  stacks}}, Theses, {Universit{\'e} Paris Cit{\'e}}, September 2021.

\bibitem[DR73]{DeligneRapoport}
Pierre Deligne and M.~Rapoport, \emph{Moduli schemes of elliptic curves},
  Modular {Functions} of one {Variable} {II}, {Proc}. {Int}. {Summer} {School},
  {Univ}. {Antwerp} 1972, {Lect}. {Notes} {Math}. 349, 143-316 (1973)., 1973.

\bibitem[DY24]{dardayasudabm}
Ratko Darda and Takehiko Yasuda, \emph{{B}atyrev-{M}anin conjecture for {D}{M}
  stacks}, Journal of European Mathematical Society (2024), published online
  first.

\bibitem[DY25a]{orbifold-toric}
Ratko {Darda} and Takehiko {Yasuda}, \emph{{Orbifold pseudo-effective cones of
  toric stacks}}, arXiv e-prints (2025), arXiv:2508.20434.

\bibitem[DY25b]{dardayasudabm2}
\bysame, \emph{{The Batyrev-Manin conjecture for DM stacks II}}, arXiv e-prints
  (2025), arXiv:2502.07157.

\bibitem[ESZB23]{ellenberg_satriano_zureick-brown_2023}
Jordan~S. Ellenberg, Matthew Satriano, and David Zureick-Brown, \emph{Heights
  on stacks and a generalized batyrev–manin–malle conjecture}, Forum of
  Mathematics, Sigma \textbf{11} (2023), e14.

\bibitem[FMN10]{mann}
Barbara Fantechi, Etienne Mann, and Fabio Nironi, \emph{Smooth toric
  {D}eligne-{M}umford stacks}, J. Reine Angew. Math. \textbf{648} (2010),
  201--244. \MR{2774310}

\bibitem[Gra00]{Grant}
David Grant, \emph{A formula for the number of elliptic curves with exceptional
  primes}, Compos. Math. \textbf{122} (2000), no.~2, 151--164 (English).

\bibitem[{Lop}23]{lopez2023picardgroupsstackycurves}
Rose {Lopez}, \emph{{Picard Groups of Stacky Curves}}, arXiv e-prints (2023),
  arXiv:2306.08227.

\bibitem[Mal04]{Malle}
Gunter Malle, \emph{On the distribution of {G}alois groups. {II}}, Experiment.
  Math. \textbf{13} (2004), no.~2, 129--135. \MR{2068887}

\bibitem[Maz78]{Mazur78}
B.~Mazur, \emph{Rational isogenies of prime degree (with an appendix by {D}.
  {G}oldfeld)}, Invent. Math. \textbf{44} (1978), no.~2, 129--162. \MR{482230}

\bibitem[Mol23]{Molnar2023}
Grant~S. Molnar, \emph{Counting elliptic curves with a cyclic $m$-isogeny over
  $\mathbb{Q}$}, Ph.d. thesis, Dartmouth College, 2023.

\bibitem[MV23]{7isogeny}
Grant Molnar and John Voight, \emph{Counting elliptic curves over the rationals
  with a 7-isogeny}, Res. Number Theory \textbf{9} (2023), no.~4, 31 (English),
  Id/No 75.

\bibitem[Ols12]{Olsson12}
Martin Olsson, \emph{Integral models for moduli spaces of {{\(G\)}}-torsors},
  Ann. Inst. Fourier \textbf{62} (2012), no.~4, 1483--1549 (English).

\bibitem[Ols16]{Olsson16}
\bysame, \emph{Algebraic spaces and stacks}, American Mathematical Society
  Colloquium Publications, vol.~62, American Mathematical Society, Providence,
  RI, 2016. \MR{3495343}

\bibitem[Pey95]{Peyre}
Emmanuel Peyre, \emph{Hauteurs et mesures de {T}amagawa sur les
  vari\'{e}t\'{e}s de {F}ano}, Duke Math. J. \textbf{79} (1995), no.~1,
  101--218. \MR{1340296}

\bibitem[{Phi}22]{phillips2024pointsboundedheightimages}
Tristan {Phillips}, \emph{{Points of bounded height in images of morphisms of
  weighted projective stacks with applications to counting elliptic curves}},
  arXiv e-prints (2022), arXiv:2201.10624.

\bibitem[PPV20]{countthreeisog}
Maggie Pizzo, Carl Pomerance, and John Voight, \emph{Counting elliptic curves
  with an isogeny of degree three}, Proc. Am. Math. Soc., Ser. B \textbf{7}
  (2020), 28--42 (English).

\bibitem[PS21]{zbMATH07357690}
Carl Pomerance and Edward~F. Schaefer, \emph{Elliptic curves with
  {Galois}-stable cyclic subgroups of order 4}, Res. Number Theory \textbf{7}
  (2021), no.~2, 19 (English), Id/No 35.

\bibitem[Sch74]{Schoeneberg}
B.~Schoeneberg, \emph{Elliptic modular functions. {An} introduction.
  {Translated} from the {German} by {J}. {R}. {Smart} and {E}. {A}.
  {Schwandt}}, Grundlehren Math. Wiss., vol. 203, Springer, Cham, 1974
  (English).

\bibitem[Shi71]{shimura-book}
Goro Shimura, \emph{Introduction to the arithmetic theory of automorphic
  functions}, Kan\^o{} Memorial Lectures, vol. No. 1, Iwanami Shoten
  Publishers, Tokyo; Princeton University Press, Princeton, NJ, 1971,
  Publications of the Mathematical Society of Japan, No. 11. \MR{314766}

\bibitem[Sil09]{Arithmetic_of_Elliptic}
Joseph~H. Silverman, \emph{The arithmetic of elliptic curves}, 2nd ed. ed.,
  Grad. Texts Math., vol. 106, New York, NY: Springer, 2009 (English).

\bibitem[{Sta}26]{stacks-project}
The {Stacks Project Authors}, \emph{The {S}tacks project},
  \url{https://stacks.math.columbia.edu}, 2026.

\bibitem[VZB22]{VZB}
John Voight and David Zureick-Brown, \emph{The canonical ring of a stacky
  curve}, Mem. Am. Math. Soc., vol. 1362, Providence, RI: American Mathematical
  Society (AMS), 2022 (English).

\bibitem[Yan06]{YangModular}
Yifan Yang, \emph{Defining equations of modular curves}, Advances in
  Mathematics \textbf{204} (2006), no.~2, 481--508.

\end{thebibliography}
\end{document}